\documentclass[12pt,letterpaper]{amsart}
\usepackage{fullpage}
\usepackage{hyperref}
\usepackage{amssymb}
\usepackage{enumitem}
\usepackage{amsmath}
\usepackage{color}
\usepackage{aliascnt}
\usepackage{amsthm}
\usepackage[all,cmtip]{xy} 
\usepackage[light]{anttor}
\usepackage[T1]{fontenc}
\usepackage{amssymb}
\usepackage{mathtools}
\usepackage{color}
\usepackage{float}
\usepackage{tikz}
\usepackage{tikz-cd}
\usetikzlibrary{arrows.meta}
\usetikzlibrary{calc}
\usepackage[utf8]{inputenc}
\usepackage[english]{babel}
\usepackage{amsmath,calligra,mathrsfs}
\usepackage{faktor}
\usepackage{csquotes}
\usepackage{graphicx} 

\usepackage[backend=bibtex,sorting=anyt]{biblatex}
\renewbibmacro{in:}{}
\bibliography{References}

\newcommand{\Torp}{\mathbf{Tor}^{\ell^p_c}}
\newcommand{\Extp}{\mathbf{Ext}_{\ell^p_c}}
\newcommand{\im}{\textnormal{im }}

\newcommand{\id}{\textnormal{id}}
\newcommand{\colim}{\textnormal{colim}}
\newcommand{\cat}[1]{\mathbf{#1}}

\newcommand{\shtensor}{\otimes_{\cat{sh}}}
\newcommand{\Ptensor}{\otimes_{\cat{P}}}
\newcommand{\ptensor}{\otimes_{\ell^p_c}}
\newcommand{\ktensor}{\otimes_{\cat{k}}}
\newcommand{\cpnorm}{\ell^p_c}
\newcommand{\op}{\textnormal{op}}

\newcommand{\Hom}{\textnormal{Hom}}

\newcommand{\scHom}{\mathscr{H}\text{\kern -5pt {\calligra\large om}}\,}

\DeclareMathOperator*{\argmin}{arg\,min}

\newtheorem{theorem}{Theorem}[section]

\newaliascnt{corollary}{theorem}
\newtheorem{corollary}[corollary]{Corollary}
\aliascntresetthe{corollary}

\newaliascnt{proposition}{theorem}
\newtheorem{proposition}[proposition]{Proposition}
\aliascntresetthe{proposition}

\newaliascnt{claim}{theorem}

\aliascntresetthe{claim}

\theoremstyle{definition}
\newaliascnt{definition}{theorem}
\newtheorem{definition}[definition]{Definition}
\aliascntresetthe{definition}

\theoremstyle{remark}
\newaliascnt{remark}{theorem}
\newtheorem{remark}[remark]{Remark}
\aliascntresetthe{remark}

\theoremstyle{plain}
\newaliascnt{lemma}{theorem}
\newtheorem{lemma}[lemma]{Lemma}
\aliascntresetthe{lemma}

\theoremstyle{definition}
\newaliascnt{example}{theorem}
\newtheorem{example}[example]{Example}
\aliascntresetthe{example}

\usepackage[capitalize,noabbrev]{cleveref}
\Crefname{theorem}{Theorem}{Theorems}

\Crefname{corollary}{Corollary}{Corollaries}

\Crefname{proposition}{Proposition}{Propositions}

\Crefname{claim}{Claim}{Claims}

\Crefname{definition}{Definition}{Definitions}

\Crefname{remark}{Remark}{Remarks}

\Crefname{lemma}{Lemma}{Lemmas}

\Crefname{example}{Example}{Examples}

\newcommand{\od}{\stackrel{\mbox {\tiny {def}}}{=}}

\title{A Continuum of K\"unneth Theorems for Persistence Modules}
\author{Nikola Mili\'cevi\'c}

\begin{document}

\begin{abstract}
 We develop new aspects of the homological algebra theory for persistence modules, in both the one-parameter and multi-parameter settings. For a poset $P$ and an order preserving map $\varphi:P\times P\to P$, we introduce a novel tensor product of persistence modules indexed by $P$, $\otimes_{\varphi}$. We prove that each $\otimes_{\varphi}$ has a right adjoint, $\scHom^{\varphi}$, the internal hom of persistence modules that also depends on $\varphi$. We prove that every $\otimes_{\varphi}$ yields a K\"unneth short exact sequence of chain complexes of persistence modules. Dually, the $\scHom^{\varphi}$ also has an associated K\"unneth short exact sequence in cohomology. As special cases both of these short exact sequences yield Universal Coefficient Theorems. We show how to apply these to chain complexes of persistence modules arising from filtered CW complexes.
 
For the special case of $P=\mathbb{R}_+$, the $p$-quasinorms for each $p\in (0,\infty]$ yield a distinct $\ptensor$ and its adjoint $\scHom^{\cpnorm}$. We compute their derived functors, $\Torp$ and $\Extp$ explicitly for interval modules. We show that the Universal Coefficient Theorem developed can be used to compute persistent Borel-Moore homology of a filtration of non-compact spaces. Finally, we show that for every $p\in [1,\infty]$ the associated K\"unneth short exact sequence can be used to significantly speed up and approximate persistent homology computations in a product metric space $(X\times Y,d^p)$ with the distance $d^p((x,y),(x',y'))=||d_X(x,x'),d_Y(y,y')||_p$.
\end{abstract}

\maketitle

\tableofcontents

\section{Introduction}

Given a parametrized filtration of topological spaces, the goal of persistent homology is understanding how homology groups of the spaces vary with the choice of parameters. This information is encoded in an algebraic object called a persistence module. If the parameters are one-dimensional such as $\mathbb{R}$, this is called one-parameter persistence, while parameters with more than one dimension such as $\mathbb{R}^n$ yield multi-parameter persistence. Persistence modules have a particularly rich algebraic structure and have been considered through many different perspectives. They have been studied as sheaves \cite{Kashiwara2018,Fernandes2025,berkouk2024,Berkouk2021,Berkouk2022,Bubenik2021,curry2014sheaves,Miller2023,harsu2024}, graded modules \cite{Bubenik2021,Carlsson2009,Lesnick2015,Zomorodian2005,Miller2025,Miller2025,Harrington2019,Geist2023}, quiver representations \cite{Takeuchi2021,Escolar2016,Carlsson2010,Oudot2016} and as functors \cite{Bubenik2021,Bubenik2014,Bubenik2015}. 

Much of the effort in exploiting this rich algebraic foundation went into defining and computing distances of persistence modules \cite{Bubenik2015,Chazal2016,medinamardones2025}, and defining new invariants of these modules that can be used as features for learning in data science \cite{angel2025,Botnan2024,Dey2024,Blanchette2022,Lupo2022,Fernandes2025,JMLR:v21:19-054,Bubenik2015Landscapes}. 
On the other hand, other works attempted to utilize the algebra of persistence modules to develop analogues of classical results from algebraic topology in the persistent setting. Examples of this include persistent spectral sequences \cite{Torras-Casas2023,goldfarb2019,Ren2021-ox,romero2014}, Universal Coefficient Theorems \cite{Boissonnat2019,Bubenik2021,Obayashi2023}, K\"unneth theorems  \cite{Bubenik2021,gakhar2019,Polterovich2017,Carlsson2020} and cup products \cite{Contessoto2022,Memoli2024}.

The focus of this manuscript is the further development of K\"unneth theorems for persistence modules. We use ideas from sheaf theory to further develop the homological algebra of persistence modules. These new techniques are used to compute persistent homology of product spaces.

\subsection*{Our contributions}

A filtration of a CW complex $X$ is a function $f:X\to \mathbb{R_+}$, where $\mathbb{R}_+\od\{x\in \mathbb{R}\,|\, x\ge 0\}$, such that $f(\partial \sigma) \le f(\sigma)$ for all cells $\sigma$ of $X$. The sublevel sets of the filtration, $X^f_a\od f^{-1}[0,a]$, for $a\in \mathbb{R}_+$, produce a filtration of $X$ by subcomplexes. Let $H_n(X^f_a,\cat{k})$ be the $n$-th homology group of $X_a$ with coefficients in a field $\cat{k}$. For each $a\le b$ in $\mathbb{R}_+$, $n\in \mathbb{N}$, the inclusion maps $X^f_a\hookrightarrow X^f_b$ induce $\cat{k}$-linear maps 
\[H_n(X^f_a;\cat{k})\to H_n(X^f_b;\cat{k}).\] 
These data are assembled into a functor, called a persistence module, $\mathcal{H}^f_n(X):\mathbb{R}_+\to \cat{Vect}_k$ that assigns to each $a\in \mathbb{R}_+$ the $\cat{k}$-vector space $H_n(X^f_a;\cat{k})$ and to each pair $a\le b$ the $\cat{k}$-linear map $H_n(X^f_a;\cat{k})\to H_n(X^f_b;\cat{k})$ induced by inclusion.

The classical K\"unneth theorem tells us that there is a short exact sequence of abelian groups for two CW complexes $X$ and $Y$: 
\[0\to \bigoplus_{i+j=n}H_i(X)\otimes H_j(Y)\to H_n(X\times Y)\to \bigoplus_{i+j=n-1}\mathbf{Tor}(H_i(X),H_j(Y))\to 0.\]

Let $f:X\to \mathbb{R}_{+}$ and $g:Y\to \mathbb{R}_+$ be two filtrations of CW complexes. Given these classical foundations, we can now formulate our central question:

\textbf{Question}: Is there a filtration $F:X\times Y\to \mathbb{R}_+$ such that there exists a short exact sequence of persistence modules:

\[0\to \bigoplus_{i+j=n}\mathcal{H}^f_i(X)\otimes \mathcal{H}^g_j(Y)\to \mathcal{H}^F_n(X\times Y)\to \bigoplus_{i+j=n-1}\mathbf{Tor}(\mathcal{H}^f_i(X),\mathcal{H}^g_j(Y))\to 0?\]

This question was answered in \cite{Bubenik2021} for two very special examples; $F\in \{f+g,\max(f,g)\}$. In order to even attempt answering this question for more general choices of $F$, one needs to define a tensor product and its derived functor, $\cat{Tor}$, for persistence modules. Furthermore, the choice of a filtration $F$ is expected to influence the definition of $\otimes$ and hence $\cat{Tor}$, as was first observed in \cite{Bubenik2021}, where the two considered choices for $F$, $F\in \{f+g,\max(f,g)\}$, had distinct K\"unneth short exact sequences, each using a different tensor product.

In this work we extend this result to much greater generality. Namely, suppose the function $\varphi:\mathbb{R}_{+}\times \mathbb{R}_{+}\to \mathbb{R}_{+}$ is order-preserving and that $f:X\to \mathbb{R}_{+}$, $g:Y\to\mathbb{R}_+$  are filtrations of CW complexes. We prove that for any such $\varphi$, there is a K\"unneth theorem for the filtration 
\[F\od\varphi\circ (f\times g):X\times Y\to \mathbb{R}_{+},\] 
of the CW complex $X\times Y$. This result is encapsulated in \cref{theorem:kunneth_filtered_cw}.

In order to prove this result we significantly expanded the available homological algebra toolkit for the category of persistence modules. We showcase how to define new interesting functors, which are analogous to the six Grothendieck operations for sheaves. For generalized persistence modules, i.e., functors $M,N:\cat{P}\to \cat{Vect_k}$ where $\cat{P}$ is a poset category, and an order-preserving morphism $\varphi:P\times P\to P$ we define the tensor product $M\otimes_{\varphi}N$ and its adjoint $\scHom^{\varphi}(M,N)$. Furthermore, we compute their derived functors, $\cat{Tor}^{\varphi}$ and $\cat{Ext}_{\varphi}$ explicitly for interval modules, which is crucial for applying the K\"unneth short exact sequences for persistence modules in data science. We prove the existence of K\"unneth short exact sequences of persistence modules 
\[\bigoplus_{i+j=n}H_i(K)\otimes_{\varphi} H_j(L)\to H_n(K\otimes_{\varphi} L)\to\bigoplus_{i+j=n-1} \cat{Tor}^{\varphi}_1(H_{i}(K),H_j(L))\quad (\text{\cref{theorem:kunneth_1}})\]
and 
\[\prod_{i+j=n-1}\cat{Ext}^1_{\varphi}(H_i(K),H_{j}(L))\to H^n(\scHom^{\varphi}(K, L))\to\prod_{i+j=n} \scHom^{\varphi}(H_i(K),H_{j}(L))\quad (\text{\cref{theorem:kunneth_2}})\]
for complexes of persistence modules $K$ and $L$. By making the complex $L$ be a single persistence module concentrated in degree $0$ we get Universal Coefficient Theorems for persistence modules (\cref{theorem:UCT_1,theorem:UCT_2}).

For the special case $P=\mathbb{R}_+$ and letting $p\in (0,\infty]$ and setting 
\[\varphi(a,b)\od (a^p+b^p)^{\frac{1}{p}},\] 
we immediately get a continuum of distinct short exact K\"unneth sequences, each corresponding to a different and novel tensor product in the abelian category of persistence modules. This allows us to use the K\"unneth short exact sequences we developed to compute homology of filtered products of CW complexes in applications.

We show that the Universal Coefficient Theorem for persistence modules (\cref{theorem:UCT_2}) can be used to calculate the Borel-Moore persistent homology of a filtration of non-compact spaces (\cref{theorem:borel_moore}). To our knowledge, this is the first such computation in the persistent setting.

We apply these results to approximate Vietoris-Rips persistent homology computations in a product of metric spaces. Namely if $(X,d_X)$ and $(Y,d_Y)$ are two finite metric spaces we are interested in computing the Vietoris-Rips persistent homology of $(X\times Y,d^p)$ where 
\[d^p((x,y),(x',y'))=||d_X(x,x'),d_Y(y,y')||_p,\]
 for $p\in [1,\infty]$ (\cref{section:approximating}). Computing persistence modules arising from filtrations of simplicial complexes is computationally intractable as the dimension and number of points of the simplicial complex increase. This work has significant implications for faster computations and approximations of persistent homology of product spaces as we can compute persistent homology of a smaller sample of points and in lower dimensions in the two base spaces. A natural example of this is the torus where so far mostly the sup and sum metrics have been used. However there is an interest in computing persistent homology in the $\ell^2$ metric as well \cite{Adams2025}. We demonstrate the potential of this approach with several examples.

\subsection*{Related work}
Our work builds upon and extends several important prior results.
Tensor products and K\"unneth theorems for persistence modules were considered in \cite{Polterovich2017,Carlsson2020} for the case $p=1$, and in \cite{Bubenik2021,gakhar2019} for the case $p\in \{1,\infty\}$. The use of these theorems has found applications in calculating higher dimensional persistent homology groups of tori, in both the sum and the supremum metrics.  Homological algebra techniques for persistence modules have been developed also in \cite{harsu2024,Kashiwara2018,Miller2025,Berkouk2021,Fernandes2025}.

\section{Preliminaries}
We assume the reader is familiar with category theory, particularly limit and colimit constructions, abelian categories, and derived functors. For background on these topics, see for example \cite[Chapter 1]{Kashiwara1990}. 

Given a category $\cat{C}$ and objects $A$ and $B$ in $\cat{C}$ the class of morphisms in $\cat{C}$ between $A$ and $B$ will be denoted by $\Hom_{\cat{C}}(A,B)$. We now list a couple of categories that will be used throughout this work. 

\begin{enumerate}[left=0pt]
        \item Let $P=(P,\le)$ be a poset. The category $\cat{P}$ is the category whose objects are the $p\in P$ and the morphisms are 
        \[\Hom_{\cat{P}}(p,p')\od\begin{cases}
            *,& p\le p'\\
            \varnothing,& \textnormal{otherwise}
        \end{cases}.\]
        \item Let $\cat{k}$ be a field. Then we have the category $\cat{Vect_k}$ whose objects are $\cat{k}$-vector spaces and the morphism $\Hom_{\cat{k}}(V,W)$ are the $\cat{k}$-linear maps between $V$ and $W$. Note that we write $\Hom_{\cat{k}}(V,W)$ instead of $\Hom_{\cat{Vect_k}}(V,W)$ for simplicity.
        \item Let $\cat{C}$ and $\cat{D}$ be categories. The functor category $\cat{D^C}$ is the category whose objects are functors $F:\cat{C}\to \cat{D}$ and the morphism $\Hom_{\cat{D^C}}(F,G)$ are the natural transformations between functors. 
\end{enumerate}

An \emph{up-set} in a poset $(P,\le )$ is a subset $U\subset P$ such that if $x\in U$ and $x\le y$ then $y\in U$. For $a\in P$ denote by $U_a$ the \emph{principal up-set} at $a$, i.e., $U_a\od\{x\in P\,|\, a\le x\}$. A \emph{down-set} in a poset $(P,\le)$ is a subset $D\subset P$ such that if $x\in D$ and $y\le x$ then $y\in D$. For 
$a\in P$ denote by  $D_a$ the \emph{principal down-set} at $a$, i.e., $D_a\od\{x\in P\,|\, x\le a\}$. 

A large focus of our manuscript will be the category $\cat{R^n_+}$ corresponding to the poset $(\mathbb{R}^n_{+},\le)$, where $\le$ denotes the product partial order on $\mathbb{R}^n_{+}\od \{(x_1,\dots, x_n)\in \mathbb{R}^n\,|\,\forall 1\le i\le n, x_i\ge 0\}$. That is, $(x_1,\dots, x_n)\le (y_1,\dots, y_n)$ if and only if $x_i\le y_i$ for all $i$.

\subsection{Direct and inverse image of functors on posets}
Here we recall different functorial constructions associated to a morphism of posets $f:P\to Q$. References are made to \cite[Chapter 5]{curry2014sheaves}. Observe that a poset morphism $f:P\to Q$ is also a functor $f:\cat{P}\to \cat{Q}$. Assume that $\cat{C}$ is a complete and cocomplete category, e.g. the category $\cat{Vect_k}$.

\begin{definition}
\label{def:inverse_image}
    Given a poset morphism $f:P\to Q$ and a functor $G:\cat{Q}\to \cat{C}$, the \emph{inverse image} of $G$ is the functor $f^{-1}G:\cat{P}\to \cat{C}$ defined by  $(f^{-1}G)_p\od G_{f(p)}$, and $(f^{-1}G)_{p\le p'}=G_{f(p)\le f(p')}$.
\end{definition}

We thus have a functor $f^{-1}:\cat{C^Q}\to \cat{C^P}$. It turns out that the inverse image functor has both a left and a right adjoint which we describe below. 

\begin{definition}
\label{def:direct_image_1}
    Given a poset morphism $f:P\to Q$ and a functor $F:\cat{P}\to \cat{C}$, the \emph{direct image functor} of $F$ is the functor $f_*F$ defined by
    \[(f_*F)_q\od \underset{q\le f(p)}{\lim}F_p,\]
    and  $(f_*F)_{q\le q'}$ is given by the induced canonical map. 
\end{definition}

It follows from the definition that  $f_*:\cat{C^P}\to \cat{C^Q}$ is a functor. Furthermore,  $f_*$ is the right adjoint of $f^{-1}$.

\begin{theorem}{\cite[Theorem 5.3.1]{curry2014sheaves}}
\label{theorem:direct_image_adjunction_1}
    The functor $f^{-1}$ is the left adjoint of $f_*$. That is, there are natural bijections
    \[\Hom_{\cat{C^P}}(f^{-1}G,F)\approx\Hom_{\cat{C^Q}}(G,f_*F),\]
    for any functors $F:\cat{P}\to \cat{C}$ and $G:\cat{Q}\to \cat{C}$.
\end{theorem}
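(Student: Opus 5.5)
We construct the bijection directly from the definitions, using the universal property of limits. Fix $F:\cat{P}\to\cat{C}$ and $G:\cat{Q}\to\cat{C}$. Recall that $(f_*F)_q=\lim_{q\le f(p)}F_p$. This is the limit of $F$ restricted to the full subposet $\{p\in P\mid q\le f(p)\}$. For $q\le q'$ the subposet for $q'$ is contained in the one for $q$, which gives the canonical restriction map $(f_*F)_q\to(f_*F)_{q'}$.

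\emph{From $f^{-1}G\to F$ to $G\to f_*F$.} Let $\alpha:f^{-1}G\to F$ have components $\alpha_p:G_{f(p)}\to F_p$. For $q\in Q$ and $p$ with $q\le f(p)$, consider the composite
\[G_q\xrightarrow{G_{q\le f(p)}}G_{f(p)}\xrightarrow{\alpha_p}F_p.\]
These maps form a cone over the diagram $\{F_p\}_{q\le f(p)}$. Indeed, if $p\le p'$ with $q\le f(p)$, naturality of $\alpha$ gives $F_{p\le p'}\alpha_p=\alpha_{p'}G_{f(p)\le f(p')}$, and functoriality of $G$ gives $G_{f(p)\le f(p')}G_{q\le f(p)}=G_{q\le f(p')}$. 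The universal property therefore yields a unique map $\beta_q:G_q\to(f_*F)_q$. To see that $\beta$ is natural in $q$, compose with the limit projections: both composites $G_q\to(f_*F)_{q'}$ have the same projections $\alpha_pG_{q\le f(p)}$ for each $p$ with $q'\le f(p)$, so they agree by uniqueness.

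\emph{From $G\to f_*F$ to $f^{-1}G\to F$.} Let $\beta:G\to f_*F$. Since $f(p)\le f(p)$, the object $p$ lies in the indexing diagram at $q=f(p)$, so there is a projection $\pi_p:(f_*F)_{f(p)}\to F_p$. Set
\[\alpha_p\od\pi_p\circ\beta_{f(p)}.\]
To check naturality for $p\le p'$, we need a compatibility between the projections and the restriction maps. Namely, the restriction $(f_*F)_{f(p)}\to(f_*F)_{f(p')}$ followed by $\pi_{p'}$ equals the projection from $(f_*F)_{f(p)}$ to $F_{p'}$ (note $f(p)\le f(p')$). That projection in turn equals $F_{p\le p'}\circ\pi_p$, by the cone condition. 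Combining this compatibility with naturality of $\beta$ gives $\alpha_{p'}\,G_{f(p)\le f(p')}=F_{p\le p'}\,\alpha_p$.

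\emph{The two constructions are inverse.} Going $\alpha\mapsto\beta\mapsto\alpha'$, we get
\[\alpha'_p=\pi_p\beta_{f(p)}=\alpha_p\,G_{f(p)\le f(p)}=\alpha_p.\]
Going $\beta\mapsto\alpha\mapsto\beta'$, each projection of $\beta'_q$ onto $F_p$ (with $q\le f(p)$) is
\[\pi_p\beta_{f(p)}G_{q\le f(p)}=\pi_p(f_*F)_{q\le f(p)}\beta_q,\]
by naturality of $\beta$. By the compatibility above, this is the $p$-th projection of $\beta_q$. Hence $\beta'_q=\beta_q$ by the universal property.

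Naturality of the bijection in $F$ and $G$ follows from the formulas, since both constructions are built from composition with components of natural transformations. No step is a real obstacle. The only point needing care is the bookkeeping of how the limit projections commute with the restriction maps $(f_*F)_{q\le q'}$, which is exactly the definition of the induced canonical map.
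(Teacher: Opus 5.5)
The paper has no proof of its own for this statement; it only cites \cite[Theorem 5.3.1]{curry2014sheaves}. Your direct verification is correct and complete. Write $\pi^q_p$ for the projection $(f_*F)_q\to F_p$. All of the following go through:
\begin{enumerate}
\item your cone checks and the uniqueness argument for naturality of $\beta$;
\item the two compatibilities you rely on, $\pi^{q'}_p\circ(f_*F)_{q\le q'}=\pi^q_p$ and $\pi^{f(p)}_{p'}=F_{p\le p'}\circ\pi^{f(p)}_p$;
\item both round-trip computations;
\item the degenerate case where $\{p\mid q\le f(p)\}$ is empty and $(f_*F)_q$ is terminal.
\end{enumerate}

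The justification behind the citation is more conceptual, and there are two standard versions. In the first, for posets the comma category $q\downarrow f$ is exactly the full subposet $\{p\mid q\le f(p)\}$. So $f_*F$ is the pointwise right Kan extension $\mathrm{Ran}_fF$, $f^{-1}$ is precomposition with $f$, and the adjunction $-\circ f\dashv\mathrm{Ran}_f$ is a general fact. In the second, via \cref{theorem:functors_and_sheaves}, an order-preserving map is continuous for the Alexandrov topologies. Then $f_*$ and $f^{-1}$ agree with the sheaf-theoretic direct and inverse images (for instance $f^{-1}(U_q)=\{p\mid q\le f(p)\}$), and the classical sheaf adjunction applies.

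Your argument is precisely the unwinding of the Kan-extension proof. The counit is $\pi^{f(p)}_p:(f_*F)_{f(p)}\to F_p$, and the unit is the map $G_q\to\lim_{q\le f(p)}G_{f(p)}$ induced by the $G_{q\le f(p)}$.

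Your route is self-contained, using only completeness of $\cat{C}$ and the universal property of limits. It also dualizes verbatim, with colimits and cocones, to give \cref{theorem:direct_image_adjunction_2} for $f_\dagger$. The conceptual route is shorter and explains why the formula for $f_*$ is forced, since adjoints are unique. It also matches how the paper imports its other adjunctions from sheaf theory, such as \cref{theorem:hom_tensor_adjunction}.
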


We also have the following construction.

\begin{definition}
\label{def:direct_image_2}
    Given a poset morphism $f:P\to Q$ and a functor $F:\cat{P}\to \cat{C}$, the \emph{direct image with open support} of $F$ is the functor $f_{\dagger}F:\cat{Q}\to \cat{C}$ defined by 
    \[(f_{\dagger}F)_q\od \underset{f(p)\le q}{\colim}F_p,\]
    and $(f_{\dagger}F)_{q\le q'}$ is given by the induced canonical map. 
\end{definition}

This defines a functor $f_{\dagger}:\cat{C^P}\to \cat{C^Q}$, which is the left adjoint of $f^{-1}$.

\begin{theorem}{\cite[Theorem 5.3.1]{curry2014sheaves}}
\label{theorem:direct_image_adjunction_2}
    The functor $f^{-1}$ is the right adjoint of $f_{\dagger}$. That is, there are natural bijections
    \[\Hom_{\cat{C^Q}}(f_{\dagger}F,G)\approx\Hom_{\cat{C^P}}(F,f^{-1}G),\]
    for any functors $F:\cat{P}\to \cat{C}$ and $G:\cat{Q}\to \cat{C}$.
\end{theorem}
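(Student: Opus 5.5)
We prove the bijection by constructing explicit maps in both directions and checking they are mutually inverse and natural. The argument uses only the universal property of colimits and the fact that $f$ is order-preserving, so that $p\le p'$ implies $f(p)\le f(p')$.

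\textbf{From $f_\dagger F\to G$ to $F\to f^{-1}G$.} For each $p\in P$ we have $f(p)\le f(p)$, so $p$ lies in the indexing diagram $\{p'\,:\,f(p')\le f(p)\}$ of $(f_\dagger F)_{f(p)}$. This gives a canonical coprojection $\iota_p:F_p\to (f_\dagger F)_{f(p)}$. Given a natural transformation $\alpha:f_\dagger F\to G$, define
\[\beta_p\od \alpha_{f(p)}\circ \iota_p:F_p\to G_{f(p)}=(f^{-1}G)_p.\]
To see that $\beta$ is natural, take $p\le p'$. The transition map $(f_\dagger F)_{f(p)\le f(p')}$ is induced by the inclusion of indexing diagrams, so it carries $\iota_p$ to the coprojection of $F_p$ into $(f_\dagger F)_{f(p')}$. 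That coprojection equals $\iota_{p'}\circ F_{p\le p'}$ by compatibility of the colimit cocone. Combining this with naturality of $\alpha$ gives
\[G_{f(p)\le f(p')}\circ\beta_p=\beta_{p'}\circ F_{p\le p'}.\]
This is exactly naturality of $\beta$ as a map $F\to f^{-1}G$.

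\textbf{From $F\to f^{-1}G$ to $f_\dagger F\to G$.} Given $\beta:F\to f^{-1}G$ and $q\in Q$, define $\alpha_q$ on $\colim_{f(p)\le q}F_p$ via the cocone
\[G_{f(p)\le q}\circ\beta_p:F_p\to G_q,\qquad f(p)\le q.\]
This family is a cocone: for $p\le p'$ with $f(p')\le q$, naturality of $\beta$ and functoriality of $G$ give
\[G_{f(p')\le q}\circ\beta_{p'}\circ F_{p\le p'}=G_{f(p)\le q}\circ\beta_p.\]
The universal property of the colimit then produces $\alpha_q$. Naturality of $\alpha$ in $q$ is checked on each coprojection, where it reduces to $G_{q\le q'}\circ G_{f(p)\le q}=G_{f(p)\le q'}$.

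\textbf{Inverse bijections and naturality.} Starting from $\beta$, passing to $\alpha$ and back recovers $\beta_p$, because $G_{f(p)\le f(p)}=\id$. Starting from $\alpha$, passing to $\beta$ and back gives a map that agrees with $\alpha_q$ on each coprojection $F_p\to(f_\dagger F)_q$. Here one uses naturality of $\alpha$ along $f(p)\le q$, together with the fact that the coprojection into $(f_\dagger F)_q$ factors as $(f_\dagger F)_{f(p)\le q}\circ\iota_p$. By uniqueness in the colimit's universal property, the two maps agree. Naturality of the bijection in $F$ and $G$ is a direct diagram check from the formulas.

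The main obstacle is bookkeeping, namely the identity that the transition maps of $f_\dagger F$ act on coprojections as described. This should be stated once and then reused throughout. Alternatively, one can note that $f_\dagger$ is the pointwise left Kan extension along $f$, whose comma category $f\downarrow q$ is the subposet $\{p\,:\,f(p)\le q\}$, and quote the standard adjunction between left Kan extension and restriction.
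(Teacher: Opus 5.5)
Your proof is correct. The restriction $\beta_p=\alpha_{f(p)}\circ\iota_p$, the cocone $\{G_{f(p)\le q}\circ\beta_p\}_{f(p)\le q}$, and the two round-trip checks are exactly what is needed. One check uses $G_{f(p)\le f(p)}=\id$. The other uses the factorization of the coprojection $F_p\to(f_\dagger F)_q$ as $(f_\dagger F)_{f(p)\le q}\circ\iota_p$, together with naturality of $\alpha$. Naturality in $F$ and $G$ does reduce to the routine diagram check you indicate.

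The paper itself gives no argument. It states the result with a citation to Theorem 5.3.1 of Curry's thesis. In effect it treats the adjunction as part of the standard formalism: $(f_\dagger F)_q=\colim_{f(p)\le q}F_p$ is the pointwise left Kan extension along $f$, so $f_\dagger\dashv f^{-1}$ is an instance of $\mathrm{Lan}_f\dashv f^{*}$. Your closing remark explains why this identification holds: the comma category $f\downarrow q$ is just the subposet $\{p : f(p)\le q\}$, because all triangles in a poset commute.

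The trade-off is the usual one. Your direct verification is self-contained and uses nothing beyond the universal property of colimits and monotonicity of $f$. It also makes the unit and counit explicit: the unit is the coprojections $\iota_p:F_p\to(f_\dagger F)_{f(p)}$, and the counit is induced by the maps $G_{f(p)\le q}$. This is useful if one wants the composite bijection in \cref{theorem:convolution_adjunction} in concrete form. The citation/Kan-extension route is shorter and applies verbatim to any functor between small categories. It also yields the companion adjunction $f^{-1}\dashv f_*$ of \cref{theorem:direct_image_adjunction_1} by the dual argument, using limits over $\{p : q\le f(p)\}$.
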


\begin{lemma}
\label{lemma:colimits_and_limits_over_diagrams_with_0_and_1}
    Let $F:\cat{P}\to \cat{C}$ be a functor from $\cat{P}$ to a cocomplete category $\cat{C}$. If $P$ has a greatest element $p$, $q\le p$ for all $q\in P$, then there is a canonical isomorphism $\underset{q\in P}{\colim\,}F_q\approx F_p$. If $\cat{C}$ is complete and $P$ has a lowest element $p$, $p\le q$ for all $q\in P$, then there is a canonical isomorphism $
    \underset{q\in P}{\lim\,} F_q\approx F_p$.
\end{lemma}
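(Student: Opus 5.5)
The plan is to show directly that $F_p$, together with the canonical maps $F_{q\le p}:F_q\to F_p$, satisfies the universal property of the colimit. The limit statement will then follow by the dual argument, or equivalently by applying the colimit case to $F^{\op}:\cat{P}^{\op}\to\cat{C}^{\op}$; there the lowest element of $P$ becomes the greatest element of $P^{\op}$.

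\textbf{Cocone.} Since $p$ is the greatest element, $q\le p$ for every $q\in P$, so $\Hom_{\cat{P}}(q,p)=*$ and the morphism $\iota_q\od F_{q\le p}:F_q\to F_p$ is defined. For any $q\le q'$, functoriality of $F$ gives
\[F_{q'\le p}\circ F_{q\le q'}=F_{q\le p},\]
because $\cat{P}$ has at most one morphism between any two objects. Hence $(F_p,\{\iota_q\})$ is a cocone over $F$.

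\textbf{Universality.} Let $(C,\{\lambda_q:F_q\to C\})$ be any cocone. Define $u\od\lambda_p:F_p\to C$. Then $u\circ\iota_q=\lambda_p\circ F_{q\le p}=\lambda_q$ by the cocone condition applied to $q\le p$. So $u$ is a morphism of cocones.

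For uniqueness, suppose $u':F_p\to C$ satisfies $u'\circ\iota_q=\lambda_q$ for all $q$. Taking $q=p$ and using $\iota_p=F_{p\le p}=\id_{F_p}$ gives $u'=\lambda_p=u$. Therefore $F_p$ is a colimit, and the comparison map from $\colim_{q\in P}F_q$ (which exists since $\cat{C}$ is cocomplete) to $F_p$ is the canonical isomorphism.

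\textbf{Limit case.} Dually, let $p$ be the lowest element. The maps $\pi_q\od F_{p\le q}:F_p\to F_q$ form a cone. Given any cone $\{\mu_q:C\to F_q\}$, the unique factorization is $\mu_p$, and uniqueness again follows from $\pi_p=\id_{F_p}$.

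There is no real obstacle. The only point needing care is that the cocone and cone identities rely on $\cat{P}$ being thin, so that all composites of morphisms between the same pair of objects agree.
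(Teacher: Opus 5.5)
Your proof is correct and follows the same route as the paper, which simply observes that $p$ is a terminal (resp.\ initial) object of $\cat{P}$ and invokes the standard fact that a colimit (resp.\ limit) over such a category is the value at that object; you verify that fact directly through the universal property. As a small bonus, your argument shows that $F_p$ is a colimit (resp.\ limit) without needing $\cat{C}$ to be cocomplete (resp.\ complete) at all.
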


\begin{proof}
	Suppose that $\cat{C}$ is cocomplete. Then $\underset{q\in P}{\colim\,}F_q$ is well defined. Furthermore, since $p$ is the greatest element, $\cat{P}$ has a terminal element. Thus the result follows. The other case is analogous. 
\end{proof}

Every poset $(P,\le)$ can be considered as a topological space in the following way. The \emph{Alexandrov} topology on $P$ is the topology whose open sets are the up-sets in $P$. If we reverse the order on $P$ we get a poset $(P,\le^{\op})$ which we will also denote by $P^{\op}$. Note that the down-sets (resp. up-sets) in $P$ are the up-sets (resp. down-sets) in $P^{\op}$. 

It turns out every functor $F:\cat{P}\to \cat{C}$ is equivalent to a sheaf on $P$ with the Alexandrov topology, when $\cat{C}$ is complete. The correspondence is given by setting the sheaf value $F(U)\od \underset{p\in U}{\lim\,} F_p$, for every up-set $U$ in $P$. Similarly, every functor $\cat{P}\to \cat{C}$ is equivalent to a cosheaf on $P^{\text{op}}$, when $\cat{C}$ is cocomplete. The correspondence is given by setting the cosheaf value $F(D)\od \underset{p\in D}{\colim\,} F_p$, for every down-set $D$ in $P$. 

\begin{theorem}{\cite[Theorem 4.2.10]{curry2014sheaves}}
\label{theorem:functors_and_sheaves}
    Let $(P,\le)$ be a poset and let $\cat{C}$ be a complete category. Then, there is an isomorphism of categories between the functor category $\cat{C^{P}}$ and the category $\cat{Sh}(P,\cat{C})$ of sheaves on $P$ valued in $\cat{C}$ with the Alexandrov topology. Let $\cat{D}$ a cocomplete category. Then, there is an isomorphism of categories between the functor category $\cat{P^D}$ and the category $\cat{CoSh}(P^{\op},\cat{C})$ of cosheaves on $P^{\op}$ with the Alexandrov topology.
\end{theorem}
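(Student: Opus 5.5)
The plan is to construct explicit mutually inverse functors between $\cat{C^P}$ and $\cat{Sh}(P,\cat{C})$, using the Alexandrov topology. We exploit the fact that every point $p\in P$ has a smallest open neighbourhood, the principal up-set $U_p$. The cosheaf statement is the formal dual: replace $\cat{C}$ by $\cat{C}^{\op}$, so limits become colimits and sheaves become cosheaves. Note that the second sentence of the statement should read $\cat{C^P}\cong\cat{CoSh}(P^{\op},\cat{D})$ for cocomplete $\cat{D}$; this is what I would prove.

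\textbf{From functors to sheaves.} Given $F:\cat{P}\to\cat{C}$, set $\Phi(F)(U)\od\lim_{p\in U}F_p$ for each up-set $U$. For $V\subset U$, the restriction map is the canonical map of limits induced by restricting the diagram. Functoriality in $U$ and in natural transformations $F\to F'$ is immediate from the universal property.

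To check the sheaf axiom, take a cover $U=\bigcup_i U_i$ by up-sets. Each $U_i\cap U_j$ is again an up-set. We must show that $\lim_{p\in U}F_p$ is the equalizer of $\prod_i\lim_{U_i}F\rightrightarrows\prod_{i,j}\lim_{U_i\cap U_j}F$. A compatible family of cones $c_i$ over the $U_i$ glues to a cone over $U$: every $p\in U$ lies in some $U_i$, so we set $c(p)=c_i(p)$. This is independent of $i$ because $c_i$ and $c_j$ agree on $U_i\cap U_j$. For $p\le p'$ in $U$ with $p\in U_i$, we have $p'\in U_i$ since $U_i$ is an up-set, so the cone condition for $c$ follows from that for $c_i$. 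This is the key point where up-closedness is used, and it is the main thing to verify carefully. The argument works objectwise via representables, i.e.\ after applying $\Hom_{\cat{C}}(X,-)$, so it holds in any complete $\cat{C}$.

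\textbf{From sheaves to functors.} Given a sheaf $S$, set $\Psi(S)_p\od S(U_p)$. Since $p\le p'$ implies $U_{p'}\subset U_p$, the restriction map $S(U_p)\to S(U_{p'})$ is the required map $\Psi(S)_{p\le p'}$.

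\textbf{Checking the two composites.} First, $\Psi\Phi(F)_p=\lim_{q\in U_p}F_q\approx F_p$ by \cref{lemma:colimits_and_limits_over_diagrams_with_0_and_1}, since $p$ is the least element of $U_p$. These isomorphisms are natural in $p$. Second, for a sheaf $S$ and an up-set $U$, the family $\{U_p\}_{p\in U}$ covers $U$, and we have $U_p\cap U_q=\bigcup_{r\in U_p\cap U_q}U_r$. Applying the sheaf axiom twice then identifies $S(U)$ with $\lim_{p\in U}S(U_p)$, which is exactly $\Phi\Psi(S)(U)$.

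The main obstacle is this last identification. One must show that the equalizer over the cover $\{U_p\}$, with pairwise intersections refined again by principal up-sets, coincides with the limit over the poset $U$. I would handle it by showing that the equalizer data (sections $s_p\in S(U_p)$ agreeing on the $U_r\subset U_p\cap U_q$) are exactly cones over the diagram $p\mapsto S(U_p)$ indexed by $U$. This holds because $r\in U_p\cap U_q$ means precisely $p\le r$ and $q\le r$. Everything is checked on representables, and naturality is routine.

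Strictly, this argument gives an equivalence of categories. To upgrade it to an isomorphism as stated, one chooses limits so that $\lim_{U_p}F=F_p$ on the nose, which is possible by \cref{lemma:colimits_and_limits_over_diagrams_with_0_and_1}.
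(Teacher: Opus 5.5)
Your construction of $\Phi$ and $\Psi$ and your verification of both composites are correct. The paper gives no proof of its own beyond citing Curry. However, the correspondence it records just before the statement, $F(U)=\lim_{p\in U}F_p$, is exactly your $\Phi$, and reconstructing a sheaf from its values on the basis of principal up-sets $U_p$ is the standard argument for this result. Your gluing check for $\Phi(F)$, which uses that each $U_i$ is up-closed, is sound. So is your identification of the equalizer data for the cover $\{U_p\}_{p\in U}$ with cones over $p\mapsto S(U_p)$.

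The step that fails is your last paragraph. Choosing limits so that $\lim_{q\in U_p}F_q=F_p$ on the nose makes $\Psi\Phi=\mathrm{Id}$ strictly, but it does nothing for $\Phi\Psi$. For a non-principal up-set $U$, $\Phi\Psi(S)(U)$ is your chosen limit of $p\mapsto S(U_p)$, whereas $S(U)$ is only required to be \emph{some} limit. In fact $\Psi$ is not injective on objects. Take $P=\{a,b\}$ an antichain, so the Alexandrov topology is discrete, and $\cat{C}=\cat{Vect_k}$. Let $S$ have $S(\{a\})=S(\{b\})=\cat{k}$ and $S(P)=\cat{k}^2$, with restrictions the first and second coordinate projections. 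Let $S'$ be the same except that the two projections are swapped. Both are sheaves and $S\neq S'$, yet $\Psi(S)=\Psi(S')$. So no choice of limits makes $\Phi$ and $\Psi$ mutually inverse. What your argument proves, and what is actually true, is an equivalence $\cat{C^P}\simeq\cat{Sh}(P,\cat{C})$. With strict choices, $\Phi$ is an isomorphism only onto the full subcategory of sheaves whose values are the chosen limits. The ``isomorphism'' in the statement must be read in this sense, and you should drop the upgrade claim rather than assert it.

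A smaller slip concerns the dual statement. The corrected version is $\cat{D^P}\simeq\cat{CoSh}(P^{\op},\cat{D})$, not $\cat{C^P}$. To get it you must dualize both $\cat{D}$ and $P$: apply the sheaf case to the poset $P^{\op}$ and the complete category $\cat{D}^{\op}$. Then use $\cat{CoSh}(P^{\op},\cat{D})=\cat{Sh}(P^{\op},\cat{D}^{\op})^{\op}$ and $(\cat{D}^{\op})^{\cat{P}^{\op}}=(\cat{D^P})^{\op}$.
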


\section{Persistence modules}
Here, we recall the necessary background on persistence modules. Let $(P,\le)$ be a poset.

\subsection{Persistence modules}
\label{subsection:persistence_modules_category}
Persistence modules have many equivalent formulations; e.g. they can be regarded as graded modules, functors or sheaves \cite{Bubenik2021}. In this work we view them as functors $M:\cat{P}\to \cat{Vect_k}$. The main examples we will be working with are $P=\mathbb{R}^n_+$ (and $n=1$ when applying the K\"unneth short exact sequences). However the theory we develop is more general. 

    A \emph{persistence module} is a functor $M: \cat{P} \to \cat{Vect_k}$. A morphims $f:M\to N$ between persistence modules $M$ and $N$ is a natural transformation. We denote the category of persistence modules and their morphisms by $\cat{k^P}$ instead of $\cat{Vect_k^P}$ for the sake of simplicity. Thus, the sets of morphisms between persistence modules in $\cat{k^P}$ will be denoted by $\Hom_{\cat{k^P}}(M,N)$.
    
\begin{remark}
\label{remark:persistence_modules_as_sheaves}
    By \cref{theorem:functors_and_sheaves}, we can apply ideas from sheaf theory to persistence modules. We will not use the formulation of persistence modules as sheaves on $\mathbb{R}^n$ with the Alexandrov topology explicitly. However, this simple observation allows us to import many sheaf-theoretic results in the setting of persistence modules for free.
\end{remark}

\begin{definition}
    Let $(P,\le)$ be a poset. We call $U\subset P$ \emph{convex} if $a\le c\le b$, with $a,b\in U$ implies that $c\in U$. We call $U\subset P$ \emph{connected} if for any two $a,b\in U$ there exists a sequence $a=p_0\le q_1\ge p_1\le q_2\ge\cdots \ge p_n\le q_n=b$ for some $n\in \mathbb{N}$ such that all $p_i,q_i\in U$ for $1\le i\le n$. A connected convex subset of a poset is called an \emph{interval}.
\end{definition}

Let $A\subset P$ be a convex subset. The \emph{indicator persistence module} on $A$ is the persistence module $\cat{k}[A]:\cat{P}\to \cat{Vect_k}$ given by
\[\cat{k}[A]_a\od \begin{cases}
    \cat{k},& a\in A\\
    0,& a\not\in A
\end{cases},\]
and all the maps $\cat{k}[A]_{a\le b}$, where $a,b\in A$ are identity maps. If $A$ is an interval, then $\cat{k}[A]$ is called an \emph{interval persistence module}. If $A$ is an interval on the real line, say $A=[a,b)$, we will write $\cat{k}[a,b)$ instead of $\cat{k}[[a,b)]$ for brevity. Interval modules play a central role in persistence theory due to the following result.

\begin{theorem}{\cite{Crawley-Boevey2015}}[Structure theorem for one-parameter persistence modules]
\label{theorem:structure_theorem}
	Let $M:\cat{R}_+\to \cat{Vect_k}$ be a persistence module such that $M_x$ is finite-dimensional for each $x\in \mathbb{R}_+$. Then $M$ decomposes as a direct sum 
	\[M\approx \bigoplus_{i}\cat{k}[A_i]\]
	where each $A_i\subset \mathbb{R}_+$ is an interval. This decomposition is unique up to isomorphism.
\end{theorem}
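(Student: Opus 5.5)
The plan is to follow Crawley-Boevey's strategy. First reduce to finding enough interval summands via "sections", then use a cut-counting argument to show that these summands exhaust $M$. Uniqueness will follow from Krull--Remak--Schmidt--Azumaya, since each interval module $\cat{k}[A]$ has endomorphism ring $\cat{k}$, which is local.

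\textbf{Step 1: sections.} Call a pair $(C^-,C^+)$ with $C^-\subset C^+\subset\mathbb{R}_+$ down-sets a \emph{cut}. For each $t\in\mathbb{R}_+$ and each cut, set
\[\im^{\pm}_{C,t}=\sum_{s\in C^{\pm},\,s\le t}\im M_{s\le t}\]
and
\[\ker^{\pm}_{C,t}=\bigcap_{u\notin C^{\pm},\,u\ge t}\ker M_{t\le u},\]
with the conventions appropriate when the index sets are empty. From these, form the subspaces
\[F^{\pm}_{I,t}=\im^-_t+\ker^{\pm}_t\cap\im^+_t\]
attached to an interval $I$, whose endpoints are determined by a pair of cuts. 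The sections $(F^-_{I,t},F^+_{I,t})$ then give, for each $t$, a filtration of $M_t$ by disjoint sections that covers $M_t$. To prove this covering property I will use the finite dimensionality of $M_t$: descending and ascending chains of subspaces stabilize, so every infimum and supremum over $s\le t$ or $u\ge t$ is attained. This is the key place where the hypothesis on $M$ is needed.

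\textbf{Step 2: build summands.} For each interval $I$, choose complements $V_{I,t}$ with
\[F^+_{I,t}=V_{I,t}\oplus F^-_{I,t}.\]
The choice must be compatible along $I$, so that $M_{s\le t}$ maps $V_{I,s}$ isomorphically onto $V_{I,t}$ for $s\le t$ in $I$. The way to get compatibility is to fix a single point $t_0\in I$ and transport the complement chosen there. Over infinitely many points this requires a limit argument, and I expect it to be the main obstacle. Here I would use the fact that the relevant maps are isomorphisms on the quotients $F^+/F^-$, which I will verify by a direct check on images and kernels. Each compatible family then spans a submodule isomorphic to $\cat{k}[I]^{\dim V_I}$.

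\textbf{Step 3: sum and uniqueness.} I will show that the sum of these submodules over all $I$ is direct and equals $M$. Directness comes from the disjointness of the sections, and equality from their covering property, both established pointwise at each $t$ in Step 1. Uniqueness then follows from the Azumaya theorem, since each summand $\cat{k}[A_i]$ has local endomorphism ring $\cat{k}$.
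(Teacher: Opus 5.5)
The paper gives no proof of this statement; it only cites Crawley-Boevey. Your outline follows his strategy: image and kernel sections at cuts, a disjoint covering family at each $t$, compatible complements, and Azumaya for uniqueness. However, your Step 1 definitions differ from his, and with them Step 1 is false.

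Both of your $\im^{\pm}$ are sums of images over down-sets, and both of your $\ker^{\pm}$ are intersections of kernels. The construction needs one of each kind. Let $I$ be an interval with lower cut $c$ and upper cut $d$, where a cut is a splitting $\mathbb{R}_+=c^-\sqcup c^+$ into a down-set and the complementary up-set, and let $t\in I$. The correct ingredients are
\[\textnormal{Im}^-_t=\sum_{s\in c^-}\im M_{s\le t},\quad \textnormal{Im}^+_t=\bigcap_{s\in I,\, s\le t}\im M_{s\le t},\quad \textnormal{Ker}^-_t=\sum_{u\in I,\, u\ge t}\ker M_{t\le u},\quad \textnormal{Ker}^+_t=\bigcap_{u\in d^+}\ker M_{t\le u}.\]
Read literally, the index sets of your $\im^+_{C,t}$ and $\ker^-_{C,t}$ contain $s=t$ and $u=t$ respectively. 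So $\im^+_{C,t}=M_t$ and $\ker^-_{C,t}=0$, and your sections collapse to $(\im^-_t,\ \im^-_t+\ker^+_t)$. These are not disjoint. For $M=\cat{k}[1,2)$ and $t=3/2$, both $I=[0,2)$ and $I=[1,2)$ receive the section $(0,M_t)$. With the correct definition, $\textnormal{Im}^+_t=\im M_{0\le t}=0$ for $I=[0,2)$, so only $[1,2)$ survives. The intersection $\textnormal{Im}^+$ separates $(a,\cdot)$ from $[a,\cdot)$, and dually the sum $\textnormal{Ker}^-$ separates $(\cdot,b)$ from $(\cdot,b]$. This is where finite-dimensionality does real work: the descending chain $\im M_{s\le t}$ as $s\downarrow$ stabilizes, which gives $M_{s\le t}(\textnormal{Im}^+_s)=\textnormal{Im}^+_t$.

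Step 2 has two further gaps.

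First, the complements must be chosen inside $V^+_{I,t}=\textnormal{Im}^+_t\cap\textnormal{Ker}^+_t$, as complements of $V^-_{I,t}=\textnormal{Im}^+_t\cap\textnormal{Ker}^-_t+\textnormal{Im}^-_t\cap\textnormal{Ker}^+_t$. By the modular law, such complements are also complements of the corrected $F^-$ in $F^+$. An arbitrary complement of $F^-_{I,t}\supseteq\im^-_t$ need not be killed by $M_{t\le u}$ for $u$ above $I$. Take $\cat{k}[0,\infty)\oplus\cat{k}[1,2)$ with $I=[1,2)$, $t=3/2$, and $e_1,e_2$ the generators of the two summands at $t$. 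The line spanned by $e_1+e_2$ complements $F^-=\langle e_1\rangle$ in $F^+=M_t$, but it maps to $e_1\neq 0$ at $u=2$. So the family does not span a copy of $\cat{k}[I]$, and the sum in Step 3 is not direct.

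Second, isomorphism on the quotients $F^+/F^-$ does not suffice for transporting backwards. Lifting a chosen complement $W_t$ to some $s<t$ requires $W_t\subseteq M_{s\le t}(V^+_{I,s})$. The missing fact is $M_{s\le t}(V^+_{I,s})=V^+_{I,t}$, which follows from $M_{s\le t}(\textnormal{Im}^+_s)=\textnormal{Im}^+_t$ and $M_{s\le t}^{-1}(\textnormal{Ker}^+_t)=\textnormal{Ker}^+_s$. The quotient isomorphism then shows that a lift of a basis spans a complement. No limit is needed, only a recursion:
\begin{enumerate}[left=0pt]
\item If $I$ has no least element, pick a sequence $t_1>t_2>\cdots$ coinitial in $I$ and a complement at $t_1$.
\item Lift bases recursively from $t_n$ to $t_{n+1}$.
\item Push forward to define $W_t$ for every $t\in I$.
\end{enumerate}
If $I$ has a least element, just push forward from it. With these corrections, your Step 3 and the Azumaya argument go through as you describe.
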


As a category of functors valued in an abelian category, $\cat{k^P}$ is an abelian category \cite{Grothendieck1957}. Direct sums, kernels and cokernels are taken elementwise.  Exact sequences, left and right exact functors and their derived functors of persistence modules are also well defined for the category $\cat{k^P}$. 

For any persistence modules $M,N:\cat{P}\to \cat{Vect_k}$, the set of natural transformations $\Hom_{\cat{k^P}}(M,N)$ has the structure of a $\cat{k}$-vector space. Indeed, if $f,g:M\to N$ are natural transformations and $a\in \cat{k}$, we define $(a\cdot f+g):M\to N$ by
\[(a\cdot f+g)_p\od a\cdot f_p+g_p\in \Hom_{\cat{k}}(M_p,N_p), \forall p\in P,\]
where the operations on the right hand side are the addition and scalar multiplication of linear maps. 

\begin{lemma}
\label{lemma:nonzero_map_of_interval_modules}
    Suppose that $A$ and $B$ are intervals in $P$ such that $A\cap B$ is connected.
    Assume there is a nonzero map $f:\cat{k}[A]\to \cat{k}[B]$ of interval modules. Then (up to isomorphism) $f_a=1$ if $a\in A\cap B$ and $f_a=0$ otherwise.
\end{lemma}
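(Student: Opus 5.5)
A natural transformation $f:\cat{k}[A]\to\cat{k}[B]$ consists of scalars $f_a\in\Hom_{\cat{k}}(\cat{k}[A]_a,\cat{k}[B]_a)$. These are forced to be $0$ unless $a\in A\cap B$, and for $a\in A\cap B$ each $f_a$ is multiplication by some $\lambda_a\in\cat{k}$. The plan is to show that $a\mapsto\lambda_a$ is constant on $A\cap B$. Since $f\neq 0$, this constant $\lambda$ is nonzero, and composing $f$ with the automorphism $\lambda^{-1}\cdot\id$ of $\cat{k}[B]$ gives $f_a=1$ on $A\cap B$ and $f_a=0$ elsewhere. This is the meaning of ``up to isomorphism'' here.

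\textbf{Constancy along comparable pairs.} Take $a\le b$ with $a,b\in A\cap B$. Convexity of $A$ and of $B$ makes the structure maps $\cat{k}[A]_{a\le b}$ and $\cat{k}[B]_{a\le b}$ both equal to the identity of $\cat{k}$. The naturality square $f_b\circ\cat{k}[A]_{a\le b}=\cat{k}[B]_{a\le b}\circ f_a$ then reads $\lambda_b=\lambda_a$.

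\textbf{Global constancy.} Use the hypothesis that $A\cap B$ is connected. Any two points $a,b\in A\cap B$ are joined by a zigzag $a=p_0\le q_1\ge p_1\le\cdots\le q_n=b$ with every $p_i,q_i\in A\cap B$. Applying the previous step to each comparable pair along the zigzag gives $\lambda_a=\lambda_b$. Hence $\lambda$ is a single constant.

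The main point of care is the zigzag itself: the intermediate points must lie in $A\cap B$, not merely in $A$ or in $B$. This is exactly why the hypothesis requires $A\cap B$ to be connected, rather than relying on connectivity of $A$ and $B$ separately. One should also note that $A\cap B$ is nonempty because $f\neq0$, and that $A\cap B$ is automatically convex, so all maps of $\cat{k}[A]$ and $\cat{k}[B]$ between points of $A\cap B$ are identities.
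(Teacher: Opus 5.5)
Your proof is correct and follows the paper's argument. The paper also normalizes $f_b=1$ at one point of $A\cap B$, propagates this value along a zigzag in $A\cap B$ using naturality squares whose horizontal maps are identities, and notes that $f_a=0$ off $A\cap B$ because one of the two stalks vanishes; your scalars $\lambda_a$ with the explicit rescaling by $\lambda^{-1}$ are a slightly cleaner version of its ``without loss of generality $f_b=1$'' step.
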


\begin{proof}
This is an elementary consequence of the commutativity that diagrams of natural transformations have to satisfy. More details are given in \cref{section:appendix_A}.
\qedhere
\end{proof}

The connectedness hypothesis for $A\cap B$ is necessary, as the following example shows.

\begin{example}{\cite[Proposition 3.10]{Miller2025}}
    Let $D,U\subset P$ be a down-set and an up-set, respectively. Then
    $\Hom_{\cat{P}}(\cat{k}[U],\cat{k}[D])$ is a product of copies of $\cat{k}$, one for each connected component of $U\cap D$, 
    \[\Hom_{\cat{P}}(\cat{k}[U],\cat{k}[D])\approx \cat{k}^{\pi_0(U\cap D)}.\]
    
\end{example}

The following observation will be useful for computing internal hom functors of interval modules in \cref{section:4}.

\begin{definition}
\label{def:nicely_overlap}
    Suppose $A$ and $B$ are intervals in $P$ such that $A\cap B\neq \varnothing$ is connected. We say that $A$ and $B$ \emph{overlap nicely} if the following two conditions are both satisfied:
    \begin{enumerate}[left=0pt]
        \item There is no $b\in B\setminus A$ such that there is an $a\in A\cap B$ with $a\le b$.
        \item There is no $a\in A\setminus B$ such that there is a $b\in A\cap B$ with $a\le b$.
    \end{enumerate}
\end{definition}

\begin{lemma}
\label{lemma:morphisms_between_intervals}
    Let $A,B$ be intervals of $P$ such that $A\cap B$ is connected. Then 
    \[\Hom_{\cat{k^{P}}}(\cat{k}[A],\cat{k}[B]) \approx
  \begin{cases}
    \cat{k}, &\text{if } A \text{ and } B \text{ overlap nicely}\\
    0, &\textnormal{otherwise}
  \end{cases}.\]
\end{lemma}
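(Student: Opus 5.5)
We determine $\Hom_{\cat{k^P}}(\cat{k}[A],\cat{k}[B])$ directly from the naturality squares, using \cref{lemma:nonzero_map_of_interval_modules} to pin down the shape of any nonzero morphism. A natural transformation $f:\cat{k}[A]\to\cat{k}[B]$ is a family of scalars $f_a\in \Hom_{\cat{k}}(\cat{k},\cat{k})=\cat{k}$ for $a\in A\cap B$, with $f_a=0$ forced elsewhere. So $\Hom$ embeds in $\cat{k}^{A\cap B}$, and the task is to find exactly which families are natural.

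\emph{Step 1: naturality inside $A\cap B$.} For $a\le a'$ in $A\cap B$, both structure maps are identities, so naturality gives $f_a=f_{a'}$. Since $A\cap B$ is connected, any two points are joined by a zigzag inside $A\cap B$. Hence $f$ is constant, say $f_a=\lambda$ on $A\cap B$. This recovers the conclusion of \cref{lemma:nonzero_map_of_interval_modules} and shows $\Hom$ is either $\cat{k}$ or $0$. If $A\cap B=\varnothing$, then $\Hom=0$, consistent with the definition of nice overlap requiring nonempty intersection.

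\emph{Step 2: the remaining naturality squares.} Take $x\le y$ in $P$. The square reads $\cat{k}[B]_{x\le y}\circ f_x = f_y\circ \cat{k}[A]_{x\le y}$. Both sides vanish unless one of them involves a nonzero component, so I split into cases.
\begin{itemize}
\item If $x\in A\cap B$ and $y\in B\setminus A$, the left side is $\lambda$ (since $x,y\in B$ and $B$ has identity maps along its comparable pairs) and the right side is $0$. This forces $\lambda=0$, and it is exactly the situation excluded by condition (1).
\item If $x\in A\setminus B$ and $y\in A\cap B$, the right side is $\lambda$ and the left side is $0$, which is the situation excluded by condition (2).
\item If $x,y\in A\cap B$, this is Step 1.
\item In all other cases both sides are zero. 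Indeed, if $x\notin A\cap B$ then $f_x=0$. If in addition $y\notin A\cap B$ then $f_y=0$. If instead $y\in A\cap B$ while $x\notin A$, the map $\cat{k}[A]_{x\le y}$ has source $0$. If $y\in A\cap B$, $x\in A$ and $x\notin B$, we are in the second case above.
\end{itemize}
One point needs care: by convexity of $B$, a relation $x\le y$ with $x,y\in B$ really does have identity structure map.

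\emph{Conclusion.} If $A$ and $B$ overlap nicely, the constant family with $f=\lambda$ on $A\cap B$ and $0$ elsewhere satisfies every naturality square, so $\lambda\mapsto f$ gives $\Hom\approx\cat{k}$. Otherwise one of the two bad configurations occurs and forces $\lambda=0$, so $\Hom=0$.

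The main obstacle is only the bookkeeping in the case analysis of Step 2. The cases must be exhaustive, and in particular the mixed cases (one endpoint in $A\setminus B$ or $B\setminus A$, the other outside $A\cup B$) must contribute no constraint. That holds because in those cases a relevant structure map or component has zero source or target.
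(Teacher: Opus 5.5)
Your proof is correct and follows essentially the paper's route: constancy on $A\cap B$ via connectedness (the content of \cref{lemma:nonzero_map_of_interval_modules}), the two bad configurations forcing the scalar to vanish, and a case check of the naturality squares for the canonical map when $A$ and $B$ overlap nicely. One small bookkeeping slip is that your ``all other cases'' bullet only argues for $x\notin A\cap B$, so you should add the case $x\in A\cap B$, $y\notin B$, where both sides vanish because $\cat{k}[B]_y=0$ and $f_y=0$.
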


\begin{proof}
The commutativity of diagrams that define natural transformations force this result. A detailed proof is  given in \cref{section:appendix_A}.
\end{proof}

\begin{example}
\label{example:natural_transformations_form_a_vector_space}
For $P=\mathbb{R}_+$, suppose $\mathbf{k}[a,b)$ and $\mathbf{k}[c,d)$ are interval modules. Then, due to the constraints of commutative squares for natural transformations, we have:
\[\Hom_{\cat{k^{R_{+}}}}(\mathbf{k}[a,b),\mathbf{k}[c,d)) \approx
  \begin{cases}
    \mathbf{k}, &\text{if } c \le a < d \le b\\
    0, &\textnormal{otherwise}
  \end{cases}
.\]
This was observed in \cite[Appendix 9]{Bubenik2018}, but it also follows by observing that the intervals $[a,b)$ and $[c,d)$ overlap nicely precisely when $c \le a < d \le b$ and applying \cref{lemma:morphisms_between_intervals}.
\end{example}

\begin{proposition}[Limit characterization of $\Hom_{\cat{k^P}}(-,-)$]
\label{proposition:limit_characterization_of_hom}
Let $M,N:\cat{P}\to \cat{Vect_k}$ be persistence modules. Then
\[\Hom_{\cat{k^P}}(M,N)\approx \underset{a\le b}{\lim}\,\Hom_{\cat{k}}(M_a,N_b).\]
\end{proposition}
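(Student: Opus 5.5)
We interpret the limit as a limit over the category of pairs $(a,b)$ with $a\le b$. This is the twisted arrow (interval) category of $\cat{P}$: its objects are the relations $a\le b$, and there is a morphism $(a\le b)\to(a'\le b')$ whenever $a'\le a\le b\le b'$. The functor sends $(a\le b)$ to $\Hom_{\cat{k}}(M_a,N_b)$. A morphism $(a\le b)\to(a'\le b')$ acts by $g\mapsto N_{b\le b'}\circ g\circ M_{a'\le a}$, which is contravariant in $M$ and covariant in $N$. This is the standard end formula $\Hom(M,N)\approx\int_p\Hom(M_p,N_p)$, written as a limit over the twisted arrow category. We prove it directly, since in $\cat{Vect_k}$ the limit is concretely the subspace of the product $\prod_{a\le b}\Hom_{\cat{k}}(M_a,N_b)$ consisting of compatible families.

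We define mutually inverse $\cat{k}$-linear maps.

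Given a natural transformation $f:M\to N$, set $g_{a\le b}\od N_{a\le b}\circ f_a$. By naturality this also equals $f_b\circ M_{a\le b}$. Compatibility under $a'\le a\le b\le b'$ then follows from functoriality:
\[N_{b\le b'}\circ N_{a\le b}\circ f_a\circ M_{a'\le a}=N_{a'\le b'}\circ f_{a'},\]
where we used naturality to rewrite $f_a\circ M_{a'\le a}=N_{a'\le a}\circ f_{a'}$.

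Conversely, given a compatible family $(g_{a\le b})$, set $f_p\od g_{p\le p}$. To check naturality for $p\le q$, apply compatibility along the two morphisms $(p\le p)\to(p\le q)$ and $(q\le q)\to(p\le q)$. This gives
\[N_{p\le q}\circ g_{p\le p}=g_{p\le q}=g_{q\le q}\circ M_{p\le q},\]
which is exactly the naturality square.

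It remains to check that the two constructions are inverse. Starting from $f$, the diagonal entries $g_{p\le p}$ return $f_p$, since $N_{p\le p}=\id$. Starting from a family $g$, the first identity above shows that every $g_{a\le b}$ is determined by the diagonal entries via $g_{a\le b}=N_{a\le b}\circ g_{a\le a}$, so we recover $g$.

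Both maps are clearly linear, so we obtain an isomorphism of vector spaces. It is natural in $M$ and $N$, since everything is defined by composition.

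The only real obstacle is fixing the correct indexing category and the variance of the transition maps. Once that is set up, the verification is the routine diagram chase above. If the paper instead intends the limit over $\{(a,b):a\le b\}$ with the product order, the same argument applies after noting that the diagonal morphisms still exist, but we expect the twisted arrow convention to be the intended one.
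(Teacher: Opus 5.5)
Your proof is correct and follows the paper's route: the paper also describes the limit through the post- and pre-composition maps $\Hom_{\cat{k}}(M_a,N_a)\to\Hom_{\cat{k}}(M_a,N_b)\leftarrow\Hom_{\cat{k}}(M_b,N_b)$ for $a\le b$, notes that agreement of the two images is exactly the naturality square, and so uses the same twisted-arrow indexing you chose. You go further than the paper's sketch by checking that a compatible family is determined by its diagonal entries $g_{p\le p}$, which is what makes the correspondence a bijection; your closing product-order hedge can be dropped, since $(a,b)\mapsto\Hom_{\cat{k}}(M_a,N_b)$ is not functorial for that order.
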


\begin{proof}
    Let $f\in \Hom_{\cat{k}}(M_a,N_a)$ and $g\in \Hom_{\cat{k}}(M_b,N_b)$, where
    $a\le b$. The canonical maps $\Hom_{\cat{k}}(M_a,N_a)\to \Hom_{\cat{k}}(M_a,N_b)$ and $\Hom_{\cat{k}}(M_b,N_b)\to \Hom_{\cat{k}}(M_a,N_b)$ are the post-composition and pre-composition by $N_{a\le b}$ and $M_{a\le b}$, respectively. If $f$ and $g$ are components of a natural transformation in $\Hom_{\cat{k^p}}(M,N)$, then the square in \cref{fig:limit_char_of_natural_transformations} commutes. Equivalently, $f$ and $g$ are mapped to the same morphism under the above maps (see \cref{fig:limit_char_of_natural_transformations}).
\end{proof}

\begin{figure}[H]
\centering
\begin{tikzcd}
M_{a}\arrow[d,"f"]\arrow[rr,"M_{a\le b}"]&&M_{b}\arrow[d,"g"]\\
N_a\arrow[rr,"N_{a\le b}"]&&N_b
\end{tikzcd}
\caption{Commutativity of natural transformations is equivalent to a limit characterization of the appropriate hom sets. }
\label{fig:limit_char_of_natural_transformations}
\end{figure}

\subsection{Tensor product and internal hom of persistence modules}
Here we recall the tensor product and internal hom of persistence modules.

\begin{definition}
\label{def:sheaf_tensor_product}
Let $M,N:\cat{P}\to \cat{Vect_k}$ be persistence modules. The \emph{tensor product} of $M$ and $N$ is the persistence module $M\Ptensor N:\cat{P}\to \cat{Vect_k}$ defined by \[(M\Ptensor N)_p\od M_p\ktensor N_p,\] 
for $p\in P$ and $(M\Ptensor N)_{p\le q}\od M_{p\le q}\ktensor N_{p\le q}$ for $p\le q$ in $P$. Here $\ktensor$ is the tensor product of $\cat{k}$-vector spaces over $\cat{k}$.
\end{definition}

The bifunctor $-\Ptensor-$ and its derived functor were studied extensively in \cite{Bubenik2021} where it was denoted by $\shtensor$. This tensor product is the sheaf tensor product of $M$ and $N$ when $M$ and $N$ are considered as sheaves, see \cref{remark:persistence_modules_as_sheaves}.

\begin{example}
\label{example:sheaf_tensor}
    Assume that $(P,\le)=(\mathbb{R}^n_{+},\le)$ and let $U,V\subset \mathbb{R}^n_{+}$ be intervals. Let $\cat{k}[U]$ and $\cat{k}[V]$ be the corresponding interval modules. For $a\in \mathbb{R}^n_{+}$, 
   \[(\cat{k}[U]\otimes_{\cat{R}^n_+} \cat{k}[V])_a=\cat{k}[U]_a\ktensor \cat{k}[V]_a\approx\begin{cases}
   \cat{k}, & a\in U\cap V\\
   0, & \text{otherwise}
   \end{cases}.\] 
   If $U\cap V$ is connected it follows that $\cat{k}[U]\otimes_{\cat{R}^n_+} \cat{k}[V]\approx \cat{k}[U\cap V]$. If $n=1$, $U=[a,b)$ and $V=[c,d)$ we have that: 
    \[\cat{k}[U]\otimes_{\cat{R}_+} \cat{k}[V]\approx\cat{k}[[a,b)\cap[b,c)]\approx\cat{k}[\max(a,c),\min(\max(b,c),\max(a,d))).\]
\end{example}

For $p\in P$, let $U_p=\{p'\in P\,|\, p\le p' \}$. The partial order on $P$ restricts to a partial order on $U_p$ and $\cat{U}_p$ is a full subcategory of $\cat{P}$. Furthermore, any persistence module $M$ on $\cat{P}$ restricts to a persistence module $M|_{\cat{U}_p}$ on $\cat{U}_p$ in the obvious way.

\begin{definition}
\label{def:sheaf_hom}
    Given persistence modules $M,N:\cat{P}\to \cat{Vect_k}$ there is a persistence module $\scHom_{\cat{P}}(M,N):\cat{P}\to \cat{Vect_k}$ defined by 
\[\scHom_{\cat{P}}(M,N)_p\od \Hom_{\cat{k}^{\cat{U}_p}}(M|_{\cat{U}_p},N|_{\cat{U}_p}),\]
for any $p\in P$. We call $\scHom_{\cat{P}}(M,N)$ the \emph{internal hom} of $M$ and $N$.
\end{definition}
In other words, the $\cat{k}$-vector space $\scHom_{\cat{P}}(M,N)_p$ is the vector space of natural transformation between the restrictions of functors $M|_{\cat{U_p}}$ and $N|_{\cat{U_p}}$. The linear maps 
\[\scHom_{\cat{P}}(M,N)_{p\le q}:\scHom_{\cat{P}}(M,N)_p\to \scHom_{\cat{P}}(M,N)_q,\] are then given by the obvious restrictions of natural transformations (deleting the components of the natural transformations that are not indexed by elements in $U_q$). This internal hom functor is the sheaf internal hom of $M$ and $N$ when $M$ and $N$ are considered as sheaves, see \cref{remark:persistence_modules_as_sheaves}.

\begin{example}
\label{example:sheaf_hom_of_interval_modules}
The following computation is similar to \cite[Example 4.3]{Bubenik2021}, except that we assume $P=\mathbb{R}_+$ instead of $P=\mathbb{R}$. Consider interval modules $\mathbf{k}[a,b)$ and $\mathbf{k}[c,d)$. We have the following. 
\[\scHom_{\cat{R}_+}(\cat{k}[a,b),\cat{k}[c,d))\approx \begin{cases}
0 & \text{if } a < b\le c < d\\
0 & \text{if } a < c\le b < d\\
\cat{k}[c,d) & \text{if } a < c < d\le b\\
0 & \text{if } c\le a < b < d\\
\cat{k}[0,d) & \text{if } c\le a < d\le b\\
0 & \text{if } c < d\le a < b
\end{cases}\]
To see this, note that by definition we have 
\begin{gather*}
\scHom_{\cat{R}_+}(\mathbf{k}[a,b),\mathbf{k}[c,d))_x
\od\Hom_{\cat{k}^{\cat{[x,\infty)}}}(\cat{k}[a,b)|_{\cat{[x,\infty)}},\cat{k}[c,d)|_{\cat{[x,\infty)}})
\end{gather*}
Thus, we need to compute the set of natural transformations between the functors \[\mathbf{k}[a,b)|_{\cat{[x,\infty)}},\mathbf{k}[c,d)|_{\cat{[x,\infty)}}:\cat{[x,\infty)}\to \mathbf{Vect}_{\mathbf{k}},
\]
where $[x,\infty)$ is given the total linear order induced from $\mathbb{R}_{\ge 0}$. 
Note that $\mathbf{k}[a,b)|_{\cat{[x,\infty)}}$ is nonzero if and only if $x \in [0,b)$.
Consider the case $c\le a < d\le b$. 
As in \cref{example:natural_transformations_form_a_vector_space}, we see that 
\[\text{Hom}_{\cat{k^{[x,\infty)}}} (\mathbf{k}[a,b)|_{\cat{[x,\infty)}},\mathbf{k}[c,d)|_{\cat{[x,\infty)}})\approx \begin{cases}\mathbf{k}, & x\in [0,d)\\
0, & \text{otherwise}
\end{cases}.\] 

The other cases may be computed similarly.
Similar arguments also shows that
\begin{equation*}
  \scHom_{\cat{R}_+}(\mathbf{k}[a,\infty),\mathbf{k}[c,d)) \approx
  \begin{cases}
    \mathbf{k}[c,d) &\text{if } a < c\\
    \mathbf{k}[0,d) &\text{if } c \leq a < d \\
    0 &\text{if } d \leq a
  \end{cases}
\end{equation*}
and that 
\[\scHom_{\cat{R}_+}(\cat{k}[a,b),\cat{k}[\mathbb{R}_+])=0, \quad \text{ and }\quad \scHom_{\cat{R}_+}(\cat{k}[a,\infty),\cat{k}[\mathbb{R}_+])\approx\cat{k}[\mathbb{R}_+].\]
\end{example}

From classical sheaf theory and by \cref{remark:persistence_modules_as_sheaves} we immediately have the following (For a proof, see for example \cite[Chapter 2]{Kashiwara1990}).

\begin{theorem}
\label{theorem:hom_tensor_adjunction}
    For any persistence module $M$, the functor $M\Ptensor -$
is left adjoint to the functor $\scHom_{\cat{P}}(M,-)$. That is, there are natural bijections
\[\Hom_{\cat{k^P}}(M\Ptensor N,L)\approx \Hom_{\cat{k^P}}(N,\scHom_{\cat{P}}(M,L)),\]
for any persistence modules $M, N$ and $L$. Similarly the functor $-\Ptensor N$ is left adjoint to the functor $\scHom_{\cat{P}}(N,-)$. 
That is, there are natural bijections
\[\Hom_{\cat{k^P}}(M\Ptensor N,L)\approx \Hom_{\cat{k^P}}(M,\scHom_{\cat{P}}(N,L)),\]
for any persistence modules $M, N$ and $L$. 
\end{theorem}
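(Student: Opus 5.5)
To prove the adjunction $\Hom_{\cat{k^P}}(M\Ptensor N,L)\approx \Hom_{\cat{k^P}}(N,\scHom_{\cat{P}}(M,L))$, I will build explicit maps in both directions and check they are mutually inverse and natural. The second statement then follows by symmetry, using the natural isomorphism $M\Ptensor N\approx N\Ptensor M$ induced pointwise by the swap $M_p\ktensor N_p\approx N_p\ktensor M_p$.

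\textbf{Forward map.} Given a natural transformation $\eta:M\Ptensor N\to L$, I define $\Phi(\eta):N\to \scHom_{\cat{P}}(M,L)$ as follows. For $p\in P$ and $n\in N_p$, let $\Phi(\eta)_p(n)$ be the family indexed by $q\in U_p$ whose component at $q$ is
\[M_q\to L_q,\qquad m\mapsto \eta_q\bigl(m\ktensor N_{p\le q}(n)\bigr).\]
Two checks are needed:
\begin{enumerate}[left=0pt]
\item \emph{This family is a natural transformation $M|_{\cat{U}_p}\to L|_{\cat{U}_p}$.} For $q\le q'$ in $U_p$, naturality follows from naturality of $\eta$ together with functoriality $N_{q\le q'}\circ N_{p\le q}=N_{p\le q'}$.
\item \emph{$\Phi(\eta)$ is natural in $p$.} The structure map of $\scHom_{\cat{P}}(M,L)$ is restriction to $U_{p'}$, and restricting the family for $n$ agrees with the family for $N_{p\le p'}(n)$, again by functoriality of $N$.
\end{enumerate}

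\textbf{Inverse map.} Given $\theta:N\to \scHom_{\cat{P}}(M,L)$, I define $\Psi(\theta)_p(m\ktensor n)\od \theta_p(n)_p(m)$, i.e.\ I evaluate the natural transformation $\theta_p(n)$ at its initial component $p\in U_p$. This is bilinear, so it extends to $M_p\ktensor N_p$. Naturality of $\Psi(\theta)$ in $p\le p'$ reduces to two facts:
\begin{itemize}[left=0pt]
\item the naturality square of $\theta_p(n)$ itself, from $p$ to $p'$;
\item naturality of $\theta$, which says that $\theta_{p'}(N_{p\le p'}n)$ is the restriction of $\theta_p(n)$ to $U_{p'}$.
\end{itemize}

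\textbf{Checking the bijection.} $\Psi\Phi(\eta)=\eta$ holds immediately, since $N_{p\le p}=\id$. The main step is $\Phi\Psi(\theta)=\theta$. The $q$-component of $\Phi\Psi(\theta)_p(n)$ is $m\mapsto \theta_q(N_{p\le q}n)_q(m)$, and by naturality of $\theta$ this equals $\theta_p(n)_q(m)$. The key observation is that an element of $\scHom_{\cat{P}}(M,L)_p$ is determined by its components, and each component at $q\in U_p$ can be recovered from $\theta_q$ via the restriction maps.

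\textbf{Naturality in $N$ and $L$.} This is a routine diagram check: both $\Phi$ and $\Psi$ are given by composing with the relevant maps.

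\textbf{Alternative route.} One could instead cite the sheaf-theoretic adjunction via \cref{theorem:functors_and_sheaves}. However, the direct componentwise argument is self-contained, and I expect no real obstacle beyond bookkeeping of the restriction maps.
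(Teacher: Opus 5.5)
Your proof is correct, and it takes a genuinely different route from the paper. The paper gives no componentwise argument. It takes exactly the ``alternative route'' you set aside: it invokes the classical tensor--hom adjunction for sheaves of $\cat{k}$-modules from Kashiwara--Schapira, via the isomorphism between $\cat{k^P}$ and sheaves on $P$ with the Alexandrov topology.

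That citation is shorter, but it silently relies on a fact the paper does not check. Under the isomorphism, $\Ptensor$ and $\scHom_{\cat{P}}$ must match the sheaf tensor product (which involves a sheafification) and the sheaf internal hom. This does hold, because every $p$ has a smallest open neighbourhood $U_p$. As a result, stalks are just the values $M_p$, and sections of the sheaf hom over $U_p$ are homs of the restrictions to $U_p$.

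Your direct proof bypasses that identification entirely and works verbatim for an arbitrary poset. It also makes the unit and counit explicit. The counit is evaluation of $\theta_p(n)$ at the minimal element $p$ of $U_p$. The fact that $p=\min U_p$, together with naturality of $\theta$ with respect to restriction, is precisely what makes $\Phi\Psi=\id$ work. This is the same input as the ``smallest open neighbourhood'' fact on the sheaf side.

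Deducing the second adjunction from the natural swap isomorphism $M\Ptensor N\approx N\Ptensor M$ is also fine; the paper simply says ``similarly''.
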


\subsection{Chain complexes of persistence modules from CW complex filtrations}
\label{subsection:chain_complexes_of_pers_mod}
Here we recall how to construct chain complexes of persistence modules from a filtered CW complex, mostly following the ideas presented in \cite[Section 2.5]{Bubenik2021}. A \emph{filtration} on a CW complex $X$ is a function $f:X\to \mathbb{R}_+$ that is constant on the cells of $X$ and such that $f(\partial\sigma)\le f(\sigma)$ for all cells $\sigma$. 

For $a\in \mathbb{R}_+$, let $X^f_a$ be the subcomplex of $X$ defined by 
\[X^f_a\od f^{-1}[0,a].\] The collection of CW complexes $\{X^f_a\}_{a\in \mathbb{R}_{+}}$ with the inclusion maps $X^f_a\hookrightarrow X^f_b$ whenever $a\le b$ is a filtered CW complex. The inclusion maps induce $\cat{k}$-linear maps on cellular homology with coefficients in a field $\cat{k}$, $H_n(X^f_a;\cat{k})\to H_n(X^f_b;\cat{k})$. Let $\mathcal{H}_n(X^f)$ denote the resulting persistence module.

Let $X$ be a CW complex with filtration $f$. Let $X^{(m)}$ denote the set of $m$-cells of X. For $m\ge 0$, define 
\[C^f_m(X)\od\bigoplus_{\sigma\in X^{(m)}}\cat{k}[f(\sigma),\infty).\]
 For $\sigma\in X^{(m)}$ let $\sigma$ also denote the generator of $\cat{k}[f(\sigma),\infty)_{f(\sigma)}$. For $a\ge f(\sigma)$, let $\sigma_a$ denote $\cat{k}[f(\sigma),\infty)_{f(\sigma)\le a}(\sigma)$ i.e., $\sigma_a$ is the image of the generator $\sigma$ under the linear map $\cat{k}[f(\sigma),\infty)_{f(\sigma)\le a}$. Similarly, define $\alpha_a\in C^f_m(X)$ for an $m$-chain in the cellular chain complex on $X$. Define $d^f_m:C^f_m(X)\to C^f_{m-1}(X)$ to be the natural transformation obtained by extending the definition $(d^f_m)_a(\sigma_a)\od (\partial\sigma)_a$ linearly. Then by construction $(C^f_{\bullet}(X),d_{\bullet})$ is a chain complex of persistence modules. Let $H_n(C^f(X))$ be the homology of the chain complex $(C^f_{\bullet}(X),d_{\bullet})$. The following is an observation made in \cite[Lemma 2.26]{Bubenik2021} for filtrations on a CW complex $f:X\to \mathbb{R}$. However the same proof applies here when we restrict the codomain to $\mathbb{R}_{+}$. 

\begin{lemma}
    Let $X$ be a CW complex with a filtration as above. For all $n\in \mathbb{N}$, $H_n(C^f(X))\approx \mathcal{H}_n(X^f)$ as persistence modules.
\end{lemma}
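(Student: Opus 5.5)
The plan is to prove the isomorphism pointwise and then check naturality in the parameter. Since kernels, images and cokernels in $\cat{k^{R_+}}$ are computed pointwise, we have
\[H_n(C^f(X))_a\approx H_n\bigl((C^f_\bullet(X))_a,(d^f_\bullet)_a\bigr)\]
for each $a\in\mathbb{R}_+$. So it suffices to identify the chain complex of vector spaces $(C^f_\bullet(X))_a$ with the cellular chain complex $C^{\mathrm{cell}}_\bullet(X^f_a;\cat{k})$. We then check that these identifications are compatible with the structure maps for $a\le b$.

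For the pointwise identification, recall that $\cat{k}[f(\sigma),\infty)_a$ is $\cat{k}$ if $f(\sigma)\le a$ and $0$ otherwise. Hence
\[(C^f_m(X))_a\approx\bigoplus_{\sigma\in X^{(m)},\,f(\sigma)\le a}\cat{k},\]
with basis $\{\sigma_a\}$ indexed by exactly the $m$-cells of the subcomplex $X^f_a=f^{-1}[0,a]$. This is the cellular chain group $C^{\mathrm{cell}}_m(X^f_a;\cat{k})$ via $\sigma_a\mapsto\sigma$. Under this identification the differential $(d^f_m)_a(\sigma_a)=(\partial\sigma)_a$ is the cellular boundary of $\sigma$. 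Here one uses the filtration condition $f(\partial\sigma)\le f(\sigma)\le a$: every cell appearing in $\partial\sigma$ with nonzero coefficient also lies in $X^f_a$, so $(\partial\sigma)_a$ is a well-defined element of $(C^f_{m-1}(X))_a$. The cellular boundary of $X^f_a$ is the restriction of that of $X$, since the attaching maps and incidence degrees of a cell do not depend on the ambient subcomplex containing it. This gives an isomorphism of chain complexes $(C^f_\bullet(X))_a\approx C^{\mathrm{cell}}_\bullet(X^f_a;\cat{k})$, and hence $H_n(C^f(X))_a\approx H_n(X^f_a;\cat{k})$ by the cellular homology theorem.

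For naturality, take $a\le b$. The structure map $(C^f_m(X))_{a\le b}$ sends $\sigma_a\mapsto\sigma_b$, because the structure maps of $\cat{k}[f(\sigma),\infty)$ are identities on $[f(\sigma),\infty)$. This is exactly the chain map $C^{\mathrm{cell}}_\bullet(X^f_a)\to C^{\mathrm{cell}}_\bullet(X^f_b)$ induced by the cellular inclusion $X^f_a\hookrightarrow X^f_b$. The isomorphisms of chain complexes therefore assemble into an isomorphism of functors $\mathbb{R}_+\to\mathbf{Ch}(\cat{Vect_k})$. Applying $H_n$, which commutes with pointwise evaluation, yields a natural isomorphism $H_n(C^f(X))\approx\mathcal{H}_n(X^f)$.

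The only step requiring care is that the identification of cellular homology with the homology defining $\mathcal{H}_n(X^f)$ is natural with respect to the inclusions. This holds because the isomorphism between cellular and singular homology is natural for cellular maps, and inclusions of subcomplexes are cellular. Restricting the codomain of $f$ from $\mathbb{R}$ to $\mathbb{R}_+$ plays no role anywhere, which is why the argument of \cite[Lemma 2.26]{Bubenik2021} carries over verbatim.
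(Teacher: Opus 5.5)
Your proposal is correct and follows the argument the paper relies on. The paper simply defers to \cite[Lemma 2.26]{Bubenik2021}, which identifies $(C^f_\bullet(X))_a$ with the cellular chain complex of the subcomplex $X^f_a$, matches the structure maps for $a\le b$ with the inclusion-induced chain maps, and uses that homology in $\cat{k^{R_+}}$ is computed pointwise. Since the paper defines $\mathcal{H}_n(X^f)$ directly via cellular homology, your final cellular-to-singular step is harmless but not needed.
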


\begin{proof}
Same arguments as in \cite[Lemma 2.26]{Bubenik2021} apply.
\end{proof}

The following example illustrates this result.

\begin{example}
\label{example:triangle_filtration}
Let $0\le a_0\le a_1\le a_2\le b_0\le b_1\le b_2\le c$ be real numbers and consider the filtration of the $2$-simplex $X$ in \cref{fig:21} given by $f(x)=a_0$, $f(y)=a_1$, $f(z)=a_2$, $f([x,y])=b_0$, $f([y,z])=b_1$, $f([x,z])=b_2$ and $f([x,y,z])=c$.
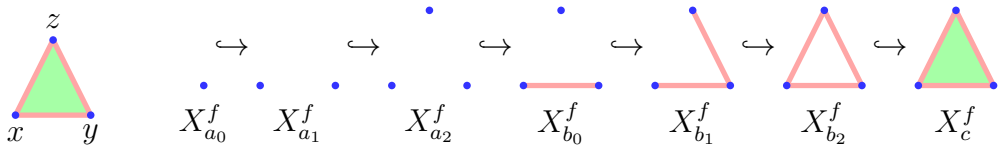
\begin{figure}[H]
\centering
\begin{tikzpicture}[line cap=round,line join=round,x=1.0cm,y=1.0cm,scale=0.5]
\draw[fill=green!35, line width=0.2mm] (18,2)--(20,2)--(19,4)--cycle;
\draw[color=red!35, line width=0.7mm] (18,2)--(20,2)--(19,4)--cycle;
\fill [color=blue!80] (18,2) circle (1mm);
\fill [color=blue!80] (20,2) circle (1mm);
\fill [color=blue!80] (19,4) circle (1mm);
\draw[color=black] (18,1.5) node {$x$};
\draw[color=black] (20,1.5) node {$y$};
\draw[color=black] (19,4.5) node {$z$};
\end{tikzpicture}
\hspace{0.5cm}
\begin{tikzpicture}[line cap=round,line join=round,x=1.0cm,y=1.0cm,scale=0.5]
\fill [color=blue!80] (-1,2) circle (1mm);
\draw[color=black] (-1,1) node {$X^f_{a_0}$};
\draw[color=black] (-0.25,3) node[scale=1] {$\hookrightarrow$};

\fill [color=blue!80] (0.5,2) circle (1mm);
\fill [color=blue!80] (2.5,2) circle (1mm);

\draw[color=black] (1.5,1) node {$X^f_{a_1}$};
\draw[color=black] (3.25,3) node[scale=1] {$\hookrightarrow$};
\fill[color=blue!80] (4,2) circle(1mm);
\fill[color=blue!80] (6,2) circle(1mm);
\fill[color=blue!80] (5,4) circle (1mm);
\draw[color=black] (6.75,3) node[scale=1] {$\hookrightarrow$};
\draw[color=black] (5,1) node {$X^f_{a_2}$};
\draw[color=red!35, line width=0.7mm] (7.5,2)--(9.5,2);
\fill[color=blue!80](7.5,2) circle(1mm);
\fill[color=blue!80] (9.5,2) circle (1mm);
\fill[color=blue!80] (8.5,4) circle (1mm);
\draw[color=black] (8.5,1) node {$X^f_{b_0}$};

\draw[color=black] (10.25,3) node[scale=1] {$\hookrightarrow$};

\draw[color=red!35, line width=0.7mm] (11,2)--(13,2);
\draw[color=red!35, line width=0.7mm] (13,2)--(12,4);
\fill[color=blue!80](11,2) circle(1mm);
\fill[color=blue!80] (13,2) circle (1mm);
\fill[color=blue!80] (12,4) circle (1mm);
\draw[color=black] (12,1) node {$X^f_{b_1}$};

\draw[color=black] (13.75,3) node[scale=1] {$\hookrightarrow$};

\draw[color=red!35, line width=0.7mm] (14.5,2)--(16.5,2);
\draw[color=red!35, line width=0.7mm] (16.5,2)--(15.5,4);
\draw[color=red!35, line width=0.7mm] (15.5,4)--(14.5,2);
\fill[color=blue!80](14.5,2) circle(1mm);
\fill[color=blue!80] (16.5,2) circle (1mm);
\fill[color=blue!80] (15.5,4) circle (1mm);

\draw[color=black] (15.5,1) node {$X^f_{b_2}$};

\draw[color=black] (17.25,3) node[scale=1] {$\hookrightarrow$};
\draw[fill=green!35, line width=0.2mm] (18,2)--(20,2)--(19,4)--cycle;
\draw[color=red!35, line width=0.7mm] (18,2)--(20,2)--(19,4)--cycle;
\fill[color=blue!80](18,2) circle(1mm);
\fill[color=blue!80] (20,2) circle (1mm);
\fill[color=blue!80] (19,4) circle (1mm);
\draw[color=black] (19,1) node {$X^f_c$};
\end{tikzpicture}
\caption{A $2$-simplex $X$ with vertices $\{x,y,z\}$ (left) and sublevel sets of the filtration of $X$ by $f$ (right). Here $X^f_t\od f^{-1}([0,t])$.}
\label{fig:21}
\end{figure}

The corresponding chain complex of persistence modules associated with this filtration is:
\begin{align*}
    C^f_0(X)&=\cat{k}[a_0,\infty)\oplus\cat{k}[a_1,\infty)\oplus\cat{k}[a_2,\infty),\\
    C^f_1(X)&=\cat{k}[b_0,\infty)\oplus\cat{k}[b_1,\infty)\oplus \cat{k}[b_2,\infty),\\
    C^f_2(X)&=\cat{k}[c,\infty), C^f_n(X)=0,n\ge 3,
\end{align*}
with boundary maps induced by the boundary maps of the $2$-simplex $X$.
The homology of the chain complex $C^f(X)$ is:
\[H_0(C^f(X))=\mathcal{H}_0(X^f)=\cat{k}[a_0,\infty)\oplus\cat{k}[a_1,b_0)\oplus \cat{k}[a_2,b_1),
H_1(C^f(X))=\mathcal{H}_1(X^f)=\cat{k}[b_2,c),\]
and $H_n(C^f(X))=\mathcal{H}_n(X^f)=0$ for $n\ge 2$.
\end{example}

\section{$\varphi$-tensor products and internal homs of persistence modules}
\label{section:4}
Here we introduce a tensor product for persistence modules, $\otimes_{\varphi}$ that depends on an order-preserving map $\varphi:P\times P \to P$. We show this ``$\varphi$-dependent" tensor product has an adjoint, a ``$\varphi$-dependent" internal hom functor, $\scHom^{\varphi}$. We determine the acyclic objects with respect to $\otimes_{\varphi}$, classify injective and projective persistence modules for the case $P=\mathbb{R}_+$. We then use projective and injective resolutions of persistence modules to compute the derived functors $\cat{Tor}^{\varphi}$ and $\cat{Ext}_{\varphi}$ of $\otimes_{\varphi}$ and $\scHom^{\varphi}$, respectively, for interval modules.

\subsection{$\varphi$-tensor products}
 Given a poset $P=(P,\le)$ consider the product poset $P\times P$ and the canonical projection maps $\pi_i:P\times P\to P$, $i=1,2$. Recall the direct image functor with open support (\cref{def:direct_image_2}).

\begin{definition}
\label{def:external_tensor_product}
    Given persistence modules $M,N:\cat{P}\to \cat{Vect_k}$ the \emph{external tensor product} of $M$ and $N$ is the persistence module $M\boxtimes N:\cat{P\times P}\to \cat{Vect_k}$ defined by \[M\boxtimes N\od\pi_{1}^{-1}M\otimes_{\cat{P\times P}}\pi_{2}^{-1}N.\]
\end{definition}

The external tensor product allows us to tensor persistence modules over $P\times P$ by first taking their inverse images under projection maps.. The map $\varphi:P\times P\to P$ allows us to take a direct image of this construction back onto $P$. 

\begin{definition}
\label{def:generalized_convolution}
    Given persistence modules $M,N:\cat{P}\to \cat{Vect_k}$, define \[M \otimes_{\varphi} N \od \varphi_{\dagger}(M \boxtimes N).\] 
    We call $M\otimes_{\varphi} N$ the $\varphi$\emph{-tensor product} of $M$ and $N$.
\end{definition}

The following result allows us to compute $M\otimes_{\varphi}N$ in practice.

\begin{lemma}
\label{lemma:generalized_convolution}
    For every $p\in P$,  $(M\otimes_{\varphi}N)_p= \underset{\varphi(a,b)\le p}{\colim} (M_a\ktensor N_b)$.
\end{lemma}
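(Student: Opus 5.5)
This is a direct unwinding of the definitions, in three steps.

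First, by \cref{def:generalized_convolution}, $(M\otimes_{\varphi}N)_p=(\varphi_{\dagger}(M\boxtimes N))_p$. By \cref{def:direct_image_2} applied to the poset morphism $\varphi:P\times P\to P$ and the functor $M\boxtimes N:\cat{P\times P}\to\cat{Vect_k}$, this is
\[
\underset{\varphi(a,b)\le p}{\colim}\,(M\boxtimes N)_{(a,b)}.
\]
The colimit is indexed by the full subposet $\{(a,b)\in P\times P\mid \varphi(a,b)\le p\}$ with the product order. Since $\varphi$ is order preserving, this subposet is a down-set. The colimit exists because $\cat{Vect_k}$ is cocomplete.

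Second, I compute the objects of the diagram. By \cref{def:external_tensor_product} and \cref{def:sheaf_tensor_product},
\[
(M\boxtimes N)_{(a,b)}=(\pi_1^{-1}M)_{(a,b)}\ktensor(\pi_2^{-1}N)_{(a,b)}.
\]
By \cref{def:inverse_image}, $(\pi_1^{-1}M)_{(a,b)}=M_{\pi_1(a,b)}=M_a$ and $(\pi_2^{-1}N)_{(a,b)}=N_b$, so $(M\boxtimes N)_{(a,b)}=M_a\ktensor N_b$.

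Third, I check that the structure maps of the diagram are the expected ones. For $(a,b)\le(a',b')$ in $P\times P$ we have $a\le a'$ and $b\le b'$. The map $(M\boxtimes N)_{(a,b)\le(a',b')}$ is then $M_{a\le a'}\ktensor N_{b\le b'}$, again directly from the two definitions. So the diagram whose colimit is taken is exactly $(a,b)\mapsto M_a\ktensor N_b$ with these transition maps, and the stated formula follows.

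The only subtle point is notational. The right-hand side of the statement should be read as the colimit over the subposet $\{\varphi(a,b)\le p\}$ of $P\times P$ with these canonical maps, which is precisely the diagram of \cref{def:direct_image_2}. Read that way, the "$=$" is a definitional equality, or at worst a canonical isomorphism if one regards colimits as defined only up to isomorphism. I would also remark, for later use, that the structure maps $(M\otimes_{\varphi}N)_{p\le q}$ are the maps induced between colimits by the inclusion of indexing posets $\{\varphi(a,b)\le p\}\subseteq\{\varphi(a,b)\le q\}$.
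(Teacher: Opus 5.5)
Your proposal is correct and follows the paper's own proof: unwind $\varphi_{\dagger}$ via the direct image with open support, then the external tensor product, the pointwise tensor product, and the inverse images along $\pi_1,\pi_2$ to get $M_a\ktensor N_b$ at $(a,b)$. Your extra checks go slightly beyond the paper's argument: that the transition maps are $M_{a\le a'}\ktensor N_{b\le b'}$, and that the indexing set is a down-set.
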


\begin{proof}
    Let $p \in P$. The statement follows from the definitions:
\begin{align*}
    (\varphi_{\dagger}(M \boxtimes N))_p&\od \underset{\varphi(a,b)\le p}{\colim}(M \boxtimes N)_{(a,b)} & (\textnormal{\cref{def:direct_image_2}})\\
    &\od\underset{\varphi(a,b)\le p}{\colim}(\pi_1^{-1}M \otimes_{\cat{P\times P}} \pi_2^{-1} N)_{(a,b)} & (\textnormal{\cref{def:external_tensor_product}})\\
    &\od \underset{\varphi(a,b)\le p}{\colim}(\pi_1^{-1}M_{(a,b)}\otimes_{\cat{P\times P}} \pi_2^{-1}N_{(a,b)})& (\textnormal{\cref{def:external_tensor_product}})\\ 
    &\od\underset{\varphi(a,b)\le p}{\colim}(M_{\pi_1(a,b)} \ktensor N_{\pi_2(a,b)})& (\textnormal{\cref{def:inverse_image}})\\
    &=\underset{\varphi(a,b)\le p}{\colim}(M_a \ktensor N_b).&\qedhere
\end{align*} 
\end{proof}

\begin{corollary}
\label{corollary:tensor_product_of_infinite_intervals}
Let $M,N:\cat{R}_+\to \cat{Vect_k}$ be the interval modules $M=\cat{k}[a,\infty)$ and $N=\cat{k}[b,\infty)$. Let $\varphi:\mathbb{R}_+\times \mathbb{R}_+ \to \mathbb{R}_+$ be an order-preserving map. Then
\[M\otimes_{\varphi}N\approx \cat{k}[\varphi(a,b),\infty).\]
\end{corollary}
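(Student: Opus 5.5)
Using \cref{lemma:generalized_convolution}, for each $p\in\mathbb{R}_+$ we have $(M\otimes_{\varphi}N)_p=\colim_{\varphi(x,y)\le p} (M_x\ktensor N_y)$, the colimit being over the sub-poset $S_p\od\{(x,y)\in\mathbb{R}_+^2\,|\,\varphi(x,y)\le p\}$ of $P\times P$ with the product order. Since $M_x\ktensor N_y\approx\cat{k}$ when $x\ge a$ and $y\ge b$ and is $0$ otherwise, with identity transition maps between nonzero terms, the diagram is $\cat{k}[U]$ restricted to $S_p$, where $U=[a,\infty)\times[b,\infty)$ is the principal up-set of $(a,b)$.

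The key computation is then the following. I first reduce the colimit to the sub-diagram on $S_p\cap U$: every element of $S_p\setminus U$ carries the zero vector space, so it contributes nothing to the colimit (the colimit is a quotient of $\bigoplus_{(x,y)\in S_p}M_x\ktensor N_y$, and the zero summands add neither generators nor relations). If $\varphi(a,b)\le p$, then $(a,b)\in S_p\cap U$ and $(a,b)$ is the least element of $S_p\cap U$. Hence $S_p\cap U$ is connected, and the constant diagram $\cat{k}$ with identity maps on a connected poset has colimit $\cat{k}$. Concretely, every generator $(x,y)$ is identified with the image of $(a,b)$ under the identity map $(a,b)\le(x,y)$. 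So $(M\otimes_\varphi N)_p\approx\cat{k}$. If instead $\varphi(a,b)>p$, then for any $(x,y)\in U$ monotonicity gives $\varphi(x,y)\ge\varphi(a,b)>p$, so $S_p\cap U=\varnothing$ and the colimit is $0$. Note that $\mathbb{R}_+$ is totally ordered, so ``not $\varphi(a,b)\le p$'' means $\varphi(a,b)>p$.

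It remains to check the structure maps. For $p\le q$ with $\varphi(a,b)\le p$, the canonical map of colimits sends the class of the generator at $(a,b)$ in the colimit over $S_p$ to the class of the same generator over $S_q\supseteq S_p$. Under the identifications above, this is the identity $\cat{k}\to\cat{k}$. All other structure maps have zero domain or zero codomain. This yields an isomorphism with $\cat{k}[\varphi(a,b),\infty)$. Naturality is automatic, since we have identified each component with the generator at $(a,b)$ compatibly across $p$.

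The main obstacle is making the discarding of zero terms rigorous, since $S_p$ need not be connected and elements outside $U$ can map into $U$. I would handle it by writing the colimit explicitly as the direct sum modulo the relations $v\sim \mathrm{map}(v)$. Every relation involving a summand outside $U$ has zero source, because $U$ is an up-set, so its complement is a down-set and maps only go upward. Such relations are therefore trivial, and the remaining relations are exactly those of the diagram restricted to $S_p\cap U$.
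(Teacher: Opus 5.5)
Your proof is correct and follows essentially the same route as the paper. You compute the colimit from \cref{lemma:generalized_convolution} pointwise, use monotonicity of $\varphi$ to show it vanishes when $p<\varphi(a,b)$, and show that otherwise it is $\cat{k}$, generated by the class at $(a,b)$, with identity structure maps. Your explicit treatment of the relations (every summand outside the up-set $U$ only contributes relations with zero source) makes rigorous a step that the paper's proof states informally.
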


\begin{proof}
	By \cref{lemma:generalized_convolution} we have $(M\otimes_{\varphi}N)_x=\underset{\varphi(c,d)\le x}{\colim} (M_c\ktensor N_d)=\underset{(c,d)\in\ \varphi^{-1}(D_x)}{\colim} (M_c\ktensor N_d)$ for all $x\in \mathbb{R}_+$, where $D_x=[0,x]$ is the principal down-set at $x$ in $\mathbb{R}_+$. 
	
	Suppose that $x<\varphi(a,b)$. Then $x<\varphi(c,d)$ for any $(a,b)\le (c,d)$ in $\mathbb{R}^2_+$ because $\varphi$ is order-preserving. However, $M_c$ is only nonzero if $a\le c$ and $N_d$ is only nonzero if $b\le d$. Thus, for any $(c,d)\in \varphi^{-1}(D_x)$, $M_c=N_d=0$. Therefore $(M\otimes_{\varphi}N)_x=0$.
	
	Now suppose that $\varphi(a,b)\le x$. Then $(a,b)\in\varphi^{-1}(D_x)$. Furthermore, every nonzero  term $M_c\otimes_{\cat{k}}N_d\approx \cat{k}$, for $(c,d)\in \varphi^{-1}(D_x)$, satisfies $(a,b)\le (c,d)$. Therefore, every nonzero summand of $\bigoplus\limits_{(c,d)\in \varphi^{-1}(D_x)}(M_c\otimes_{\cat{k}} N_d)$ is in the image of $M_a\otimes_{\cat{k}}N_b$. Additionally, if $M_c\otimes_{\cat{k}}N_d$ is nonzero for $(c,d)\in \varphi^{-1}(D_x)$ then so is $M_{e}\otimes_{\cat{k}}N_f$ for any $(c,d)\le (e,f)$, $(e,f)\in \varphi^{-1}(D_x)$. Hence $(M\otimes_{\varphi}N)_x\approx \cat{k}$. Finally, for $\varphi(a,b)\le x\le y$ in $\mathbb{R}_+$,  $(M\otimes_{\varphi}N)_{x\le y}:(M\otimes_{\varphi}N)_x\to (M\otimes_{\varphi}N)_y$ will be the identity map for the same reasons.	
\end{proof}

\subsection{$\varphi$-internal homs}

Here $\varphi:P\times P \to P$ is still an order-preserving map, and $\pi_i:P\times P\to P$, for $i=1,2$ are the canonical projection maps. Recall the direct image functor (\cref{def:direct_image_1}).

\begin{definition}
\label{def:convolution_hom}
    For persistence modules $M,N:\cat{P}\to \cat{Vect_k}$, define 
    \[\scHom^{\varphi}(M,N)\od \pi_{2*}\scHom_{\cat{P\times P}}(\pi_1^{-1}M,\varphi^{-1}N),\]
    and  
    \[^{\varphi}\scHom(M,N)\od \pi_{1*}\scHom_{\cat{P\times P}}(\pi_2^{-1}M,\varphi^{-1}N).\]
\end{definition}

These definitions are similar to the definition of the right adjoint of the convolution functor for sheaves on $\mathbb{R}^n$ \cite{Guillermou2014,Tamarkin2018}, where $\varphi:\mathbb{R}^n\times \mathbb{R}^n\to \mathbb{R}^n$ is the addition map $\varphi(x,y)=x+y$, and $\mathbb{R}^n$ has the standard topology which is very different than our setting. However, there the exceptional inverse image $\varphi^{!}$ functor of sheaves is used and not $\varphi^{-1}$ like in our definition above. If the map $\varphi$ is symmetric we have that $\scHom^{\varphi}(M,N)=\,^{\varphi}\scHom(M,N)$ (\cref{corollary:tensor_for_symmetric_phi}). The following result allows us to compute $\scHom^{\varphi}(M,N)$ and $^{\varphi}\scHom(M,N)$ in practice.

\begin{proposition}
\label{proposition:limit_characterization_of_internal_hom}
    For every $p\in P$, we have natural isomorphisms
     \[\scHom^{\varphi}(M,N)_p\approx \underset{a\le b}{\lim\,} \Hom_{\cat{k}}(M_a, N_{\varphi(b,p)}),\]
    and
    \[^{\varphi}\scHom(M,N)_p\approx \underset{a\le b}{\lim\,} \Hom_{\cat{k}}(M_a, N_{\varphi(p,b)}).\]
\end{proposition}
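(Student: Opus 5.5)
The plan is to unwind the definitions of $\pi_{2*}$ and $\scHom_{\cat{P\times P}}$ until $\scHom^{\varphi}(M,N)_p$ becomes a space of natural transformations over a single up-set of $P\times P$. Then I will use the fact that this up-set has a lowest element in the second coordinate to collapse it to a hom-space over $P$, and finish with \cref{proposition:limit_characterization_of_hom}. The statement for $^{\varphi}\scHom$ follows by the same argument with the roles of the two coordinates swapped, so I only treat $\scHom^{\varphi}$.

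\textbf{Step 1.} By \cref{def:direct_image_1},
\[\scHom^{\varphi}(M,N)_p=\underset{p\le y}{\lim}\;\Hom_{\cat{k}^{\cat{U}_{(x,y)}}}\bigl(\pi_1^{-1}M|_{\cat{U}_{(x,y)}},\varphi^{-1}N|_{\cat{U}_{(x,y)}}\bigr),\]
where the limit runs over all $(x,y)\in P\times P$ with $p\le y$ and the transition maps are restrictions. I claim this limit is canonically
\[\Hom_{\cat{k}^{\cat{W}}}\bigl(\pi_1^{-1}M|_{\cat{W}},\varphi^{-1}N|_{\cat{W}}\bigr),\qquad W\od P\times U_p=\bigcup_{p\le y}U_{(x,y)}.\]
This is exactly the sheaf (gluing) property of $\scHom$ for the Alexandrov topology, via \cref{theorem:functors_and_sheaves}. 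It can also be checked by hand: a compatible family of natural transformations on the principal up-sets $U_{(x,y)}\subset W$ is the same as a family of components $\eta_{(a,b)}$, $(a,b)\in W$, that is natural on every $U_{(x,y)}$. Since every comparable pair $(a,b)\le(a',b')$ in $W$ lies in $U_{(a,b)}$, such a family is natural on all of $W$.

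\textbf{Step 2.} Reduce from $W$ to $P$. A natural transformation $\eta$ on $W$ consists of maps $\eta_{(a,b)}:M_a\to N_{\varphi(a,b)}$ with $b\ge p$. Naturality along $(a,p)\le(a,b)$, where $\pi_1^{-1}M$ acts by the identity on $M_a$, forces
\[\eta_{(a,b)}=N_{\varphi(a,p)\le\varphi(a,b)}\circ\eta_{(a,p)}.\]
So $\eta$ is determined by the family $\eta_{(a,p)}$, which is a natural transformation $M\to\psi_p^{-1}N$ with $\psi_p\od\varphi(-,p):P\to P$.

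Conversely, given such a natural transformation, I define $\eta_{(a,b)}$ by the displayed formula. Naturality on a general arrow $(a,b)\le(a',b')$ then follows from naturality at level $p$ together with functoriality of $N$, because $\varphi$ is order-preserving. This gives an isomorphism $\scHom^{\varphi}(M,N)_p\approx\Hom_{\cat{k^P}}(M,\psi_p^{-1}N)$.

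\textbf{Step 3.} Apply \cref{proposition:limit_characterization_of_hom} to get
\[\Hom_{\cat{k^P}}(M,\psi_p^{-1}N)\approx\underset{a\le b}{\lim}\,\Hom_{\cat{k}}(M_a,N_{\varphi(b,p)}),\]
which is the claimed formula.

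\textbf{Naturality.} For $p\le q$, the structure map of $\scHom^{\varphi}(M,N)$ is restriction from $P\times U_p$ to $P\times U_q$. Under Step 2 this becomes post-composition with $N_{\varphi(b,p)\le\varphi(b,q)}$, which is the canonical map between the limits, so the isomorphisms are natural in $p$.

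I expect Step 1, the interchange of the outer limit from $\pi_{2*}$ with the inner hom-limits, to be the only delicate point. I would handle it via the sheaf property of \cref{theorem:functors_and_sheaves}, which reduces it to checking that the comparable pairs in $W$ are covered by the principal up-sets. The remaining steps are routine diagram checks.
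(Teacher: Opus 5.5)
Your proof is correct, but it is organized differently from the paper's.

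The paper runs a purely formal chain of limit manipulations. It splits the $\pi_{2*}$-limit as $\lim_{p\le b}\lim_{a\in P}$ and collapses $b$ to the least element $p$ via \cref{lemma:colimits_and_limits_over_diagrams_with_0_and_1}. It then expands each $\Hom_{\cat{k}^{\cat{U}_{(a,p)}}}(\pi_1^{-1}M|_{\cat{U}_{(a,p)}},\varphi^{-1}N|_{\cat{U}_{(a,p)}})$ by \cref{proposition:limit_characterization_of_hom}, and removes the leftover indices by further least-element collapses.

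You instead glue first: the whole $\pi_{2*}$-limit becomes one hom-space over $W=P\times U_p$. You then collapse once, by the explicit naturality argument along $(a,p)\le(a,b)$. This gives the intermediate identification $\scHom^{\varphi}(M,N)_p\approx\Hom_{\cat{k^P}}(M,\psi_p^{-1}N)$ with $\psi_p=\varphi(-,p)$, and you invoke the limit characterization of hom only at the end.

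The paper's route is shorter and uses nothing beyond its two lemmas. Yours buys several things:
\begin{itemize}
\item It isolates a clean intermediate statement.
\item It makes naturality in $p$ explicit, where the paper leaves it to the canonicity of each step.
\item It sidesteps the paper's rather loose index bookkeeping. For instance, the paper's last collapse of $\lim_{a\le c\le b}\Hom_{\cat{k}}(M_c,N_{\varphi(b,p)})$ to $c=a$ is not a direct instance of the least-element lemma, because $c\mapsto\Hom_{\cat{k}}(M_c,-)$ is contravariant. Your element-level check has no such variance issue.
\end{itemize}
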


\begin{proof}
Recall that for a poset $P$, we denote by $U_{a}$ the principal up-set at $a\in P$.
For $p\in P$ we have the following natural isomorphisms:
    \begin{align*}
        \scHom^{\varphi}(M,N)_p&\od (\pi_{2*}\scHom_{\cat{P\times P}}(\pi_1^{-1}M,\varphi^{-1}N))_p& (\textnormal{\cref{def:convolution_hom}})\\
        &\od\underset{p\le \pi_2(a,b)}{\lim} \scHom_{\cat{P\times P}}(\pi_1^{-1}M,\varphi^{-1}N)_{(a,b)} & (\textnormal{\cref{def:direct_image_1}})\\
        &\od\underset{p\le \pi_2(a,b)}{\lim} \Hom_{\cat{k}^{\cat{U}_{(a,b)}}}(\pi^{-1}_1M|_{\cat{U}_{(a,b)}},\varphi^{-1}N|_{\cat{U}_{(a,b)}})& (\textnormal{\cref{def:sheaf_hom}})\\
        &=\underset{p\le b}{\lim\,}\underset{a\in P}{\lim\,} \Hom_{\cat{k}^{\cat{U}_{(a,b)}}}(\pi^{-1}_1M|_{\cat{U}_{(a,b)}},\varphi^{-1}N|_{\cat{U}_{(a,b)}})\\
        &\approx \underset{a\in P}{\lim\,} \Hom_{\cat{k}^{\cat{U}_{(a,p)}}}(\pi^{-1}_1M|_{\cat{U}_{(a,p)}},\varphi^{-1}N|_{\cat{U}_{(a,p)}}) & (\textnormal{\cref{lemma:colimits_and_limits_over_diagrams_with_0_and_1}}) \\
        &\approx \underset{a\in P}{\lim\,} \underset{(a,p)\le (c,d)\le (b,f)}{\lim\,} \Hom_{\cat{k}}((\pi^{-1}_1M|_{\cat{U}_{(a,p)}})_{(c,d)},(\varphi^{-1}N|_{\cat{U}_{(a,p)}})_{(b,f)}) &(\textnormal{\cref{proposition:limit_characterization_of_hom}})\\
        &\od \underset{a\in P}{\lim\,} \underset{(a,p)\le (c,d)\le (b,f)}{\lim\,} \Hom_{\cat{k}}(M_{\pi_1(c,d)},N_{\varphi(b,f)})& (\textnormal{\cref{def:inverse_image}})\\ 
        &= \underset{a \in P}{\lim\,} \underset{a\le c\le b}{\lim\,} \underset{p\le f}{\lim\,}\Hom_{\cat{k}}(M_c,N_{\varphi(b,f)})\\ 
        &\approx \underset{a \in P}{\lim\,} \underset{a\le c\le b}{\lim\,} \Hom_{\cat{k}}(M_c,N_{\varphi(b,p)}) & (\textnormal{\cref{lemma:colimits_and_limits_over_diagrams_with_0_and_1}}) \\
        &=\underset{a \in P}{\lim\,} \underset{a\le b}{\lim\,}  \underset{a\le c\le b}{\lim\,}\Hom_{\cat{k}}(M_c,N_{\varphi(b,p)}) \\
        &\approx\underset{a \in P}{\lim\,} \underset{a\le b}{\lim\,} \Hom_{\cat{k}}(M_a,N_{\varphi(b,p)}) & (\textnormal{\cref{lemma:colimits_and_limits_over_diagrams_with_0_and_1}})\\
        &=\underset{a\le b}{\lim\,}  \Hom_{\cat{k}}(M_a,N_{\varphi(b,p)})
    \end{align*}      
    The proof for the other statement is similar.
\end{proof}

\begin{theorem}
\label{theorem:convolution_adjunction}
    The functor $M\otimes_{\varphi}-$ is left adjoint to the functor $\scHom^{\varphi}(M,-)$. In other words, there are natural bijections
    \[\Hom_{\cat{k^P}}(M\otimes_{\varphi}N,L)\approx \Hom_{\cat{k^P}}(N,\scHom^{\varphi}(M,L)),\]
    for any persistence modules $M,N$ and $L$. Similarly, the functor $-\otimes_{\varphi}M$ is left adjoint to the functor $^{\varphi}\scHom(M,-)$. In other words, there are natural bijections
    \[\Hom_{\cat{k^P}}(N\otimes_{\varphi}M,L)\approx \Hom_{\cat{k^P}}(N,\,^{\varphi}\scHom(M,L)),\]
    for any persistence modules $M,N$ and $L$.
\end{theorem}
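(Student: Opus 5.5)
The plan is to chain together adjunctions already available, so that no limit or colimit has to be computed by hand. By \cref{def:generalized_convolution}, $M\otimes_{\varphi}N=\varphi_{\dagger}(\pi_1^{-1}M\otimes_{\cat{P\times P}}\pi_2^{-1}N)$. I would then apply, in order:
\begin{align*}
\Hom_{\cat{k^P}}(M\otimes_{\varphi}N,L)&\approx \Hom_{\cat{k^{P\times P}}}(\pi_1^{-1}M\otimes_{\cat{P\times P}}\pi_2^{-1}N,\varphi^{-1}L)\\
&\approx \Hom_{\cat{k^{P\times P}}}(\pi_2^{-1}N,\scHom_{\cat{P\times P}}(\pi_1^{-1}M,\varphi^{-1}L))\\
&\approx \Hom_{\cat{k^P}}(N,\pi_{2*}\scHom_{\cat{P\times P}}(\pi_1^{-1}M,\varphi^{-1}L)).
\end{align*}
The last line is $\Hom_{\cat{k^P}}(N,\scHom^{\varphi}(M,L))$ by \cref{def:convolution_hom}. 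The justifications are as follows:
\begin{enumerate}[left=0pt]
\item The first isomorphism is the adjunction $\varphi_{\dagger}\dashv\varphi^{-1}$ of \cref{theorem:direct_image_adjunction_2}, applied to the poset map $\varphi:P\times P\to P$.
\item The second is \cref{theorem:hom_tensor_adjunction} on the poset $P\times P$, used with the module $\pi_1^{-1}M$ held fixed.
\item The third is the adjunction $\pi_2^{-1}\dashv\pi_{2*}$ of \cref{theorem:direct_image_adjunction_1}, applied to the projection $\pi_2$.
\end{enumerate}
Each isomorphism is natural in every variable, so the composite is natural in $N$ and $L$. It is also natural in $M$, since the intermediate functors are functorial in $M$.

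For the second statement I would run the same chain with the roles of the factors swapped. First, $N\otimes_{\varphi}M=\varphi_{\dagger}(\pi_1^{-1}N\otimes_{\cat{P\times P}}\pi_2^{-1}M)$, and the adjunction $\varphi_{\dagger}\dashv\varphi^{-1}$ applies as before. Next, I would use the other half of \cref{theorem:hom_tensor_adjunction}, which says $-\otimes_{\cat{P\times P}}\pi_2^{-1}M$ is left adjoint to $\scHom_{\cat{P\times P}}(\pi_2^{-1}M,-)$. Finally, the adjunction $\pi_1^{-1}\dashv\pi_{1*}$ produces exactly $^{\varphi}\scHom(M,L)$.

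I expect no step to be a serious obstacle. The only points needing care are checking the hypotheses of the quoted results. \cref{theorem:hom_tensor_adjunction} must hold for the poset $P\times P$, which it does because it is stated for an arbitrary poset (via \cref{remark:persistence_modules_as_sheaves}). \cref{theorem:direct_image_adjunction_1,theorem:direct_image_adjunction_2} require $\cat{Vect_k}$ to be complete and cocomplete, which it is.

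Some readers may prefer a hands-on check. Alternatively, one can describe both sides via \cref{lemma:generalized_convolution} and \cref{proposition:limit_characterization_of_internal_hom}: a map out of the colimit $\colim_{\varphi(a,b)\le p}M_a\ktensor N_b$ corresponds to a compatible family of maps $N_b\to\Hom_{\cat{k}}(M_a,L_{\varphi(a,b)})$, by tensor--hom adjunction in $\cat{Vect_k}$. The formal argument above avoids this bookkeeping, so I would present it as the proof.
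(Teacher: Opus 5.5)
Your proposal is correct and is essentially the paper's proof. The paper uses the same chain of adjunctions: $\varphi_{\dagger}\dashv\varphi^{-1}$ from \cref{theorem:direct_image_adjunction_2}, then the tensor--hom adjunction of \cref{theorem:hom_tensor_adjunction} on $P\times P$, then $\pi_2^{-1}\dashv\pi_{2*}$ from \cref{theorem:direct_image_adjunction_1}, with the other half of the tensor--hom adjunction and $\pi_1^{-1}\dashv\pi_{1*}$ for the second statement.
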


\begin{proof}
    We have the following canonical bijections:
    \begin{align*}
        \Hom_{\cat{k^P}}(M\otimes_{\varphi}N,L)&\od\Hom_{\cat{k^P}}(\varphi_{\dagger}(M\boxtimes N),L)  & (\textnormal{\cref{def:generalized_convolution}})\\
        &\approx \Hom_{\cat{k^{P\times P}}}(M\boxtimes N,\varphi^{-1}L) & (\textnormal{\cref{theorem:direct_image_adjunction_2}})\\
        &\od\Hom_{\cat{k^{P\times P}}}(\pi_1^{-1}M\otimes_{\cat{P\times P}} \pi_2^{-1}N,\varphi^{-1}L)& (\textnormal{\cref{def:external_tensor_product}})\\
        &\approx \Hom_{\cat{k^{P\times P}}}(\pi_2^{-1}N,\scHom_{\cat{P\times P}}(\pi_1^{-1}M,\varphi^{-1}L))& (\textnormal{\cref{theorem:hom_tensor_adjunction}})\\
        &\approx\Hom_{\cat{k^P}}(N,\pi_{2*}\scHom_{\cat{P\times P}}(\pi_1^{-1}M,\varphi^{-1}L)& (\textnormal{\cref{theorem:direct_image_adjunction_1}})\\
        &\od\Hom_{\cat{k^P}}(N,\scHom^{\varphi}(M,L))& (\textnormal{\cref{def:convolution_hom}}).
    \end{align*}
    
        Similarly, we also have the following canonical bijections:
    \begin{align*}
        \Hom_{\cat{k^P}}(N\otimes_{\varphi}M,L)&\od\Hom_{\cat{k^P}}(\varphi_{\dagger}(N\boxtimes M),L)  & (\textnormal{\cref{def:generalized_convolution}})\\
        &\approx \Hom_{\cat{k^{P\times P}}}(N\boxtimes M,\varphi^{-1}L) & (\textnormal{\cref{theorem:direct_image_adjunction_2}})\\
        &\od\Hom_{\cat{k^{P\times P}}}(\pi_1^{-1}N\otimes_{\cat{P\times P}} \pi_2^{-1}M,\varphi^{-1}L)& (\textnormal{\cref{def:external_tensor_product}})\\
        &\approx \Hom_{\cat{k^{P\times P}}}(\pi_1^{-1}N,\scHom_{\cat{P\times P}}(\pi_2^{-1}M,\varphi^{-1}L))& (\textnormal{\cref{theorem:hom_tensor_adjunction}})\\
        &\approx\Hom_{\cat{k^P}}(N,\pi_{1*}\scHom_{\cat{P\times P}}(\pi_2^{-1}M,\varphi^{-1}L)& (\textnormal{\cref{theorem:direct_image_adjunction_1}})\\
        &\od\Hom_{\cat{k^P}}(N, \,^{\varphi}\scHom(M,L))& (\textnormal{\cref{def:convolution_hom}}).&\qedhere
    \end{align*}
\end{proof}

Since left adjoints are right exact and right adjoints are left exact we immediately have the following.

\begin{corollary}
\label{corollary:tensor_and_internal_hom_adjoints}
    The functors $M\otimes_{\varphi}-, -\otimes_{\varphi}M :\cat{k^P}\to \cat{k^P}$ are right exact, and the functors $\scHom^{\varphi}(M,-),\,^{\varphi}\scHom(M,-):\cat{k^P}\to\cat{k^P}$ are left exact.
\end{corollary}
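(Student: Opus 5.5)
This follows from \cref{theorem:convolution_adjunction} together with the general principle that a left adjoint between abelian categories preserves all colimits, and in particular cokernels, while a right adjoint preserves all limits, and in particular kernels. I would first note that $\cat{k^P}$ is abelian, with kernels and cokernels computed pointwise, as recalled in \cref{subsection:persistence_modules_category}. Moreover, the functors $M\otimes_{\varphi}-$ and $\scHom^{\varphi}(M,-)$ are additive, since they are composites of additive functors: $\pi_i^{-1}$, $\varphi_{\dagger}$, $\pi_{2*}$, $\otimes_{\cat{P\times P}}$ and $\scHom_{\cat{P\times P}}$.

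For right exactness of $F=M\otimes_{\varphi}-$, take an exact sequence $N'\to N\to N''\to 0$ in $\cat{k^P}$, so that $N''$ is the cokernel of $N'\to N$. Applying $\Hom_{\cat{k^P}}(-,L)$ gives a left exact sequence
\[0\to\Hom(N'',\scHom^{\varphi}(M,L))\to\Hom(N,\scHom^{\varphi}(M,L))\to\Hom(N',\scHom^{\varphi}(M,L)).\]
By the natural bijections of \cref{theorem:convolution_adjunction}, this sequence is isomorphic to
\[0\to\Hom(F N'',L)\to\Hom(F N,L)\to\Hom(F N',L).\]
This holds for every $L$, so a Yoneda argument shows that $FN'\to FN\to FN''\to 0$ is exact.

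Dually, for a short exact sequence $0\to L'\to L\to L''$, I would apply $\Hom_{\cat{k^P}}(N,-)$ and transport it through the adjunction. This shows that $\scHom^{\varphi}(M,-)$ preserves kernels, and hence is left exact.

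The statements for $-\otimes_{\varphi}M$ and $^{\varphi}\scHom(M,-)$ follow in the same way from the second adjunction in \cref{theorem:convolution_adjunction}.

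The only point requiring care is the naturality of the adjunction bijections in both variables. This naturality is needed to make the transported Hom-sequences commute, and it holds because each step in the proof of \cref{theorem:convolution_adjunction} is a natural isomorphism.
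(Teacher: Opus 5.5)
Your proposal is correct and is the same argument as the paper's, which obtains the corollary in one line from \cref{theorem:convolution_adjunction} by the principle that left adjoints are right exact and right adjoints are left exact. You additionally unpack that principle through the Hom-sequence and Yoneda argument; the only slip is cosmetic, since the sequence $0\to L'\to L\to L''$ you use should be called exact rather than short exact.
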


Furthermore, if $\varphi$ is symmetric we also have the following.

\begin{corollary}
\label{corollary:tensor_for_symmetric_phi}
    If $\varphi$ is symmetric, then $M\otimes_{\varphi}-$ and $-\otimes_{\varphi}M$ are naturally isomorphic as functors and so are  $\scHom^{\varphi}(M,-)$ and $^{\varphi}\scHom(M,-)$.
\end{corollary}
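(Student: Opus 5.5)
The plan is to realize the symmetry of $\varphi$ as a symmetry of the product poset. Let $\tau:P\times P\to P\times P$ be the swap $\tau(a,b)=(b,a)$. It is an order-preserving involution, hence an isomorphism of posets, and $\varphi$ symmetric means exactly $\varphi\circ\tau=\varphi$. We also have $\pi_1\circ\tau=\pi_2$ and $\pi_2\circ\tau=\pi_1$.

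For the tensor products, the first step is to check that $\tau^{-1}(M\boxtimes N)\approx N\boxtimes M$ naturally in $M$ and $N$. Since $\tau^{-1}$ is just precomposition, it commutes with the pointwise tensor product $\otimes_{\cat{P\times P}}$, and $\tau^{-1}\pi_1^{-1}M=(\pi_1\tau)^{-1}M=\pi_2^{-1}M$. Pointwise the isomorphism is the swap $M_b\ktensor N_a\to N_a\ktensor M_b$, which is compatible with all structure maps.

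The second step is to note that, because $\tau$ is an isomorphism, $\tau_{\dagger}$ is inverse to $\tau^{-1}$. By \cref{lemma:colimits_and_limits_over_diagrams_with_0_and_1}, the colimit defining $(\tau_\dagger F)_{(a,b)}$ has the terminal index $\tau^{-1}(a,b)$. Alternatively, one can argue that both $\tau_{\dagger}$ and $(\tau^{-1})^{-1}$ are left adjoint to $\tau^{-1}$. Combined with functoriality $(\varphi\tau)_\dagger\approx\varphi_\dagger\tau_\dagger$, which holds because left adjoints compose and $(\varphi\tau)^{-1}=\tau^{-1}\varphi^{-1}$, this gives
\[N\otimes_\varphi M=\varphi_\dagger(N\boxtimes M)\approx \varphi_\dagger\tau_\dagger(M\boxtimes N)\approx(\varphi\tau)_\dagger(M\boxtimes N)=\varphi_\dagger(M\boxtimes N)=M\otimes_\varphi N.\]
Every isomorphism here is natural in $N$, so $M\otimes_\varphi-\approx-\otimes_\varphi M$. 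As a concrete check, one can use \cref{lemma:generalized_convolution}: the reindexing $(a,b)\mapsto(b,a)$ is an isomorphism of the index posets $\{\varphi(a,b)\le p\}$ and $\{\varphi(b,a)\le p\}$, and the swap of tensor factors is compatible with it. This yields the isomorphism of colimits directly.

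For the internal homs, I would avoid redoing limit computations and use uniqueness of adjoints. By \cref{theorem:convolution_adjunction}, $\scHom^\varphi(M,-)$ is right adjoint to $M\otimes_\varphi-$ and ${}^\varphi\scHom(M,-)$ is right adjoint to $-\otimes_\varphi M$. Since these two left adjoints are naturally isomorphic, their right adjoints are naturally isomorphic as well, by the standard Yoneda argument:
\[\Hom(N,\scHom^\varphi(M,L))\approx\Hom(M\otimes_\varphi N,L)\approx\Hom(N\otimes_\varphi M,L)\approx\Hom(N,{}^\varphi\scHom(M,L)),\]
naturally in $N$ and $L$. An alternative route is \cref{proposition:limit_characterization_of_internal_hom}, where $N_{\varphi(b,p)}=N_{\varphi(p,b)}$ makes the two limits literally equal.

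The only step needing care is naturality: the swap isomorphism must be natural in both variables so that uniqueness of adjoints applies functorially in $L$. This is routine, since every isomorphism used is canonical.
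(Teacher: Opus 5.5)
Your proof is correct. For the internal homs you argue exactly as the paper does: uniqueness of right adjoints, using the two adjunctions of \cref{theorem:convolution_adjunction}. The paper's text cites the exactness corollary \cref{corollary:tensor_and_internal_hom_adjoints} at that point, but the adjunction is what is actually used, as you say.

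For the tensor products, the paper's argument is what you call the ``concrete check.'' It applies \cref{lemma:generalized_convolution}, reindexes the colimit over $\{\varphi(a,b)\le p\}$ by $(a,b)\mapsto(b,a)$, swaps the factors of $\ktensor$, and then asserts naturality. Your main route lifts this reindexing to functors on $\cat{P\times P}$. The swap $\tau$ gives $\tau^{-1}(M\boxtimes N)\approx N\boxtimes M$, the functor $\tau_\dagger$ is inverse to $\tau^{-1}$, and $(\varphi\tau)_\dagger\approx\varphi_\dagger\tau_\dagger$ by composing adjoints. Together with $\varphi\tau=\varphi$, these yield $N\otimes_\varphi M\approx M\otimes_\varphi N$.

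What your route buys is that naturality comes for free, both in $N$ and with respect to the structure maps indexed by $p\le q$. Everything is a composite of canonical natural isomorphisms of functors. The pointwise argument instead needs a separate check that the reindexed colimit isomorphisms commute with the maps induced by $p\le q$ and by morphisms $N\to N'$, and the paper leaves this implicit. The paper's version is shorter and needs nothing beyond the colimit formula.
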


\begin{proof}
	For any persistence module $N:\cat{k}\to \cat{Vect_k}$, by \cref{lemma:generalized_convolution}, since $\varphi$ is symmetric and $\otimes_{\cat{k}}$ is commutative we have 
	\[
	(M\otimes_{\varphi}N)_p= \underset{\varphi(a,b)\le p}{\colim} (M_a\ktensor N_b)
	\approx\underset{\varphi(b,a)\le p}{\colim} (N_b\ktensor M_a)
	=(N\otimes_{\varphi} M)_p.
	\]
	Therefore $M\otimes_{\varphi}-$ and $-\otimes_{\varphi}M$ are naturally isomorphic. Since adjoints are unique up to isomorphism, it follows that $\scHom^{\varphi}(M,-)$ and $^{\varphi}\scHom(M,-)$ are also naturally isomorphic by \cref{corollary:tensor_and_internal_hom_adjoints}.
\end{proof}

\subsection{The p-quasinorm tensor product}
Here we assume we are working over the poset $\mathbb{R}^n_{+}$. For any $p\in (0,\infty]$, the \emph{componentwise} $p$-quasinorm $||(-,-)||^c_p:\mathbb{R}^n_{+}\times \mathbb{R}^n_{+}\to \mathbb{R}^n_{+}$ is defined by 
\[||((x_1,\dots,x_n),(y_1,\dots y_n))||_p^c\od(||(x_1,y_1)||_p,\dots,||(x_n,y_n)||_p),\]
where $||-,-||_p:\mathbb{R}_{+}\times \mathbb{R}_{+}\to \mathbb{R}_{+}$ is the $p$-quasinorm on $\mathbb{R}^2_{+}$, $||(a,b)||_p\od(a^p+b^p)^{\frac{1}{p}}$. 

When $p\in [1,\infty]$, then $||\cdot ||_p$ is a norm, however since we want to allow for the highest level of generality as possible, we also allow for $p\in (0,1)$ when $||\cdot ||_p$ is only a quasinorm. Note that the map $\ell_c^p$ is order-preserving with respect to the partial orders. Indeed, $\ell^p_c$ is the composition 
\[((x_1,\dots, x_n),(y_1,\dots,y_n)))\mapsto ((x_1,y_1),\dots, (x_n,y_n))\mapsto (||(x_1,y_1)||_p,\dots, ||(x_n,y_n)||_p),\]
and every map in the composition is order-preserving. We will denote the map $\||-,-||_p^c$ by $\ell_c^p$ for the sake of brevity (the ``c" represents the word ``componentwise").

Recalling \cref{def:generalized_convolution}, we observe the existence of a continuum of different tensor product functors for persistence modules on $\cat{R}^n_+$.

\begin{definition}
    Let $M,N:\cat{R^n_+}\to \cat{Vect_k}$ be persistence modules and $p\in (0,\infty]$. The \emph{componentwise $p$-quasinorm
    tensor product} or simply $\ell^{p}_c$-\emph{tensor} of $M$ and $N$ is $M\ptensor N$. 
\end{definition}

\cref{lemma:generalized_convolution} gives a formula for calculating $M\ptensor N$.

\begin{corollary}
\label{corollary:convolution_as_colimit}
    Let $M,N:\cat{R^n_+}\to \cat{Vect_k}$ be persistence modules. Then for all $x\in \mathbb{R}^n_{+}$ we have that
    \[(M\ptensor N)_x=\underset{\cpnorm(a,b)\le x}{\colim}(M_a\ktensor N_b).\]
\end{corollary}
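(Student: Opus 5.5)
This is a direct specialization of \cref{lemma:generalized_convolution}, so the plan is simply to check that its hypotheses hold and then substitute. First I would recall that $M\ptensor N$ is by definition $M\otimes_{\varphi}N$ for the particular choice $P=\mathbb{R}^n_+$ and $\varphi=\ell^p_c$. The only hypothesis \cref{lemma:generalized_convolution} needs is that $\varphi$ is an order-preserving map $P\times P\to P$. This was observed just before the definition: $\ell^p_c$ is the composite of the coordinate shuffle $\mathbb{R}^n_+\times\mathbb{R}^n_+\to(\mathbb{R}^2_+)^n$ with $n$ copies of $\|\cdot\|_p$.

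For completeness I would check briefly that each factor is order-preserving in the product order. The shuffle is, since the order is coordinatewise. The map $\|\cdot\|_p$ on $\mathbb{R}^2_+$ is monotone in each variable for every $p\in(0,\infty)$, because $t\mapsto t^p$ and $t\mapsto t^{1/p}$ are increasing on $\mathbb{R}_+$. For $p=\infty$ it is $\max$, which is monotone. A product of monotone maps is monotone in the product order.

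With that in place, \cref{lemma:generalized_convolution} applied at $x\in\mathbb{R}^n_+$ gives
\[(M\ptensor N)_x=(M\otimes_{\ell^p_c}N)_x=\underset{\ell^p_c(a,b)\le x}{\colim}(M_a\ktensor N_b),\]
which is the claim.

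There is no real obstacle. The only point needing care is the case $p=\infty$, where one interprets $\|(a,b)\|_\infty=\max(a,b)$ so that monotonicity still holds. I would also note that the indexing poset $\{(a,b)\,|\,\ell^p_c(a,b)\le x\}$ is a down-set of $\mathbb{R}^n_+\times\mathbb{R}^n_+$, exactly as in the general lemma.
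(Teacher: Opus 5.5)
Your proposal is correct and matches the paper. The paper states this corollary as an immediate specialization of \cref{lemma:generalized_convolution} to $\varphi=\ell^p_c$ on $P=\mathbb{R}^n_+$, relying on its earlier remark that $\ell^p_c$ is order-preserving; your check of monotonicity, including the cases $p=\infty$ and $p\in(0,1)$, simply spells out that remark.
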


Recall \cref{def:sheaf_tensor_product}.  We immediately observe the following.

\begin{lemma}
    \label{lemma:sh_tensor_as_a_convolution}
    For any persistence modules $M,N:\cat{R}^n_+\to \cat{Vect_k}$, there are natural isomorphisms:
    \[M\otimes_{\ell^{\infty}_c}N\approx M\otimes_{\cat{R}^n_+}N.\]
\end{lemma}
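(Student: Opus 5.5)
The plan is to compute both sides pointwise and compare them. The key input is \cref{corollary:convolution_as_colimit} with $p=\infty$, which identifies the indexing poset of the colimit. We have $\ell^\infty_c(a,b)=(\max(a_1,b_1),\dots,\max(a_n,b_n))$, so $\ell^\infty_c(a,b)\le x$ holds exactly when $a\le x$ and $b\le x$ in the product order on $\mathbb{R}^n_+$. This gives
\[(M\otimes_{\ell^{\infty}_c}N)_x=\underset{(a,b)\in D_x\times D_x}{\colim}(M_a\ktensor N_b),\]
where $D_x$ is the principal down-set at $x$.

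The indexing subposet $D_x\times D_x$ of $\mathbb{R}^n_+\times\mathbb{R}^n_+$ has the greatest element $(x,x)$. By \cref{lemma:colimits_and_limits_over_diagrams_with_0_and_1}, the colimit is therefore canonically isomorphic to $M_x\ktensor N_x$, which is $(M\otimes_{\cat{R}^n_+}N)_x$ by \cref{def:sheaf_tensor_product}. Concretely, the isomorphism is induced by the structure maps
\[M_{a\le x}\ktensor N_{b\le x}:M_a\ktensor N_b\to M_x\ktensor N_x.\]

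Next I check that these pointwise isomorphisms assemble into a morphism of persistence modules. For $x\le y$ we have $D_x\times D_x\subset D_y\times D_y$, and the structure map of $\varphi_{\dagger}$ is the canonical map between the colimits. Under the identifications it becomes $M_{x\le y}\ktensor N_{x\le y}$, because the composite $M_a\ktensor N_b\to M_x\ktensor N_x\to M_y\ktensor N_y$ equals $M_{a\le y}\ktensor N_{b\le y}$ by functoriality.

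Finally, naturality in $M$ and $N$ follows the same way, since a morphism of modules induces compatible maps on each term $M_a\ktensor N_b$, and the terminal-object identification is natural. No step is a real obstacle. The only care needed is the bookkeeping in the compatibility check for $x\le y$, which is just the universal property of colimits over a poset with a terminal object.
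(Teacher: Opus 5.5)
Your proposal is correct and follows the paper's proof. Both arguments identify $\{(a,b)\mid \ell^{\infty}_c(a,b)\le x\}$ with $D_x\times D_x$, observe that it has greatest element $(x,x)$, and apply \cref{lemma:colimits_and_limits_over_diagrams_with_0_and_1} to obtain $M_x\ktensor N_x$ pointwise. Your explicit check that the structure maps for $x\le y$ become $M_{x\le y}\ktensor N_{x\le y}$ fills in a step the paper only asserts with ``since each such isomorphism is natural''.
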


\begin{proof}
    Let $x\in \mathbb{R}^n_{+}$. Note that $\ell^{\infty}_c(a,b)\le x$ if and only if $a\le x$ and $b\le x$, for any $a,b\in \mathbb{R}^n_+$. Therefore we have natural isomorphisms
    \begin{align*}
        (M\otimes_{\ell^{\infty}_c}N)_x&= \underset{\ell^{\infty}_c(a,b)\le x}{\colim}(M_a\ktensor N_b) & (\textnormal{\cref{corollary:convolution_as_colimit}})\\
        &=\underset{(a,b)\le (x,x)}{\colim}(M_a\ktensor N_b)\\
        &\approx M_x\ktensor N_x&(\textnormal{\cref{lemma:colimits_and_limits_over_diagrams_with_0_and_1}})\\
        &\od (M\otimes_{\cat{R}^n_+} N)_x.
    \end{align*}
    This holds for each $x$, and since each such isomorphism is natural, we have a natural isomorphism  $M\otimes_{\ell^{\infty}_c}N\approx M\otimes_{\cat{R}^n_+}N$.
\end{proof}

Using the colimit characterization of $\ptensor$, we show that $\ptensor$-products of interval modules are themselves interval modules.

\begin{proposition}
\label{proposition:tensor_product_formulas}
    Let $p\in (0,\infty]$. Then 
    \begin{enumerate}[left=0pt]
        \item $\cat{k}[a,b)\ptensor \cat{k}[c,d)\od\cat{k}[||(a,c)||_p, \min (||(b,c)||_p,||(a,d)||_p))$.
        \item $\cat{k}[a,\infty)\ptensor \cat{k}[c,d)\approx \cat{k}[\cpnorm(a,c),\cpnorm(a,d))\od \cat{k}[||(a,c)||_p,||(a,d)||_p)$.
        \item $\cat{k}[a,\infty)\ptensor \cat{k}[c,\infty)\approx \cat{k}[\cpnorm(a,c),\infty)\od \cat{k}[||(a,c)||_p,\infty)$.
    \end{enumerate}
\end{proposition}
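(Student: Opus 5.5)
Here $n=1$ and $\varphi=\ell^p_c$ is the map $(a,b)\mapsto\|(a,b)\|_p$ on $\mathbb{R}_+\times\mathbb{R}_+$. Part (3) is exactly \cref{corollary:tensor_product_of_infinite_intervals} with $\varphi=\ell^p_c$, so nothing new is needed there. For (1) and (2) I would argue pointwise via the colimit formula of \cref{corollary:convolution_as_colimit}: $(M\ptensor N)_x=\colim_{\|(s,t)\|_p\le x}(M_s\ktensor N_t)$. With $M=\cat{k}[a,b)$ and $N=\cat{k}[c,d)$, the functor $(s,t)\mapsto M_s\ktensor N_t$ on $\mathbb{R}^2_+$ is the indicator module $\cat{k}[R]$ of the box $R=[a,b)\times[c,d)$; in (2) it is the box $[a,\infty)\times[c,d)$. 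The colimit is taken over the down-set $D=\{(s,t):\|(s,t)\|_p\le x\}$, and $R\cap D$ is again a box cut by a down-closed region.

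The key computation is the colimit of an indicator functor of a convex set $S=R\cap D$ over the poset $D$. Since $D$ is a down-set, I would compute it as follows. Start with $\bigoplus_{(s,t)\in S}\cat{k}$ and identify $\cat{k}_{(s,t)}$ with $\cat{k}_{(s',t')}$ whenever $(s,t)\le(s',t')$ and both lie in $S$. Generators at points $(s,t)\in S$ for which some $(s',t')\ge(s,t)$ lies in $D\setminus R$ are killed, because the structure map $\cat{k}_{(s,t)}\to 0$ must factor through the colimit. So the colimit is $\cat{k}^{\pi_0}$ of the surviving, non-killed part, taken in the connectivity sense of the definition of intervals. Concretely, if $x<\|(a,c)\|_p$ then $S=\varnothing$ by monotonicity, giving $0$. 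If $\|(a,c)\|_p\le x$, every point of $S$ lies above $(a,c)\in S$, so everything is identified with the generator at $(a,c)$, and the colimit is $\cat{k}$ or $0$ according to whether that generator is killed. The generator at $(a,c)$ is killed iff there is $(s,t)\ge(a,c)$ with $\|(s,t)\|_p\le x$ and $(s,t)\notin R$, i.e.\ $s\ge b$ or $t\ge d$. By monotonicity of the $p$-quasinorm, the minimal such points are $(b,c)$ and $(a,d)$. Hence the generator is killed iff $x\ge\min(\|(b,c)\|_p,\|(a,d)\|_p)$. This gives the pointwise answer in (1); in (2) we have $b=\infty$, so only $(a,d)$ matters.

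Finally I would check the structure maps. For $\|(a,c)\|_p\le x\le y<\min(\dots)$, the inclusion $D_x\subset D_y$ induces the map sending the class of the generator at $(a,c)$ to the same class, hence the identity on $\cat{k}$. This identifies the module with the stated interval module, and the notation $\od$ in (1) is then justified as an isomorphism.

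I expect the main obstacle to be making the colimit computation rigorous, specifically the claim that all identifications route through the single minimal point $(a,c)$ and that the only relations come from maps into zero. I would handle this directly by exhibiting the cocone to $\cat{k}$ or $0$ and verifying the universal property: any cocone from $\cat{k}[R]|_D$ is determined by its value at $(a,c)$, and that value is forced to vanish exactly when a comparable point outside $R$ exists in $D$. This mirrors the argument used in \cref{corollary:tensor_product_of_infinite_intervals}.
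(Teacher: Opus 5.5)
Your proposal is correct and follows essentially the paper's argument: a pointwise computation of the colimit from \cref{corollary:convolution_as_colimit}, in which every nonzero term is identified with the generator at $(a,c)$, and that generator dies once the region $\|(s,t)\|_p\le x$ contains a point above $(a,c)$ lying outside the box. Where the paper splits into two cases according to which of $\|(b,c)\|_p$ and $\|(a,d)\|_p$ is smaller, your observation that the minimal points above $(a,c)$ outside the box are $(b,c)$ and $(a,d)$ handles both orderings at once, and your cocone check establishes the nonvanishing more rigorously than the paper's level-set sketch.
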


\begin{proof}
    We prove the isomorphism in (1). The other isomorphisms follow by similar arguments. Let $M=\cat{k}[a,b)$ and $N=\cat{k}[c,d)$. Assume that $\cpnorm(b,c)\le \cpnorm(a,d)$. \cref{fig:convolution_of_interval_modules} illustrates this case for $p\in \{\frac{1}{2},1,2,\infty\}$. Let $r\in \mathbb{R}_{+}$ and let $X_r\od\bigoplus\limits_{\cpnorm(s,t)=r}(M_s\ktensor N_t)$. For $\cpnorm(a,c)\le r\le \cpnorm (b,c)$, every summand of $X_r$ is in the image of $M_a\ktensor N_b\approx \cat{k}$, and therefore 
    $(M\ptensor N)_r= \underset{\cpnorm(s,t)\le r}{\colim} (M_s\ktensor N_t)\approx k$. Furthermore for $\cpnorm(a,c)\le r\le r'\le \cpnorm (b,c)$, $(M\ptensor N)_{r\le r'}$ is the identity map.

    For $\cpnorm(b,c)\le r$, each nonzero summand $M_s\ktensor N_t$ of $X_r$ has $t>c$ and thus lies in the image of $M_s\ktensor N_c\approx k$. However, the map 
    $M_s\ktensor N_c\to M_l\ktensor N_t$ where $l$ is such that $\cpnorm(l,c)=r$ has to be the zero map as $\cpnorm (b,c)\le r$ and thus $b\le l$ and therefore $M_l=0$ (\cref{fig:convolution_of_interval_modules}). Hence $M\ptensor N\approx \cat{k}[\cpnorm(a,c),\cpnorm (b,c))$.

    If the case was that $\cpnorm(a,d)\le \cpnorm(b,c)$, then the same argument shows that $M\ptensor N\approx \cat{k}[\cpnorm(a,c),\cpnorm(a,d))$. Combining these two cases we have 
    $M\ptensor N\approx \cat{k}[\cpnorm (a,c),\min(\cpnorm(b,c),\cpnorm(a,d)))$.
\end{proof}

\begin{figure}[H]
\centering
\begin{tikzpicture}[line cap=round,line join=round,,x=1.0cm,y=1.0cm,scale=0.5]
\draw[->,color=black,dashed] (0,0) -- (11.5,0);
\draw[->,color=black,dashed] (0,0) -- (0,11.5);
\draw[color=red,line width=0.8mm] (2,0) -- (5,0);
\draw[color=red,line width=0.8mm] (0,3) -- (0,9);
\fill [color=blue] (2,0) circle (1.5mm);
\fill [color=blue] (5,0) circle (1.5mm);
\fill [color=blue] (0,3) circle (1.5mm);
\fill [color=blue] (0,9) circle (1.5mm);
\fill[fill=red!25] (5,3) rectangle (2,9);
\fill [color=red] (2,3) circle (1.5mm);
\fill [color=red] (4,3) circle (1.5mm);
\draw[color=black] (4,2.5) node[scale=0.8]{$(s,c)$};
\draw[color=black] (2,2.5) node[scale=0.8]{$(a,c)$};
\draw[color=black,dashed,line width=0.3mm,samples=100,domain=0:9.9,smooth] 
  plot (\x, {(9.90^0.5 - \x^0.5)^2});
  \draw[color=black,dashed,line width=0.3mm,samples=100,domain=0.5:9.9,smooth] 
  plot (\x, {(15.74^0.5 - \x^0.5)^2});
     \draw[color=red,line width=0.5mm,samples=100,domain=2.0:5.0,smooth] 
  plot (\x, {(15.74^0.5 - \x^0.5)^2});
   \draw[color=black,dashed,line width=0.3mm,samples=100,domain=1.0:9.9,smooth] 
  plot (\x, {(19.48^0.5 - \x^0.5)^2});
   \draw[color=red,line width=0.5mm,samples=100,domain=2.0:5.0,smooth] 
  plot (\x, {(19.48^0.5 - \x^0.5)^2});
  \draw[color=black,line width=0.3mm,samples=100,domain=2.0:3.0,smooth] 
  plot (\x, {(12^0.5 - \x^0.5)^2});
    \draw[color=black,line width=0.3mm,samples=100,domain=2.0:5.0,smooth] 
  plot (\x, {(17.94^0.5 - \x^0.5)^2});
      \draw[color=black,dashed,line width=0.3mm,samples=100,domain=5.0: {(17.94^0.5 - 3^0.5)^2},smooth] 
  plot (\x, {(17.94^0.5 - \x^0.5)^2});
  \draw[color=black] (4,{(17.94^0.5-4^0.5)^2-0.5}) node[scale=0.8]{$(s,t)$};
\draw[color=black] (7,2.5) node[scale=0.8]{$(l,c)$};
\fill [color=red] (4,{(17.94^0.5-4^0.5)^2})  circle (1.5mm);
\fill [color=red] ({(17.94^0.5 - 3^0.5)^2},3) circle (1.5mm);
 \fill [color=red] (2,3) circle (1.5mm);
\draw[color=black] (2,-0.5) node[scale=0.8] {$a$};
\draw[color=black] (5,-0.5) node[scale=0.8] {$b$};
\draw[color=black] (-0.5,3) node[scale=0.8] {$c$};
\draw[color=black] (-0.5,9) node[scale=0.8] {$d$};
\draw[color=black] (2.5,1) node[scale=0.8]{$\ell^{1/2}_c(a,c)$};
\draw[color=black] (6,1.5) node[scale=0.8] {$\ell^{1/2}_c(b,c)$};
\draw[color=black] (10.5,2.5) node[scale=0.8] {$\ell^{1/2}_c(a,d)$};
\end{tikzpicture}
\hspace{0.3cm}
\begin{tikzpicture}[line cap=round,line join=round,,x=1.0cm,y=1.0cm,scale=0.5]
\draw[->,color=black,dashed] (0,0) -- (11.5,0);
\draw[->,color=black,dashed] (0,0) -- (0,11.5);
\draw[color=red,line width=0.8mm] (2,0) -- (5,0);
\draw[color=red,line width=0.8mm] (0,3) -- (0,9);
\fill [color=blue] (2,0) circle (1.5mm);
\fill [color=blue] (5,0) circle (1.5mm);
\fill [color=blue] (0,3) circle (1.5mm);
\fill [color=blue] (0,9) circle (1.5mm);

\fill[fill=red!25] (5,3) rectangle (2,9);
\draw[color=black,dashed,line width=0.3mm] (0,5) -- (5,0);
\draw[color=black,dashed,line width=0.3mm] (0,8) -- (8,0);
\draw[color=black,dashed,line width=0.3mm] (0,11)--(11,0);
\draw[color=red, line width=0.5mm] (2,6)--(5,3);
\draw[color=red, line width=0.5mm] (2,9) -- (5,6);
\draw[color=black] (4,5.5) node[scale=0.8]{$(s,t)$};
\draw[color=black] (4,2.5) node[scale=0.8]{$(s,c)$};
\draw[color=black] (7,2.5) node[scale=0.8]{$(l,c)$};
\draw[color=black] (2,2.5) node[scale=0.8]{$(a,c)$};
\draw[color=black,line width=0.3mm] (2,4)--(3,3);
\draw[color=black,line width=0.3mm](2,8)--(5,5);
\draw[color=black,dashed,line width=0.3mm] (5,5)--(7,3);
\draw[color=black] (2,-0.5) node[scale=0.8] {$a$};
\draw[color=black] (5,-0.5) node[scale=0.8] {$b$};
\draw[color=black] (-0.5,3) node[scale=0.8] {$c$};
\draw[color=black] (-0.5,9) node[scale=0.8] {$d$};
\draw[color=black] (3,1) node[scale=0.8] {$\ell^1_c(a,c)$};
\draw[color=black] (6,1) node[scale=0.8] {$\ell^1_c(b,c)$};
\draw[color=black] (9,4) node[scale=0.8] {$\ell^1_c(a,d)$};
\fill [color=red] (2,3) circle (1.5mm);
\fill [color=red] (4,6) circle (1.5mm);
\fill [color=red] (7,3) circle (1.5mm);
\fill [color=red] (4,3) circle (1.5mm);
\end{tikzpicture}
\hspace{0.3cm}
\begin{tikzpicture}[line cap=round,line join=round,,x=1.0cm,y=1.0cm,scale=0.5]
\draw[->,color=black,dashed] (0,0) -- (11.5,0);
\draw[->,color=black,dashed] (0,0) -- (0,11.5);
\draw[color=red,line width=0.8mm] (2,0) -- (5,0);
\draw[color=red,line width=0.8mm] (0,3) -- (0,9);
\fill [color=blue] (2,0) circle (1.5mm);
\fill [color=blue] (5,0) circle (1.5mm);
\fill [color=blue] (0,3) circle (1.5mm);
\fill [color=blue] (0,9) circle (1.5mm);
\fill[fill=red!25] (5,3) rectangle (2,9);
\draw[dashed, thick] ({sqrt(13)},0) arc[start angle=0, end angle=90, radius={sqrt(13)}];
\draw[dashed, thick] ({sqrt(34)},0) arc[start angle=0, end angle=90, radius={sqrt(34)}];
\draw[dashed, thick] ({sqrt(85)},0) arc[start angle=0, end angle=90, radius={sqrt(85)}];

\pgfmathsetmacro{\startangleA}{atan2(3,5)}
\pgfmathsetmacro{\endangleA}{atan2({sqrt(30)},2)}
\draw[color=red, line width=0.5mm] (5,3) arc[start angle=\startangleA, end angle=\endangleA, radius={sqrt(34)}];
\pgfmathsetmacro{\startangleB}{atan2(5,5)}
\pgfmathsetmacro{\endangleB}{atan2({sqrt(46)},2)}
\draw[color=black, line width=0.3mm] (5,5) arc[start angle=\startangleB, end angle=\endangleB, radius={sqrt(50)}];
\pgfmathsetmacro{\startangleC}{atan2(3,{sqrt(41)})}
\pgfmathsetmacro{\endangleC}{atan2(5,5)}
\draw[color=black, dashed,line width=0.3mm] ({sqrt(41)},3) arc[start angle=\startangleC, end angle=\endangleC, radius={sqrt(46)}];
\pgfmathsetmacro{\startangleD}{atan2({sqrt(60)},5)}
\pgfmathsetmacro{\endangleD}{atan2(9,2}
\draw[color=red, line width=0.5mm] (5,{sqrt(60)}) arc[start angle=\startangleD, end angle=\endangleD, radius={sqrt(85)}];
\pgfmathsetmacro{\startangleE}{atan2(3,3)}
\pgfmathsetmacro{\endangleE}{atan2({sqrt(14)},2}
\draw[color=black, line width=0.3mm] (3,3) arc[start angle=\startangleE, end angle=\endangleE, radius={sqrt(18)}];
\draw[color=black] (4,5.5) node[scale=0.8]{$(s,t)$};
\draw[color=black] (4,2.5) node[scale=0.8]{$(s,c)$};
\draw[color=black] (7,2.5) node[scale=0.8]{$(l,c)$};
\draw[color=black] (2,2.5) node[scale=0.8]{$(a,c)$};
\draw[color=black] (2,-0.5) node[scale=0.8] {$a$};
\draw[color=black] (5,-0.5) node[scale=0.8] {$b$};
\draw[color=black] (-0.5,3) node[scale=0.8] {$c$};
\draw[color=black] (-0.5,9) node[scale=0.8] {$d$};
\draw[color=black] (2.1,1) node[scale=0.8]{$\ell^2_c(a,c)$};
\draw[color=black] (7,1) node[scale=0.8] {$\ell^2_c(b,c)$};
\draw[color=black] (8.5,6) node[scale=0.8] {$\ell^2_c(a,d)$};
\fill [color=red] (2,3) circle (1.5mm);
\fill [color=red] (4,{sqrt(34)}) circle (1.5mm);
\fill [color=red] ({sqrt(41)},3) circle (1.5mm);
\fill [color=red] (4,3) circle (1.5mm);
\end{tikzpicture}
\hspace{0.3cm}
\begin{tikzpicture}[line cap=round,line join=round,,x=1.0cm,y=1.0cm,scale=0.5]
\draw[->,color=black,dashed] (0,0) -- (11.5,0);
\draw[->,color=black,dashed] (0,0) -- (0,11.5);
\draw[color=red,line width=0.8mm] (2,0) -- (5,0);
\draw[color=red,line width=0.8mm] (0,3) -- (0,9);
\fill [color=blue] (2,0) circle (1.5mm);
\fill [color=blue] (5,0) circle (1.5mm);
\fill [color=blue] (0,3) circle (1.5mm);
\fill [color=blue] (0,9) circle (1.5mm);
\fill[fill=red!25] (5,3) rectangle (2,9);
\draw[color=black,dashed,line width=0.3mm] (0,3) -- (3,3);
\draw[color=black,dashed,line width=0.3mm] (3,3) -- (3,0);
\draw[color=black,dashed,line width=0.3mm] (0,5) -- (5,5);
\draw[color=black,dashed,line width=0.3mm] (5,5) -- (5,0);
\draw[color=black,dashed,line width=0.3mm] (0,9) -- (9,9);
\draw[color=black,dashed,line width=0.3mm] (9,9) -- (9,0);
\draw[color=red, line width=0.5mm] (2,3)--(3,3);
\draw[color=red, line width=0.5mm] (2,5)--(5,5);
\draw[color=red, dashed, line width=0.5mm] (5,5)--(5,3);
\draw[color=red, dashed, line width=0.5mm] (2,9) -- (5,9);
\draw[color=black] (4,6.5) node[scale=0.8]{$(s,t)$};
\draw[color=black] (4,2.5) node[scale=0.8]{$(s,c)$};
\draw[color=black] (7,2.5) node[scale=0.8]{$(l,c)$};
\draw[color=black] (2,2.5) node[scale=0.8]{$(a,c)$};
\draw[color=black,line width=0.3mm] (2,3.5)--(3.5,3.5);
\draw[color=black,line width=0.3mm] (3.5,3.5)--(3.5,3);
\draw[color=black,line width=0.3mm] (2,7)--(5,7);
\draw[color=black,dashed,line width=0.3mm] (5,7)--(7,7);
\draw[color=black,dashed,line width=0.3mm] (7,7)--(7,3);
\draw[color=black] (2,-0.5) node[scale=0.8] {$a$};
\draw[color=black] (5,-0.5) node[scale=0.8] {$b$};
\draw[color=black] (-0.5,3) node[scale=0.8] {$c$};
\draw[color=black] (-0.5,9) node[scale=0.8] {$d$};
\draw[color=black] (1.5,1) node[scale=0.8]{$\ell^{\infty}_c(a,c)$};
\draw[color=black] (6.5,1) node[scale=0.8] {$\ell^{\infty}_c(b,c)$};
\draw[color=black] (10.5,1) node[scale=0.8] {$\ell^{\infty}_c(a,d)$};
\fill [color=red] (2,3) circle (1.5mm);
\fill [color=red] (4,7) circle (1.5mm);
\fill [color=red] (7,3) circle (1.5mm);
\fill [color=red] (4,3) circle (1.5mm);
\end{tikzpicture}
\caption {\rmfamily Illustration of the $\cpnorm$-tensor product of interval modules: $\mathbf{k}[a,b)\ptensor \mathbf{k}[c,d) =\mathbf{k}[\cpnorm(a,c),\min(\cpnorm(a,d),\cpnorm(b,c))$. Only the cases $p=\frac{1}{2}$ (top left), $p=1$ (top right), $p=2$ (bottom left), $p=\infty$ (bottom right), and $\cpnorm(b,c)\le \cpnorm(a,d)$ are shown. The dashed lines represent the level sets of the maps $\cpnorm:\mathbb{R}^2_{+}\to \mathbb{R}_{+}$. For example, the curve $\ell^2_c(b,c)$ is the set of all points $(x,y)\in \mathbb{R}^2_{+}$ such that $\ell^2_c(x,y)=\ell^2_c(b,c)$. The red rectangle are the points in $\mathbb{R}^2_+$ where persistence module $\cat{k}[a,b)\boxtimes \cat{k}[c,d)$ is nonzero.}
\label{fig:convolution_of_interval_modules}
\end{figure}
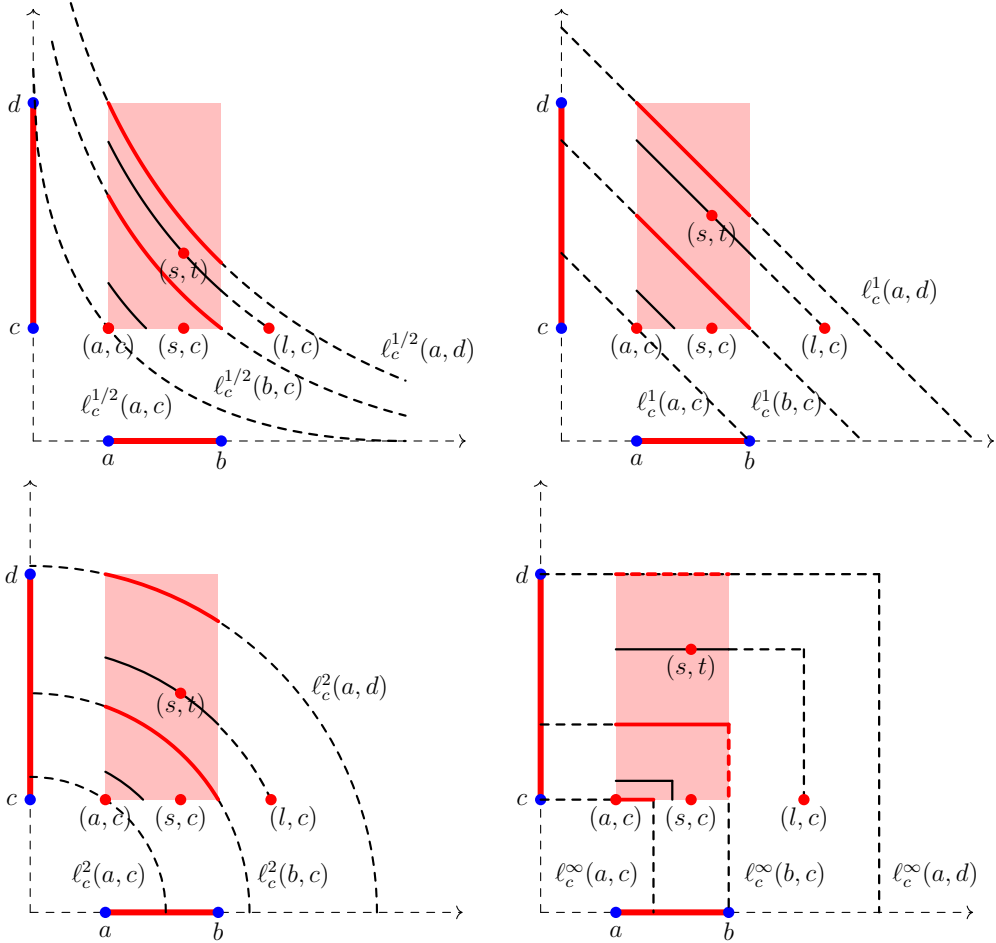

\begin{example}
\label{example:multiparameter_tensor_of_rect_modules}
Let $M,N:\cat{R}^n_+\to \cat{Vect_k}$ be interval modules of the form $M=\cat{k}[A]$ and $N=\cat{k}[B]$, where $A$ and $B$ are the hyper-rectangles
 \[A=[a_1,b_1)\times \dots \times [a_n,b_n), \textnormal{ and } B=[c_1,d_1)\times \dots \times [c_n,d_n).\] 
Consider the hyper-rectangle 
\[C=[||(a_1,c_1)||_p,\min(||(b_1,c_1)||_p,||(a_1,d_1)||_p)\times\dots\times [||(a_n,c_n)||_p,\min(||(b_n,c_n)||_p,||(a_n,d_n)||_p).\]
Then, similar to the one-parameter case, we find that  $M\ptensor N\approx \mathbf{k}[C]$.
\end{example}

\subsection{The p-quasinorm internal hom}
Here we define the adjoint to $\ptensor$,  $\scHom^{\cpnorm}$, for $p\in (0,\infty]$. We also show this functor applied to one-parameter interval modules returns an interval module. Recall \cref{def:convolution_hom}.
\begin{definition}
	Let $M,N:\cat{R}^n_+\to \cat{Vect_k}$ be persistence modules and $p\in (0,\infty]$. The \emph{componentwise }$p$-\emph{quasinorm internal hom} of simply $\cpnorm$-\emph{internal hom} of $M$ and $N$ is $\scHom^{\cpnorm}(M,N)$.
\end{definition}

We immediately have the following.

\begin{corollary}
    For any persistence modules $M,N:\cat{R}^n_+\to \cat{Vect_k}$, there exist natural isomorphisms
    \[\scHom^{\ell^{\infty}_c}(M,N)\approx \scHom_{\cat{R}^n_+}(M,N).\]
\end{corollary}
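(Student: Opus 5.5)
The quickest route goes through adjunctions, so no limits need to be computed. By \cref{lemma:sh_tensor_as_a_convolution}, for each fixed $M$ there is a natural isomorphism of functors
\[M\otimes_{\ell^{\infty}_c}-\;\approx\; M\otimes_{\cat{R}^n_+}-.\]
By \cref{theorem:convolution_adjunction}, the functor $\scHom^{\ell^\infty_c}(M,-)$ is right adjoint to $M\otimes_{\ell^{\infty}_c}-$. By \cref{theorem:hom_tensor_adjunction}, together with the symmetry of $\otimes_{\cat{R}^n_+}$ coming from the symmetry of $\ktensor$, the functor $\scHom_{\cat{R}^n_+}(M,-)$ is right adjoint to $M\otimes_{\cat{R}^n_+}-$.

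The plan is then as follows. Compose the natural bijections
\[\Hom_{\cat{k^{R^n_+}}}(N,\scHom^{\ell^\infty_c}(M,L))\approx\Hom(M\otimes_{\ell^\infty_c}N,L)\approx\Hom(M\otimes_{\cat{R}^n_+}N,L)\approx\Hom(N,\scHom_{\cat{R}^n_+}(M,L)).\]
These are natural in $N$ and $L$. The Yoneda lemma, or equivalently uniqueness of right adjoints, then yields an isomorphism $\scHom^{\ell^\infty_c}(M,L)\approx\scHom_{\cat{R}^n_+}(M,L)$ that is natural in $L$.

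The only point needing care is naturality in $M$ as well. Both adjunction bijections are natural in all three variables: this holds for \cref{theorem:convolution_adjunction} because every step there is a canonical adjunction, and the isomorphism of \cref{lemma:sh_tensor_as_a_convolution} is built from canonical colimit identifications, so it is natural in both $M$ and $N$. The Yoneda argument therefore produces an isomorphism natural in $M$ and $L$ simultaneously.

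As a sanity check, one can also verify the result directly from \cref{proposition:limit_characterization_of_internal_hom}. For $\varphi=\ell^\infty_c$ we have $\varphi(b,x)=\max(b,x)$ componentwise, so
\[\scHom^{\ell^\infty_c}(M,N)_x\approx\lim_{a\le b}\Hom_{\cat{k}}(M_a,N_{\max(b,x)}).\]
This should match $\lim_{x\le a\le b}\Hom_{\cat{k}}(M_a,N_b)$, which is what \cref{proposition:limit_characterization_of_hom} gives for the restrictions to $\cat{U}_x$. Matching the two limits requires a cofinality argument for indices $a\not\ge x$, and that is the step where this direct route is harder. For that reason I would present the adjunction argument as the proof.
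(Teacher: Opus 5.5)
Your proposal is correct and matches the paper's proof: the paper also obtains the isomorphism directly from \cref{lemma:sh_tensor_as_a_convolution}, \cref{theorem:convolution_adjunction} and \cref{theorem:hom_tensor_adjunction} by uniqueness of right adjoints. Two small remarks: you do not need the symmetry of $\otimes_{\cat{R}^n_+}$, because \cref{theorem:hom_tensor_adjunction} already states $M\otimes_{\cat{R}^n_+}-\dashv\scHom_{\cat{R}^n_+}(M,-)$; and your explicit check of naturality in $M$ is a useful addition that the paper leaves implicit.
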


\begin{proof}
    The result follows from \cref{lemma:sh_tensor_as_a_convolution,theorem:convolution_adjunction,theorem:hom_tensor_adjunction} since adjoints are unique up to natural isomorphisms.
\end{proof}

Therefore, by \cref{example:sheaf_hom_of_interval_modules} we also have the following.

\begin{corollary}
\label{corollary:inf_sc_hom}
Let $M,N:\cat{R}_+\to \cat{Vect_k}$ be the interval modules $M=\cat{k}[a,b)$ and $N=\cat{k}[c,d)$. Then
    \[\scHom^{\ell^{\infty}_c}(M,N)=\begin{cases}
0 & \text{if } a < b\le c < d\\
0 & \text{if } a < c\le b < d\\
\cat{k}[c,d) & \text{if } a < c < d\le b\\
0 & \text{if } c\le a < b < d\\
\cat{k}[0,d) & \text{if } c\le a < d\le b\\
0 & \text{if } c < d\le a < b
\end{cases}.\]
\end{corollary}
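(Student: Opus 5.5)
This corollary is a direct chain of identifications, so the plan is to combine the corollary immediately preceding it with \cref{example:sheaf_hom_of_interval_modules}.

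First, by the previous corollary (which follows from \cref{lemma:sh_tensor_as_a_convolution}, \cref{theorem:convolution_adjunction}, \cref{theorem:hom_tensor_adjunction} and uniqueness of adjoints), there is a natural isomorphism $\scHom^{\ell^{\infty}_c}(M,N)\approx \scHom_{\cat{R}_+}(M,N)$ for all persistence modules $M,N$ on $\cat{R}_+$. Specializing to $n=1$ and $M=\cat{k}[a,b)$, $N=\cat{k}[c,d)$ reduces the claim to computing $\scHom_{\cat{R}_+}(\cat{k}[a,b),\cat{k}[c,d))$. That computation is precisely the six-case table in \cref{example:sheaf_hom_of_interval_modules}.

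The only genuine content lies in justifying that table, since the example spells out only the case $c\le a<d\le b$. For each $x\in\mathbb{R}_+$, the value $\scHom_{\cat{R}_+}(M,N)_x$ is the space of natural transformations between the restrictions of $M$ and $N$ to the totally ordered set $[x,\infty)$. Each restriction is again an interval module on $[x,\infty)$, namely $\cat{k}[\max(a,x),b)$ and $\cat{k}[\max(c,x),d)$, or zero if the interval is empty. I would then apply \cref{example:natural_transformations_form_a_vector_space}, whose proof via \cref{lemma:morphisms_between_intervals} works verbatim on $[x,\infty)$. This gives $\cat{k}$ exactly when $\max(c,x)\le \max(a,x)<d\le b$, and $0$ otherwise.

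Running through the six orderings of $a,b,c,d$ then proceeds as follows.
\begin{itemize}
\item If $b\le c$, or if $d\le a$, the intersection is empty, giving $0$.
\item If $a<c\le b<d$, or if $c\le a<b<d$, the condition $d\le b$ fails, giving $0$.
\item If $a<c<d\le b$, the condition requires $\max(c,x)\le\max(a,x)$, which forces $x\ge c$, and $x<d$. This gives $\cat{k}[c,d)$.
\item If $c\le a<d\le b$, the condition holds for every $x<d$, giving $\cat{k}[0,d)$.
\end{itemize}

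It remains to check that the structure maps are identities on the support. Restricting a nonzero natural transformation from $[x,\infty)$ to $[y,\infty)$, for $x\le y<d$, keeps its components equal to $1$ on the overlap. By \cref{lemma:nonzero_map_of_interval_modules} this restriction is nonzero, hence it is the identity under the chosen bases.

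The main obstacle is bookkeeping of the boundary cases, such as $c=a$ or $d=b$, in this case analysis, together with the identification of the transition maps. Everything else is formal.
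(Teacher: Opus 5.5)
Your proposal is correct and follows the paper's route. The paper derives this corollary directly from the preceding isomorphism $\scHom^{\ell^{\infty}_c}(M,N)\approx\scHom_{\cat{R}^n_+}(M,N)$ together with the table in \cref{example:sheaf_hom_of_interval_modules}, which is computed exactly as you do: restrict to $[x,\infty)$ and apply \cref{example:natural_transformations_form_a_vector_space}. Your explicit check of the cases the paper leaves as ``computed similarly,'' including the condition $\max(c,x)\le\max(a,x)<d\le b$ and the transition maps, is accurate.
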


Thus for $p=\infty$, $\scHom^{\ell^p_c}(-,-)$ sends interval modules to interval modules (at least in the one-parameter case). For all other $p\in (0,\infty)$ the same is true. 

\begin{proposition}
\label{proposition:internal_hom_formulas}
    Let $p\in (0,\infty)$. Then 
    \begin{enumerate}[left=0pt]
        \item $\scHom^{\cpnorm}(\cat{k}[a,b), \cat{k}[c,d))\approx \cat{k}[(\max(0,c^p-a^p,d^p-b^p))^{\frac{1}{p}},(\max(0,d^p-a^p))^{\frac{1}{p}})$.
        \item $\scHom^{\cpnorm}(\cat{k}[a,\infty), \cat{k}[c,d)\approx \cat{k}[(\max(0,c^p-a^p))^{\frac{1}{p}},(\max(0,d^p-a^p))^{\frac{1}{p}})$.
        \item $\scHom^{\cpnorm}(\cat{k}[a,\infty), \cat{k}[c,\infty))\approx \cat{k}[(\max(0,c^p-a^p))^{\frac{1}{p}},\infty)$.
    \end{enumerate}
\end{proposition}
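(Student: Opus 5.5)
We compute pointwise using \cref{proposition:limit_characterization_of_internal_hom}. Since $\cpnorm$ is symmetric, for $x\in\mathbb{R}_+$
\[\scHom^{\cpnorm}(M,N)_x\approx \underset{s\le t}{\lim}\,\Hom_{\cat{k}}(M_s,N_{\|(t,x)\|_p}).\]
We read this limit as the space of families $(f_s)_{s}$, $f_s\in\Hom_{\cat{k}}(M_s,N_{\|(s,x)\|_p})$, compatible with the structure maps. Equivalently, it is $\Hom_{\cat{k^{R_+}}}(M,\tau_x N)$, where $\tau_x N$ is the reindexed module $(\tau_xN)_t\od N_{\|(t,x)\|_p}$. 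This is the same computation as in \cref{proposition:limit_characterization_of_hom}, with $N_b$ replaced by $N_{\varphi(b,x)}$. The argument then has two steps: identify $\tau_xN$ as an interval module, and apply \cref{example:natural_transformations_form_a_vector_space}.

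\textbf{Step 1.} Take $N=\cat{k}[c,d)$. Then $(\tau_xN)_t\neq 0$ exactly when $c\le (t^p+x^p)^{1/p}<d$, i.e. $c^p-x^p\le t^p<d^p-x^p$. The map $t\mapsto \|(t,x)\|_p$ is strictly increasing for $p<\infty$, so $\tau_x N\approx \cat{k}[c_x,d_x)$ with
\[c_x\od(\max(0,c^p-x^p))^{1/p},\qquad d_x\od(\max(0,d^p-x^p))^{1/p},\]
and the structure maps are identities. When $d^p\le x^p$ the interval is empty and $\tau_xN=0$. Strict monotonicity is exactly what fails for $p=\infty$, which is why that case was handled separately in \cref{corollary:inf_sc_hom}. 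If $d=\infty$ then $d_x=\infty$.

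\textbf{Step 2.} By \cref{example:natural_transformations_form_a_vector_space},
\[\Hom(\cat{k}[a,b),\cat{k}[c_x,d_x))\approx\cat{k}\iff c_x\le a<d_x\le b,\]
and it is $0$ otherwise. We translate the three inequalities into conditions on $x$:
\begin{itemize}
\item $a<d_x$ is equivalent to $a^p<d^p-x^p$, i.e. $x<(\max(0,d^p-a^p))^{1/p}$ (for $d>a$).
\item $d_x\le b$ is equivalent to $x^p\ge d^p-b^p$.
\item $c_x\le a$ is equivalent to $x^p\ge c^p-a^p$, since $c_x\le a$ either because $c^p\le x^p$ or because $c^p-x^p\le a^p$.
\end{itemize}
Together with $x\ge 0$, the set of $x$ with nonzero value is
\[\bigl[(\max(0,c^p-a^p,d^p-b^p))^{1/p},\,(\max(0,d^p-a^p))^{1/p}\bigr).\]
This gives (1). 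For (2) set $b=\infty$, which removes the $d^p-b^p$ term. For (3) also set $d=\infty$, which removes the upper bound.

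\textbf{Step 3: structure maps.} For $x\le y$ both in this set, we must check that the map $\scHom_x\to\scHom_y$ is an isomorphism. By \cref{def:convolution_hom} and the proof of \cref{proposition:limit_characterization_of_internal_hom}, this map sends the family $(f_s)$ to the family of composites with $N_{\|(s,x)\|_p\le \|(s,y)\|_p}$. Under the identification of Step 2 (\cref{lemma:nonzero_map_of_interval_modules}), the generator at $x$ is $f_s=1$ for $s\in[a,b)\cap[c_x,d_x)$. Composing with the structure map of $N$ gives $1$ on $[a,b)\cap[c_y,d_y)$, because for such $s$ both $\|(s,x)\|_p$ and $\|(s,y)\|_p$ lie in $[c,d)$. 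Indeed, $c_x\le a\le s$ forces $\|(s,x)\|_p\ge c$, and $s<d_y$ gives $\|(s,y)\|_p<d$. So the generator is sent to the generator, the map is the identity, and the module is the stated interval module.

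I expect this last verification, and the bookkeeping of the degenerate cases (empty intervals, $d\le a$, and the $\max(0,\cdot)$ clamps), to be the main source of care. The rest is direct translation of inequalities.
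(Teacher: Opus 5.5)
Your argument is correct, but it follows a different route from the paper's.

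\textbf{The paper's route.} The paper stays in two parameters. It identifies $\scHom^{\cpnorm}(M,N)_x$ with $\Hom_{\cat{k}^{\cat{U}_{(0,x)}}}(\pi_1^{-1}M|_{\cat{U}_{(0,x)}},(\cpnorm)^{-1}N|_{\cat{U}_{(0,x)}})$. Using the figures and \cref{lemma:morphisms_between_intervals}, it then decides when the strip $\pi_1^{-1}[a,b)$ and the region $(\cpnorm)^{-1}[c,d)$ overlap nicely inside $U_{(0,x)}$. This forces a split into $c^p-a^p\le d^p-b^p$ versus the reverse. The clamped cases are handled only by ``similar arguments,'' and the transition maps are left implicit.

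\textbf{Your route.} You collapse the second coordinate. By \cref{proposition:limit_characterization_of_internal_hom} and \cref{proposition:limit_characterization_of_hom}, the stalk is $\Hom(M,\tau_xN)$ with $\tau_xN=\cpnorm(-,x)^{-1}N$. Concretely, a natural transformation on $U_{(0,x)}$ is determined by its values on $\mathbb{R}_+\times\{x\}$. Everything therefore reduces to the one-parameter criterion $c_x\le a<d_x\le b$. Its three inequalities produce all the $\max(0,\cdot)$ clamps at once, and (2) and (3) follow by specialization.

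\textbf{Comparison.} Your version gives a uniform treatment of the degenerate cases. Your Step 3 also explicitly verifies that the transition maps are identities on the support, which the paper does not do. The paper's version gives, in exchange, a geometric picture of how the level sets of $\cpnorm$ shape the answer as $p$ varies.

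\textbf{One inaccurate side remark.} Strict monotonicity is not what sets $p=\infty$ apart. For $t\mapsto\max(t,x)$, the preimage of $[c,d)$ is still $[c,d)$, $[0,d)$ or empty. Your Step 2 then reproduces \cref{corollary:inf_sc_hom} directly. The paper treats $p=\infty$ separately only because the resulting formula differs. Your reduction is therefore more general than you claim.
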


\begin{proof}
    We prove the isomorphism in (1). The other isomorphisms follow by similar arguments. Let $M=\cat{k}[a,b)$ and $N=\cat{k}[c,d)$. Assume that $0\le c^p-a^p\le d^p-b^p$. \cref{fig:hom_of_interval_modules} illustrates this case for $p\in \{\frac{1}{2},1,2,\infty\}$ and $0<c^p-a^p$. Observe that
     \[\pi_1^{-1}([a,b))=\{(s,t)\in \mathbb{R}^2_{+}\, |\, a\le s<b\},\] 
    and
     \[(\cpnorm)^{-1}([c,d))=\{(s,t)\in\mathbb{R}^{2}_{+}\,|\, c\le \cpnorm(s,t)< d\}\] 
     are both intervals in the poset $\mathbb{R}^2_{+}$. Therefore, $\pi_1^{-1}M$ and $(\cpnorm)^{-1}N$ are interval modules as well (as functors on $\cat{R}^2_{+}$). For any $(s,t)\in \mathbb{R}^2_{+}$ let $U_{(s,t)}=\{(s',t') \in \mathbb{R}^2_{+}\,|\, s\le s', t\le t'\}$ be the principal up-set at $(s,t)$. Then, the restrictions $\pi_1^{-1}M|_{\cat{U}_{(s,t)}}$ and $(\cpnorm)^{-1}N|_{\cat{U}_{(s,t)}}$ are also interval modules. For any $x\in \mathbb{R}_+$ we have
    \begin{align*}
        \scHom^{\cpnorm}(M,N)_x&\od \pi_{2*}\scHom_{\cat{k^{R^2_+}}}(\pi_1^{-1}M,(\cpnorm)^{-1}N)_x& (\textnormal{\cref{def:convolution_hom}})\\
        &\od \underset{x\le \pi_2(s,t)}{\lim}\scHom_{\cat{k^{R^2_+}}}(\pi_1^{-1}M,(\cpnorm)^{-1}N)_{(s,t)}& (\textnormal{\cref{def:direct_image_1}})\\
        &\approx \scHom_{\cat{k^{R^2_+}}}(\pi_1^{-1}M,(\cpnorm)^{-1}N)_{(0,x)}& (\textnormal{\cref{lemma:colimits_and_limits_over_diagrams_with_0_and_1}})\\
        &\od \Hom_{\cat{k}^{\cat{U}_{(0,x)}}}(\pi_1^{-1}M|_{\cat{U}_{(0,x)}},(\cpnorm)^{-1}N|_{\cat{U}_{(0,x)}})& (\textnormal{\cref{def:sheaf_hom}})
    \end{align*}
    Thus, to find $\scHom^{\cpnorm}(M,N)_x$ we need to compute the set of natural transformations between the restrictions $\pi_1^{-1}M|_{\cat{U}_{(0,x)}}$ and $(\cpnorm)^{-1}N|_{\cat{U}_{(0,x)}}$.
    There are several cases to consider:
    
    \begin{enumerate}[left=0pt]
        \item Suppose $x< (d^p-b^p)^{\frac{1}{p}}$. Then, when restricted to $U_{(0,x)}$ the intervals $\pi_1^{-1}([a,b))$ and $(\cpnorm)^{-1}([c,d))$ do not overlap nicely. There is a part of $(\cpnorm)^{-1}([c,d))$ that lies to the ``right" of the intersection $\pi_1^{-1}([a,b))\cap (\cpnorm)^{-1}([c,d))$ (see \cref{fig:hom_of_interval_modules}). By \cref{lemma:morphisms_between_intervals}, it follows that $\Hom_{\cat{k}^{\cat{U}_{(0,x)}}}(\pi_1^{-1}M|_{\cat{U}_{(0,x)}},(\cpnorm)^{-1}N|_{\cat{U}_{(0,x)}})=0$.
        \item Suppose $(d^p-b^p)^{\frac{1}{p}}\le x<(d^p-a^p)^{\frac{1}{p}}$. Then, when restricted to $U_{(0,x)}$ the intervals $\pi_1^{-1}([a,b))$ and $(\cpnorm)^{-1}([c,d))$ overlap nicely (see \cref{fig:hom_of_interval_modules}). By \cref{lemma:morphisms_between_intervals}, it follows that $\Hom_{\cat{k}^{\cat{U}_{(0,x)}}}(\pi_1^{-1}M|_{\cat{U}_{(0,x)}},(\cpnorm)^{-1}N|_{\cat{U}_{(0,x)}})\approx\cat{k}$.
        \item Suppose $(d^p-a^p)^{\frac{1}{p}}\le x$. Then, when restricted to $U_{(0,x)}$ the intervals $\pi_1^{-1}([a,b))$ and $(\cpnorm)^{-1}([c,d))$ do not intersect (see \cref{fig:hom_of_interval_modules}). By \cref{lemma:nonzero_map_of_interval_modules} it follows that $\Hom_{\cat{k}^{\cat{U}_{(0,x)}}}(\pi_1^{-1}M|_{\cat{U}_{(0,x)}},(\cpnorm)^{-1}N|_{\cat{U}_{(0,x)}})=0$.
    \end{enumerate}

    On the other hand, if we had assumed $0\le d^p-b^p\le c^p-a^p$ by using similar arguments and considering \cref{fig:hom_of_interval_modules2}, which illustrates this case, we would arrive at the following cases:

    \begin{enumerate}[left=0pt]
        \item Suppose $x< (c^p-a^p)^{\frac{1}{p}}$. Then, when restricted to $U_{(0,x)}$ the intervals $\pi_1^{-1}([a,b))$ and $(\cpnorm)^{-1}([c,d))$ do not overlap nicely. There is a part of $(\pi_1)^{-1}([a,b))$ that lies to the ``left" of the intersection $\pi_1^{-1}([a,b))\cap (\cpnorm)^{-1}([c,d))$ (see \cref{fig:hom_of_interval_modules2}). By \cref{lemma:morphisms_between_intervals}, it follows that $\Hom_{\cat{k}^{\cat{U}_{(0,x)}}}(\pi_1^{-1}M|_{\cat{U}_{(0,x)}},(\cpnorm)^{-1}N|_{\cat{U}_{(0,x)}})=0$.
        \item Suppose $(c^p-a^p)^{\frac{1}{p}}\le x<(d^p-a^p)^{\frac{1}{p}}$. Then, when restricted to $U_{(0,x)}$ the intervals $\pi_1^{-1}([a,b))$ and $(\cpnorm)^{-1}([c,d))$ overlap nicely (see \cref{fig:hom_of_interval_modules2}). By \cref{lemma:morphisms_between_intervals}, it follows that $\Hom_{\cat{k}^{\cat{U}_{(0,x)}}}(\pi_1^{-1}M|_{\cat{U}_{(0,x)}},(\cpnorm)^{-1}N|_{\cat{U}_{(0,x)}})\approx\cat{k}$.
        \item Suppose $(d^p-a^p)^{\frac{1}{p}}\le x$. Then, when restricted to $U_{(0,x)}$ the intervals $\pi_1^{-1}([a,b))$ and $(\cpnorm)^{-1}([c,d))$ do not intersect (see \cref{fig:hom_of_interval_modules2}). By \cref{lemma:nonzero_map_of_interval_modules} it follows that $\Hom_{\cat{k}^{\cat{U}_{(0,x)}}}(\pi_1^{-1}M|_{\cat{U}_{(0,x)}},(\cpnorm)^{-1}N|_{\cat{U}_{(0,x)}})=0$.
    \end{enumerate}

    Now, combining these cases we would get the formula \[\scHom^{\cpnorm}(\cat{k}[a,b), \cat{k}[c,d))\approx \cat{k}[(\max(c^p-a^p,d^p-b^p))^{\frac{1}{p}},(d^p-a^p)^{\frac{1}{p}}).\] 
    However, since our persistence modules are indexed by $\mathbb{R}_+$ the formula also has to include the potential cases where some of the values $c^p-a^p$, $d^p-b^p$ and $d^p-a^p$ could be negative. Using similar arguments and \cref{lemma:morphisms_between_intervals}, we deduce the isomorphism 
    \[\scHom^{\cpnorm}(\cat{k}[a,b), \cat{k}[c,d))\approx \cat{k}[(\max(0,c^p-a^p,d^p-b^p))^{\frac{1}{p}},(\max(0,d^p-a^p))^{\frac{1}{p}}).\] For example, if $d^p-a^p<0$, then also $d^p-b^p<0$ and $c^p-a^p<0$ and in particular the intervals $\pi_1^{-1}([a,b)])$ and $(\cpnorm)^{-1}([c,d))$ do not intersect in $\mathbb{R}^2_+$ and thus $\scHom^{\cpnorm}(\cat{k}[a,b), \cat{k}[c,d))\approx\cat{k}[0,0)=0$.
\end{proof}

\begin{figure}[H]
\centering
\begin{tikzpicture}[line cap=round,line join=round,,x=1.0cm,y=1.0cm,scale=0.5]
\draw[->,color=black,dashed] (0,0) -- (11.5,0);
\draw[->,color=black,dashed] (0,0) -- (0,11.5);
\fill[fill=red!25] (5,0) rectangle (2,11.5);
\fill[cyan!35,, fill opacity=0.4] 
  (0,3) -- (0,9) --
  plot[samples=100,domain=0:9,smooth] (\x, {(9^0.5 - \x^0.5)^2}) --
  (9,0)--(3,0)--
  plot[samples=100,domain=0:3,smooth] (\x, {(3^0.5 - \x^0.5)^2}) --
  cycle;
\draw[color=red,line width=0.8mm] (2,0) -- (5,0);
\draw[color=cyan,line width=0.8mm] (0,3) -- (0,9);
\fill [color=blue] (2,0) circle (1.5mm);
\fill [color=blue] (5,0) circle (1.5mm);
\fill [color=blue] (0,3) circle (1.5mm);
\fill [color=blue] (0,9) circle (1.5mm);
\draw[color=black,line width=0.3mm] (0,{(9^0.5 - 2^0.5)^2}) -- (11.5,{(9^0.5 - 2^0.5)^2});
\draw[color=black,line width=0.3mm] (0,{(3^0.5 - 2^0.5)^2}) -- (11.5,{(3^0.5 - 2^0.5)^2});
\draw[color=black,line width=0.3mm] (0,{(9^0.5 - 5^0.5)^2}) -- (11.5,{(9^0.5 - 5^0.5)^2});
\draw[color=black] (10,3) node[scale=0.8] {$(d^{1/2}-a^{1/2})^2$};
\draw[color=black] (10,1) node[scale=0.8] {$(d^{1/2}-b^{1/2})^{2}$};
\draw[color=black] (10,-0.5) node[scale=0.8] {$(c^{1/2}-a^{1/2})^2$};
  \draw[color=cyan,dashed,line width=0.3mm,samples=100,domain=0:9,smooth] 
  plot (\x, {(9^0.5 - \x^0.5)^2});
  \draw[color=cyan,line width=0.3mm,samples=100,domain=0:3,smooth] 
  plot (\x, {(3^0.5 - \x^0.5)^2});
\draw[color=black] (2,-0.5) node[scale=0.8] {$a$};
\draw[color=black] (5,-0.5) node[scale=0.8] {$b$};
\draw[color=black] (-0.5,3) node[scale=0.8] {$c$};
\draw[color=black] (-0.5,9) node[scale=0.8] {$d$};
\end{tikzpicture}
\hspace{0.3cm}
\begin{tikzpicture}[line cap=round,line join=round,,x=1.0cm,y=1.0cm,scale=0.5]
\draw[->,color=black,dashed] (0,0) -- (11.5,0);
\draw[->,color=black,dashed] (0,0) -- (0,11.5);
\fill[fill=red!25] (5,0) rectangle (2,11.5);
\draw[color=red,line width=0.8mm] (2,0) -- (5,0);
\draw[color=cyan,line width=0.8mm] (0,3) -- (0,9);
\fill[cyan!25,fill opacity=0.4] (0,3) -- (3,0) -- (9,0) -- (0,9) -- cycle;  
\fill [color=blue] (2,0) circle (1.5mm);
\fill [color=blue] (5,0) circle (1.5mm);
\fill [color=blue] (0,3) circle (1.5mm);
\fill [color=blue] (0,9) circle (1.5mm);
\draw[color=black,line width=0.3mm] (0,7) -- (11.5,7);
\draw[color=black,line width=0.3mm] (0,1) -- (11.5,1);
\draw[color=black,line width=0.3mm] (0,4) -- (11.5,4);
\draw[color=black] (10,7.5) node[scale=0.8] {$d-a$};
\draw[color=black] (10,4.5) node[scale=0.8] {$d-b$};
\draw[color=black] (10,1.5) node[scale=0.8] {$c-a$};
\draw[color=cyan,dashed,line width=0.3mm] (9,0) -- (0,9);
\draw[color=cyan,line width=0.3mm] (3,0) -- (0,3);
\draw[color=black] (2,-0.5) node[scale=0.8] {$a$};
\draw[color=black] (5,-0.5) node[scale=0.8] {$b$};
\draw[color=black] (-0.5,3) node[scale=0.8] {$c$};
\draw[color=black] (-0.5,9) node[scale=0.8] {$d$};
\end{tikzpicture}

\begin{tikzpicture}[line cap=round,line join=round,,x=1.0cm,y=1.0cm,scale=0.5]
\draw[->,color=black,dashed] (0,0) -- (11.5,0);
\draw[->,color=black,dashed] (0,0) -- (0,11.5);
\fill[fill=red!25] (5,0) rectangle (2,11.5);
\draw[color=red,line width=0.8mm] (2,0) -- (5,0);
\draw[color=cyan,line width=0.8mm] (0,3) -- (0,9);
\fill [color=blue] (2,0) circle (1.5mm);
\fill [color=blue] (5,0) circle (1.5mm);
\fill [color=blue] (0,3) circle (1.5mm);
\fill [color=blue] (0,9) circle (1.5mm);
\fill[fill=red!25] (5,3) rectangle (2,9);
\draw[color=black,line width=0.3mm] (0,{sqrt(5)}) -- (11.5,{sqrt(5)});
\draw[color=black,line width=0.3mm] (0,{sqrt(77)}) -- (11.5,{sqrt(77)});
\draw[color=black,line width=0.3mm] (0,{sqrt(56)}) -- (11.5,{sqrt(56)});
\draw[color=black] (10,{sqrt(5)+0.5}) node[scale=0.8] {$(c^2-a^2)^{\frac{1}{2}}$};
\draw[color=black] (10,{sqrt(77)+0.5}) node[scale=0.8] {$(d^2-a^2)^{\frac{1}{2}}$};
\draw[color=black] (10,{sqrt(56)+0.5}) node[scale=0.8] {$(d^2-b^2)^{\frac{1}{2}}$};
\path[fill=cyan!35, fill opacity=0.4, draw=none]
        (9,0)                    
        arc[start angle=0, end angle=90, radius=9]  
        -- (0,3)                                    
        arc[start angle=90, end angle=0, radius=3]  
        -- cycle;  
\draw[color=cyan, thick] (3,0) arc[start angle=0, end angle=90, radius=3];
\draw[color=cyan, dashed, thick] (9,0) arc[start angle=0, end angle=90, radius=9];
\draw[color=black] (2,-0.5) node[scale=0.8] {$a$};
\draw[color=black] (5,-0.5) node[scale=0.8] {$b$};
\draw[color=black] (-0.5,3) node[scale=0.8] {$c$};
\draw[color=black] (-0.5,9) node[scale=0.8] {$d$};
\end{tikzpicture}
\hspace{0.3cm}
\begin{tikzpicture}[line cap=round,line join=round,,x=1.0cm,y=1.0cm,scale=0.5]
\fill[fill=red!25] (5,0) rectangle (2,11.5);

\draw[->,color=black,dashed] (0,0) -- (11.5,0);
\draw[->,color=black,dashed] (0,0) -- (0,11.5);
\draw[color=red,line width=0.8mm] (2,0) -- (5,0);
\draw[color=cyan,line width=0.8mm] (0,3) -- (0,9);
\fill [color=blue] (2,0) circle (1.5mm);
\fill [color=blue] (5,0) circle (1.5mm);
\fill [color=blue] (0,3) circle (1.5mm);
\fill [color=blue] (0,9) circle (1.5mm);
\draw[color=cyan,line width=0.3mm] (0,3) -- (3,3);
\draw[color=cyan,line width=0.3mm] (3,3) -- (3,0);
\draw[color=cyan,line width=0.3mm] (0,9) -- (9,9);
\draw[color=cyan,dashed,line width=0.3mm] (9,9) -- (9,0);
\fill[cyan!25,fill opacity=0.4] (0,3) -- (3,3) -- (3,0) -- (9,0) -- (9,9) --(0,9) -- cycle;
\draw[color=black,line width=0.3mm] (0,3) -- (11.5,3);
\draw[color=black,line width=0.3mm] (0,5) -- (11.5,5);
\draw[color=black,line width=0.3mm] (0,9) -- (11.5,9);
\draw[color=black] (10,3.5) node[scale=0.8] {$c$};
\draw[color=black] (10,5.5) node[scale=0.8] {$b$};
\draw[color=black] (10,9.5) node[scale=0.8] {$d$};
\draw[color=black] (2,-0.5) node[scale=0.8] {$a$};
\draw[color=black] (5,-0.5) node[scale=0.8] {$b$};
\draw[color=black] (-0.5,3) node[scale=0.8] {$c$};
\draw[color=black] (-0.5,9) node[scale=0.8] {$d$};
\end{tikzpicture}
\caption {\rmfamily Illustration of the $\scHom^{\cpnorm}$ functor applied to two interval modules: $M=\mathbf{k}[a,b)$ and $N=\mathbf{k}[c,d)$. Only the cases $p=\frac{1}{2}$ (top left), $p=1$ (top right), $p=2$ (bottom left) and $p=\infty$ (bottom right) and $0\le c^p- a^p\le d^p-b^p$ are shown. The red regions are points where the module $\pi_{1}^{-1}M:\mathbb{R}^2_{+}\to\cat{Vect_k}$ is nonzero. The cyan regions are the points where the module $(\cpnorm)^{-1}N:\mathbb{R}^2_{+}\to\cat{Vect_k}$ is nonzero.}
\label{fig:hom_of_interval_modules}
\end{figure}
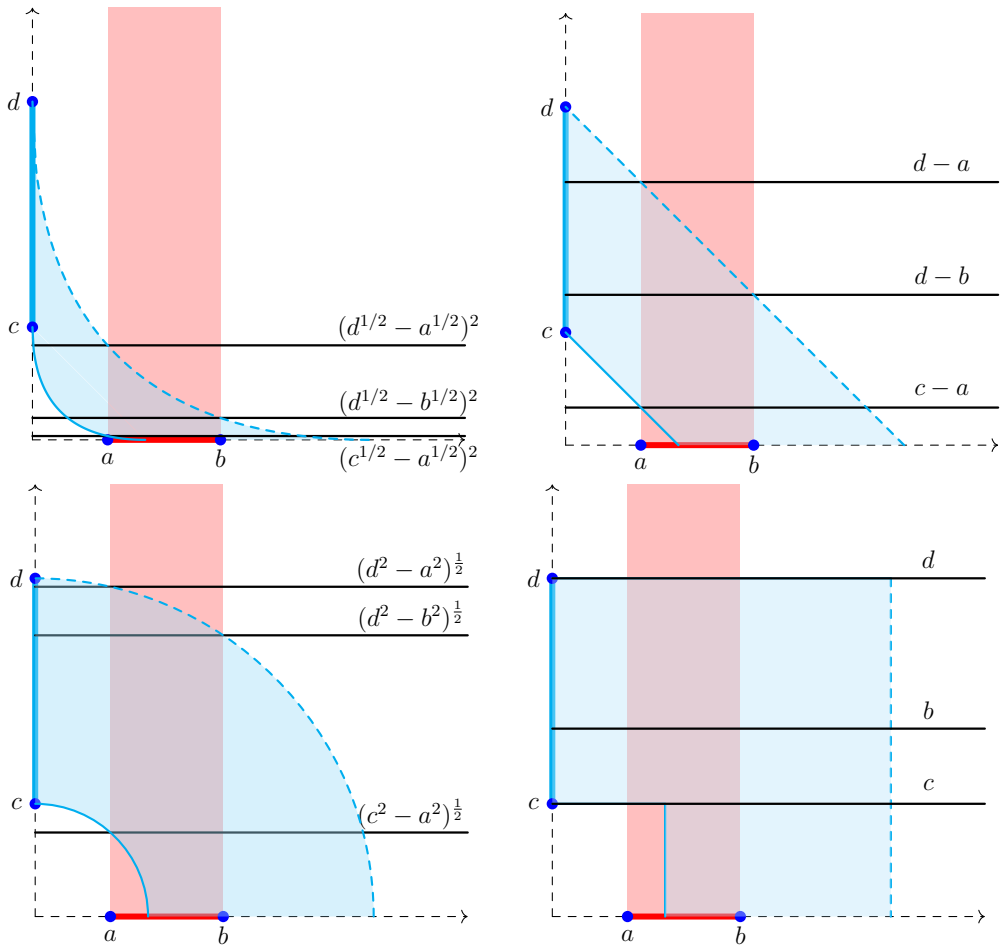

\begin{figure}[H]
\centering
\begin{tikzpicture}[line cap=round,line join=round,,x=1.0cm,y=1.0cm,scale=0.5]
\draw[->,color=black,dashed] (0,0) -- (11.5,0);
\draw[->,color=black,dashed] (0,0) -- (0,11.5);
\fill[fill=red!25] (5,0) rectangle (2,11.5);
\draw[color=red,line width=0.8mm] (2,0) -- (5,0);
\draw[color=cyan,line width=0.8mm] (0,6) -- (0,8);
\fill[cyan!35,, fill opacity=0.4] 
  (0,6) -- (0,8) --
  plot[samples=100,domain=0:8,smooth] (\x, {(8^0.5 - \x^0.5)^2}) --
  (8,0)--(5,0)--
  plot[samples=100,domain=5:0,smooth] (\x, {(6^0.5 - \x^0.5)^2}) --
  cycle;
 \draw[color=cyan,dashed,line width=0.3mm,samples=100,domain=0:8,smooth] 
  plot (\x, {(8^0.5 - \x^0.5)^2});
  \draw[color=cyan,line width=0.3mm,samples=100,domain=0:6,smooth] 
  plot (\x, {(6^0.5 - \x^0.5)^2});
\fill [color=blue] (2,0) circle (1.5mm);
\fill [color=blue] (5,0) circle (1.5mm);
\fill [color=blue] (0,6) circle (1.5mm);
\fill [color=blue] (0,8) circle (1.5mm);
\draw[color=black,line width=0.3mm] (0,{(8^0.5 - 5^0.5)^2}) -- (11.5,{(8^0.5 - 5^0.5)^2});
\draw[color=black,line width=0.3mm] (0,{(8^0.5 - 2^0.5)^2}) -- (11.5,{(8^0.5 - 2^0.5)^2});
\draw[color=black,line width=0.3mm] (0,{(6^0.5 - 2^0.5)^2}) -- (11.5,{(6^0.5 - 2^0.5)^2});
\draw[color=black] (10, {(8^0.5 - 2^0.5)^2+0.5}) node[scale=0.8] {$(d^{1/2}-a^{1/2})^2$};
\draw[color=black] (10,{(8^0.5 - 5^0.5)^2-0.75}) node[scale=0.8] {$(d^{1/2}-b^{1/2})^2$};
\draw[color=black] (10,{(6^0.5 - 2^0.5)^2+0.5}) node[scale=0.8] {$(c^{1/2}-a^{1/2})^2$};
\draw[color=black] (2,-0.5) node[scale=0.8] {$a$};
\draw[color=black] (6,-0.5) node[scale=0.8] {$b$};
\draw[color=black] (-0.5,6) node[scale=0.8] {$c$};
\draw[color=black] (-0.5,8) node[scale=0.8] {$d$};
\end{tikzpicture}
\begin{tikzpicture}[line cap=round,line join=round,,x=1.0cm,y=1.0cm,scale=0.5]
\fill[fill=red!25] (5,0) rectangle (2,11.5);
\draw[->,color=black,dashed] (0,0) -- (11.5,0);
\draw[->,color=black,dashed] (0,0) -- (0,11.5);
\draw[color=red,line width=0.8mm] (2,0) -- (5,0);
\draw[color=cyan,line width=0.8mm] (0,6) -- (0,8);
\fill[cyan!25,fill opacity=0.4] (0,6) -- (6,0) -- (8,0) -- (0,8) -- cycle;  
\fill [color=blue] (2,0) circle (1.5mm);
\fill [color=blue] (5,0) circle (1.5mm);
\fill [color=blue] (0,6) circle (1.5mm);
\fill [color=blue] (0,8) circle (1.5mm);
\draw[color=black,line width=0.3mm] (0,4) -- (11.5,4);
\draw[color=black,line width=0.3mm] (0,6) -- (11.5,6);
\draw[color=black,line width=0.3mm] (0,3) -- (11.5,3);
\draw[color=black] (10,6.5) node[scale=0.8] {$d-a$};
\draw[color=black] (10,3.5) node[scale=0.8] {$d-b$};
\draw[color=black] (10,4.5) node[scale=0.8] {$c-a$};
\draw[color=cyan,dashed,line width=0.3mm] (8,0) -- (0,8);
\draw[color=cyan,line width=0.3mm] (6,0) -- (0,6);
\draw[color=black] (2,-0.5) node[scale=0.8] {$a$};
\draw[color=black] (6,-0.5) node[scale=0.8] {$b$};
\draw[color=black] (-0.5,6) node[scale=0.8] {$c$};
\draw[color=black] (-0.5,8) node[scale=0.8] {$d$};
\end{tikzpicture}

\begin{tikzpicture}[line cap=round,line join=round,,x=1.0cm,y=1.0cm,scale=0.5]
\draw[->,color=black,dashed] (0,0) -- (11.5,0);
\draw[->,color=black,dashed] (0,0) -- (0,11.5);
\fill[fill=red!25] (6,0) rectangle (2,11.5);
\draw[color=red,line width=0.8mm] (2,0) -- (6,0);
\draw[color=cyan,line width=0.8mm] (0,6) -- (0,8);
\fill [color=blue] (2,0) circle (1.5mm);
\fill [color=blue] (6,0) circle (1.5mm);
\fill [color=blue] (0,6) circle (1.5mm);
\fill [color=blue] (0,8) circle (1.5mm);
\draw[color=black,line width=0.3mm] (0,{sqrt(32)}) -- (11.5,{sqrt(32)});
\draw[color=black,line width=0.3mm] (0,{sqrt(60)}) -- (11.5,{sqrt(60)});
\draw[color=black,line width=0.3mm] (0,{sqrt(28)}) -- (11.5,{sqrt(28)});
\draw[color=black] (10,{sqrt(32)+0.5}) node[scale=0.8] {$(c^2-a^2)^{\frac{1}{2}}$};
\draw[color=black] (10,{sqrt(60)+0.5}) node[scale=0.8] {$(d^2-a^2)^{\frac{1}{2}}$};
\draw[color=black] (10,{sqrt(28)-0.5}) node[scale=0.8] {$(d^2-b^2)^{\frac{1}{2}}$};
\path[fill=cyan!35, fill opacity=0.4, draw=none]
        (8,0)                    
        arc[start angle=0, end angle=90, radius=8]  
        -- (0,6)                                    
        arc[start angle=90, end angle=0, radius=6]  
        -- cycle;  
\draw[color=cyan, thick] (6,0) arc[start angle=0, end angle=90, radius=6];
\draw[color=cyan, dashed, thick] (8,0) arc[start angle=0, end angle=90, radius=8];
\draw[color=black] (2,-0.5) node[scale=0.8] {$a$};
\draw[color=black] (6,-0.5) node[scale=0.8] {$b$};
\draw[color=black] (-0.5,6) node[scale=0.8] {$c$};
\draw[color=black] (-0.5,8) node[scale=0.8] {$d$};
\end{tikzpicture}
\hspace{0.3cm}
\begin{tikzpicture}[line cap=round,line join=round,,x=1.0cm,y=1.0cm,scale=0.5]
\fill[fill=red!25] (5,0) rectangle (2,11.5);
\draw[->,color=black,dashed] (0,0) -- (11.5,0);
\draw[->,color=black,dashed] (0,0) -- (0,11.5);
\draw[color=red,line width=0.8mm] (2,0) -- (5,0);
\draw[color=cyan,line width=0.8mm] (0,6) -- (0,8);
\fill [color=blue] (2,0) circle (1.5mm);
\fill [color=blue] (5,0) circle (1.5mm);
\fill [color=blue] (0,6) circle (1.5mm);
\fill [color=blue] (0,8) circle (1.5mm);
\draw[color=cyan,line width=0.3mm] (0,6) -- (6,6);
\draw[color=cyan,line width=0.3mm] (6,6) -- (6,0);
\draw[color=cyan,line width=0.3mm] (0,8) -- (8,8);
\draw[color=cyan,dashed,line width=0.3mm] (8,8) -- (8,0);
\fill[cyan!25,fill opacity=0.4] (0,6) -- (6,6) -- (6,0) -- (8,0) -- (8,8) --(0,8) -- cycle;
\draw[color=black,line width=0.3mm] (0,6) -- (11.5,6);
\draw[color=black,line width=0.3mm] (0,8) -- (11.5,8);
\draw[color=black,line width=0.3mm] (0,5) -- (11.5,5);
\draw[color=black] (10,6.5) node[scale=0.8] {$c$};
\draw[color=black] (10,5.5) node[scale=0.8] {$b$};
\draw[color=black] (10,8.5) node[scale=0.8] {$d$};
\draw[color=black] (2,-0.5) node[scale=0.8] {$a$};
\draw[color=black] (5,-0.5) node[scale=0.8] {$b$};
\draw[color=black] (-0.5,6) node[scale=0.8] {$c$};
\draw[color=black] (-0.5,8) node[scale=0.8] {$d$};
\end{tikzpicture}
\caption {\rmfamily Illustration of the $\scHom^{\cpnorm}$ functor applied to two interval modules: $M=\mathbf{k}[a,b)$ and $N=\mathbf{k}[c,d)$. Only the cases $p=\frac{1}{2}$ (top left), $p=1$ (top right), $p=2$ (bottom left), $p=\infty$ (bottom right) and $0\le d^p- b^p\le c^p-a^p$ are shown. The red regions are points where the module $\pi_{1}^{-1}M:\mathbb{R}^2_{+}\to\cat{Vect_k}$ is nonzero. The cyan regions are the points where the module $(\cpnorm)^{-1}N:\mathbb{R}^2_{+}\to\cat{Vect_k}$ is nonzero.}
\label{fig:hom_of_interval_modules2}
\end{figure}

\begin{example}
\label{example:multiparameter_hom_underline_of_rect_modules}
Let $M,N:\cat{R}^n_+\to\cat{Vect_k}$ be interval modules of the form $M=\cat{k}[A]$ and $N=\cat{k}[B]$, where $A$ and $B$ are the hyper-rectangles
\[A=[a_1,b_1)\times \cdots \times [a_n,b_n), \quad \text{ and } \quad B=[c_1,d_1)\times \cdots \times [c_n,d_n).\]
Consider the hyper-rectangle
\begin{align*}
C&=[(\max(0,c_1^p-a_1^p,d_1^p-b_1^p))^{\frac{1}{p}},(\max(0,d_1^p-a_1^p))^{\frac{1}{p}})\times \cdots\\
&\times [(\max(0,c_n^p-a_n^p,d_n^p-b_n^p))^{\frac{1}{p}},(\max(0,d_n^p-a_n^p))^{\frac{1}{p}}).
\end{align*}
Then, similar to the one-parameter case, we find that $\scHom^{\cpnorm}(M,N)=\cat{k}[C]$.
\end{example}

\subsection{The $\varphi$-Tor and $\varphi$-Ext}
Let $M:\cat{P}\to \cat{Vect_k}$ be a persistence module and let $\varphi:P\times P\to P$ be an order-preserving morphism.  The category $\cat{k^P}$ is a Grothendieck category with a generator $U\od\bigoplus_{p\in P}\cat{k}[U_p]$ (\cite[Proposition 2.23]{Bubenik2021}) and therefore $\cat{k^P}$ has enough projectives and injectives. This allows us to define the derived functors of $\otimes_{\varphi}$ and $\scHom^{\varphi}$ in what follows.

By \cref{corollary:tensor_and_internal_hom_adjoints} the covariant functors $M\otimes_{\varphi}-,-\otimes_{\varphi}M:\cat{k^P}\to \cat{k^P}$ are right exact. Since $\cat{k^P}$ is known to have enough projectives the left derived functors of $M\otimes_{\varphi}-$ and $-\otimes_{\varphi}M$ are well defined. We will denote them by $\cat{Tor}_i^{\varphi}(M,-)$ and $\cat{Tor}_i^{\varphi}(-,M)$. In particular, if 
\[\cdots \to P_2\to P_1\to P_0\to N\to 0\]
is a projective resolution of $N$, applying $M\otimes_{\varphi}-$ yields the chain complex:

\[\cdots \to M\otimes_{\varphi}P_2\to M\otimes_{\varphi} P_1 \to M\otimes_{\varphi}P_0\to 0.\]

Then $\cat{Tor}^{\varphi}_i(M,N)$ is the $i$-th homology of the above complex. On the other hand, $\cat{Tor}^{\varphi}_i(N,M)$ is defined by applying the functor $-\otimes_{\varphi}M$ instead and then taking homology. Note that if $M$ is left (resp. right) $\otimes_{\varphi}$-acyclic then $\cat{Tor}_i^{\varphi}(M,N)=0$ (resp. $\cat{Tor}_i^{\varphi}(N,M)=0$) for $i\ge 1$ and for any persistence module $N$. In particular, if $M$ is projective then $\cat{Tor}_i^{\varphi}(M,N)=\cat{Tor}_i^{\varphi}(N,M)=0$ for $i\ge 1$. It follows from definition that  $\cat{Tor}_0^{\varphi}(M,N)=M\otimes_{\varphi}N$. Finally, observe that if $\varphi$ is symmetric then $\cat{Tor}_i^{\varphi}(M,N)=\cat{Tor}_i^{\varphi}(N,M)$ for any $i\ge 0$ and any persistence modules $M$ and $N$. This follows by \cref{corollary:tensor_for_symmetric_phi}.

The functor $\scHom^{\varphi}(M,-):\cat{k^P}\to \cat{k^P}$ is a covariant left exact functor by \cref{corollary:tensor_and_internal_hom_adjoints}. Since $\cat{k^P}$ is known to have enough injectives the right derived functor of $\scHom^{\varphi}(M,-)$ is well defined. We will denote this functor by $\cat{Ext}^i_{\varphi}(M,-)$. In particular, if 
\[0\to N \to I_0\to I_1\to I_2\to \cdots\]
is an injective resolution of $N$, applying $\scHom^{\varphi}(M,-)$ yields the cochain complex:

\[0 \to \scHom^{\varphi}(M,I_0)\to \scHom^{\varphi}(M,I_1) \to \scHom^{\varphi}(M,I_2)\to \cdots .\]

Then $\cat{Ext}_{\varphi}^ i(M,N)$ is the $i$-th cohomology of the above sequence. 

On the other hand, the functor $\scHom^{\varphi}(-,M):\cat{k^P} \to \cat{k^P}$ is a contravariant left exact functor. To see this, note that $\scHom^{\varphi}(-,M)\od \pi_{2*}\scHom_{\cat{P\times P}}(\pi_1^{-1}-,\varphi^{-1}M)$. The functor $\pi_{1}^{-1}$ is exact as it  has left and right adjoints by \cref{theorem:direct_image_adjunction_1,theorem:direct_image_adjunction_2}. The sheaf internal hom $\scHom_{\cat{P\times P}}$ is known to be left exact and contravariant in the first argument. Finally, the functor $\pi_{2*}$ is left exact. Therefore, as $\scHom^{\varphi}(-,M)$ is a composition of these functors, the claim follows. Thus to define $\cat{Ext}_{\varphi}(-,M)$ we do the following.

Consider a projective resolution of $N$, 
\[\cdots \to P_2\to P_1\to P_0\to N\to 0.\]

Applying $\scHom^{\varphi}(-,M)$ yields the cochain complex:

\[0 \to \scHom^{\varphi}(P_0,M)\to \scHom^{\varphi}(P_1,M) \to \scHom^{\varphi}(P_2,M)\to \cdots .\]

Then $\cat{Ext}_{\varphi}^ i(N,M)$ is the $i$-th cohomology of the above sequence. Note that if $M$ is projective or $N$ is injective, we have  $\cat{Ext}^i_{\varphi}(M,N)=0$ for $i\ge 1$. It follows from definition that  $\cat{Ext}^0_{\varphi}(M,N)=\scHom^{\varphi}(M,N)$. 

We can define $_{\varphi}\cat{Ext}^i(-,-)$, the $i$-th derived functors of $^{\varphi}\scHom(-,-)$ using the same procedure. However in all the computations in this work, the map $\varphi$ will be symmetric which will imply that $_{\varphi}\cat{Ext}^i(-,-)$ and  $\cat{Ext}_{\varphi}(-,-)$ are isomorphic by \cref{corollary:tensor_for_symmetric_phi} and so we won't bother with this here.

\subsection{Free, projective, injective and $\otimes_\varphi$-acyclic persistence modules}
To compute the derived functors of $\otimes_{\varphi}$ and $\scHom^{\varphi}$, for a poset morphism $\varphi:P\times P\to P$, we need to understand projective and injective resolutions of persistence modules.

A classification of interval modules into projective and injective modules was done in \cite[Theorem 6.5]{Bubenik2021}. However this was analysis was done for persistence modules over the poset $\mathbb{R}$ (or $\mathbb{R}^n$). Here we characterize interval modules into projective, injective and $\otimes_{\varphi}$-acyclic for the case $P=\mathbb{R}_+$, and thus some of this analysis has to be redone as the notion of projective and injective objects in the category $\cat{k^P}$ will depend on the poset $P$. However, many of the results can be recovered.

\begin{definition}
    A persistence module $M:\cat{P}\to \cat{Vect_k}$ is: 
    \begin{enumerate}[left=0pt]
    	\item \emph{free} if $M$ is isomorphic to the direct sum $\bigoplus_{a\in G}\cat{k}[U_a]$, where $G$ is a multiset of elements in $P$ and $U_a$ is the principal up-set at $a$.
        \item \emph{projective} if the functor $\Hom_{\cat{k^P}}(M,-):\cat{k}^P\to \cat{Vect_k}$ is exact.
        \item \emph{injective} if the functor $\Hom_{\cat{k^P}}(-,M):(\cat{k}^P)^{\op}\to \cat{Vect_k}$ is exact.
        \item \emph{left} $\otimes_{\varphi}$-\emph{acyclic} if $\cat{Top}^{\varphi}_i(M,N)=0$ for any $i>0$ and any persistence module $N$.
        \item \emph{right} $\otimes_{\varphi}$-\emph{acyclic} if if $\cat{Top}^{\varphi}_i(N,M)=0$ for any $i>0$ and any persistence module $N$.

    \end{enumerate} 
\end{definition}

\begin{remark}
	If the given poset morphism $\varphi$ were symmetric, $\varphi(p,q)=\varphi(q,p)$ for all $p,q\in P$, then $M\otimes_{\varphi}N\approx N\otimes_{\varphi}M$ for all persistence modules $M$ and $N$ by \cref{lemma:generalized_convolution}. Thus, in this case right $\otimes_{\varphi}$-acyclic objects coincide with left $\otimes_{\varphi}$-acyclic objects in $\cat{k^P}$. Note also that  projective persistence modules are left and right $\otimes_{\varphi}$-acyclic by definition. 
\end{remark}

We now introduce some results that will be of use in classifying interval modules.

\begin{lemma}
\label{lemma:free_is_projective}
	A free persistence module is projective.
\end{lemma}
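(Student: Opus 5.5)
The plan is to reduce to a single principal up-set module $\cat{k}[U_a]$ via a Yoneda-type identification, and then pass to direct sums.

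\emph{Step 1 (Yoneda).} For $a\in P$ and any persistence module $N$, I will show that evaluation at the generator gives a natural isomorphism
\[\Hom_{\cat{k^P}}(\cat{k}[U_a],N)\approx N_a, \qquad f\mapsto f_a(1).\]
\begin{enumerate}[left=0pt]
\item \emph{Injectivity.} For $x\in U_a$, naturality along $a\le x$ forces $f_x(1)=N_{a\le x}(f_a(1))$, since $\cat{k}[U_a]_{a\le x}$ is the identity on $\cat{k}$. For $x\notin U_a$ we have $f_x=0$. So $f$ is determined by $f_a(1)$.
\item \emph{Surjectivity.} Given $v\in N_a$, define $f_x(1)\od N_{a\le x}(v)$ for $x\in U_a$ and $f_x\od 0$ otherwise. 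Check naturality on each relation $x\le y$:
\begin{itemize}
\item If $x,y\in U_a$, it follows from functoriality, $N_{x\le y}\circ N_{a\le x}=N_{a\le y}$.
\item If $x\notin U_a$, the source $\cat{k}[U_a]_x$ is $0$, so the square commutes trivially.
\item The case $x\in U_a$, $y\notin U_a$ cannot occur, because $U_a$ is an up-set.
\end{itemize}
\end{enumerate}
This is exactly the Yoneda lemma, since $\cat{k}[U_a]$ is the $\cat{k}$-linearization of the representable functor $\Hom_{\cat{P}}(a,-)$. I could simply cite Yoneda, but I will include the check above because it is short.

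\emph{Step 2 (exactness for one summand).} Under the identification of Step 1, the functor $\Hom_{\cat{k^P}}(\cat{k}[U_a],-)$ becomes evaluation at $a$, $N\mapsto N_a$. Exact sequences in $\cat{k^P}$ are computed pointwise, as recalled in the section on persistence modules. Hence evaluation at $a$ is exact, so $\cat{k}[U_a]$ is projective.

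\emph{Step 3 (direct sums).} For $M=\bigoplus_{a\in G}\cat{k}[U_a]$, use the universal property of the coproduct:
\[\Hom_{\cat{k^P}}(M,-)\approx \prod_{a\in G}\Hom_{\cat{k^P}}(\cat{k}[U_a],-)\approx\prod_{a\in G}(-)_a .\]
Arbitrary products of exact sequences of vector spaces are exact, since products in $\cat{Vect_k}$ are exact. Therefore $\Hom_{\cat{k^P}}(M,-)$ is exact and $M$ is projective.

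The only point needing care is the naturality check in Step 1 for pairs $x\le y$ that straddle the boundary of $U_a$. The up-set property disposes of it, so I expect no real obstacle.
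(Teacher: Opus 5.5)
Your proof is correct and is essentially the paper's argument. Both reduce to a single principal up-set module $\cat{k}[U_a]$, use that a morphism out of it amounts to choosing an element of the target at $a$ (propagated upward by the structure maps) together with the fact that exactness is pointwise, and then pass to direct sums. The only difference is packaging: the paper builds the lift $\tilde{f}$ against an epimorphism by hand and cites closure of projectives under direct sums, while you isolate the Yoneda isomorphism $\Hom_{\cat{k^P}}(\cat{k}[U_a],N)\approx N_a$ and prove the direct-sum step via exactness of products in $\cat{Vect_k}$.
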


\begin{proof}
	We first prove that for any $a\in P$, the interval module $\cat{k}[U_a]$ is projective. To show that $\Hom_{\cat{k^P}}(\cat{k}[U_a],-)$ is exact, we show that any $f$ in the diagram
	
	\begin{figure}[H]
		\begin{tikzcd}
		&\cat{k}[U_a]\arrow[d,"f"]\arrow[dl,dashed,"\tilde{f}"']\\
		N\arrow[r,"h",twoheadrightarrow] &L
	\end{tikzcd}
	\end{figure}
	where the $h$ is an epimorphism, has an extension $\tilde{f}$ that makes the diagram commute. Note that $h$ being an epimorphism means that every component $h_b:N_b\to L_b$ is surjective, for all $b\in P$. If $b\not \in U_a$, then $f_b=0$ and we set $\tilde{f}_b=0$.  Let $1$ be the generator of $\cat{k}[U_a]_a=\cat{k}$.  Let $v=f_a(1)$. Since $h_a$ is surjective, there is a $w\in N_a$ such that $h_a(w)=v$. Define $\tilde{f}_a(1)=w$ and extend linearly.  Suppose that $a\le b$. Define
	\[\tilde{f}_b(1)= N_{a\le b}(\tilde{f}_a(1))=N_{a\le b}(w),\]
	and extend linearly.
	Then, it is immediate by the definition that $\tilde{f}$ is a natural transformation. Furthermore, $\tilde{f}$ makes the diagram commute.
	
	Since direct sums of projective objects in an abelian category are projective it follows that any free module is projective.
\end{proof}

Since projective persistence modules are immediately left and right $\otimes_{\varphi}$-acyclic we immediately have the following:

\begin{corollary}
\label{corollary:free_modules_are_flat}
	A free persistence module is left $\otimes_{\varphi}$-acyclic and right $\otimes_{\varphi}$-acyclic.
\end{corollary}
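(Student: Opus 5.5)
The plan is to combine \cref{lemma:free_is_projective} with the general fact that a projective object is acyclic for any right exact functor's left derived functors. By \cref{lemma:free_is_projective}, a free module $F\approx\bigoplus_{a\in G}\cat{k}[U_a]$ is projective in $\cat{k^P}$. We therefore need $\cat{Tor}^{\varphi}_i(F,N)=0$ and $\cat{Tor}^{\varphi}_i(N,F)=0$ for all $i>0$ and every persistence module $N$.

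For the right-acyclic statement, $\cat{Tor}^{\varphi}_i(N,F)$ is defined by taking a projective resolution of the argument $F$ and applying $N\otimes_{\varphi}-$. Since $F$ is projective, $0\to F\xrightarrow{\id}F\to 0$ is itself a projective resolution. Applying $N\otimes_{\varphi}-$ yields a complex concentrated in degree $0$, so its homology vanishes in positive degrees. Independence of the choice of projective resolution (standard, since $\cat{k^P}$ has enough projectives) then gives $\cat{Tor}^{\varphi}_i(N,F)=0$ for $i>0$. The same argument with $-\otimes_{\varphi}N$ resolving the left argument gives $\cat{Tor}^{\varphi}_i(F,N)=0$.

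The only subtle point is a possible mismatch of conventions. As defined in the paper, $\cat{Tor}^{\varphi}_i(M,N)$ is the derived functor of $M\otimes_{\varphi}-$ applied in the second variable. Left acyclicity of $F$ is therefore the statement that $\cat{Tor}^{\varphi}_i(F,N)=0$, computed from a resolution of $N$, and this is not immediate from projectivity of $F$ by the resolution argument above. Here I would use that $F\otimes_{\varphi}-$ is exact. By additivity, since $\otimes_{\varphi}$ commutes with direct sums (it is built from colimits and a left adjoint), it suffices to treat $F=\cat{k}[U_a]$. By \cref{lemma:generalized_convolution},
\[(\cat{k}[U_a]\otimes_{\varphi}N)_p=\underset{\varphi(c,d)\le p,\ a\le c}{\colim} N_d .\]
This colimit is filtered-like, and I would check directly that it is exact in $N$, for instance by identifying it with a left Kan extension along the map $d\mapsto\varphi(a,d)$. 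That identification gives $(\cat{k}[U_a]\otimes_{\varphi}N)_p\approx\colim_{\varphi(a,d)\le p}N_d$, because the term $c=a$ is initial in the $c$-direction. This is $(\varphi(a,-)_{\dagger}N)_p$, and I expect the main obstacle to be showing that this $\varphi(a,-)_{\dagger}$ is exact.

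If exactness fails for general posets, I would fall back on the balancing of $\cat{Tor}$. One route is the standard double-complex argument, using that $\otimes_{\varphi}$ is right exact in each variable and preserves projectives in a suitable sense. The other is to obtain vanishing from the projective-resolution argument in the first variable, after noting that both derived-functor conventions agree by the usual balancing lemma.
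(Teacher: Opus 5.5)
Your first two paragraphs are the paper's entire proof: free implies projective by \cref{lemma:free_is_projective}, and projective implies $\otimes_{\varphi}$-acyclic, which the paper simply calls immediate. As you observe, that step is only automatic for the $\cat{Tor}$ obtained by resolving the free argument, so your worry is legitimate and goes beyond what the paper checks.

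The paper's literal definitions compute $\cat{Tor}^{\varphi}_i(M,N)$ from $M\otimes_{\varphi}P_\bullet$ and $\cat{Tor}^{\varphi}_i(N,M)$ from $P_\bullet\otimes_{\varphi}M$, where $P_\bullet\to N$. The corollary is also used that way later: exactness of $-\otimes_{\varphi}P_i$ in \cref{lemma:colimits_of_projectives}, and exactness of $Z_p\otimes_{\varphi}-$ in the proof of \cref{theorem:kunneth_1}. So the real content is that both $\cat{k}[U_a]\otimes_{\varphi}-$ and $-\otimes_{\varphi}\cat{k}[U_a]$ are exact. Your right-acyclicity argument also resolved $F$, so it needs the same treatment, with $\varphi(-,a)_{\dagger}$ in place of $\varphi(a,-)_{\dagger}$. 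Your identification $\cat{k}[U_a]\otimes_{\varphi}N\approx\varphi(a,-)_{\dagger}N$ is correct. The projection $(c,d)\mapsto d$ from $\{(c,d)\,|\,a\le c,\ \varphi(c,d)\le x\}$ onto $\{d\,|\,\varphi(a,d)\le x\}$ is right adjoint to $d\mapsto(a,d)$, hence final.

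The gap is that you never prove exactness of $\varphi(a,-)_{\dagger}$, and your fallback cannot supply it. Balancing of $\cat{Tor}$, by the double-complex argument or otherwise, takes as input precisely that $Q\otimes_{\varphi}-$ and $-\otimes_{\varphi}Q$ are exact for projective $Q$. Right exactness together with preservation of projectives is not enough (here $\cat{k}[U_a]\otimes_{\varphi}\cat{k}[U_b]\approx\cat{k}[U_{\varphi(a,b)}]$ always holds). So the appeal to balancing is circular.

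In fact, for an arbitrary poset and order-preserving $\varphi$ the exactness genuinely fails. Let $P=\{z,u,v\}$ with $z<u$ and $z<v$, let $\psi(z)=z$ and $\psi(u)=\psi(v)=u$, and set $\varphi(c,d)\od\psi(d)$.
\begin{itemize}
\item Then $(\cat{k}[U_a]\otimes_{\varphi}N)_u\approx\colim_{P}N$, which is the pushout $N_u\sqcup_{N_z}N_v$.
\item Apply $\cat{k}[U_a]\otimes_{\varphi}-$ to the free resolution $0\to\cat{k}[U_u]\oplus\cat{k}[U_v]\to\cat{k}[U_z]\to\cat{k}[\{z\}]\to 0$. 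At $u$ this gives the sum map $\cat{k}^2\to\cat{k}$.
\item Hence $\cat{Tor}^{\varphi}_1(\cat{k}[U_a],\cat{k}[\{z\}])_u\approx\cat{k}\neq 0$, while resolving the projective slot gives $0$. The two conventions really disagree.
\end{itemize}

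What closes the gap in the settings the paper actually computes with is the following. $(\varphi(a,-)_{\dagger}N)_x$ is a colimit over the down-set $\{d\,|\,\varphi(a,d)\le x\}$. Whenever that set is empty or directed, the colimit is zero or filtered, hence exact in $\cat{Vect_k}$. This holds for every order-preserving $\varphi$ when $P=\mathbb{R}_+$, since nonempty down-sets of a chain are directed. It also holds for $\ell^p_c$ on $\mathbb{R}^n_+$, where the set is a principal down-set or empty. The same argument handles $\varphi(-,a)_{\dagger}$. Since $\otimes_{\varphi}$ commutes with direct sums in each variable, this gives both acyclicity statements for all free modules there.
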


The following will allow us to determine $\otimes_{\varphi}$-acyclic interval modules in $\cat{k^R_+}$.

\begin{lemma}
\label{lemma:colimits_of_projectives}
    Filtered colimits of projective persistence modules are left and right $\otimes_{\varphi}$-acyclic.
\end{lemma}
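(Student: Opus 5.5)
The proof combines two facts: $\otimes_{\varphi}$ commutes with filtered colimits in each variable, and filtered colimits are exact in $\cat{k^P}$. Together they show that $\cat{Tor}^{\varphi}_i$ commutes with filtered colimits in each variable. Since $\cat{Tor}^{\varphi}_i$ vanishes on projectives for $i\ge 1$, it then vanishes on filtered colimits of projectives.

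\textbf{Step 1: $\otimes_{\varphi}$ preserves colimits in each variable.} By \cref{theorem:convolution_adjunction}, $M\otimes_{\varphi}-$ and $-\otimes_{\varphi}M$ are left adjoints, so they preserve all colimits. Alternatively, this is visible directly from \cref{lemma:generalized_convolution}: colimits commute with colimits, and $\ktensor$ commutes with colimits in each variable.

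\textbf{Step 2: filtered colimits are exact in $\cat{k^P}$.} Colimits in $\cat{k^P}$ are computed pointwise, and filtered colimits are exact in $\cat{Vect_k}$. Consequently, filtered colimits of chain complexes in $\cat{k^P}$ commute with homology.

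\textbf{Step 3: the balancing issue.} Let $Q=\colim_{j\in J}Q_j$ be a filtered colimit of projectives $Q_j$. To show $Q$ is left acyclic we must prove $\cat{Tor}^{\varphi}_i(Q,N)=0$ for $i\ge1$. By definition this is computed by a projective resolution of $N$ with $Q$ held fixed. The cleaner route is to resolve $N$ by a projective resolution $P_\bullet\to N$ and compute
\[
Q\otimes_{\varphi}P_\bullet\approx \colim_j\,(Q_j\otimes_{\varphi}P_\bullet)
\]
by Step 1. By Step 2, taking homology gives $\cat{Tor}^{\varphi}_i(Q,N)\approx\colim_j \cat{Tor}^{\varphi}_i(Q_j,N)$. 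Each term vanishes for $i\ge1$, since $Q_j$ is projective and hence left acyclic (as noted after the definition of $\cat{Tor}^{\varphi}$, projectives satisfy $\cat{Tor}^{\varphi}_i(Q_j,N)=0$). The right-acyclic case is symmetric, using $-\otimes_{\varphi}M$ and $P_\bullet\otimes_{\varphi}Q$.

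\textbf{Main obstacle.} The remark in the paper that a projective $M$ has $\cat{Tor}^{\varphi}_i(M,N)=0$ implicitly uses balancing of $\cat{Tor}^{\varphi}$: one may resolve either variable. If that is not granted, I would prove balancing first. The key input is that $P\otimes_{\varphi}-$ is exact for $P$ projective. Every projective is a summand of a free module, and for free modules
\[
\cat{k}[U_a]\otimes_{\varphi}N\approx (\varphi(a,-))_{\dagger}N.
\]
I would verify this via \cref{lemma:generalized_convolution}, since the colimit over $\{(c,d):\varphi(c,d)\le p\}$ restricted to $c\ge a$ is cofinal at $c=a$. The functor $(\varphi(a,-))_{\dagger}$ is a left Kan extension along a poset map, and I would check its exactness pointwise. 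This pointwise exactness is the step I expect to be delicate: the indexing set $\{d:\varphi(a,d)\le p\}$ is a down-set but need not be directed, so the colimit over it need not be exact in general. If it fails, I would instead argue that acyclicity holds for the particular modules the paper applies the lemma to (interval modules in $\mathbb{R}_+$), where these down-sets are totally ordered and hence directed.
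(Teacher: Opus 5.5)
Your argument is essentially the paper's. The paper shows $-\otimes_{\varphi}\colim_i P_i$ is exact by pushing the short exact sequences $0\to A\otimes_{\varphi}P_i\to B\otimes_{\varphi}P_i\to C\otimes_{\varphi}P_i\to 0$ through the exact filtered colimit. You instead push a projective resolution through and commute homology with the filtered colimit; the two formulations are equivalent. Your appeal to left adjointness is the correct justification for cocontinuity; the paper's ``right exact, hence commutes with colimits'' only yields finite colimits. Both versions, however, rest on the same unproved input: that $Q\otimes_{\varphi}-$ and $-\otimes_{\varphi}Q$ are exact for every projective $Q$. The paper simply asserts this.

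Your ``main obstacle'' is therefore a genuine gap, and it is one in the paper's proof as well. At the stated generality (arbitrary $P$ and $\varphi$) it cannot be closed, because the claim is false. Your identification $\mathbf{k}[U_a]\otimes_{\varphi}N\approx\varphi(a,-)_{\dagger}N$ is correct: $\mathbf{k}[U_a]\boxtimes N\approx j_{\dagger}N$ for $j(d)=(a,d)$, and $\varphi_{\dagger}j_{\dagger}=(\varphi\circ j)_{\dagger}$. Your worry about non-directed fibres is exactly what goes wrong. Take $P=\mathbb{R}^2_+$ and the order-preserving map $\varphi(a,b)=a+(b_1+b_2,\,b_1+b_2)$. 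Then $(\mathbf{k}[U_{(0,0)}]\otimes_{\varphi}N)_{(1,1)}$ is the colimit of $N$ over the simplex $\{b: b_1+b_2\le 1\}$. On that simplex, the submodule $\mathbf{k}[U_{(1,0)}\cup U_{(0,1)}]\subset\mathbf{k}[U_{(0,0)}]$ is supported only at the two incomparable maximal points $(1,0)$ and $(0,1)$. So applying $\mathbf{k}[U_{(0,0)}]\otimes_{\varphi}-$ to the inclusion gives, at $(1,1)$, the sum map $\mathbf{k}^2\to\mathbf{k}$, which is not injective. The long exact sequence then shows that $\mathbf{Tor}^{\varphi}_1(\mathbf{k}[U_{(0,0)}],-)$ is nonzero on the quotient $\mathbf{k}[U_{(0,0)}]/\mathbf{k}[U_{(1,0)}\cup U_{(0,1)}]$. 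Hence the free module $\mathbf{k}[U_{(0,0)}]$, a trivial filtered colimit of projectives, is not left $\otimes_{\varphi}$-acyclic.

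The repair is a hypothesis on $(P,\varphi)$, not on the modules as your fallback suggests. Suppose every set $\{d:\varphi(a,d)\le p\}$ and $\{c:\varphi(c,b)\le p\}$ is empty or directed. Then $\mathbf{k}[U_a]\otimes_{\varphi}-$ and $-\otimes_{\varphi}\mathbf{k}[U_b]$ are pointwise filtered colimits, hence exact. Exactness passes to direct sums and to summands, and every projective is a summand of a free module, since each $M$ is a quotient of $\bigoplus_{a}\bigoplus_{v\in M_a}\mathbf{k}[U_a]$. The hypothesis holds for $P=\mathbb{R}_+$ with any $\varphi$, because down-sets of a chain are chains; that is the setting where this lemma is actually used. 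It also holds for $\ell^p_c$ on $\mathbb{R}^n_+$, where the fibres are boxes with a top element. With this hypothesis added, your Step 3 is a complete proof.
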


\begin{proof}
	We prove the statement for the right $\otimes_{\varphi}$-acyclic case. The other case is similar.
	Recall that $\cat{k^P}$ is a Grothendieck category. In particular, filtered colimits are exact functors on $\cat{k}^P$. 
	
	Let $\{P_i\}_{i}$ be a filtered diagram of projective persistence modules. 
	Note that since each $P_i$ is projective, it is $\otimes_{\varphi}$-acyclic and thus each $-\otimes_{\varphi}P_i:\cat{k^P}\to \cat{k^P}$ is an exact functor.  
	Let $0\to A\to B\to C\to 0$ be a short exact sequence of persistence modules. The for all $i$, we have the short exact sequence
	\[0\to A\otimes_{\varphi}P_i\to B\otimes_{\varphi}P_i\to C\otimes_{\varphi}P_i\to 0.\] 
	Since filtered colimits are exact in Grothendieck categories we also have the short exact sequence
	\[0\to \colim_iA\otimes_{\varphi}P_i\to \colim_iB\otimes_{\varphi}P_i\to \colim_iC\otimes_{\varphi}P_i\to 0.\] 
	Since $M\otimes_{\varphi}-$ is right exact for any persistence module $M$, $M\otimes_{\varphi}-$ commutes with colimits. Therefore we also have the short exact sequence
	\[0\to (A\otimes_{\varphi}\colim_iP_i)\to (B\otimes_{\varphi}\colim_iP_i)\to (C\otimes_{\varphi}\colim_iP_i)\to 0.\]
	Therefore $-\otimes_{\varphi}\colim_iP_i:\cat{k^P}\to \cat{k^P}$ is an exact functor. It then follows that $\cat{Tor}_j^{\varphi}(M,\colim_iP_i)=0$ for all $j>0$. Hence $\colim_iP_i$ is right $\otimes_{\varphi}$-acyclic.
\end{proof}

The following is immediate.

\begin{lemma}
\label{lemma:colimits_of_intervals}
    If $a<c\in \mathbb{R}_{+}\cup\{\infty\}$, then the filtered colimit $\underset{a<b<c}{\colim}\,\cat{k}[b,c)$ (where the morphisms are the obvious inclusions) is isomorphic to $\cat{k}(a,c)$. Dually, if $c<a \in \mathbb{R}_{+}$ then the filtered limit $\underset{c<b<a}{\lim}\cat{k}(c,b]$ is isomorphic to $\cat{k}(c,a)$. 
\end{lemma}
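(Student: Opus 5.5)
Both statements follow from the fact that limits and colimits in the functor category $\cat{k^{R_+}}$ are computed pointwise. So the plan is to evaluate the filtered diagram at each $x\in\mathbb{R}_+$, compute the resulting colimit (or limit) of vector spaces, and then check that the structure maps $x\le y$ assemble into the identity maps of an interval module.

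\emph{The colimit.} The diagram is indexed by $b\in(a,c)$ ordered by \emph{decreasing} $b$. For $b'<b$ there is an inclusion $\cat{k}[b,c)\hookrightarrow\cat{k}[b',c)$, and it is a nonzero morphism. To see this, note that $[b,c)$ and $[b',c)$ overlap nicely in the sense of \cref{def:nicely_overlap}, so \cref{example:natural_transformations_form_a_vector_space} applies (here $b'\le b<c\le c$). By \cref{lemma:nonzero_map_of_interval_modules} we may take it to be the identity on $[b,c)$ and zero elsewhere. This index category is directed, so the colimit is filtered. Fix $x\in\mathbb{R}_+$.
\begin{itemize}[left=0pt]
\item If $x\le a$ or $x\ge c$, every term $\cat{k}[b,c)_x$ is $0$, so the colimit at $x$ is $0$.
\item If $a<x<c$, then $\cat{k}[b,c)_x=\cat{k}$ exactly for $b\in(a,x]$, which is a cofinal subset of the index category. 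On this subset all transition maps are $\id_{\cat{k}}$, so the filtered colimit at $x$ is $\cat{k}$.
\end{itemize}
For $a<x\le y<c$, pick any $b\in(a,x]$. The map $\cat{k}[b,c)_{x\le y}=\id$ induces the identity on the colimits, and all other structure maps have zero source or target. This identifies the colimit with $\cat{k}(a,c)$. The case $c=\infty$ is identical.

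\emph{The limit.} For $b<b'$ in $(c,a)$ there is a morphism $\cat{k}(c,b']\to\cat{k}(c,b]$. It is nonzero by \cref{lemma:morphisms_between_intervals}: the set $(c,b']\setminus(c,b]=(b,b']$ lies above the intersection, and $(c,b]\subset(c,b']$, so the two intervals overlap nicely. By \cref{lemma:nonzero_map_of_interval_modules} it is the identity on $(c,b]$. The index category is $(c,a)$ with limits taken as $b\to a$, which is cofiltered. Fix $x$.
\begin{itemize}[left=0pt]
\item If $x\le c$ or $x\ge a$, all terms vanish, so the limit at $x$ is $0$.
\item If $c<x<a$, the terms with $b\in[x,a)$ form a cofinal subdiagram. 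On it every term is $\cat{k}$ and every transition map is the identity, so the limit is $\cat{k}$.
\end{itemize}
The structure maps are again identities on $(c,a)$, giving $\cat{k}(c,a)$.

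The only point needing care is the cofinality step, namely that restricting to the tail where $x$ lies in the interval does not change the (co)limit. This is standard for directed index categories: a cofinal directed subset computes the same filtered (co)limit. Naturality of the identification in $x$ is then immediate, because every transition map is either the identity or zero.
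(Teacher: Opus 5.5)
Your proof is correct and takes the same route as the paper, which just declares the lemma immediate. Evaluating objectwise (limits and colimits in $\cat{k^{R_+}}$ are computed pointwise) and passing to the cofinal tail, $b\in(a,x]$ for the colimit and $b\in[x,a)$ for the limit, on which every transition map is the identity, is exactly the routine check the paper leaves to the reader.
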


Recall that persistence modules are equivalent to sheaves by \cref{theorem:functors_and_sheaves}. In particular, every persistence module is a $\cat{k}_{P}$ module, where $\cat{k}_{P}$ is the constant sheaf of rings that assigns to every upset in $P$ the field $\cat{k}$. Recall that a sheaf $F$ on a space $X$ is \emph{flabby} if all the sheaf maps $F(U\subset X):F(X)\to F(U)$ are surjective for all open $U\subset X$. In particular, a persistence module $M$ is flabby as a sheaf the induced morphism $\lim_{p\in P}M_p\to \lim_{p\in U}M_p$ is surjective for all upsets $U$ in $P$. We have the following result that tells us that injective and flabby persistence modules are one and the same.

\begin{proposition}{\cite[Exercise 2.10 b)]{Kashiwara1990}}
\label{prop:injectives_are_flabby}
Let $\cat{k}_{X}$ be a sheaf of rings on a topological space $X$. A $\cat{k}_X$-module $M$ is injective if and only if it is flabby as a sheaf.
\end{proposition}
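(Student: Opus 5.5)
Throughout I read $\cat{k}_X$ in the sense the paper uses it: the constant sheaf of rings with value the field $\cat{k}$, on an arbitrary topological space $X$. The argument will use at several points that $\cat{k}$ is a field. Under the literal reading where $\cat{k}_X$ is an arbitrary sheaf of rings, the implication flabby $\Rightarrow$ injective fails. For example, take $X$ a point and $\cat{k}_X=\mathbb{Z}$: every module is flabby, but $\mathbb{Z}$ is not injective. So I will prove the equivalence for the constant sheaf of a field on an arbitrary space. The persistence case follows by specializing to $X=P$ with the Alexandrov topology, via \cref{theorem:functors_and_sheaves}.

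\emph{Injective $\Rightarrow$ flabby.} I will use the Godement embedding. For each $x\in X$, embed the stalk $M_x$ into an injective $\cat{k}$-module $I_x$; over a field, $I_x=M_x$ already works. Let $(i_x)_*I_x$ be the skyscraper sheaf at $x$, and form $G=\prod_{x\in X}(i_x)_*I_x$, whose sections over an open $U$ are $G(U)=\prod_{x\in U}I_x$. The canonical map $M\to G$ is a monomorphism, because a section is determined by its germs. The sheaf $G$ is flabby, since each restriction map $\prod_{x\in X}I_x\to\prod_{x\in U}I_x$ is a projection. If $M$ is injective, $M\to G$ splits, so $M$ is a direct summand of $G$. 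A direct summand of a flabby sheaf is flabby, because restriction maps commute with the projection onto the summand. Hence $M$ is flabby.

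\emph{Flabby $\Rightarrow$ injective.} I will use a Baer-type argument with Zorn's lemma. Fix a monomorphism $F'\hookrightarrow F$ and a morphism $f':F'\to M$.
\begin{enumerate}
\item Consider the pairs $(F'',f'')$ with $F'\subset F''\subset F$ and $f''$ extending $f'$, ordered by extension. A chain has an upper bound: take the union of the subsheaves (the sheafification of the union presheaf), on which the morphisms glue. So Zorn's lemma gives a maximal pair $(F'',f'')$.
\item Suppose $F''\neq F$. Choose an open $U$ and a section $s\in F(U)$ that does not lie in $F''$. The section $s$ defines a map $\sigma:\cat{k}_U\to F$, where $\cat{k}_U$ is the extension by zero of the constant sheaf on $U$.
\item Set $J=\sigma^{-1}(F'')\subset\cat{k}_U$, and consider the composite $g=f''\circ\sigma|_J:J\to M$. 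The main step is to extend $g$ to a morphism $\tilde g:\cat{k}_U\to M$.
\item Given $\tilde g$, define $F''+\operatorname{im}\sigma\to M$ by $f''$ on $F''$ and by $\tilde g$ on $\operatorname{im}\sigma$. This is well defined: $\ker\sigma\subset J$ and $g$ vanishes on $\ker\sigma$, and on the overlap $F''\cap\operatorname{im}\sigma=\sigma(J)$ the two definitions agree. This contradicts the maximality of $(F'',f'')$, so $F''=F$.
\end{enumerate}

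\emph{The key step (obstacle).} I first show that every subsheaf $J\subset\cat{k}_U$ has the form $\cat{k}_V$ for an open $V\subset U$.
\begin{itemize}
\item Each stalk $J_x$ is a subspace of $\cat{k}$, hence equal to $0$ or to $\cat{k}$.
\item Let $V=\{x : J_x\neq 0\}$. If $J_x\neq0$, then $J$ contains a germ at $x$ of a locally constant nonzero function. That function is a nonzero constant near $x$, so $J_y\neq 0$ for all $y$ near $x$. Hence $V$ is open.
\item Multiplying by the inverse of that constant, which is possible because $\cat{k}$ is a field, shows that $J$ contains $1$ near $x$. Therefore $J=\cat{k}_V$.
\end{itemize}
Now $\Hom(\cat{k}_V,M)\approx M(V)$ and $\Hom(\cat{k}_U,M)\approx M(U)$. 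Under these identifications, extending $g$ to $\tilde g$ amounts to lifting a section along the restriction map $M(U)\to M(V)$. That map is surjective precisely because $M$ is flabby. This is the only place flabbiness enters, and it completes the argument.
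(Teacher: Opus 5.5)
Your proof is correct. The paper gives no argument of its own for this statement; it only cites the exercise in Kashiwara--Schapira. Your proposal therefore supplies a self-contained proof, and it is the standard one:
\begin{enumerate}
\item Injective $\Rightarrow$ flabby: embed $M$ into the Godement sheaf $\prod_x (i_x)_*M_x$ and note that a direct summand of a flabby sheaf is flabby. This direction holds for any sheaf of rings. You also do not need $I_x$ to be injective here, only $G$ to be flabby.
\item Flabby $\Rightarrow$ injective: apply Baer's criterion to the generating family $\{\cat{k}_U\}$, using that the subsheaves of $\cat{k}_U$ are exactly the $\cat{k}_V$ with $V\subset U$ open.
\end{enumerate}

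What your route gains over the bare citation is that it shows exactly where the field hypothesis enters. The stalks of a subsheaf $J\subset\cat{k}_U$ are subspaces of $\cat{k}$, hence $0$ or $\cat{k}$. This is also why the hypothesis as printed (``a sheaf of rings'') is too general. Your counterexample is right: on a point with ring $\mathbb{Z}$, every sheaf is flabby but $\mathbb{Z}$ is not injective. So the proposition should be read, as you do, for the constant sheaf of a field. Since the paper only applies it to $\cat{k}_P$ on an Alexandrov space, nothing downstream is affected.

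Two small points you could make explicit:
\begin{itemize}
\item Flabbiness gives surjectivity of $M(X)\to M(V)$. The surjectivity of $M(U)\to M(V)$ that you actually use follows because $M(X)\to M(V)$ factors through $M(U)$.
\item The Zorn argument in your steps 1--4 is exactly the proof of Baer's criterion in a Grothendieck category with a set of generators. You could cite that criterion instead of reproving it.
\end{itemize}
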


From now on we assume that $P=\mathbb{R}_+$, which is a lattice.

\begin{proposition}
\label{proposition:projective_intervals}
    The interval module $\cat{k}(a,\infty)$ is not projective for any $a>0$.
\end{proposition}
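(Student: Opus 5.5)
We show directly that $\cat{k}(a,\infty)$ fails the lifting property: we exhibit an epimorphism $h:N\to \cat{k}(a,\infty)$ and the identity map $f=\id:\cat{k}(a,\infty)\to\cat{k}(a,\infty)$ that admits no lift $\tilde f:\cat{k}(a,\infty)\to N$ with $h\circ\tilde f=\id$. Equivalently, we show that a certain short exact sequence ending in $\cat{k}(a,\infty)$ does not split. The natural candidate is the free cover
\[N\od\bigoplus_{b>a}\cat{k}[b,\infty)=\bigoplus_{b>a}\cat{k}[U_b]\xrightarrow{\ h\ }\cat{k}(a,\infty),\]
where $h$ sends the generator of $\cat{k}[b,\infty)$ at $b$ to the generator $1\in\cat{k}(a,\infty)_b$. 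This is a morphism because $[b,\infty)\subset(a,\infty)$ and the structure maps of $\cat{k}(a,\infty)$ are identities. It is surjective in each degree $x>a$ because the summand with $b=x$ hits $1$, and in degrees $x\le a$ the target is $0$. This is the place where $a>0$ is used: the degrees $x\le a$ must form a nonempty region in $\mathbb{R}_+$, and more importantly $(a,\infty)$ must not be a principal up-set of $\mathbb{R}_+$. (For $a=0$ the analogous module with closed endpoint, $\cat{k}[0,\infty)=\cat{k}[U_0]$, is free.)

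Suppose a section $s:\cat{k}(a,\infty)\to N$ exists. By \cref{lemma:morphisms_between_intervals} (or \cref{example:natural_transformations_form_a_vector_space}), any map $\cat{k}(a,\infty)\to\cat{k}[b,\infty)$ with $b>a$ is zero, since the intervals do not overlap nicely: there are points of $(a,\infty)\setminus[b,\infty)$ below points of the intersection. A cleaner way to see this is to compare elementwise. For $x\in(a,b)$, the element $s_x(1)\in N_x$ lies in $\bigoplus_{b'\le x}\cat{k}$, so it involves only summands with $b'\le x$. Naturality then forces $s_y(1)=N_{x\le y}(s_x(1))$ for all $y\ge x$. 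Hence $s$ factors, in the $b$-th component, through zero whenever $x<b$, and letting $x\downarrow a$ shows that every component of $s$ vanishes.

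Concretely, we fix $y>a$ and write $s_y(1)$ as a finite combination of the generators of summands $\cat{k}[b_i,\infty)$ with $a<b_i\le y$. We then choose $x$ with $a<x<\min_i b_i$. The element $s_y(1)$ must be the image of $s_x(1)$, and $s_x(1)$ lies in summands with $b\le x<\min_i b_i$. Since the structure maps of $N$ preserve the direct sum decomposition, this forces $s_y(1)=0$, and therefore $h_y s_y(1)=0\neq 1$, a contradiction. We expect this finiteness-plus-direct-sum step to be the only real point; everything else is routine. It is exactly the place where the openness of the endpoint $a$ enters: there is no smallest $b>a$.

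If a slicker route is preferred, we could instead cite \cref{lemma:colimits_of_intervals}, which gives $\cat{k}(a,\infty)=\colim_{a<b}\cat{k}[b,\infty)$, and show $\Hom(\cat{k}(a,\infty),-)$ fails to be exact on the above epimorphism. That argument, however, reduces to the same computation, so we would present the direct one.
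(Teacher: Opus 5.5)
Your proof is correct. The cover $\bigoplus_{b>a}\cat{k}[b,\infty)\to\cat{k}(a,\infty)$ has no section, because $s_y(1)$ has finite support $\{b_i\}$ and must be the image of $s_x(1)$ for some degree $x<\min_i b_i$. This is the standard lifting argument. The paper gives no proof of its own and only defers to \cite[Proposition 6.1]{Bubenik2021}, so your argument simply supplies the omitted details.

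One small correction: nothing in your proof uses $a>0$. The same argument shows $\cat{k}(0,\infty)$ is not projective on $\mathbb{R}_+$, since $(0,\infty)$ also has no least element. So your remark about where that hypothesis enters is inaccurate, though harmless.
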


\begin{proof}
    The same arguments as in \cite[Proposition 6.1]{Bubenik2021}, where $a$ was allowed to be in $\mathbb{R}$, apply here.
\end{proof}

We accumulate all these observations below. 

\begin{proposition}
\label{proposition:injective_projective_classification}
    We have the following classification of interval modules into injective, projective and $\otimes_{\varphi}$-flat objects in $\cat{k^{R_+}}$, for any order-preserving map $\varphi:\mathbb{R}^2_+\to \mathbb{R}_+$.
    \begin{enumerate}[left=0pt]
        \item Every persistence module is $\otimes_{\ell^{\infty}_c}$-acyclic.
        \item The interval module $\cat{k}[a,\infty)$ is projective and left and right $\otimes_{\varphi}$-acyclic  but not injective, for $a>0$. 
         \item The interval module $\cat{k}[0,\infty)$ is both projective and injective and left and right $\otimes_{\varphi}$-acyclic.
        \item The interval module $\cat{k}(a,\infty)$ is not projective nor injective, but it is left and right $\otimes_{\varphi}$-acyclic for all $a>0$. 
        \item The interval modules $\cat{k}[0,a)$ and $\cat{k}[0,a]$ are not projective but are injective, for all $a\in \mathbb{R}_+$.
    \end{enumerate}
\end{proposition}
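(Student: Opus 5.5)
We treat the five items in turn, each using one of the earlier results.

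For (1), \cref{lemma:sh_tensor_as_a_convolution} identifies $\otimes_{\ell^\infty_c}$ with the pointwise tensor $\Ptensor$. Since $(M\Ptensor N)_x=M_x\ktensor N_x$, and tensoring over a field is exact, $M\Ptensor -$ and $-\Ptensor M$ are exact for every $M$. Hence all higher $\cat{Tor}^{\ell^\infty_c}$ vanish, and every module is acyclic.

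For projectivity and acyclicity in (2) and (3): $\cat{k}[a,\infty)=\cat{k}[U_a]$ is free, so it is projective by \cref{lemma:free_is_projective} and left and right $\otimes_\varphi$-acyclic by \cref{corollary:free_modules_are_flat}. This includes $a=0$, where $\cat{k}[0,\infty)=\cat{k}[U_0]$.

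Injectivity will be handled via \cref{prop:injectives_are_flabby}: a module is injective iff $\lim_{x\in\mathbb{R}_+}M_x\to\lim_{x\in U}M_x$ is surjective for every up-set $U$. The up-sets of $\mathbb{R}_+$ are $\varnothing$, $[u,\infty)$ and $(u,\infty)$, and $\lim_{x\in\mathbb{R}_+}M_x\approx M_0$ by \cref{lemma:colimits_and_limits_over_diagrams_with_0_and_1}.
\begin{itemize}
\item For $\cat{k}[0,\infty)$, every restriction map is the identity $\cat{k}\to\cat{k}$, so the module is flabby.
\item For $\cat{k}[a,\infty)$ with $a>0$, the global sections are $M_0=0$, while sections over $[a,\infty)$ form $\cat{k}$. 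The restriction is not surjective, so the module is not injective.
\item For $\cat{k}(a,\infty)$ with $a>0$, the same happens with $U=(a,\infty)$: global sections are $0$ but sections over $U$ form $\cat{k}$.
\item For $\cat{k}[0,a)$ and $\cat{k}[0,a]$, global sections are $\cat{k}$. Over an up-set $U$ the limit is $\cat{k}$ if $U$ is nonempty and contained in the support (impossible, since $U$ is unbounded), and $0$ otherwise. This is because a compatible family must vanish at points $\ge a$ (respectively $>a$) and all transition maps inside the support are identities. So every restriction map has target $0$ or is $\cat{k}\to\cat{k}$ the identity, hence is surjective, and both modules are injective.
\end{itemize}
The limit over the up-set is the one point needing a short check.

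For the remaining acyclicity claim in (4), \cref{lemma:colimits_of_intervals} suggests writing $\cat{k}(a,\infty)\approx\colim_{a<b}\cat{k}[b,\infty)$. This is a filtered colimit of the free modules $\cat{k}[U_b]$, so \cref{lemma:colimits_of_projectives} gives left and right $\otimes_\varphi$-acyclicity. Non-projectivity is \cref{proposition:projective_intervals}.

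The non-projectivity in (5) is the step I expect to be the main obstacle. For $\cat{k}[0,a)$ I would use the epimorphism $\cat{k}[0,\infty)\to\cat{k}[0,a)$ and show that it does not split. By \cref{example:natural_transformations_form_a_vector_space}, $\Hom(\cat{k}[0,a),\cat{k}[0,\infty))=0$, since the nice-overlap condition $0\le 0<\infty\le a$ fails. So no section exists, and $\cat{k}[0,a)$ is not projective.

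For $\cat{k}[0,a]$ the same argument applies with the surjection $\cat{k}[0,\infty)\to\cat{k}[0,a]$. The only nonzero candidate section would have to be the identity at every point of the intersection $[0,a]$ and zero at $x>a$, by \cref{lemma:nonzero_map_of_interval_modules}. But the target $\cat{k}[0,\infty)$ is nonzero past $a$ with identity transition maps, so naturality fails at $a\le x$ for $x>a$. Hence there is no section.

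If $a=0$ the intervals are degenerate: $\cat{k}[0,0)=0$ and $\cat{k}[0,0]$ is the module supported at $0$. These are treated by the same argument, or noted as trivially excluded.
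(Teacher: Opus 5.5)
Items (1)--(4) and the injectivity half of (5) follow the paper's proof: pointwise exactness of $\Ptensor$, freeness of $\cat{k}[U_a]$, flabbiness via \cref{prop:injectives_are_flabby}, and the filtered-colimit argument for $\cat{k}(a,\infty)$.

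\textbf{Non-projectivity in (5): a different, valid route.} The paper uses the cited fact that a finitely generated persistence module is projective iff it is free, and observes that $\cat{k}[0,a)$ and $\cat{k}[0,a]$ are finitely generated but not free. You instead use that a projective module splits off from anything surjecting onto it. You then show $\Hom(\cat{k}[0,a),\cat{k}[0,\infty))=0$; directly via \cref{lemma:morphisms_between_intervals}, the point $a\in B\setminus A$ lies above $0\in A\cap B$. The same holds for $\cat{k}[0,a]$. Your argument is self-contained within the paper and avoids the graded-module machinery; the paper's is shorter but leans on an external structural result. Both are fine for $a>0$.

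\textbf{An incorrect step in your flabbiness check for $\cat{k}[0,a)$ and $\cat{k}[0,a]$.} You claim that $\lim_{x\in U}M_x$ is $0$ for every nonempty up-set $U$ not contained in the support. Your reason is that a compatible family vanishes past $a$ and the transitions inside the support are identities. This is false. Compatibility only requires $m_y=M_{x\le y}(m_x)$, and for $x<a\le y$ that map is zero, so it imposes no condition on $m_x$.

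Concretely, for $U=[u,\infty)$ with $u<a$, the limit is $M_u=\cat{k}$ by \cref{lemma:colimits_and_limits_over_diagrams_with_0_and_1}. Your claim even contradicts your own correct computation that the global sections over the unbounded up-set $\mathbb{R}_+$ are $\cat{k}$.

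The correct statement is that $\lim_{x\in U}M_x\cong\cat{k}$ exactly when $U$ meets the support, and $0$ otherwise. For $U=(u,\infty)$, a compatible family is constant on the nonempty connected piece $U\cap[0,a)$ (resp.\ $U\cap[0,a]$) and zero elsewhere. In the nonzero case the restriction from $M_0=\cat{k}$ sends $1$ to the family with value $1$, so it is an isomorphism. With this correction, your concluding sentence (each restriction has target $0$ or is the identity $\cat{k}\to\cat{k}$) becomes true, and injectivity follows.

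\textbf{The case $a=0$.} Your remark is not right for $\cat{k}[0,0)$. It is the zero module, which is projective, so it cannot be treated by the same argument and must be excluded from the non-projectivity claim. The paper's statement and proof share this oversight, since $0$ is free. The module $\cat{k}[0,0]$ is genuinely non-projective, and your argument covers it.
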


\begin{proof}
\begin{enumerate}[left=0pt]
	\item This was proved in \cite[Theorem 7.3]{Bubenik2021} for the case $P=\mathbb{R}^n$, but the same arguments apply here. The skeleton of the argument is that by \cref{lemma:sh_tensor_as_a_convolution}, $\otimes_{\ell^{\infty}_c}$ corresponds to ``pointwise" $\cat{k}$-tensor products of vector spaces, which are exact. 
	
	\item The interval module $\cat{k}[a,\infty)$ is free and thus projective for any $a\in \mathbb{R}_+$, by \cref{lemma:free_is_projective}. It is therefore also left and right $\otimes_{\varphi}$-acyclic by \cref{corollary:free_modules_are_flat}. Note that if $a>0$ we have that $\lim_{x\in \mathbb{R}_+}\cat{k}[a,\infty)_x=0$ and $\lim_{x\in U}\cat{k}[a,\infty)_x\approx k$ for any upset $U\subset [a,\infty)$. Therefore $\cat{k}[a,\infty)$ is not flabby as a sheaf and therefore not an injective object by  \cref{prop:injectives_are_flabby}.
	
	\item By (2), $\cat{k}[0,\infty)$ is projective and left and right $\otimes_{\varphi}$-acyclic. Furthermore, it is not hard to see that $\cat{k}[0,\infty)$ is flabby as a sheaf and thus injective by  \cref{prop:injectives_are_flabby}.
	
	\item Let $a>0$. The interval module $\cat{k}(a,\infty)$ is not projective by \cref{proposition:projective_intervals}. Similarly, it is not hard to see that  $\cat{k}(a,\infty)$ is not flabby as a sheaf and is therefore not injective by \cref{prop:injectives_are_flabby}. Note that $\cat{k}(a,\infty)$ is the  filtered colimit of the diagram of interval modules $\cat{k}[c,\infty)\hookrightarrow \cat{k}[b,\infty)$ for $a<b<c$. Therefore $\cat{k}(a,\infty)$ is left and right $\varphi$-acyclic by \cref{lemma:colimits_of_intervals,lemma:colimits_of_projectives}.  
	
	\item For $a\in \mathbb{R}_+$, the interval modules $\cat{k}[0,a)$ and $\cat{k}[0,a]$ are both flabby as sheaves and  therefore are injective by \cref{prop:injectives_are_flabby}. It is known that persistence modules  $M:\cat{R}_+\to \cat{Vect_k}$ are $\mathbb{R}_+$ graded modules, see \cite[Section 2]{Bubenik2021}. In particular by \cite[Corollary 2.20]{Bubenik2021}, a finitely generated persistence module is projective if and only if it is free. The persistence modules $\cat{k}[0,a)$ and $\cat{k}[0,a]$ are finitely generated but not free and thus not projective.	\qedhere
\end{enumerate}
\end{proof}

\begin{remark}
\label{remark:acyclic}
We only need to understand projective and injective objects in order to compute derived functors. However the module $\cat{k}(a,b]$ for example does not have a simple projective resolution. There is no nonzero map $\cat{k}[a,\infty)\to \cat{k}(a,b]$. Thus a projective resolution could perhaps start with $\bigoplus_{a<c\le b}\cat{k}[c,\infty)\to \cat{k}(a,b]$. However, we have a very simple length one $\otimes_{\varphi}$-acyclic resolution
\[ 0\to \cat{k}(b,\infty)\to \cat{k}(a,\infty)\to \cat{k}(a,b]\to 0,\]
and acyclic resolutions can also be used to compute derived functors.
This is why we spent the extra effort in classifying $\otimes_{\varphi}$-acyclic interval modules as well since in some literature interval modules of the form $\cat{k}(a,b]$ are prefered, see for example \cite{Polterovich2017}.
\end{remark}

\subsection{The p-quasinorm Tor}
Here we compute the value of $\Torp_i$, the $i$-th right derived functor of $\otimes_{\cpnorm}$, when applied to interval modules in $\cat{k^R_+}$.

\begin{proposition}
\label{proposition:tor_interval_modules}
    Let $p\in (0,\infty)$. Then 
    \begin{enumerate}[left=0pt]
	\item $\Torp_i(\cat{k}[a,b), \cat{k}[c,d))\approx  \cat{k}[\max(||(b,c)||_p,||(a,d)||_p),||(b,d)||_p)$ if $i=1$, and for $i\ge 2$ we have $\Torp_i(\cat{k}[a,b), \cat{k}[c,d))=0$
        \item $\Torp_i(\cat{k}[a,\infty), M)=\Torp_1(M,\cat{k}[a,\infty))=0$ for $i\ge 1$ and any persistence module $M:\cat{R}_+\to \cat{Vect_k}$.
        \item $\cat{Tor}_i^{\ell_c^{\infty}}(M,N)=0$ for $i\ge 1$ and for any persistence modules $M,N:\cat{R}^n_+\to \cat{Vect_k}$.
    \end{enumerate}
\end{proposition}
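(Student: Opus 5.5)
The plan is to compute $\Torp_i$ using the length-one free resolution of $\cat{k}[c,d)$. Consider
\[0\to \cat{k}[d,\infty)\xrightarrow{\iota}\cat{k}[c,\infty)\to \cat{k}[c,d)\to 0.\]
By \cref{lemma:free_is_projective} this is a projective resolution. Since it has length one, $\Torp_i(\cat{k}[a,b),\cat{k}[c,d))=0$ for $i\ge 2$ immediately. Applying $\cat{k}[a,b)\ptensor -$ gives a two-term complex, and by \cref{proposition:tensor_product_formulas}(2) and symmetry of $\cpnorm$ (\cref{corollary:tensor_for_symmetric_phi}) this complex is
\[\cat{k}[\,\|(a,d)\|_p,\|(b,d)\|_p)\xrightarrow{\ \iota_*\ }\cat{k}[\,\|(a,c)\|_p,\|(b,c)\|_p).\]
The groups are then $\Torp_0=\operatorname{coker}\iota_*$ and $\Torp_1=\ker\iota_*$.

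The main step is to identify $\iota_*$. Computing via \cref{lemma:generalized_convolution}, $\iota_*$ is induced on colimits by the maps $\cat{k}[a,b)_s\ktensor\cat{k}[d,\infty)_t\to\cat{k}[a,b)_s\ktensor\cat{k}[c,\infty)_t$, which are the identity on nonzero terms. Hence at $x$ the map $\iota_*$ is the identity $\cat{k}\to\cat{k}$ whenever both sides are nonzero, and zero otherwise. This is also forced by \cref{lemma:nonzero_map_of_interval_modules}, since the intersection of the two intervals is connected.

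Write $\alpha=\|(a,c)\|_p\le\beta=\|(a,d)\|_p,\ \|(b,c)\|_p\le\gamma=\|(b,d)\|_p$. The kernel is then supported on the points of $[\beta,\gamma)$ not lying in $[\alpha,\|(b,c)\|_p)$, which is $[\max(\|(b,c)\|_p,\|(a,d)\|_p),\|(b,d)\|_p)$. This interval is empty exactly when the maximum is at least $\|(b,d)\|_p$, and that cannot happen when $a<b$ and $c<d$, because $\cpnorm$ is strictly increasing in each variable for $p<\infty$. Restricting structure maps, $\ker\iota_*$ is the interval module claimed in (1). As a consistency check, the cokernel is $\cat{k}[\alpha,\min(\|(b,c)\|_p,\|(a,d)\|_p))$, which recovers \cref{proposition:tensor_product_formulas}(1).

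The only delicate point I expect is making the identification of $\iota_*$ rigorous, i.e.\ checking naturality of the colimit identifications used in \cref{proposition:tensor_product_formulas}. I would handle this directly with the colimit description: both colimits are generated by the class of $1\otimes 1$ at the minimal point, and $\iota$ sends the generator at $(s,t)$ to the generator at $(s,t)$.

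Part (2) follows because $\cat{k}[a,\infty)$ is projective by \cref{lemma:free_is_projective}, hence left and right $\otimes_{\varphi}$-acyclic by \cref{corollary:free_modules_are_flat}. Concretely, a projective resolution of $\cat{k}[a,\infty)$ is the module itself, which gives $\Torp_i(\cat{k}[a,\infty),M)=0$. For the other variable, $\cat{k}[a,\infty)\ptensor-$ is exact, so it is acyclic in the balanced sense; by symmetry of $\cpnorm$ the two sides agree.

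Part (3) is \cref{proposition:injective_projective_classification}(1). By \cref{lemma:sh_tensor_as_a_convolution}, $\otimes_{\ell^\infty_c}$ is the pointwise tensor product over $\cat{k}$, which is exact because $\cat{k}$ is a field. Therefore all higher derived functors vanish, and this argument works verbatim over $\cat{R}^n_+$.
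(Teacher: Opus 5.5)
Your proof is correct and follows essentially the same route as the paper. For (1) you use a length-one free resolution of an interval module, the formulas of \cref{proposition:tensor_product_formulas}, and a kernel computation; (2) and (3) go through projectivity and exactness of the pointwise tensor product, as in the paper. The only difference is that you resolve the second argument $\cat{k}[c,d)$ and apply $\cat{k}[a,b)\ptensor-$, while the paper resolves $\cat{k}[a,b)$ and applies $-\ptensor\cat{k}[c,d)$; by symmetry of $\ell^p_c$ these are mirror images, and your explicit identification of the induced map fills in a step the paper leaves implicit.
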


\begin{proof}
    The statement in (2) is immediate as $\cat{k}[a,\infty)$ is projective. The statement in (3) is immediate as any persistence module $M$ is $\otimes_{\ell^{\infty}_c}$-acyclic by \cref{proposition:injective_projective_classification}.
    For the statement in (1),  Consider the interval modules $\cat{k}[a,b)$ and $\cat{k}[c,d)$. We have the following projective resolution of $\cat{k}[a,b)$:
    \[0\to \cat{k}[b,\infty)\to \cat{k}[a,\infty)\to \cat{k}[a,b)\to 0.\]
    Since the projective resolution has length $1$ we have  $\Torp_i(\cat{k}[a,b), \cat{k}[c,d))=0$ for $i\ge 2$.
    Apply the functor $-\ptensor \cat{k}[c,d)$ and to the projective resolution. By \cref{proposition:tensor_product_formulas} we have the following (no longer exact) sequence:
    \[0\to \cat{k}[\cpnorm(b,c),\cpnorm(b,d))\to \cat{k}[\cpnorm(a,c),\cpnorm(a,d))\to 0.\]
    Calculating homology (i.e., taking the kernel of the middle map) we get the following:
    \begin{align*}
    \Torp_1(\cat{k}[a,b), \cat{k}[c,d))&\approx \cat{k}[\max(\cpnorm(b,c),\cpnorm(a,d)),\cpnorm(b,d))\\
    &\od \cat{k}[\max(||(b,c)||_p,||(a,d)||_p),||(b,d)||_p).\qedhere
    \end{align*}
\end{proof}

\subsection{The p-quasinorm Ext} 
 Here we compute the value of $\Extp^i$, the $i$-th right derived functor of $\scHom^{\cpnorm}$, when applied to interval modules in $\cat{k^R_+}$.
  
\begin{proposition}
\label{proposition:ext_interval_modules}
    We have the following:
   	\begin{enumerate}[left=0pt]
       \item For $p\in (0,\infty)$, \[\Extp^i(\cat{k}[a,b), \cat{k}[c,d))\approx\cat{k}[(\max(0,c^p-b^p))^{\frac{1}{p}},\min(\max(0,c^p-a^p)^{\frac{1}{p}},\max(0,d^p-b^p)^{\frac{1}{p}})),\]
       if $i=1$ and $\Extp^i(\cat{k}[a,b), \cat{k}[c,d))=0$ for $i\ge 2$.
      \item  $\cat{Ext}_{\ell^{\infty}_c}^1(\cat{k}[a,b), \cat{k}[c,d))=\begin{cases}
 \cat{k}[0,c), & \text{if } a<c\le b<d\\
0, & \text{otherwise}
\end{cases}$ and $\cat{Ext}_{\ell^{\infty}_c}^i(\cat{k}[a,b), \cat{k}[c,d))=0$ for $i\ge 2$.

      \item For $p\in (0,\infty]$, $\Extp^i(\cat{k}[a,\infty),M)=0$, for any $i\ge 1$ and $M:\cat{R}_+\to\cat{Vect_k}$.
		\item For $p\in (0,\infty]$, $\Extp^i(M, \cat{k}[0,d))=0$, for any $i\ge 1$ and $M:\cat{R}_+\to\cat{Vect_k}$.  	  \end{enumerate}
\end{proposition}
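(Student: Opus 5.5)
Following the proof of \cref{proposition:tor_interval_modules}, I would compute $\Extp^i(-,N)$ in the first variable using the length-one projective resolution
\[0\to \cat{k}[b,\infty)\to \cat{k}[a,\infty)\to \cat{k}[a,b)\to 0,\]
which consists of free modules and is projective by \cref{lemma:free_is_projective}. Applying the contravariant left exact functor $\scHom^{\cpnorm}(-,\cat{k}[c,d))$ gives the two-term cochain complex
\[0\to \scHom^{\cpnorm}(\cat{k}[a,\infty),\cat{k}[c,d))\to \scHom^{\cpnorm}(\cat{k}[b,\infty),\cat{k}[c,d))\to 0.\]
Since the resolution has length one, $\Extp^i=0$ for $i\ge 2$ immediately. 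For $p\in(0,\infty)$, \cref{proposition:internal_hom_formulas}(2) identifies the two terms as $\cat{k}[\alpha(c,a),\alpha(d,a))$ and $\cat{k}[\alpha(c,b),\alpha(d,b))$, where $\alpha(x,y)\od(\max(0,x^p-y^p))^{1/p}$. Then $\Extp^1$ is the cokernel of the induced map between these intervals.

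The key step is to identify this map. By \cref{example:natural_transformations_form_a_vector_space}, a nonzero map $\cat{k}[s,t)\to\cat{k}[s',t')$ exists only when $s'\le s<t'\le t$. Here $a\le b$ gives $\alpha(c,b)\le\alpha(c,a)$ and $\alpha(d,b)\le\alpha(d,a)$, so the nice-overlap condition holds exactly when $\alpha(c,a)<\alpha(d,b)$. In that case I would argue that the map is the canonical one, i.e.\ the identity on the overlap. This should follow by naturality of $\scHom^{\cpnorm}(-,N)$ applied to the inclusion $\cat{k}[b,\infty)\hookrightarrow\cat{k}[a,\infty)$, computed stalkwise through \cref{proposition:limit_characterization_of_internal_hom}: at $x$ the map is restriction of natural transformations from $U_{(a,x)}$-data to $U_{(b,x)}$-data, which is nonzero on the overlap. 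The cokernel is then
\[\cat{k}[\alpha(c,b),\min(\alpha(c,a),\alpha(d,b))).\]
When the intervals do not overlap nicely, the map is zero. The cokernel is then the whole target $\cat{k}[\alpha(c,b),\alpha(d,b))$, which agrees with the same formula because $\alpha(d,b)\le\alpha(c,a)$ in that case. This gives (1). I expect this identification, together with the degenerate cases where some $\alpha$ vanish, to be the main obstacle. I would handle it by a short case check on the signs of $c^p-a^p$, $c^p-b^p$, $d^p-b^p$.

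For (2), I would run the same computation with $p=\infty$, using \cref{example:sheaf_hom_of_interval_modules}. There $\scHom(\cat{k}[a,\infty),\cat{k}[c,d))$ equals $\cat{k}[c,d)$ if $a<c$, $\cat{k}[0,d)$ if $c\le a<d$, and $0$ if $d\le a$; the analogous formula holds with $b$ in place of $a$. I would go through the orderings of $a,b$ relative to $c,d$. The map is injective, so the cokernel vanishes, except when $a<c\le b<d$. In that case the map is $\cat{k}[c,d)\to\cat{k}[0,d)$ and the cokernel is $\cat{k}[0,c)$.

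Part (3) is immediate since $\cat{k}[a,\infty)$ is projective by \cref{proposition:injective_projective_classification}. Part (4) is immediate since $\cat{k}[0,d)$ is injective by \cref{proposition:injective_projective_classification}(5), so $0\to\cat{k}[0,d)\to\cat{k}[0,d)\to 0$ is an injective resolution. Alternatively, $\Extp^i(M,-)$ for $i\ge 1$ vanishes on injectives, since derived functors in the second variable can be computed by injective resolutions via the balancing of the two variables.
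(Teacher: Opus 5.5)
Your proof is correct. For parts (2)--(4) it matches the paper's, which also treats the $\ell^{\infty}_c$ case through the projective resolution $0\to\cat{k}[b,\infty)\to\cat{k}[a,\infty)\to\cat{k}[a,b)\to 0$ and gets (3) and (4) from projectivity and injectivity.

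For (1) you take a different route from the paper.

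\textbf{The paper's route.} It resolves the \emph{second} argument by the injective resolution $0\to\cat{k}[c,d)\to\cat{k}[0,d)\to\cat{k}[0,c)\to 0$. It then applies $\scHom^{\cpnorm}(\cat{k}[a,b),-)$ using formula (1) of \cref{proposition:internal_hom_formulas}. In your notation $\alpha(x,y)=(\max(0,x^p-y^p))^{1/p}$, this gives $\cat{k}[\alpha(d,b),\alpha(d,a))\to\cat{k}[\alpha(c,b),\alpha(c,a))$. The paper then runs a four-case sign analysis on $c^p-b^p$ and $d^p-b^p$, implicitly taking the map to be the canonical one.

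\textbf{Your route.} You resolve the first argument, so you only need formula (2), i.e.\ $\scHom^{\cpnorm}(\cat{k}[a,\infty),N)_x\approx N_{\cpnorm(a,x)}$. Your naturality argument then identifies the differential at $x$ as the structure map $N_{\cpnorm(a,x)\le\cpnorm(b,x)}$. The cokernel can be read off pointwise: it is nonzero exactly when $\cpnorm(b,x)\in[c,d)$ and $\cpnorm(a,x)<c$.

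\textbf{What each buys.} Your version makes explicit why the map is nonzero when it can be, and it absorbs the degenerate cases where some $\alpha$ vanishes. It treats $p<\infty$ and $p=\infty$ uniformly, and the same computation would go through for any order-preserving $\varphi$ on $\mathbb{R}_+$. What you rely on is the projective-resolution description of $\cat{Ext}_{\varphi}$ in the first variable. The paper states this description alongside the injective one and uses it itself in (2), so you are consistent with the paper. Strictly, both arguments take the balancing of the two descriptions for granted.

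\textbf{A small slip in (2).} The cokernel vanishes in the non-exceptional cases because the map is \emph{surjective}, not injective. For $\cat{k}[c,d)\to 0$ or $\cat{k}[0,d)\to 0$ the map is not injective, yet the cokernel is zero. In the exceptional case $\cat{k}[c,d)\hookrightarrow\cat{k}[0,d)$ the map is injective, with cokernel $\cat{k}[0,c)$.
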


\begin{proof}
Statements (3) and (4) are immediate since $\cat{k}[a,\infty)$ is projective and $\cat{k}[0,d)$ is injective by  \cref{proposition:injective_projective_classification}. Thus, we only need to show the first two statements. 

Let $p\in (0,\infty)$ and consider the interval modules $\cat{k}[a,b)$ and $\cat{k}[c,d)$.  By \cref{proposition:injective_projective_classification}, we have the following augmented injective resolution of $\mathbf{k}[c,d)$ 
    \[0\to \cat{k}[c,d)\to \cat{k}[0,d)\to \cat{k}[0,c)\to 0.\]
    Since the resolution has length $1$, it follows that $\Extp^i(\cat{k}[a,b), \cat{k}[c,d))=0$ for $i\ge 2$.
    Apply the functor $\scHom^{\cpnorm}(\cat{k}[a,b),-)$ to the injective resolution. By \cref{proposition:internal_hom_formulas}  we have the following (no longer exact) sequence:
    \[0\to \cat{k}[(\max(0,d^p-b^p))^{\frac{1}{p}},(\max(0,d^p-a^p))^{\frac{1}{p}})\to\cat{k}[(\max(0,c^p-b^p))^{\frac{1}{p}},(\max(0,c^p-a^p))^{\frac{1}{p}})\to 0.\]
    There are four cases to consider. 
    \begin{enumerate}[left=0pt]
        \item Suppose that $0\le c^p-b^p$. Since $c^p-b^p< d^p-b^p$ we also get that $0< d^p-b^p$. These inequalities then also imply that $0<c^p-a^p,d^p-a^p$. Thus the sequence above becomes
        \[0\to \cat{k}[(d^p-b^p)^{\frac{1}{p}},(d^p-a^p)^{\frac{1}{p}})\to\cat{k}[(c^p-b^p)^{\frac{1}{p}},(c^p-a^p)^{\frac{1}{p}})\to 0.\]
        Calculating cohomology (i.e. taking the cokernel of the middle map) we get the interval module $\cat{k}[(c^p-b^p)^{\frac{1}{p}},(\min(c^p-a^p,d^p-b^p))^{\frac{1}{p}})$.
        \item Suppose that $c^p-b^p\le 0\le d^p-b^p$. Then we also have that $0\le d^p-a^p$. Thus the sequence above becomes
        \[0\to \cat{k}[(d^p-b^p)^{\frac{1}{p}},(d^p-a^p)^{\frac{1}{p}})\to\cat{k}[0,(\max(0,c^p-a^p))^{\frac{1}{p}})\to 0.\]
        Calculating cohomology (i.e. taking the cokernel of the middle map) we get the interval module $\cat{k}[0,(\min(\max(0,c^p-a^p),d^p-b^p))^{\frac{1}{p}})$.
        \item Suppose that $d^p-b^p\le 0\le c^p-b^p$. Since $c^p-b^p< d^p-b^p$ this cannot happen and thus this case is impossible. 
        \item Suppose that $c^p-b^p,d^p-b^p\le 0$. Then the sequence above becomes
        \[0\to \cat{k}[0,(\max(0,d^p-a^p))^{\frac{1}{p}})\to\cat{k}[0,(\max(0,c^p-a^p))^{\frac{1}{p}})\to 0.\]
        If $0<c^p-a^p$ then also $0<c^p-a^p<d^p-a^p$ and the middle map above is thus surjective. Thus, in this case the cohomology (cokernel of the middle map) is the 
        \[0=\cat{k}[0,(\min(c^p-a^p,0))^{\frac{1}{p}})=\cat{k}[0,0)\] 
        module. If $c^p-a^p<0$, then $\cat{k}[0,(\max(0,c^p-a^p))^{\frac{1}{p}})=\cat{k}[0,0)$ and thus the cohomology (cokernel of the middle map) is the $0=\cat{k}[0,\min(0,0))$ module. If $d^p-a^p<0$, then also $c^p-a^p<0$ and thus both modules in the above sequence are $0$ and thus the homology is the $0=\cat{k}[0,\min(0,0))$ module. 
    \end{enumerate}
    Combining all these different possible cases we conclude that the formula
    \[\Extp^1(\cat{k}[a,b),\cat{k}[c,d))\approx\cat{k}[(\max(0,c^p-b^p))^{\frac{1}{p}},\min(\max(0,c^p-a^p),\max(0,d^p-b^p))^{\frac{1}{p}}),\]
    accounts for all possible cases.
    
    Now consider once again the augmented projective resolution of $\cat{k}[a,b)$: 
     \[0\to \cat{k}[b,\infty)\to \cat{k}[a,\infty)\to \cat{k}[a,b)\to 0.\]
   Since the resolution has length $1$, it follows that $\cat{Ext}_{\ell^{\infty}_c}^i(\cat{k}[a,b), \cat{k}[c,d))=0$ for $i\ge 2$. Apply the functor $\scHom^{\ell^{\infty}_c}(-,\cat{k}[c,d))$ to the projective resolution to get the (no longer exact) sequence:
   \[0\to \scHom^{\ell^{\infty}_c}(\cat{k}[a,\infty),\cat{k}[c,d))\to \scHom^{\ell^{\infty}_c}(\cat{k}[b,\infty),\cat{k}[c,d))\to 0.\]
    By \cref{corollary:inf_sc_hom}, the resulting (no longer exact) sequence falls in one of the following cases:
    \[\begin{cases}
    0\to \cat{k}[c,d)\to \cat{k}[c,d)\to 0, &  \text{if } a<b\le c<d\\
    0\to \cat{k}[c,d)\to \cat{k}[0,d) \to 0, &  \text{if } a<c\le b<d\\
    0\to \cat{k}[c,d)\to 0\to 0, &  \text{if } a<c<d\le b\\
    0\to \cat{k}[0,d)\to \cat{k}[0,d)\to 0, & \text{if } c\le a<b<d\\
    0\to \cat{k}[0,d)\to 0\to 0, & \text{if } c\le a<d\le b\\
    0\to 0\to 0\to 0, & \text{if } c<d\le a< b
    \end{cases}.\]
By definition, $\cat{Ext}_{\ell^{\infty}_c}^i(\cat{k}[a,b),\cat{k}[c,d))$ is the cohomology (cokernel of the middle map). Therefore 
\[\cat{Ext}_{\ell^{\infty}_c}^i(\cat{k}[a,b),\cat{k}[c,d))=\begin{cases}
 \cat{k}[0,c), & \text{if } a<c\le b<d\\
0, & \text{otherwise}
\end{cases}.\qedhere\]
\end{proof}

\section{The K\"unneth and Universal Coefficient Theorems for persistence modules}

\subsection{Tensor and hom of double complexes}
\label{subsection:tensor_chain_complex}

Let $\cat{A}$ be an abelian category and consider $\cat{Ch(A)}$ the category  of chain complexes valued in $\cat{A}$. Assume that $\cat{A}$ comes equipped with a monoidal product, say $\otimes_*$ and an adjoint for the monoidal product an internal hom, say $\Hom^*$. Let $(C,d^C)$ and $(D,d^D)$ be chain complexes valued in $\cat{A}$. 

\begin{definition}
\label{def:monoidal_prod_of_chain_complexes}
    The \emph{monoidal product double complex} $(C\otimes_*D)_{\bullet,\bullet}$ is the double complex given by 
    \[(C\otimes_* D)_{p,q}\od C_p\otimes_* D_q,\]
    with differentials $d^h_{p,q}:C_p\otimes_* D_q\to C_{p-1}\otimes_* D_q$ defined by $d^h_{p,q}\od d_p^C\otimes_* \id_{D_q}$ and $d^v_{p,q}: C_p\otimes_* D_q\to C_p\otimes_* D_{q-1}$ defined by 
    $d^v_{p,q}\od (-1)^p\id_{C_p}\otimes_*d_q^D$.
    These give rise to the total complex $(C\otimes_* D)_{\bullet}$ defined by 
    \[(C\otimes_*D)_n\od \bigoplus\limits_{p+q=n}(C_p\otimes_* D_q),\] and the differential 
    \[d_n\od \bigoplus_{p+q=n}(d^h_{p,q}+d^v_{p,q}).\]
\end{definition} 

Dually we also have the total complex for the $\Hom^*$ bifunctor.

\begin{definition}
    The \emph{internal hom double complex} $\Hom^*(C,D)^{\bullet,\bullet}$ is the double complex given by 
    \[\Hom^*(C, D)^{p,q}\od \Hom^*(C_p, D_q),\]
    with differentials 
    \[d_h^{p,q}\od (-1)^{p+q+1}(d^C_{p+1})^* :\Hom^*(C_p,D_q)\to \Hom^*(C_{p+1},D_q),\]  
    where $(d^C_{p+1})^*=\Hom^*(d^C_{p+1},\id)$ and  
     \[d_v^{p,q}\od (d^D_{q+1})^* :\Hom^*(C_p,D_q)\to \Hom^*(C_{p},D_{q+1}),\]
        where $(d^D_{p+1})^*=\Hom^*(\id,d^D_{p+1})$.     
        These give rise to the total cochain complex $\Hom^*(C, D)^{\bullet}$ defined by 
    \[\Hom^*(C,D)^n\od\prod\limits_{p+q=n}\Hom^*(C_{p},D_q),\] and the differential 
    \[d^n\od \prod_{p+q=n}(d_h^{p,q}+d_v^{p,q}).\]
\end{definition}

\subsection{K\"unneth theorems for persistence modules}

In this section we state K\"unneth and universal coefficient theorems for chain complexes of persistence modules. We apply these theorems to products of filtered CW complexes. In what follows, persistence modules are functors $M:\cat{P}\to \cat{Vect_k}$ and $\varphi:P\times P\to P$ is order-preserving. Whenever $K$ and $L$ are complexes of persistence modules, we denote by $K\otimes_{\varphi}L$ and $\scHom^{\varphi}(K,L)$ the resulting total complexes as described above.

\begin{theorem}[K\"unneth short exact sequence for persistence modules, I]
\label{theorem:kunneth_1}
Let $(K,d^K)$ and $(L,d^L)$ be two (positive) complexes of persistence module. Suppose that $K$ and its subcomplex of boundaries have all terms projective persistence modules. Then for all $n\ge 0$ there is a natural short exact sequence 
\[0\to \bigoplus_{i+j=n}H_i(K)\otimes_{\varphi} H_j(L)\to H_n(K\otimes_{\varphi} L)\to\bigoplus_{i+j=n-1} \cat{Tor}^{\varphi}_1(H_{i}(K),H_j(L))\to 0.\]
If in addition the complex $L$ and its subcomplex of boundaries have all terms projective, then
the short exact sequence splits, although not naturally.
\end{theorem}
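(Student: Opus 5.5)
We follow the classical proof of the algebraic Künneth theorem (as in Weibel, Theorem 3.6.3), with $\otimes_{\varphi}$ in place of $\otimes$. Write $Z_i$ and $B_i$ for the cycles and boundaries of $K$. The only inputs we need are these:
\begin{itemize}
\item $-\otimes_{\varphi}L_j$ and $-\otimes_{\varphi}H_j(L)$ are right exact and commute with direct sums, by \cref{corollary:tensor_and_internal_hom_adjoints}.
\item Projective modules are $\otimes_{\varphi}$-acyclic, by \cref{corollary:free_modules_are_flat} and the remark following it.
\item $\cat{Tor}^{\varphi}_\bullet$ gives long exact sequences in either variable, computed via projective resolutions of the projective side.
\end{itemize}

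The first step is to show that each $Z_i$ is projective. The sequence $0\to Z_i\to K_i\xrightarrow{d} B_{i-1}\to 0$ is exact, and $B_{i-1}$ is projective by hypothesis, so the sequence splits and $Z_i$ is a direct summand of the projective $K_i$. We regard $Z$ and $B[-1]$ (the complex with $(B[-1])_i=B_{i-1}$) as complexes with zero differential. This gives a degreewise split short exact sequence of complexes
\[0\to Z\to K\to B[-1]\to 0.\]

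Next we tensor with $L$. Since the sequence is degreewise split, applying $-\otimes_{\varphi}L_j$ termwise keeps it exact, because additive functors preserve split sequences. We therefore get a short exact sequence of total complexes
\[0\to Z\otimes_{\varphi}L\to K\otimes_{\varphi}L\to B[-1]\otimes_{\varphi}L\to 0.\]
Because $Z$ has zero differential, $(Z\otimes_{\varphi}L)_n=\bigoplus_i Z_i\otimes_{\varphi}L_{n-i}$ with differential $\pm\id\otimes_{\varphi}d^L$. Each $Z_i\otimes_{\varphi}-$ is exact by acyclicity of projectives, so it commutes with homology, giving
\[H_n(Z\otimes_{\varphi}L)\approx\bigoplus_i Z_i\otimes_{\varphi}H_{n-i}(L).\]
The same holds for $B$, giving
\[H_n(B[-1]\otimes_{\varphi}L)\approx \bigoplus_i B_{i-1}\otimes_{\varphi}H_{n-i}(L).\]
Exactness of $Z_i\otimes_{\varphi}-$ is not immediate from the paper's definitions: acyclicity in the first variable means $\cat{Tor}^{\varphi}_1(Z_i,-)=0$. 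We need the balancing $\cat{Tor}^{\varphi}_1(Z_i,N)\approx$ the first left derived functor of $Z_i\otimes_{\varphi}-$ evaluated at $N$. This is standard, provided the bifunctor $\otimes_\varphi$ sends projectives to exact functors in each variable. That in turn follows from adjunction together with the fact that $\scHom^{\varphi}(-,I)$ is exact for injective $I$, or more simply because projectives are summands of free modules. For free modules we compute directly, using \cref{lemma:generalized_convolution}: $\cat{k}[U_a]\otimes_{\varphi}N$ is $N$ pushed forward along $b\mapsto\varphi(a,b)$ via colimits over down-sets, and this should be exact by a filtered/final-object argument.

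With these identifications, the long exact homology sequence reads
\[\cdots\to \bigoplus_i B_{i}\otimes_{\varphi}H_{n-i}(L)\xrightarrow{\ \partial\ } \bigoplus_i Z_i\otimes_{\varphi}H_{n-i}(L)\to H_n(K\otimes_{\varphi}L)\to \bigoplus_i B_{i-1}\otimes_{\varphi}H_{n-i}(L)\xrightarrow{\ \partial\ }\bigoplus_i Z_{i-1}\otimes_{\varphi}H_{n-i}(L)\to\cdots\]
We check, as in the classical case, that the connecting map $\partial$ is $\iota\otimes_{\varphi}\id$, where $\iota\colon B\hookrightarrow Z$ is the inclusion. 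Now $0\to B_i\to Z_i\to H_i(K)\to 0$ is a projective resolution of $H_i(K)$ of length one. Hence:
\begin{itemize}
\item $\operatorname{coker}(\iota\otimes_{\varphi}\id)\approx H_i(K)\otimes_{\varphi}H_j(L)$, by right exactness;
\item $\ker(\iota\otimes_{\varphi}\id)\approx\cat{Tor}^{\varphi}_1(H_i(K),H_j(L))$, by the definition of $\cat{Tor}^{\varphi}$ via a projective resolution in the first variable.
\end{itemize}
Splicing the long exact sequence at $H_n(K\otimes_{\varphi}L)$ yields the desired short exact sequence. Naturality follows because every construction above is functorial in $K$ and $L$.

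For the splitting, assume that $L$ and $B(L)$ are also termwise projective. Then, as in the first step, $K_i\to B_{i-1}$ and $L_j\to B_{j-1}(L)$ split. Choose splittings $K_i\to Z_i$ and $L_j\to Z_j(L)$, compose them with the projections onto homology, and take the chain maps $K\to H(K)$ and $L\to H(L)$ into complexes with zero differential. These chain maps induce isomorphisms on homology. Tensoring gives a chain map $K\otimes_{\varphi}L\to H(K)\otimes_{\varphi}H(L)$, and on $H_n$ this retracts the left-hand map. The retraction depends on the chosen splittings, so the splitting is not natural.

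The main obstacle is the exactness of $Z_i\otimes_{\varphi}-$ for projective $Z_i$, which is what the Künneth formula for $Z\otimes_{\varphi}L$ rests on. I would first reduce to $Z_i=\cat{k}[U_a]$, since a projective is a summand of a free module and exactness passes to summands and direct sums. For that case I would verify via \cref{lemma:generalized_convolution} that $(\cat{k}[U_a]\otimes_{\varphi}N)_p=\colim_{\varphi(a,b)\le p}N_b$. This colimit is over a down-set, which need not be filtered for a general poset, so exactness is not automatic. If it fails in general, I would fall back on symmetry, i.e.\ applying $\cat{Tor}$ in the other variable, or on the hypothesis that $\varphi$ preserves enough structure.
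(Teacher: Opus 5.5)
Your architecture is sound and matches the paper's in substance. The paper also splits $K_n\cong Z_n\oplus B_{n-1}$, but it writes $K\cong\bigoplus_p S_p$ with $S_p=(B_p\hookrightarrow Z_p)$ and runs a two-column spectral sequence, where you use the long exact sequence of $0\to Z\to K\to B[-1]\to 0$. Your connecting map $\iota\otimes_{\varphi}\id$ is its $d^1$, and the splitting argument is the same.

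\textbf{The gap.} The genuine gap is the step you flag and leave open: exactness of $Z_i\otimes_{\varphi}-$ and $B_i\otimes_{\varphi}-$. The paper's proof disposes of it with ``projective, hence left $\otimes_{\varphi}$-acyclic, hence exact.'' But $\cat{Tor}^{\varphi}_i(M,-)$ is defined as the left derived functor of $M\otimes_{\varphi}-$, so that implication is the whole content. For an arbitrary poset $P$ and order-preserving $\varphi$ it is false, and so is the theorem.

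Your own formula $(\cat{k}[U_a]\otimes_{\varphi}N)_p\cong\colim_{\varphi(a,b)\le p}N_b$ shows why: a colimit over a connected but non-directed down-set can be a pushout, which is not left exact. Here is a concrete counterexample.
\begin{itemize}
\item Let $P=\{z,x,y\}$ with $z<x$ and $z<y$, and let $\varphi\equiv z$ (constant, hence order-preserving). Then $(\cat{k}[U_z]\otimes_{\varphi}N)_p\cong\colim_P N=N_x\sqcup_{N_z}N_y$ for every $p$.
\item Take $K=\cat{k}[U_z]$ in degree $0$; it is free with zero boundaries.
\item Take $L$ to be the exact complex $0\to\cat{k}[U_x]\oplus\cat{k}[U_y]\to\cat{k}[U_z]\to\cat{k}[\{z\}]\to 0$ in degrees $2,1,0$.
\item Since $H_\bullet(L)=0$, both outer terms of the claimed sequence vanish, under either convention for $\cat{Tor}^{\varphi}$.
\item Yet the differential $(K\otimes_{\varphi}L)_2\to(K\otimes_{\varphi}L)_1$ is $\cat{k}^2\to\cat{k}$, $(u,v)\mapsto u+v$, at every $p$. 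Hence $H_2(K\otimes_{\varphi}L)\neq 0$.
\end{itemize}
Equivalently, $\cat{Tor}^{\varphi}_1(\cat{k}[U_z],\cat{k}[\{z\}])\neq 0$, so free modules need not be left $\otimes_{\varphi}$-acyclic. Your fallbacks do not help either. This $\varphi$ is symmetric, and $-\otimes_{\varphi}\cat{k}[U_z]$ fails to be exact in exactly the same way.

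\textbf{What repairs it.} The missing ingredient is a hypothesis on $\varphi$, for instance: for all $a,p\in P$, the set $\{b\in P:\varphi(a,b)\le p\}$ is directed or empty. Under this hypothesis:
\begin{itemize}
\item $\cat{k}[U_a]\otimes_{\varphi}-$ is pointwise a filtered colimit, hence exact.
\item $\otimes_{\varphi}$ is a left adjoint in each variable, so it commutes with direct sums.
\item Every projective is a summand of a direct sum of modules $\cat{k}[U_a]$, so $M\otimes_{\varphi}-$ is exact for every projective $M$.
\end{itemize}
With that, your proof goes through verbatim. The hypothesis holds for every order-preserving $\varphi$ when $P=\mathbb{R}_+$, since a down-set of a chain is directed; this is the case used in \cref{theorem:kunneth_filtered_cw}. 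It also holds for $\ell^p_c$ on $\mathbb{R}^n_+$, where the set is a box $\prod_i[0,t_i]$ with a maximum, or empty.
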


For a proof see \cref{section:appendix_kunneth}. Furthermore, applying the same arguments dually to the functor $\scHom^{\varphi}(-,-)$ as opposed to $-\otimes_{\varphi}-$ we immediately have the following.

\begin{theorem}[K\"unneth short exact sequence for persistence modules, II]
\label{theorem:kunneth_2}
Let $(K,d^K)$ and $(L,d^L)$ be a positive and a negative complex of persistence module, respectively. Suppose that $K$ and its subcomplex of boundaries have all terms projective persistence modules. Then for all $n\ge 0$ there is a natural short exact sequence 
\[0\to \prod_{i+j=n-1}\cat{Ext}^1_{\varphi}(H_i(K),H_{j}(L))\to H^n(\scHom^{\varphi}(K, L))\to\prod_{i+j=n} \scHom^{\varphi}(H_i(K),H_{j}(L))\to 0.\]
If in addition the complex $L$ and its subcomplex of boundaries have all terms injective, then
the short exact sequence splits, although not naturally.
\end{theorem}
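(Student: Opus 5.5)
We dualize the standard proof of the algebraic Künneth theorem, following the pattern of the proof of \cref{theorem:kunneth_1}. Write $Z_i\subset K_i$ for the cycles and $B_{i-1}=d^K(K_i)\subset K_{i-1}$ for the boundaries. Since each $B_{i-1}$ is projective, the sequences
\[0\to Z_i\to K_i\to B_{i-1}\to 0\]
split degreewise. We regard $Z$ and $B$ as complexes with zero differential, and $B[-1]$ as the shifted complex with $(B[-1])_i=B_{i-1}$. This gives a short exact sequence of complexes $0\to Z\to K\to B[-1]\to 0$.

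Because the sequence splits in each degree, applying the additive functor $\scHom^{\varphi}(-,L_j)$ termwise and taking products gives a short exact sequence of total cochain complexes
\[0\to \scHom^{\varphi}(B[-1],L)\to \scHom^{\varphi}(K,L)\to \scHom^{\varphi}(Z,L)\to 0.\]
Products are exact in $\cat{k^P}$, since they are computed pointwise in $\cat{Vect_k}$. We then take the long exact sequence in cohomology.

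Since $Z$ and $B$ have zero differential, the total complexes $\scHom^{\varphi}(Z,L)$ and $\scHom^{\varphi}(B,L)$ only see $d^L$. The next step is to show
\[H^n(\scHom^{\varphi}(Z,L))\approx\prod_{i+j=n}\scHom^{\varphi}(Z_i,H_j(L)),\]
and similarly for $B$. This is where projectivity of $Z_i$ and $B_i$ enters, and we expect it to be the main obstacle. We must show that $\scHom^{\varphi}(Q,-)$ is exact whenever $Q$ is projective, so that it commutes with taking (co)homology of $L$.

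To do this we use the adjunction of \cref{theorem:convolution_adjunction}. For every $N$ we have
\[\Hom(N,\scHom^{\varphi}(Q,-))\approx\Hom(Q,\,{}^{\varphi}\scHom(N,-)).\]
Alternatively, we observe that $\scHom^{\varphi}(Q,-)$ is the right adjoint of $Q\otimes_{\varphi}-$. If $Q$ is a summand of a free module, \cref{proposition:limit_characterization_of_internal_hom} computes $\scHom^{\varphi}(\cat{k}[U_a],N)_p\approx N_{\varphi(a,p)}$, because the limit over $a\le b$ is attained at $b=a$. Therefore $\scHom^{\varphi}(\cat{k}[U_a],-)$ is the exact functor $(\varphi(a,-))^{-1}$. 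A product of such functors is exact, and so is a retract of one, which settles the projective case.

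Here $Z_i$ is projective: it is the kernel of the split epimorphism $K_i\to B_{i-1}$ with $K_i$ projective, hence a direct summand of $K_i$.

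Next we identify the connecting homomorphism. Up to sign, it is induced by the inclusions $B_i\hookrightarrow Z_i$:
\[\prod\scHom^{\varphi}(Z_i,H_j(L))\to\prod\scHom^{\varphi}(B_i,H_j(L)).\]
We then invoke the short exact sequence $0\to B_i\to Z_i\to H_i(K)\to 0$. This is a projective resolution of $H_i(K)$ of length one, so applying $\scHom^{\varphi}(-,H_j(L))$ gives
\[0\to \scHom^{\varphi}(H_i(K),H_j(L))\to\scHom^{\varphi}(Z_i,H_j(L))\to\scHom^{\varphi}(B_i,H_j(L))\to\cat{Ext}^1_{\varphi}(H_i(K),H_j(L))\to 0.\]
We use this to identify the kernel of the connecting map with $\prod_{i+j=n}\scHom^{\varphi}(H_i(K),H_j(L))$ and its cokernel with $\prod_{i+j=n-1}\cat{Ext}^1_{\varphi}(H_i(K),H_j(L))$. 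Cutting the long exact sequence at these kernels and cokernels gives the claimed short exact sequence. Naturality follows since every map used is natural in $K$ and $L$.

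For the splitting, assume $L$ and its boundaries are termwise injective. We dualize the construction by splitting $L$ into cycles and boundaries. This gives chain maps $K\to H(K)$ and $L\to H(L)$ realized as retractions, built from the chosen (non-canonical) degreewise splittings. These retractions make the map $H^n(\scHom^{\varphi}(K,L))\to\prod\scHom^{\varphi}(H_i(K),H_j(L))$ a split epimorphism, as in the classical argument. Because the splitting depends on these choices, it is not natural.
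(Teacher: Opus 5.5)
Your derivation of the short exact sequence itself is sound. The sequence $0\to Z\to K\to B[-1]\to 0$, its long exact sequence, and the identification of the connecting map with restriction along $B_i\hookrightarrow Z_i$ are the long-exact-sequence form of what the paper gets by dualizing its proof of \cref{theorem:kunneth_1}. There one decomposes $K\approx\bigoplus_p S_p$, so that $\scHom^{\varphi}(K,L)\approx\prod_p\scHom^{\varphi}(S_p,L)$, and lets a two-column spectral sequence collapse. Your check that $\scHom^{\varphi}(Q,-)$ is exact for projective $Q$, via $\scHom^{\varphi}(\cat{k}[U_a],N)_p\approx N_{\varphi(a,p)}$, supplies a step the paper leaves implicit.

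The gap is in the splitting. Injectivity of the $L_j$ and of the boundaries $B_j(L)$ does not let you split $L$ into cycles and boundaries. The sequence $0\to Z_j(L)\to L_j\to B_{j-1}(L)\to 0$ splits if and only if $Z_j(L)\approx B_j(L)\oplus H_j(L)$ is injective, that is, if and only if $H_j(L)$ is injective, which is not assumed. For example, take the surjection $L_0=\cat{k}[0,d)\to L_{-1}=\cat{k}[0,c)$ with $0<c<d$. It satisfies every hypothesis, yet $H_0(L)=\cat{k}[c,d)$ and $\Hom_{\cat{k^{R_+}}}(\cat{k}[0,d),\cat{k}[c,d))=0$. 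So no chain map $L\to H(L)$ induces the identity on $H_0$.

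The direction is also wrong. $\scHom^{\varphi}$ is covariant in its second variable, so maps $K\to H(K)$ and $L\to H(L)$ only yield $\scHom^{\varphi}(H(K),L)\to\scHom^{\varphi}(K,L)\to\scHom^{\varphi}(K,H(L))$. None of these sends $H^n(\scHom^{\varphi}(H(K),H(L)))=\prod_{i+j=n}\scHom^{\varphi}(H_i(K),H_j(L))$ into $H^n(\scHom^{\varphi}(K,L))$, which is what a section must do.

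The correct dual uses injectivity of $B_j(L)$ to split $0\to B_j(L)\to Z_j(L)\to H_j(L)\to 0$. This gives a chain map $\sigma\colon H(L)\to L$ (a section, not a retraction) inducing the identity on homology. Take $\pi\colon K\to H(K)$ as you built it. Then $f\mapsto\sigma\circ f\circ\pi$ is a chain map $\scHom^{\varphi}(H(K),H(L))\to\scHom^{\varphi}(K,L)$ out of a complex with zero differential. Its composite with your surjection $H^n(\scHom^{\varphi}(K,L))\to\prod_{i+j=n}\scHom^{\varphi}(H_i(K),H_j(L))$ is the identity, and this is the non-natural splitting.
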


Recalling that $-\ell^{\infty}_c-$ is exact in both arguments, its derived functor $\cat{Tor}^{\ell^{\infty}_c}$ is entirely trivial. Therefore we have the following.

\begin{corollary}
\label{corollary:kunneth_infty}
Let $(K,d^K)$ and $(L,d^L)$ be two (positive) complexes of persistence module indexed by $\mathbb{R}^n_+$. Then for all $n\ge 0$ there is an isomorphism 
\[H_n(K\otimes_{\ell^{\infty}_c} L) \approx \bigoplus_{i+j=n}H_i(K)\otimes_{\ell^{\infty}_c} H_j(L).\]
\end{corollary}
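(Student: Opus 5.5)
The plan is to deduce the corollary from \cref{theorem:kunneth_1} with $\varphi=\ell^{\infty}_c$, using the fact that the $\mathbf{Tor}$ term vanishes identically. There is one subtlety. \cref{theorem:kunneth_1} assumes that $K$ and its boundaries are projective, while the corollary assumes nothing about $K$ or $L$. So, rather than verifying hypotheses, I would rerun the standard Künneth argument directly, using exactness of $\otimes_{\ell^{\infty}_c}$ in each variable.

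\textbf{Step 1.} By \cref{lemma:sh_tensor_as_a_convolution}, $K\otimes_{\ell^{\infty}_c}L\approx K\otimes_{\cat{R}^n_+}L$ as total complexes, and the isomorphism is compatible with the differentials because it is natural. Concretely, at each $x\in\mathbb{R}^n_+$,
\[(K\otimes_{\ell^{\infty}_c}L)_x\approx K_x\ktensor L_x,\]
the usual tensor product of chain complexes of $\cat{k}$-vector spaces. The transition maps are the tensor products of the transition maps of $K$ and $L$.

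\textbf{Step 2.} Homology of persistence modules is computed pointwise, because kernels, cokernels and images are taken elementwise in $\cat{k^{R^n_+}}$. Hence
\[H_n(K\otimes_{\ell^{\infty}_c}L)_x\approx H_n(K_x\ktensor L_x).\]
The classical algebraic Künneth theorem over the field $\cat{k}$ has vanishing $\mathbf{Tor}$. It gives a natural isomorphism
\[\bigoplus_{i+j=n}H_i(K_x)\ktensor H_j(L_x)\to H_n(K_x\ktensor L_x),\qquad [z]\otimes[w]\mapsto[z\otimes w].\]
Since $H_i(K)_x=H_i(K_x)$, the left-hand side equals $\bigl(\bigoplus_{i+j=n}H_i(K)\otimes_{\ell^{\infty}_c}H_j(L)\bigr)_x$, again by \cref{lemma:sh_tensor_as_a_convolution}.

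\textbf{Step 3.} Check that these pointwise isomorphisms assemble into a morphism of persistence modules. For $x\le y$ the structure maps are $K_{x\le y}\otimes L_{x\le y}$, which are chain maps. The cross-product $[z]\otimes[w]\mapsto[z\otimes w]$ is natural in chain maps of both factors, so the squares commute. A natural transformation that is an isomorphism at every point is an isomorphism, which gives the stated isomorphism.

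\textbf{Main obstacle.} The only real care needed is in Step 3 and in the identification in Step 1. One must confirm that the sign convention $d^v=(-1)^p\,\id\otimes d^L$ of \cref{def:monoidal_prod_of_chain_complexes} matches the pointwise total complex, and that the isomorphism of \cref{lemma:sh_tensor_as_a_convolution} commutes with $d^h$ and $d^v$. Both follow from its naturality in each variable.

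An alternative route is available if one prefers to cite \cref{theorem:kunneth_1} directly: rerun its proof, replacing "projective" by "$\otimes_{\ell^{\infty}_c}$-acyclic", which holds for every module by \cref{proposition:injective_projective_classification}(1). The pointwise argument above avoids this.
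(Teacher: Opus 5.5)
Your proof is correct, but it takes a different route from the paper.

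\textbf{What the paper does.} Its justification is a one-line deduction from \cref{theorem:kunneth_1}. Since $\otimes_{\ell^{\infty}_c}$ is exact in both variables, every module is acyclic (\cref{proposition:injective_projective_classification}(1)), so the $\mathbf{Tor}^{\ell^{\infty}_c}_1$ term of the K\"unneth sequence vanishes.

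\textbf{What you do.} You use \cref{lemma:sh_tensor_as_a_convolution} to identify the total complex pointwise with $K_x\ktensor L_x$. You then compute homology pointwise and apply the classical K\"unneth theorem over $\cat{k}$. Finally you glue the cross-product isomorphisms using their naturality in chain maps.

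\textbf{What each route buys.} Your route resolves exactly the subtlety you flagged. \cref{theorem:kunneth_1} assumes that $K$ and its boundaries have projective terms, and the paper's proof of it genuinely uses this. Projectivity of $B_{n-1}$ is what splits $K_n\approx Z_n\oplus B_{n-1}$, which in turn decomposes $K$ into two-term complexes $S_p$. So the paper's deduction, read literally, only covers such $K$, for instance complexes coming from filtered CW complexes. Your argument proves the corollary for arbitrary complexes, as stated, and also gives naturality. The paper's route buys uniformity: the corollary appears as an instance of the general $\varphi$-K\"unneth theorem. Yours is specific to $\ell^{\infty}_c$, since for other $\varphi$ the product $\otimes_{\varphi}$ is not computed pointwise.

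\textbf{A caution about your alternative route.} If ``its proof'' means the paper's argument, replacing ``projective'' by ``$\otimes_{\ell^{\infty}_c}$-acyclic'' is not enough. Acyclicity does not provide the splitting $K_n\approx Z_n\oplus B_{n-1}$. For example, take the canonical surjection $\cat{k}[0,\infty)\to\cat{k}[0,1)$ placed in degrees $1$ and $0$. It gives the non-split sequence $0\to\cat{k}[1,\infty)\to\cat{k}[0,\infty)\to\cat{k}[0,1)\to 0$. You would instead need a flatness-based argument that treats the degreewise sequences $0\to Z_n\to K_n\to B_{n-1}\to 0$ as a short exact sequence of complexes. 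Your pointwise argument avoids this issue entirely.
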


By assuming the complex $L$ is a single persistence module concentrated in degree $0$ we also show the following.

\begin{theorem}[Universal Coefficient Theorem for persistence modules, I]
\label{theorem:UCT_1}
Let $A$ be a persistence module. Let $(K,d)$ be a chain complex that has all terms projective and whose subcomplex of boundaries $B$ also has all terms projective. For all $n\ge 0$, there is a natural short exact sequence
\[0\to H_n(K)\otimes_{\varphi} A\to H_n(K\otimes_{\varphi} A)\to \cat{Tor}^{\varphi}_1(H_{n-1}(K),A)\to 0.\]
Furthermore the short exact sequence splits, although not naturally.
\end{theorem}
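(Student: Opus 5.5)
The plan is to obtain this as the special case of \cref{theorem:kunneth_1} in which the second complex is $A$ placed in a single degree. Let $L$ be the complex with $L_0=A$, $L_j=0$ for $j\neq 0$, and all differentials zero. Then the total complex of \cref{def:monoidal_prod_of_chain_complexes} has $(K\otimes_{\varphi}L)_n=K_n\otimes_{\varphi}A$, and its differential is $d_n\otimes_{\varphi}\id_A$. The vertical differentials vanish, so the sign $(-1)^p$ plays no role. Hence $K\otimes_{\varphi}L$ is exactly the complex $K\otimes_{\varphi}A$ of the statement.

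Since $H_j(L)=A$ for $j=0$ and $H_j(L)=0$ otherwise, the direct sums in \cref{theorem:kunneth_1} collapse. The only surviving term on the left is $H_n(K)\otimes_{\varphi}A$, and the only surviving term on the right is $\cat{Tor}^{\varphi}_1(H_{n-1}(K),A)$. Here I use that $\otimes_{\varphi}$ and $\cat{Tor}^{\varphi}_1$ are additive and vanish when one argument is $0$; this is clear from \cref{lemma:generalized_convolution} and from the definition via resolutions. The hypotheses of \cref{theorem:kunneth_1} on $K$ (terms projective, boundaries projective) are precisely those assumed here. This gives the natural short exact sequence.

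The splitting needs a separate argument, because \cref{theorem:kunneth_1} only guarantees a splitting when $L$ also has projective terms, and $A$ is arbitrary. I will use the classical argument. Write $Z_n$ and $B_n$ for the cycles and boundaries of $K$. Since $B_{n-1}$ is projective, the short exact sequence $0\to Z_n\to K_n\xrightarrow{d_n} B_{n-1}\to 0$ splits. Choose a retraction $r_n\colon K_n\to Z_n$ and compose it with the quotient map $Z_n\to H_n(K)$ to get $\rho_n\colon K_n\to H_n(K)$.

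Apply $-\otimes_{\varphi}A$ to obtain $\rho_n\otimes\id_A\colon K_n\otimes_{\varphi}A\to H_n(K)\otimes_{\varphi}A$. The boundaries of $K\otimes_{\varphi}A$ in degree $n$ are the image of $d_{n+1}\otimes\id_A$. This map factors through $B_n\otimes_{\varphi}A\to K_n\otimes_{\varphi}A$, and on $B_n$ the map $r_n$ is the inclusion $B_n\subset Z_n$. Therefore $\rho_n$ is zero on $B_n$, and so $\rho_n\otimes\id_A$ kills these boundaries. Restricting $\rho_n\otimes\id_A$ to the cycles of $K\otimes_{\varphi}A$ thus gives a map $H_n(K\otimes_{\varphi}A)\to H_n(K)\otimes_{\varphi}A$.

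It remains to check that this map is a left inverse of the first map in the sequence. The first map sends the class of $z\otimes a$, for $z\in Z_n$, to the class of $z\otimes a$ in $H_n(K\otimes_{\varphi}A)$. Since $r_n$ fixes $Z_n$, the composite is the identity on generators coming from $Z_n\otimes_{\varphi}A\to H_n(K)\otimes_{\varphi}A$. That map is surjective by right exactness (\cref{corollary:tensor_and_internal_hom_adjoints}), so the composite is the identity.

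I expect this last step to be the main obstacle. ``Elements'' must be interpreted pointwise, since $\otimes_{\varphi}$ is a colimit by \cref{lemma:generalized_convolution}. The clean way is to argue with maps rather than elements: the first map of the sequence is induced by $Z_n\otimes_{\varphi}A\to K_n\otimes_{\varphi}A$ via the appendix construction, and composing it with $r_n\otimes\id_A$ gives the identity on $Z_n\otimes_{\varphi}A$. This uses the functoriality of $-\otimes_{\varphi}A$ and the epimorphism $Z_n\otimes_{\varphi}A\twoheadrightarrow H_n(K)\otimes_{\varphi}A$. The splitting is not natural because it depends on the choice of $r_n$.
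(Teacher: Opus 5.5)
Your proof is correct and follows the paper's route. The exact sequence is the special case of \cref{theorem:kunneth_1} with $L=A$ concentrated in degree $0$, and the splitting comes from $K_n\approx Z_n\oplus B_{n-1}$ (projectivity of $B_{n-1}$) tensored with $A$; you package this as an explicit retraction $\rho_n\otimes_{\varphi}\id_A$ checked against the epimorphism $Z_n\otimes_{\varphi}A\twoheadrightarrow H_n(K)\otimes_{\varphi}A$, a slightly more careful version of the paper's ``direct summand of the cycles, then quotient by the common image of $d_{n+1}\otimes_{\varphi}\id_A$'' argument.
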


For a proof see \cref{section:appendix_kunneth}. Similarly, using dual arguments we also have the following.

\begin{theorem}[Universal coefficient theorem persistence modules, II]
\label{theorem:UCT_2}
Let $A$ be a persistence module. Let $(K,d)$ be a chain complex that has all terms projective and whose subcomplex of boundaries $B$ also has all terms projective. For all $n\ge 0$, there is a natural short exact sequence
\[0\to \cat{Ext}^1_{\varphi}(H_{n-1}(K),A)\to H^n(\scHom^{\varphi}(K, A))\to \scHom^{\varphi}(H_n(K),A)\to 0.\]
Furthermore the short exact sequence splits, although not naturally.
\end{theorem}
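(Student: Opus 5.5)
We prove \cref{theorem:UCT_2} by dualizing the classical argument behind the Universal Coefficient Theorem, replacing $\Hom$ by $\scHom^{\varphi}(-,A)$ and $\cat{Ext}$ by $\cat{Ext}^1_{\varphi}(-,A)$. Write $Z_n\subset K_n$ for the cycles and $B_{n-1}=d(K_n)\subset K_{n-1}$ for the boundaries. The cochain complex $\scHom^{\varphi}(K,A)$ has terms $\scHom^{\varphi}(K_n,A)$ and differential induced by $d$, up to the sign conventions for total complexes, which do not affect cohomology.

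\textbf{Step 1: split the short exact sequences.} For each $n$ there is a short exact sequence of complexes $0\to Z\to K\xrightarrow{d} B[-1]\to 0$, that is, $0\to Z_n\to K_n\to B_{n-1}\to 0$ in each degree. Each $B_{n-1}$ is projective by hypothesis, so each of these sequences splits degreewise. Since $\scHom^{\varphi}(-,A)$ is an additive contravariant functor, it preserves split exact sequences. We therefore obtain a short exact sequence of cochain complexes
\[0\to \scHom^{\varphi}(B_{\bullet-1},A)\to \scHom^{\varphi}(K_\bullet,A)\to \scHom^{\varphi}(Z_\bullet,A)\to 0.\]
The complexes $Z$ and $B$ have zero differentials, so the outer complexes also have zero differentials.

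\textbf{Step 2: the long exact sequence and its connecting map.} Taking the associated long exact cohomology sequence gives
\[\cdots\to \scHom^{\varphi}(Z_{n-1},A)\xrightarrow{\delta}\scHom^{\varphi}(B_{n-1},A)\to H^n(\scHom^{\varphi}(K,A))\to \scHom^{\varphi}(Z_n,A)\xrightarrow{\delta}\scHom^{\varphi}(B_n,A)\to\cdots\]
A diagram chase, identical to the classical one, shows that the connecting map $\delta$ is restriction along the inclusion $i_n:B_n\hookrightarrow Z_n$. Naturality of everything in $K$ follows from functoriality of these constructions.

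\textbf{Step 3: identify the kernel and cokernel of $i_n^*$.} The sequence $0\to B_n\xrightarrow{i_n} Z_n\to H_n(K)\to 0$ is a projective resolution of $H_n(K)$. Here we need $Z_n$ to be projective as well. We get this from the split sequence $0\to Z_n\to K_n\to B_{n-1}\to 0$: since it splits and $K_n$ is projective, $Z_n$ is a direct summand of a projective and hence projective. Using this resolution to compute the derived functors of $\scHom^{\varphi}(-,A)$, as defined in the subsection on $\varphi$-Ext, gives
\[\ker(i_n^*)\approx\scHom^{\varphi}(H_n(K),A),\qquad \operatorname{coker}(i_{n-1}^*)\approx\cat{Ext}^1_{\varphi}(H_{n-1}(K),A).\]
For the kernel we use left exactness of $\scHom^{\varphi}(-,A)$ (\cref{corollary:tensor_and_internal_hom_adjoints} and the discussion following it). Cutting the long exact sequence at $H^n$ then yields the desired natural short exact sequence.

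\textbf{Step 4: the splitting.} The splitting is the delicate point, and it is where the projectivity of the boundaries is used essentially. Since $0\to Z_n\to K_n\to B_{n-1}\to 0$ splits, choose a retraction $r:K_n\to Z_n$. Define $\scHom^{\varphi}(Z_n,A)\to\scHom^{\varphi}(K_n,A)$ as precomposition with $r$. Restricted to $\ker(i_n^*)=\scHom^{\varphi}(H_n(K),A)$, this lands in the cocycles, because such maps vanish on $B_n$. Composing with the quotient map gives a section of $H^n(\scHom^{\varphi}(K,A))\to\scHom^{\varphi}(H_n(K),A)$.

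The main obstacle is checking that this composite really is a section, and that it is a morphism of persistence modules rather than a levelwise construction. Both reduce to functoriality of $\scHom^{\varphi}$ in $\cat{k^P}$. The section depends on the choice of $r$, which is why the splitting is not natural.
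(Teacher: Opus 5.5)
Your proof is correct, but it follows a different route from the paper. The paper gives no separate argument for this theorem. It is obtained by dualizing the proof of the first Universal Coefficient Theorem, which specializes the K\"unneth theorem to $L=A$ concentrated in degree $0$. That argument proceeds as follows:
\begin{enumerate}[left=0pt]
\item It uses the non-canonical splittings $K_n\approx Z_n\oplus B_{n-1}$ to write $K\approx\bigoplus_p S_p$ as a sum of two-term complexes $B_p\to Z_p$.
\item It computes the cohomology of each $\scHom^{\varphi}(S_p,A)$ by the dual two-row spectral sequence, reading off $\scHom^{\varphi}(H_p(K),A)$ and $\cat{Ext}^1_{\varphi}(H_p(K),A)$ from the resolution $0\to B_p\to Z_p\to H_p(K)\to 0$.
\item It splits the sequence by a direct-summand argument.
\end{enumerate}
You instead run the classical textbook proof. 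You take the functorial sequence $0\to Z\to K\to B[-1]\to 0$, which stays exact under $\scHom^{\varphi}(-,A)$ because it is degreewise split. You then pass to its long exact cohomology sequence and identify the connecting map with $i_n^*$.

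The essential inputs are the same in both approaches:
\begin{enumerate}[left=0pt]
\item projectivity of $B_{n-1}$, to split off $Z_n$;
\item projectivity of $Z_n$, to get a length-one resolution;
\item left exactness of $\scHom^{\varphi}(-,A)$.
\end{enumerate}
Your Step 4 is exactly the dual of the paper's direct-summand splitting.

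Your version makes naturality of the sequence transparent, since no choice of decomposition of $K$ enters its construction, and it needs no spectral sequence. The paper's version is uniform with the general K\"unneth theorem for an arbitrary second complex $L$. With $L=A$, its decomposition even gives $H^n\approx\ker(i_n^*)\oplus\operatorname{coker}(i_{n-1}^*)$ directly.

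For the point you flagged in Step 4, let $\bar d_{n+1}:K_{n+1}\twoheadrightarrow B_n$ be the corestriction and $j_n:Z_n\hookrightarrow K_n$ the inclusion. The only facts needed are $r\circ d_{n+1}=i_n\circ\bar d_{n+1}$, hence $\scHom^{\varphi}(d_{n+1},A)\circ r^*=\scHom^{\varphi}(\bar d_{n+1},A)\circ i_n^*$, and $j_n^*\circ r^*=\id$. Both are identities of morphisms in $\cat{k^P}$, so the section is automatically a morphism of persistence modules.
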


\subsection{K\"unneth theorem for filtered CW complexes}
All persistence modules here are objects in $\cat{k^R_+}$ and $\varphi:\mathbb{R}_+^2\to \mathbb{R}_+$ is order preserving.

\begin{theorem}
\label{theorem:kunneth_filtered_cw}
    Let $(K,d^K)$ and $(L,d^L)$ be two chain complexes of persistence modules obtained from filtered CW complexes $X$ and $Y$ with filtration functions $f$ and $g$, respectively, as discussed in \cref{subsection:chain_complexes_of_pers_mod}. Then, the $F=\varphi\circ(f\times g)$ filtration on $X\times Y$ induces the chain complex of persistence modules $K\otimes_{\varphi} L$. In particular, the persistent homology of the $F$ filtration can be calculated from \cref{theorem:kunneth_1}.
\end{theorem}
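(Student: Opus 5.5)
We compare the cellular chain complex of persistence modules $C^F_\bullet(X\times Y)$ of the product filtration with the total complex $K\otimes_{\varphi}L$ of \cref{def:monoidal_prod_of_chain_complexes}. Then we check that $K$ and $L$ satisfy the hypotheses of \cref{theorem:kunneth_1}.

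First, $F=\varphi\circ(f\times g)$ is a filtration of $X\times Y$. The cells of $X\times Y$ are the products $\sigma\times\tau$, where $\sigma$ is an $i$-cell of $X$ and $\tau$ a $j$-cell of $Y$, so $F$ is constant on cells. The boundary $\partial(\sigma\times\tau)$ lies in $\partial\sigma\times\overline\tau\cup\overline\sigma\times\partial\tau$. Since $f$ and $g$ are filtrations and $\varphi$ is order-preserving, $F\le \varphi(f(\sigma),g(\tau))=F(\sigma\times\tau)$ on this set. By definition,
\[C^F_n(X\times Y)=\bigoplus_{i+j=n}\ \bigoplus_{\sigma\in X^{(i)},\,\tau\in Y^{(j)}}\cat{k}[\varphi(f(\sigma),g(\tau)),\infty).\]
The tensor product $\otimes_\varphi$ commutes with direct sums in each variable, since it is a left adjoint by \cref{theorem:convolution_adjunction}. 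Combined with \cref{corollary:tensor_product_of_infinite_intervals}, this gives
\[(K\otimes_\varphi L)_n=\bigoplus_{i+j=n}K_i\otimes_\varphi L_j\approx\bigoplus_{i+j=n}\ \bigoplus_{\sigma,\tau}\cat{k}[\varphi(f(\sigma),g(\tau)),\infty).\]
So the two complexes agree termwise. We fix the generator $\sigma\times\tau$ of each summand to correspond to $\sigma\otimes\tau$.

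Next we match differentials. Both sides are free modules on these generators, and a morphism out of $\cat{k}[U_a]$ is determined by the image of the generator at $a$ (cf.\ \cref{lemma:free_is_projective}). It therefore suffices to compare the two differentials on generators. The cellular boundary on the product satisfies
\[\partial(\sigma\times\tau)=\partial\sigma\times\tau+(-1)^i\sigma\times\partial\tau,\]
with coefficients given by the classical cellular Künneth formula. The total differential is $d^h+d^v=d^K\otimes_\varphi\id+(-1)^i\id\otimes_\varphi d^L$. We must check that, under the isomorphism of \cref{corollary:tensor_product_of_infinite_intervals}, the map $d^K\otimes_\varphi\id$ sends the generator of $\cat{k}[\varphi(f(\sigma),g(\tau)),\infty)$ to $\sum_{\sigma'}[\sigma:\sigma']$ times the image of the generator of $\cat{k}[\varphi(f(\sigma'),g(\tau)),\infty)$. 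This is the main obstacle: making the isomorphism of that corollary natural in maps between free interval modules. The plan is to use the colimit formula of \cref{lemma:generalized_convolution}. A map $\cat{k}[s,\infty)\to\cat{k}[s',\infty)$ with $s'\le s$ is a scalar $\lambda$, and the induced map on colimits over $\{\varphi(c,d)\le x\}$ is $\lambda$ times the canonical structure map of $\cat{k}[\varphi(s',t),\infty)$. So the generator-level formula follows. This identifies $C^F_\bullet(X\times Y)\cong K\otimes_\varphi L$ as chain complexes. The lemma identifying $H_n(C^F)$ with $\mathcal H_n((X\times Y)^F)$ then gives the first claim.

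Finally, to apply \cref{theorem:kunneth_1}, we need $K$ and its boundaries to be termwise projective. Each $K_i$ is free, hence projective by \cref{lemma:free_is_projective}. For the boundaries $B_{i}=\im d^K_{i+1}\subset K_{i}$, we use that a submodule of a free module over $\mathbb{R}_+$ generated by finitely many (or, for finite-type cells, locally finitely many) elements is again free. The argument runs through the graded-module viewpoint of \cite{Bubenik2021}: persistence modules over $\mathbb{R}_+$ behave like modules over a valuation-type monoid ring, where finitely generated submodules of free modules are free, and \cite[Corollary 2.20]{Bubenik2021} identifies free with projective there. One could instead apply the structure theorem, \cref{theorem:structure_theorem}. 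Its use is as follows: $B_i$ is a finitely presented submodule of $K_i$ with no $\cat{k}[0,a)$-type or bounded summands, since it embeds into a module of type $\bigoplus\cat{k}[a,\infty)$ and is generated at the values $F(\sigma)$ of the cells. Its interval summands are then of the form $\cat{k}[a,\infty)$, which are projective. With these hypotheses verified, \cref{theorem:kunneth_1} computes $H_n(K\otimes_\varphi L)=\mathcal H^F_n(X\times Y)$. It also splits, since the same argument applies to $L$.
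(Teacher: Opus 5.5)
Your proposal is correct and follows the paper's route. You identify each product cell $\sigma\times\tau$ with the summand $\cat{k}[f(\sigma),\infty)\otimes_{\varphi}\cat{k}[g(\tau),\infty)\approx\cat{k}[\varphi(f(\sigma),g(\tau)),\infty)$ via \cref{corollary:tensor_product_of_infinite_intervals}, match $\partial(\sigma\times\tau)=\partial\sigma\times\tau+(-1)^{|\sigma|}\sigma\times\partial\tau$ with the total differential, and use freeness to invoke \cref{theorem:kunneth_1}. You are more careful than the paper, which only asserts that the differentials are compatible and that the boundary modules are free ``by construction''.

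Your finiteness caveat in the projectivity step is genuinely needed, for the paper's argument as well as yours. Take two vertices at level $0$ joined by edges $e_n$ at levels $a+1/n$ with $a>0$. This gives $B_0\approx\cat{k}(a,\infty)$, which is not projective by \cref{proposition:projective_intervals}. So $K$ and $L$ must be pointwise finite-dimensional, for example with only finitely many cells below each filtration level.
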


\begin{proof}
    Let $\sigma$ be an $n$-cell of $X$ and let $\tau$ be an $m$-cell of $Y$. These cells have corresponding free summands $\cat{k}[f(\sigma),\infty)$ and $\cat{k}[g(\tau),\infty)$ in $K_n$ and $L_m$. Consider the $F=\varphi\circ(f\times g)$ filtration on $X\times Y$. Then, $\sigma\times \tau$ is an $(n+m)$-cell of $X\times Y$ with corresponding free summand $\cat{k}[f(\sigma),\infty)\otimes_{\varphi}\cat{k}[g(\tau),\infty)\approx\cat{k}[\varphi(f(\sigma),g(\tau)),\infty)$, by \cref{corollary:tensor_product_of_infinite_intervals}), in $(K\otimes_{\varphi} L)_{n+m}$. Note that this correspondence is compatible with the cellular boundary $\partial(\sigma\times \tau)=\partial(\sigma)\times \tau +(-1)^{|\sigma|}\sigma\times\partial(\tau)$ (see, for example, [22, Proposition 3.B.1]), and the boundary map in $K\otimes_{\varphi} L$ (\cref{subsection:tensor_chain_complex}). Thus, $K\otimes_{\varphi} L$ is the chain complex of persistence modules induced by the $F=\varphi\circ(f\times g)$ filtration on $X\times Y$. Note that $K$ and $L$ are chain complexes consisting of free persistence modules and their complexes of boundaries also consist of free persistence modules, by construction. In particular, since free persistence modules are projective by \cref{lemma:free_is_projective}, \cref{theorem:kunneth_1} can be applied to compute the persistent homology of the filtration $F:X\times Y\to \mathbb{R}_+$.
\end{proof}

We illustrate this result with the following example.

\begin{example}
\label{example:kunneth_square}
See \cref{fig:kunneth_square}. Let $(K,d^K)$ and $(L, d^L)$ be chain complexes of persistence modules determined by filtrations of the $1$-simplex. In particular, let
\begin{gather*}
  K_0=\mathbf{k}[a_1,\infty)\oplus \mathbf{k}[b_1,\infty), \quad
  K_1=\mathbf{k}[c_1,\infty),\\
  L_0=\mathbf{k}[a_2,\infty)\oplus \mathbf{k}[b_2,\infty), \quad
  L_1=\mathbf{k}[c_2,\infty),
\end{gather*}
where $a_1\le b_1\le c_1$ and $a_2\le b_2\le c_2$, and let $d^K$ and $d^L$ be the induced boundary maps by the boundary maps of the $1$-simplex, as discussed in \cref{subsection:chain_complexes_of_pers_mod}.
Now consider the product complex $K \ptensor L$ (\cref{def:monoidal_prod_of_chain_complexes}). Note that it is the chain complex of persistence modules corresponding to the filtered cubical complex given by the square in \cref{fig:kunneth_square}, which assigns each cell the $\ell^p_c$ combination of filtration values of corresponding cells in the two $1$-simplices.
\begin{figure}[H]
\centering
\begin{tikzpicture}[line cap=round,line join=round,x=1.0cm,y=1.0cm,scale=1]
\draw[fill=green!20] (6,0) rectangle (8,2);
\draw[color=blue] (0,0)--(2,0);
\filldraw[blue] (1,0) circle (0pt) node[anchor=south,color=black] {$c_1$};
\draw[color=blue] (4,0)--(4,2);
\filldraw[blue] (4,1) circle (0pt) node[anchor=east,color=black] {$c_2$};
\draw[color=blue] (6,0)--(8,0);
\filldraw[blue] (7,0) circle (0pt) node[anchor=south,color=black,scale=0.8] {$\cpnorm(c_1,a_2)$};
\draw[color=blue] (8,0)--(8,2);
\filldraw[blue] (7,2) circle (0pt) node[anchor=north,color=black,scale=0.8] {$\cpnorm(c_1,b_2)$};
\draw[color=blue] (8,2)--(6,2);
\filldraw[blue] (8,1) circle (0pt) node[anchor=north,color=black,scale=0.8,rotate=90] {$\cpnorm(b_1,c_2)$};
\draw[color=blue] (6,2)--(6,0);
\filldraw[blue] (6,1) circle (0pt) node[anchor=south,color=black,scale=0.8,rotate=90] {$\cpnorm(a_1,c_2)$};
\draw[color=black] (3,1) node {$\times$};
\draw[color=black,->] (4.75,1)--(5.25,1);
\draw[color=black](7,1) node[scale=0.8] {$\cpnorm(c_1,c_2)$};
\fill [color=red] (0,0) circle (0.5mm) node[anchor=south,color=black] {$a_1$};
\fill [color=red] (2,0) circle (0.5mm) node[anchor=south,color=black] {$b_1$};
\fill [color=red] (4,0) circle (0.5mm) node[anchor=east,color=black] {$a_2$};
\fill [color=red] (4,2) circle (0.5mm) node[anchor=east,color=black] {$b_2$};
\fill [color=red] (6,0) circle (0.5mm) node[anchor=north,color=black,scale=0.8] {$\cpnorm(a_1,a_2)$};
\fill [color=red] (8,0) circle (0.5mm) node[anchor=north,color=black,scale=0.8] {$\cpnorm(b_1,a_2)$};
\fill [color=red] (8,2) circle (0.5mm) node[anchor=south,color=black,scale=0.8] {$\cpnorm(b_1,b_2)$};
\fill [color=red] (6,2) circle (0.5mm) node[anchor=south,color=black,scale=0.8] {$\cpnorm(a_1,b_2)$};
\end{tikzpicture}
\caption{A product complex with a filtration with respect to $\ptensor$, visualized.}
\label{fig:kunneth_square}
\end{figure}
One can compute that the only non-trivial homology groups are
\[
    H_0(K)=\cat{k}[a_1,\infty)\oplus\cat{k}[b_1,c_1),\, H_0(L)=\cat{k}[a_2,\infty)\oplus \cat{k}[b_2,c_2),
\]
\begin{align*}
H_0(K\ptensor L)&=\cat{k}[\cpnorm(a_1,a_2),\infty) \oplus \cat{k}[\cpnorm(a_1,b_2),\cpnorm(a_1,c_2))\oplus \cat{k}[\cpnorm(b_1,a_2),\cpnorm(c_1,a_2))\\
&\oplus \mathbf{k}[\cpnorm(b_1,b_2),\min(\cpnorm(b_1,c_2),\cpnorm(c_1,b_2)),    
\end{align*}
and
\[H_1(K\ptensor L)=\mathbf{k}[\max(\cpnorm(b_1,c_2),\cpnorm(c_1,b_2),\cpnorm(c_1,c_2)).\]
Note that
$H_0(K \ptensor L) \approx H_0(K) \ptensor H_0(L)$ and
$H_1(K \ptensor L) \approx \mathbf{Tor}^{\cpnorm}_1(H_0(K),H_0(L))$,
which agrees with \cref{theorem:kunneth_1}.
\end{example}

\begin{remark}
In \cref{theorem:kunneth_filtered_cw} $\varphi$ was not assumed to be symmetric. Thus, if one has filtrations $f:X\to \mathbb{R}_+$ and $g:Y\to \mathbb{R}_+$ and wishes to ``weigh`` them differently when combined on $X\times Y$ our theorem in its full generality allows them to do so. The only limitation is deriving formulas for $\cat{k}[a,b)\otimes_{\varphi}\cat{k}[c,d)$ and $\cat{Tor}_1^{\varphi}(\cat{k}[a,b),\cat{k}[c,d))$ for a given $\varphi$. We spend quite a bit of effort doing this for the map $\ell^p_c$ for $p\in (0,\infty]$ here and we hope the homological algebra methods developed here will serve future such endeavors for other maps $\varphi$. 
\end{remark}

\subsection{Persistent Borel-Moore homology of open complexes}
\label{section:borel_moore}
	Let $X$ be a simplicial complex and let $T\subset X$ be a nonempty subcomplex, and $H=X\setminus T$. Any such $H$ is called an \emph{open complex} \cite{knudson2026discretemorsetheoryopen}. Open complexes naturally arise in discrete stratified Morse theory \cite{Knudson2022}.
	The \emph{Borel-Moore} homology $H_i^{BM}(H)$ is the relative homology $H_i(X, T)$. Borel–Moore homology was originally developed in \cite{Borel1960} as a homology theory for
locally compact (in particular, non-compact) spaces. The standard reference is \cite[Chapter 5]{Bredon1997}.   Equivalently, Borel-Moore homology can be viewed as the relative homology of the one-point compactification of $H$ (relative to the point at infinity), i.e. $H_i^{BM}(H)=H_i(H^*,\{\infty\})$ where $H^*$ is the one-point compactification of $H$. There are other equivalent formulations of Borel-Moore homology but it is not necessary to mention them here. We are still working with coefficients in a field $\cat{k}$ when considering $H_i^{BM}(H)$. That is $H_i^{BM}(H)=H_i(X, T;\cat{k})$.
	
Let $f:X\to \mathbb{R}_+$ be a filtration of a simplicial complex $X$ (we defined a filtration of CW complexes and every simplicial complex is a CW complex). If particular for whenever $\sigma\subset \tau$ are simplices in $X$ we have $f(\sigma)\le f(\tau)$. Assume that $\alpha>f(\sigma)$ for all simplices $\sigma$ of $X$ and let $A=\cat{k}[\alpha,\infty)$. Let $C^f(X)$ be the chain complex of persistence modules obtained from $X$ and $f$ as described in \cref{subsection:chain_complexes_of_pers_mod}.

Let $p\in (0,\infty)$ and consider the cochain complex of persistence modules $\scHom^{\cpnorm}(C^f(X),A)$. In particular, $\scHom^{\cpnorm}(C^f(X),A)^n\od \scHom^{\cpnorm}(C^f_n(X),A)$ for all $n\ge 0$. By \cref{proposition:internal_hom_formulas} we have that 

\[\scHom^{\cpnorm}(C^f(X),A)^n\approx\bigoplus_{\sigma\in X^{(n)}} \cat{k}[(\alpha^p-f(\sigma)^p)^{\frac{1}{p}},\infty),\]
where $X^{(n)}$ are the $n$-simplices of $X$. Note that if $f(\sigma)<f(\tau)$, then $\alpha^p-f(\tau)^p<\alpha^p-f(\sigma)^p$, therefore 
\[\sigma\mapsto (\alpha^p-f(\sigma)^p)^{\frac{1}{p}},\]
 for $\sigma$ a simplex in $X$, defines a function which we will denote by $f_{\alpha^p}$, $f_{\alpha^p}:X\to \mathbb{R}_+$. This function is like an ``opposite filtration" in the sense that it reverses the order in which the simplices in $X$ appeared under $f$, and it additionally shifts the reversed order by the parameter $\alpha$. Since the filtration $f$ had to satisfy $f(\sigma)\le f(\tau)$ for $\sigma\subset \tau$ we had simplices appearing after their boundaries in the sublevel sets. However, in the filtration $f_{\alpha^p}$ we have simplices appearing before their boundaries. This results in sublevel sets of $f_{\alpha^p}$ consisting of typically non-compact spaces, see \cref{fig:borel-moore-fig}. In particular, each sublevel set $f^{-1}_{\alpha^p}[0,t]$ satisfies
 \[f^{-1}_{\alpha^p}[0,t]\od\{\sigma\,|\, \alpha^p-f(\sigma)^p\le t^p\}=\{\sigma\,|\, f(\sigma)^p\ge \alpha^p-t^p\}.\]
 Therefore $f^{-1}_{\alpha^p}[0,t]$ is the set 
 \begin{align*}
 f^{-1}_{\alpha^p}[0,t]&=f^{-1}[\max(0,\alpha^p-t^p)^{\frac{1}{p}},\infty)=f^{-1}[0,\infty)\setminus  f^{-1}[0,\max (0,\alpha^p-t^p)^{\frac{1}{p}})\\
 &=X\setminus  f^{-1}[0,\max (0,\alpha^p-t^p)^{\frac{1}{p}}).
 \end{align*}
 This is typically an open complex, unless we end up substracting the empty set. Let \[T_t\od  f^{-1}[0,\max (0,\alpha^p-t^p)^{\frac{1}{p}}).\]
 
Let $H_{t}\od f^{-1}_{\alpha^p}[0,t]$. Then, $H_t=X\setminus T_t$. Each $T_t$ is a closed subcomplex of $X$, therefore for $s\le t$, we have open inlclusions $H_t\hookrightarrow H_s$. Borel-Moore homology is contravariant with respect to open inclusions, thus we have linear maps maps 
\[H_n^{BM}(H_t)\od H_n(X, T_t)\to H_n^{BM}(H_s)\od H_n(X,T_s)\] 
for $s\le t$. We call this sequence of vector spaces and linear maps the \emph{persistent Borel-Moore homology} of the filtration $f_{\alpha^p}$. We denote this persistence module by $\mathcal{H}^{BM,f_{\alpha^p}}_n:\cat{R}_+^{\op}\to \cat{Vect_k}$ (note that this is a contravariant functor on $\mathbb{R}_+$, unlike our standard persistence modules on $\mathbb{R}_+$). However, if we take duals  $\Hom_{\cat{k}}(H_n^{BM}(H_t),\cat{k})$ we do have induced linear maps 
\[\Hom_{\cat{k}}(H_n^{BM}(H_s),\cat{k})= \Hom_{\cat{k}}(H_n(X, T_s),\cat{k})\to \Hom_{\cat{k}}(H_n^{BM}(H_t),\cat{k})= \Hom_{\cat{k}}(H_n(X,T_t),\cat{k}),\]
whenever $s\le t$.
Note that because everything has coefficients in the field $\cat{k}$ we have isomorphisms $H_n^{BM}(H_t)\approx \Hom_{\cat{k}}(H_n^{BM}(H_t),\cat{k})$. Therefore, the duals of the Borel-Moore homology of the filtration $f_{\alpha^p}$ give a covariant functor $\cat{R}_+\to \cat{Vect}_k$. From all these observations we get the following.
  
\begin{theorem}
	\label{theorem:borel_moore}
	Let $C^f(X)$ be a chain complexes of persistence modules obtained from a filtered CW complexes $X$ with filtration function $f$, as discussed in \cref{subsection:chain_complexes_of_pers_mod}. Let $A=\cat{k}[\alpha,\infty)$ where $\alpha$ is larger than any value of $f$. Then, $H^n(\scHom^{\cpnorm}(C^f(X),A))$ is the dual of the $n$-th persistent Borel-Moore homology of the filtration $f_{\alpha^p}$ on $X$. In particular, the persistent Borel-Moore homology of the $f_{\alpha^p}$ filtration can be calculated from \cref{theorem:UCT_2}.
\end{theorem}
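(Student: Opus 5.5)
The plan is to identify the cochain complex $\scHom^{\cpnorm}(C^f(X),A)$ pointwise with the dual of the relative cellular chain complex of the pair $(X,T_t)$, naturally in $t$, and then take cohomology.

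\emph{Step 1: the cochain groups.} By \cref{proposition:internal_hom_formulas}(3), applied summandwise (and using that $\scHom^{\cpnorm}(-,A)$ converts finite direct sums to products), we have
\[\scHom^{\cpnorm}(C^f_n(X),A)\approx\bigoplus_{\sigma\in X^{(n)}}\cat{k}[(\alpha^p-f(\sigma)^p)^{\frac1p},\infty).\]
Here $\max(0,\cdot)$ is unnecessary because $\alpha>f(\sigma)$. Evaluating at $t\in\mathbb{R}_+$, the summand indexed by $\sigma$ is nonzero exactly when $f(\sigma)^p\ge \alpha^p-t^p$, that is, when $\sigma\in H_t=X\setminus T_t$. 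So the degree-$n$ term at $t$ is the vector space with basis the $n$-cells of $X$ not in $T_t$. This is canonically $\Hom_{\cat{k}}(C_n(X,T_t),\cat{k})$, since $C_n(X,T_t)=C_n(X)/C_n(T_t)$ has basis the $n$-cells outside $T_t$. For $s\le t$ the structure map is the inclusion of bases, because $H_s\subset H_t$. This is exactly the dual of the quotient map $C_n(X,T_t)\to C_n(X,T_s)$, which exists because $T_t\subset T_s$.

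\emph{Step 2: the differential.} I need to check that, under this identification, the differential of $\scHom^{\cpnorm}(C^f(X),A)$ is (up to the sign $(-1)^{n+1}$ from the total complex convention, which does not affect cohomology) the dual of the cellular boundary of $(X,T_t)$. The differential is $\scHom^{\cpnorm}(d_{n+1},A)$. Since $d_{n+1}$ is determined by the incidence numbers $[\tau:\sigma]$ on the free generators, functoriality of $\scHom^{\cpnorm}(-,A)$ on maps $\cat{k}[f(\tau),\infty)\to\cat{k}[f(\sigma),\infty)$ gives the same matrix. Each such component is a scalar times the canonical map $\cat{k}[(\alpha^p-f(\sigma)^p)^{1/p},\infty)\to\cat{k}[(\alpha^p-f(\tau)^p)^{1/p},\infty)$.

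This is the step I expect to be the main obstacle: verifying that $\scHom^{\cpnorm}(-,A)$ sends the nonzero generator map between free interval modules to the nonzero generator map, rather than to zero. I would check it via the limit description in \cref{proposition:limit_characterization_of_internal_hom}. A map $\cat{k}[c,\infty)\to\cat{k}[\alpha,\infty)$ in that limit is determined by its component at $c$, and precomposition with the inclusion $\cat{k}[f(\tau),\infty)\to\cat{k}[f(\sigma),\infty)$ restricts it, which is the identity on the relevant $\cat{k}$ whenever both modules are nonzero at $t$. Matrix entries for cells with $\sigma\in T_t$ vanish at $t$, exactly as in the relative cochain complex.

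\emph{Step 3: conclusion.} Pointwise in $t$, we get an isomorphism of cochain complexes
\[\scHom^{\cpnorm}(C^f(X),A)_t\approx\Hom_{\cat{k}}(C_\bullet(X,T_t),\cat{k}),\]
natural in $t$. Since $\cat{k}$ is a field, dualization is exact, so taking cohomology commutes with it:
\[H^n(\scHom^{\cpnorm}(C^f(X),A))_t\approx\Hom_{\cat{k}}(H_n(X,T_t),\cat{k})=\Hom_{\cat{k}}(H_n^{BM}(H_t),\cat{k}),\]
with structure maps the duals of the maps $H_n^{BM}(H_t)\to H_n^{BM}(H_s)$ for $s\le t$. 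Cohomology of persistence modules is computed pointwise, so this is an isomorphism of persistence modules.

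For the final clause, $C^f(X)$ and its boundaries consist of free, hence projective, modules by \cref{lemma:free_is_projective}. Therefore \cref{theorem:UCT_2} with $\varphi=\cpnorm$ applies and expresses this module via $\Extp^1(H_{n-1}(C^f(X)),A)$ and $\scHom^{\cpnorm}(H_n(C^f(X)),A)$. These are computable from the interval decomposition (\cref{theorem:structure_theorem}) together with \cref{proposition:internal_hom_formulas,proposition:ext_interval_modules}.
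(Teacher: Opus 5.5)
Your proposal is correct and follows essentially the same route as the paper: identify $\scHom^{\cpnorm}(C^f(X),A)$ pointwise in $t$ with the relative cellular cochains $\Hom_{\cat{k}}(C_\bullet(X,T_t),\cat{k})$, naturally in $t$, and then take cohomology. You also verify explicitly that $\scHom^{\cpnorm}(-,A)$ sends the generator maps between free interval modules to the nonzero canonical maps, and that dualization over a field commutes with cohomology; this supplies the detail the paper leaves to the phrase ``by construction''.
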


\begin{proof}
Let $X^{(n)}$ denote the set of $n$-cells of X. For $n\ge 0$, define 
\[C^{f_{\alpha^p}}_n(X)\od\bigoplus_{\sigma\in X^{(n)}}\cat{k}[(\alpha^p-f(\sigma)^p)^{\frac{1}{p}},\infty).\]
where now $\cat{k}[(\alpha^p-f(\sigma)^p)^{\frac{1}{p}},\infty):\cat{R}_+^{\op}\to \cat{Vect_k}$ is contravariant. Note that for each $t\in \mathbb{R}_+$, $C_n^{f_{\alpha^p}}(X)_t$ can be identified with the relative chains $C_n(X,T_t)$ as per the discussion above. We thus have boundary maps $C_n^{f_{\alpha^p}}(X)\to C_{n-1}^{f_{\alpha^{p}}}(X)$ induced by the relative boundaries $C_n(X,T_t)\to C_{n-1}(X,T_t)$. Dualizing everything gives us the cochain complexes of persistence modules:
\[C^n_{f_{\alpha^p}}(X)\approx \bigoplus_{\sigma\in X^{(n)}}\cat{k}[(\alpha^p-f(\sigma)^p)^{\frac{1}{p}},\infty),\]
where now $\cat{k}[(\alpha^p-f(\sigma)^p)^{\frac{1}{p}},\infty):\cat{R}_+\to \cat{Vect_k}$ is covariant and we can identify $C^n_{f_{\alpha^p}}(X)_t$ with the cochains $C^n(X,T_t)$ for all $t\in \mathbb{R}_+$. The cochain of complexes of persistence modules and its coboundary are precisely the cochain complex $\scHom^{\cpnorm}(C^f(X),A)$ by construction. Thus the result follows.
\end{proof}

\begin{example}
\label{example:borel_moore_persistent}
Let $0\le a_0\le a_1\le a_2\le b_0\le b_1\le b_2\le c$ be real numbers and consider the filtration of the $2$-simplex $X$ in \cref{fig:21} given by $f(x)=a_0$, $f(y)=a_1$, $f(z)=a_2$, $f([x,y])=b_0$, $f([y,z])=b_1$, $f([x,z])=b_2$ and $f([x,y,z])=c$.
\begin{figure}[H]
\centering

\begin{tikzpicture}[line cap=round,line join=round,x=1.0cm,y=1.0cm,scale=0.5]
\fill [color=blue!80] (-1,2) circle (1mm);
\draw[color=black] (-1,1) node[font=\scriptsize]  {$a_0$};
\draw[color=black] (-0.25,3) node[scale=1] {$\hookrightarrow$};

\fill [color=blue!80] (0.5,2) circle (1mm);
\fill [color=blue!80] (2.5,2) circle (1mm);

\draw[color=black] (1.5,1) node[font=\scriptsize]  {$a_1$};
\draw[color=black] (3.25,3) node[scale=1] {$\hookrightarrow$};
\fill[color=blue!80] (4,2) circle(1mm);
\fill[color=blue!80] (6,2) circle(1mm);
\fill[color=blue!80] (5,4) circle (1mm);
\draw[color=black] (6.75,3) node[scale=1] {$\hookrightarrow$};
\draw[color=black] (5,1) node[font=\scriptsize]  {$a_2$};
\draw[color=red!35, line width=0.7mm] (7.5,2)--(9.5,2);
\fill[color=blue!80](7.5,2) circle(1mm);
\fill[color=blue!80] (9.5,2) circle (1mm);
\fill[color=blue!80] (8.5,4) circle (1mm);
\draw[color=black] (8.5,1) node[font=\scriptsize]  {$b_0$};

\draw[color=black] (10.25,3) node[scale=1] {$\hookrightarrow$};

\draw[color=red!35, line width=0.7mm] (11,2)--(13,2);
\draw[color=red!35, line width=0.7mm] (13,2)--(12,4);
\fill[color=blue!80](11,2) circle(1mm);
\fill[color=blue!80] (13,2) circle (1mm);
\fill[color=blue!80] (12,4) circle (1mm);
\draw[color=black] (12,1) node[font=\scriptsize]  {$b_1$};

\draw[color=black] (13.75,3) node[scale=1] {$\hookrightarrow$};

\draw[color=red!35, line width=0.7mm] (14.5,2)--(16.5,2);
\draw[color=red!35, line width=0.7mm] (16.5,2)--(15.5,4);
\draw[color=red!35, line width=0.7mm] (15.5,4)--(14.5,2);
\fill[color=blue!80](14.5,2) circle(1mm);
\fill[color=blue!80] (16.5,2) circle (1mm);
\fill[color=blue!80] (15.5,4) circle (1mm);

\draw[color=black] (15.5,1) node[font=\scriptsize]  {$b_2$};

\draw[color=black] (17.25,3) node[scale=1] {$\hookrightarrow$};
\draw[fill=green!35, line width=0.2mm] (18,2)--(20,2)--(19,4)--cycle;
\draw[color=red!35, line width=0.7mm] (18,2)--(20,2)--(19,4)--cycle;
\fill[color=blue!80](18,2) circle(1mm);
\fill[color=blue!80] (20,2) circle (1mm);
\fill[color=blue!80] (19,4) circle (1mm);
\draw[color=black] (19,1) node[font=\scriptsize] {$c$};
\end{tikzpicture}

\begin{tikzpicture}[line cap=round,line join=round,x=1.0cm,y=1.0cm,scale=0.5]

\fill[green!35] (-1,2)--(1,2)--(0,4)--cycle;
\draw[color=black] (0,1) node[font=\scriptsize]  {$(\alpha^p-c^p)^{\frac{1}{p}}$};

\draw[color=black] (1.75,3) node[scale=1] {$\hookrightarrow$};

\fill[green!35] (2.5,2)--(4.5,2)--(3.5,4)--cycle;
\draw[color=red!35, line width=0.7mm] (3.43,3.86)--(2.57,2.14);
\draw[color=black] (3.5,1) node[font=\scriptsize]  {$(\alpha^p-b_2^p)^{\frac{1}{p}}$};

\draw[color=black] (5.25,3) node[scale=1] {$\hookrightarrow$};

\fill[green!35] (6,2)--(8,2)--(7,4)--cycle;

\draw[color=red!35, line width=0.7mm] (6.93,3.86)--(6.07,2.14);
\draw[color=red!35, line width=0.7mm] (7.07,3.86)--(7.93,2.14);

\draw[color=black] (8.75,3) node[scale=1] {$\hookrightarrow$};
\draw[color=black] (7,1) node[font=\scriptsize]  {$(\alpha^p-b_1^p)^{\frac{1}{p}}$};

\fill[green!35] (9.5,2)--(11.5,2)--(10.5,4)--cycle;

\draw[color=red!35, line width=0.7mm] (10.43,3.86)--(9.57,2.14);
\draw[color=red!35, line width=0.7mm] (10.57,3.86)--(11.43,2.14);
\draw[color=red!35, line width=0.7mm] (11.4,2)--(9.6,2);

\draw[color=black] (10.5,1) node[font=\scriptsize]  {$(\alpha^p-b_0^p)^{\frac{1}{p}}$};

\draw[color=black] (12.25,3) node[scale=1] {$\hookrightarrow$};



\fill[green!35] (13,2)--(15,2)--(14,4)--cycle;
\draw[color=red!35, line width=0.7mm] (14,4)--(14.93,2.14);
\draw[color=red!35, line width=0.7mm] (14,4)--(13.07,2.14);
\draw[color=red!35, line width=0.7mm] (14.9,2)--(13.1,2);
\fill[color=blue!80] (14,4) circle (1mm);

\draw[color=black] (14,1) node[font=\scriptsize]  {$(\alpha^p-a_2^p)^{\frac{1}{p}}$};

\draw[color=black] (15.75,3) node[scale=1] {$\hookrightarrow$};

\fill[green!35] (16.5,2)--(18.5,2)--(17.5,4)--cycle;
\draw[color=red!35, line width=0.7mm] (17.5,4)--(18.5,2);
\draw[color=red!35, line width=0.7mm] (17.5,4)--(16.57,2.14);
\draw[color=red!35, line width=0.7mm] (18.5,2)--(16.6,2);
\fill [color=blue!80] (18.5,2) circle (1mm);
\fill [color=blue!80] (17.5,4) circle (1mm);

\draw[color=black] (17.5,1) node[font=\scriptsize]   {$(\alpha^p-a_1^p)^{\frac{1}{p}}$};

\draw[color=black] (19.25,3) node[scale=1] {$\hookrightarrow$};
\draw[fill=green!35, line width=0.2mm] (20,2)--(22,2)--(21,4)--cycle;
\draw[color=red!35, line width=0.7mm] (20,2)--(22,2)--(21,4)--cycle;
\fill [color=blue!80] (20,2) circle (1mm);
\fill [color=blue!80] (22,2) circle (1mm);
\fill [color=blue!80] (21,4) circle (1mm);
\draw[color=black] (21,1) node[font=\scriptsize]  {$(\alpha^p-a_0^p)^{\frac{1}{p}}$};

\end{tikzpicture}
\caption{A $2$-simplex $X$ with filtration $f$ (top) and the ``reversed filtration" $f_{\alpha^p}$ (bottom).}
\label{fig:borel-moore-fig}
\end{figure}
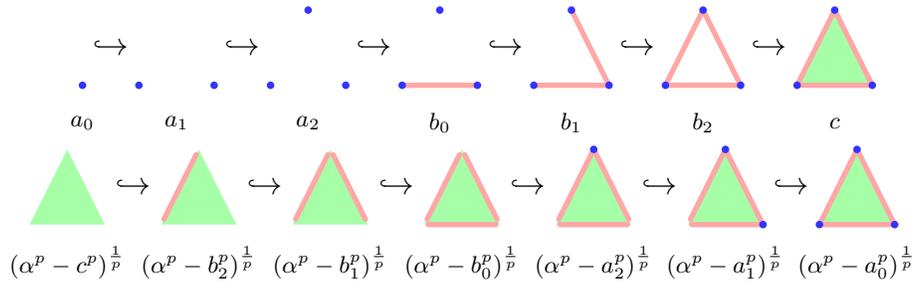

The corresponding chain complex of persistence modules associated with this filtration is:
\begin{align*}
    C^f_0(X)&=\cat{k}[a_0,\infty)\oplus\cat{k}[a_1,\infty)\oplus\cat{k}[a_2,\infty),\\
    C^f_1(X)&=\cat{k}[b_0,\infty)\oplus\cat{k}[b_1,\infty)\oplus \cat{k}[b_2,\infty),\\
    C^f_2(X)&=\cat{k}[c,\infty), C^f_n(X)=0,n\ge 3,
\end{align*}
with boundary maps induced by the boundary maps of the $2$-simplex $X$.
The homology of the chain complex $C^f(X)$ is:
\[H_0(C^f(X))=\mathcal{H}_0(X^f)=\cat{k}[a_0,\infty)\oplus\cat{k}[a_1,b_0)\oplus \cat{k}[a_2,b_1),
H_1(C^f(X))=\mathcal{H}_1(X^f)=\cat{k}[b_2,c),\]
and $H_n(C^f(X))=\mathcal{H}_n(X^f)$ for $n\ge 2$.

 Let $A=\cat{k}[\alpha,\infty)$ where $\alpha> c$. Then, by \cref{proposition:internal_hom_formulas} $\scHom^{\cpnorm}(C^f(X),A)$ is the cochain complex:
 \begin{align*}
    \scHom^{\cpnorm}(C^f(X),A)^0&=\cat{k}[(\alpha^p-a^p_0)^{\frac{1}{p}},\infty)\oplus\cat{k}[(\alpha^p-a^p_1)^{\frac{1}{p}},\infty)\oplus\cat{k}[(\alpha^p-a^p_2)^{\frac{1}{p}},\infty),\\
    \scHom^{\cpnorm}(C^f(X),A)^1&=\cat{k}[(\alpha^p-b^p_0)^{\frac{1}{p}},\infty)\oplus\cat{k}[(\alpha^p-b^p_1)^{\frac{1}{p}},\infty)\oplus \cat{k}[(\alpha^p-b^p_2)^{\frac{1}{p}},\infty),\\
   \scHom^{\cpnorm}(C^f(X),A)^2&=\cat{k}[(\alpha^p-c^p)^{\frac{1}{p}},\infty), \scHom^{\cpnorm}(C^f(X),A)^n=0, n\ge 3.
\end{align*}
Because $\alpha^p-c^p<\alpha^p-b_2^p<\alpha^p-b_1^p<\alpha^p-b_0^p<\alpha^p-a_2^p<\alpha^p-a_1^p<\alpha^p-a_0^p$ we can think of $\scHom^{\cpnorm}(C^f(X),A)$ as the cochain complex of the ``filtration" in \cref{fig:borel-moore-fig}. By \cref{theorem:UCT_2,proposition:internal_hom_formulas,proposition:ext_interval_modules} the cohomology of this cochain complex is given by 
 \begin{align*}
    H^0(\scHom^{\cpnorm}(C^f(X),A))&\approx \scHom^{\cpnorm}(H_0(C^f(X),A))\approx \cat{k}[(\alpha^p-a^p_0)^{\frac{1}{p}},\infty),\\
    H^1(\scHom^{\cpnorm}(C^f(X),A))&\approx \cat{Ext}^{1}_{\cpnorm}(H_0(C^f(X)),A)\oplus \scHom^{\cpnorm}(H_1(C^f(X)),A)\approx  \\
    &\approx
    \cat{k}[(\alpha^p-b^p_0)^{\frac{1}{p}},(\alpha^p-a_1^p)^{\frac{1}{p}})\oplus\cat{k}[(\alpha^p-b^p_1)^{\frac{1}{p}},(\alpha^p-a^p_2)^{\frac{1}{p}})\oplus 0,\\
   H^2(\scHom^{\cpnorm}(C^f(X),A))&=\cat{Ext}^1_{\cpnorm}(H_1(C^f(X)),A)\approx\cat{k}[(\alpha^p-c^p)^{\frac{1}{p}},(\alpha^p-b_2^p)^{\frac{1}{p}}),\\
    H^n(\scHom^{\cpnorm}(C^f(X),A))&=0, n\ge 3.
\end{align*}
Note that this computation agrees with the persistent Borel-Moore homology of the filtration in \cref{fig:borel-moore-fig}. For example at the filtration value of $(\alpha^p-c^p)^{\frac{1}{p}}$ we have the interior of a $2$-simplex whose Borel-Moore homology is only nonzero in degree $2$. A way to see this is that its one-point compactification is the sphere. Similarly, at the filtration value $(\alpha^p-b^p_0)^{\frac{1}{p}}$ we have two nonzero homology classes in degree one. The one-point compactification is the triangle with the 3-vertices identified which once again agrees with the Borel-Moore homology of this non-compact space.
\end{example}

\section{Different filtrations on product metric spaces}

\label{section:approximating}

Let $(X,d_X)$ and $(Y,d_Y)$ be two metric spaces. Let $d_{X\times Y}^p$ be the $p$-norm distance induced by $d$ and $e$ on $X\times Y$. In other words, $d^p_{X\times Y}((x_1,y_1),(x_2,y_2))=||d_X(x_1,x_2),d_Y(y_1,y_2)||_p$. In this section we discuss different filtered CW complexes arising from different samplings of points on $X\times Y$. The goal of this section is to relate various persistent modules on $(X\times Y,d_{X\times Y})$ to those on $(X,d_X)$ and $(Y,d_Y)$ by using the K\"unneth theorems we proved. 

\subsection{The Vietoris-Rips simplicial complex}

For a metric space $(X,d)$, let $\mathbf{VR}_{a}(X,d)$ be the Vietoris-Rips complex at scale $a$. This is an abstract simplicial complex on the vertex set $X$ whose $n$-simplices are given by

\[\cat{VR}_{a}^n(X)\od\{\sigma\subset X\,|\, |\sigma|=n+1, x,x'\in \sigma\Longrightarrow d(x,x')\le a\}.\]

In other words, $\sigma$ is a simplex in $\cat{VR}_{a}(X,d)$ if its diameter is less than $a$. Varying $a\in \mathbb{R}_+$ we have the collection $\cat{VR(X,d)}_{\bullet}\od \{\cat{VR}_a(X,d)\}_{a\in \mathbb{R}_+}$ which is an example of filtered CW complex as discussed in \cref{subsection:chain_complexes_of_pers_mod}. This gives rise to a chain complex of persistence modules whose homology is the persistent Vietoris-Rips homology. The Vietoris-Rips complex is the most common construction in topological data analysis since there is a highly efficient algorithm for computing the persistent homology of its filtration \cite{Bauer2021}. The Vietoris-Rips filtered complex is typically constructed on finite point cloud samples from a metric space. Next we discuss sampling in products of metric spaces.

\subsection{Sampling points in a metric measure space}

Let $(X,d_X,\mu_X)$ and $(Y,d_Y,\mu_Y)$ be two metric Borel probability measure spaces. Here we discuss 2 ways to sample points in $(X\times Y,d_{X\times Y}^p, \mu_{X}\times \mu_{Y})$, where $d^p$ is the $\ell^p$ metric induced from $d_X$ and $d_Y$.

The first method is to sample $S^X_N=\{x_1,\dots , x_N\}$ i.i.d from $(X,\mu_X)$ and $S^M_M=\{y_1,\dots, y_M\}$ i.i.d from $(Y,\mu_Y)$. This yields a sample of points $S^X_N\times S^Y_M$ in $X\times Y$.

Alternatively we can sample points $S^{X\times Y}_{NM}=\{(x,y)_1,\dots, (x,y)_{N\times M}\}$ i.i.d from $(X\times Y,\mu_X\times \mu_Y)$. Obviously the points in $S^X_N\times S^Y_M$ always form a ``grid" in $X\times Y$ whereas the points in $S^{X\times Y}_{NM}$ do not. However as $N,M\to \infty$ we can ask what happens to the distance between the sets $S^{X\times Y}_{NM}$ and $S^X_N\times S^Y_M$, in $(X\times Y,d^p_{X\times Y})$. By distance we mean the Hausdorff distance between sets.

\begin{definition}
\label{def:hausdorff_distance}
Let $(X,d)$ be a metric space. For each pair of non-empty subsets $A,B\subset X$, the \emph{Hausdorff distance} between $A$ and $B$ is defined as
	\[d_H(A,B)=\max\left\{\sup_{a\in A}\text{dist}(a,B),\sup_{b\in B}\text{dist}(b,A)\right\},\]
	where $\text{dist}(x,C)=\inf_{y\in C}d(x,y)$ is the distance of a point $x$ to a set $C$.
\end{definition}

The Hausdorff distance allows us to define a distance between any pair of compact metric spaces $X$ and $Y$ by setting
\[d_{\text{GH}}(X,Y)=\inf_{f,g,Z}d_H^Z(f(X),g(Y)),\]
where $f:X\to Z$, $g:Y\to Z$ are isometric embeddings into a common metric space $Z$ and $d_H^Z$ is the Hausdorff distance between subsets in $Z$. The value $d_{\text{GH}}(X,Y)$ is the \emph{Gromov-Hausdorff distance} of $X$ and $Y$. For a comprehensive treatment of Gromov-Hausdorff distance see \cite{Burago2001}.
 
The following results show that as $N,M\to \infty$, the Hausdorff distance between the sets $S^{X\times Y}_{NM}$ and $S^X_N\times S^Y_M$ goes to $0$ almost surely.

\begin{proposition}
\label{prop:iid_sample}
	Let $(X,d)$ be a compact metric space and $\mu$ a Borel probability measure on $X$. Let $x_1, x_2, \dots, x_N$ be i.i.d. samples from $\mu$. Then
	\[\lim_{N\to \infty}\sup_{x\in \text{supp}(\mu)}\min_{i=1,\dots, N}d(x,x_i)=0 \text{ almost surely}.\]
\end{proposition}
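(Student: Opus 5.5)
The plan is a standard covering argument combined with Borel--Cantelli and monotonicity. Write $S=\text{supp}(\mu)$ and $D_N\od\sup_{x\in S}\min_{i\le N}d(x,x_i)$. Adding a sample point can only shrink the minimum, so $D_N$ is non-increasing in $N$. It is therefore enough to show that for every $\varepsilon>0$ we have $\mathbb{P}(D_N>\varepsilon\text{ for all }N)=0$. Taking the countable intersection over $\varepsilon=1/k$ then gives $D_N\to 0$ almost surely.

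Fix $\varepsilon>0$. The support $S$ is closed in the compact space $X$, so it is compact. Cover it by finitely many open balls $B(z_1,\varepsilon/2),\dots,B(z_m,\varepsilon/2)$ with centers $z_j\in S$. By the definition of the support, every open set meeting $S$ has positive measure, so $\mu(B(z_j,\varepsilon/2))=:\delta_j>0$. Set $\delta=\min_j\delta_j>0$.

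Suppose every ball $B(z_j,\varepsilon/2)$ contains at least one sample $x_i$ with $i\le N$. Any $x\in S$ lies in some $B(z_j,\varepsilon/2)$, so by the triangle inequality $d(x,x_i)<\varepsilon$ for that sample. Hence $D_N\le\varepsilon$. Consequently
\[\mathbb{P}(D_N>\varepsilon)\le\sum_{j=1}^m\mathbb{P}(\text{no }x_i\in B(z_j,\varepsilon/2),\ i\le N)=\sum_{j=1}^m(1-\delta_j)^N\le m(1-\delta)^N,\]
where the equality uses independence of the samples.

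The right-hand side is summable in $N$, so by Borel--Cantelli the event $D_N>\varepsilon$ occurs only finitely often almost surely. Monotonicity alone also suffices: $\mathbb{P}(D_N>\varepsilon\ \forall N)\le\lim_N m(1-\delta)^N=0$. This gives $\limsup_N D_N\le\varepsilon$ almost surely, and intersecting over $\varepsilon=1/k$ finishes the proof.

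The only delicate step is the positivity $\delta>0$. It needs the centers $z_j$ to lie in the support. It also needs the support to be well behaved, namely that $\mu(X\setminus S)=0$ and that every open set meeting $S$ has positive mass. Both hold because a compact metric space is separable. A minor point is measurability of $D_N$. This follows from the continuity of $x\mapsto\min_i d(x,x_i)$, so the supremum can be taken over a countable dense subset of $S$.
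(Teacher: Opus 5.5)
Your proposal is correct and follows essentially the same route as the paper's proof. Both cover the compact support by finitely many $\varepsilon/2$-balls centered in $\text{supp}(\mu)$, use the positivity of their masses, bound the failure probability by $\sum_{j}(1-p_j)^N$, apply Borel--Cantelli, and intersect over $\varepsilon=1/m$; your observation that $D_N$ is non-increasing (so Borel--Cantelli is not even needed) and your measurability remark are small refinements the paper does not make.
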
 

\begin{proof}
For a proof, see \cref{section:appendix_B}.
\end{proof}

\begin{proposition}
\label{prop:grid_sample}
	Let $(X,d_X)$ and $(Y,d_Y)$ be compact metric spaces with Borel probability measures $\mu_X$ and $\mu_Y$, respectively, both having full support. Let $\{x_i\}_{i=1}^N$ and $\{y_i\}_{i=1}^M$ be i.i.d. samples from $\mu_X$ and $\mu_Y$, respectively. Let $d$ be the metric on $X\times Y$ defined by $d((x,y),(x',y'))=(d_X(x,x')^p+d_Y(y,y')^p)^{1/p}$ for $p\ge 1$. Then
	\[\lim_{N,M\to \infty}\sup_{(x,y)\in X\times Y}\min_{1\le i\le N,1\le j\le M}d((x,y),(x_i,y_j))=0 \text{ almost surely}.\]
\end{proposition}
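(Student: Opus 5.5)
The plan is to reduce the statement to \cref{prop:iid_sample}, applied to each factor separately, and then use the fact that the product metric $d$ is controlled by the larger of the two coordinate distances. Fix $(x,y)\in X\times Y$. Since $\mu_X$ and $\mu_Y$ have full support, $\text{supp}(\mu_X)=X$ and $\text{supp}(\mu_Y)=Y$. For any indices $i,j$ we have
\[d((x,y),(x_i,y_j))=\|(d_X(x,x_i),d_Y(y,y_j))\|_p\le 2^{1/p}\max(d_X(x,x_i),d_Y(y,y_j)).\]
This follows from $(s^p+t^p)^{1/p}\le (2\max(s,t)^p)^{1/p}$, and the case $p=\infty$ is covered trivially.

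The minimum over $(i,j)$ splits because the two coordinates can be chosen independently. Choose $i$ to minimize $d_X(x,x_i)$ and, separately, $j$ to minimize $d_Y(y,y_j)$. This gives
\[\min_{i,j}d((x,y),(x_i,y_j))\le 2^{1/p}\max\Big(\min_{i}d_X(x,x_i),\ \min_j d_Y(y,y_j)\Big).\]
Taking the supremum over $(x,y)$ then bounds the left-hand quantity of the statement by
\[2^{1/p}\max(\epsilon^X_N,\epsilon^Y_M),\]
where $\epsilon^X_N=\sup_{x\in X}\min_i d_X(x,x_i)$ and $\epsilon^Y_M$ is defined analogously.

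By \cref{prop:iid_sample}, there is an event $\Omega_X$ of probability one on which $\epsilon^X_N\to 0$ as $N\to\infty$, and likewise an event $\Omega_Y$ on which $\epsilon^Y_M\to 0$. Here the two sample sequences are realized on a common probability space, for instance the product space. The event $\Omega_X\cap\Omega_Y$ still has probability one. On it, for any $\varepsilon>0$ there exist $N_0$ and $M_0$ such that $N\ge N_0$ and $M\ge M_0$ imply $\max(\epsilon^X_N,\epsilon^Y_M)<\varepsilon$. This is exactly the joint limit $N,M\to\infty$, so the stated quantity tends to $0$ almost surely.

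I expect no real obstacle. The only points needing care are interpreting the double limit as joint convergence and noting that $\epsilon^X_N$ is nonincreasing in $N$, although the argument does not even need monotonicity. If one also wanted a quantitative rate, the work would lie in \cref{prop:iid_sample} itself, via a finite $\varepsilon$-net of the compact space and the positive mass of balls under a fully supported measure; here we simply invoke it.
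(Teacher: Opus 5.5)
Your proof is correct. The deterministic reduction is the same as the paper's. The paper bounds the quantity by $Z_{NM}=\bigl((\epsilon^X_N)^p+(\epsilon^Y_M)^p\bigr)^{1/p}$ rather than $2^{1/p}\max(\epsilon^X_N,\epsilon^Y_M)$, and that difference is immaterial.

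You genuinely differ in the probabilistic step. The paper works with probabilities. From \cref{prop:iid_sample} it deduces $P(\epsilon^X_N<\epsilon/2^{1/p})\to 1$ and $P(\epsilon^Y_M<\epsilon/2^{1/p})\to 1$. It multiplies these using independence of the two samples to get $P(Z_{NM}<\epsilon)\to 1$, which is only $Z_{NM}\to 0$ in probability. It then simply asserts almost sure convergence, and that does not follow from convergence in probability.

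Your pathwise argument on the full-measure event $\Omega_X\cap\Omega_Y$ gives almost sure joint convergence directly, so it closes that gap. It also never uses independence of the $x$- and $y$-samples, only that both sequences live on a common probability space.

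The paper's route buys nothing extra here. Yours is shorter and actually establishes the mode of convergence that is stated.
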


\begin{proof}
For a proof, see \cref{section:appendix_B}.
\end{proof}

\begin{theorem}
\label{theorem:hausdorff_distance_grid_and_sample}
	Let $(X,d_X), (Y,d_Y)$ be compact metric spaces with Borel probability measures $\mu_X$, $\mu_Y$ respectively, both having full support.  Let $d$ be the metric on $X\times Y$ defined by $d((x,y),(x',y'))=(d_X(x,x')^p+d_Y(y,y')^p)^{1/p}$ for $p\ge 1$ and $d_H$ the induced Hausdorff distance on subsets of $X\times Y$.
	Then 
	\[d_H(G_{NM},S_{NM})\to 0 \text{ almost surely as } N,M\to \infty.\]
\end{theorem}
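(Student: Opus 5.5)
We will bound the Hausdorff distance by the triangle inequality, passing through the whole space $X\times Y$. Write $G_{NM}=S^X_N\times S^Y_M$ for the grid sample and $S_{NM}=S^{X\times Y}_{NM}$ for the i.i.d.\ sample from $\mu_X\times\mu_Y$. Both are subsets of $X\times Y$, so by \cref{def:hausdorff_distance}
\[d_H(G_{NM},S_{NM})\le d_H(G_{NM},X\times Y)+d_H(X\times Y,S_{NM}).\]
Since $G_{NM},S_{NM}\subset X\times Y$, each term reduces to a one-sided quantity. For instance, $d_H(G_{NM},X\times Y)=\sup_{(x,y)\in X\times Y}\min_{i,j}d((x,y),(x_i,y_j))$, and similarly for $S_{NM}$. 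It therefore suffices to show that both one-sided quantities tend to $0$ almost surely as $N,M\to\infty$.

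For the grid term this is exactly \cref{prop:grid_sample}. For the product-sample term we apply \cref{prop:iid_sample} to the compact metric space $(X\times Y,d)$ with measure $\mu_X\times\mu_Y$. Compactness holds because a finite product of compact spaces is compact, and $d$ induces the product topology for $p\ge 1$. The sample size is $K=NM$, and $K\to\infty$ whenever $N,M\to\infty$. We must also check that $\operatorname{supp}(\mu_X\times\mu_Y)=X\times Y$. Any nonempty open set contains a basic box $U\times V$ with $U,V$ nonempty and open, and $(\mu_X\times\mu_Y)(U\times V)=\mu_X(U)\mu_Y(V)>0$ by full support of each factor. So \cref{prop:iid_sample} gives $\sup_{z\in X\times Y}\min_k d(z,s_k)\to0$ almost surely.

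The main subtlety is the probabilistic bookkeeping of the double limit. The samples $S_{NM}$ must be indexed so that \cref{prop:iid_sample} applies along $K=NM$. We would fix one infinite i.i.d.\ sequence in $X\times Y$ and take $S_{NM}$ to be its first $NM$ terms, and likewise use nested sequences in $X$ and $Y$ for the grid. The one-sided distances are then monotone nonincreasing in the sample size. Hence convergence along $K\to\infty$ implies convergence as $N,M\to\infty$ jointly, on a single full-measure event. Intersecting the two almost-sure events from the two propositions yields an event of probability one on which $d_H(G_{NM},S_{NM})\to0$.
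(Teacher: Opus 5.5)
Your proposal is correct and follows the paper's proof: the triangle inequality through $X\times Y$, with the grid term handled by \cref{prop:grid_sample} and the i.i.d.\ term by \cref{prop:iid_sample}. You also make explicit two details the paper leaves implicit: that $\mu_X\times\mu_Y$ has full support (which the paper uses without comment), and the nested coupling of the samples that puts the double limit on a single full-measure event.
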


\begin{proof}
	For any $N,M \ge 0$, the sets $S^{X\times Y}_{NM}$ and $S^X_N\times S^Y_M$ are finite and hence compact. Since the Hausdorff distance satisfies the triangle inequality when restricted to compact sets we have
	\[d_H(S_N^X\times S^Y_M,S^{X\times Y}_{NM})\le d_H(S_N^X\times S_M^Y,X\times Y)+d_H(X\times Y, S^{X\times Y}_{NM}),\]
	for any $N,M\ge 0$. We have that 
	\[\lim_{N,M\to \infty}\sup_{(x,y)\in X\times Y}\min_{(x_i,y_j)\in S_N^X\times S_M^Y}d((x,y),(x_i,y_j))=0 \text{ almost surely},\]
	by \cref{prop:grid_sample}. This means that  $d_H(S_N^X\times S_M^Y,X\times Y)\to 0$ almost surely. By \cref{prop:iid_sample} we have that 
	\[\lim_{N,M\to \infty}\sup_{(x,y)\in X\times Y}\min_{s_i\in S^{X\times Y}_{NM}}d((x,y),s_i)=0 \text{ almost surely}.\]
	This means that $d_H(S^{X\times Y}_{NM},X\times Y)\to 0$ almost surely. Therefore, by the triangle inequality we have that $d_H(S_N^X\times S_M^Y,S^{X\times Y}_{NM})\to 0$ almost surely.
\end{proof}

We now ask how different are the persistence modules obtained from the Vietoris-Rips complex filtrations of the point clouds $S^{X\times Y}_{NM}$ and $S^X_N\times S^Y_M$ in $(X\times Y,d^p)$, respectively. A standard way of measuring distance between persistence modules that are interval decomposable is the bottleneck distance (see \cite{CohenSteiner2006}) which we describe below. 

Let $M:\cat{R}_+\to \cat{Vect_k}$ be an interval decomposable persistence modules. Let $\text{Dgm}(M)$ be \emph{persistence diagram} of $M$, i.e. the multiset of points in $(\mathbb{R}_+\sqcup\{\infty\})^2$, that represent the endpoints of intervals in the interval module decomposition of $M$. That is $(a,b)\in \text{Dgm}(M)$ if $\cat{k}[A]$ is a direct summand of $M$ and $[a,b]$ is the smallest closed interval that contains $A$. This means for example that  $\cat{k}[a,b)$ and $\cat{k}(a,b]$ both contribute the point $(a,b)$ to $\text{Dgm}(M)$. 

A matching $\gamma$ between persistence diagrams $\text{Dgm}(M)$ and $\text{Dgm}(N)$ is a bijection between subsets of the diagrams, where any unmatched interval is paired with its own projection onto the diagonal $\Delta = \{(x, x)\, |\, x \in \mathbb{R}_+\}$. The \emph{bottleneck distance} between persistence modules $d_B(M,N)$ is then defined as 

\[d_B(M,N)\od \inf_{\gamma}\sup_{(x,y)\in \text{Dgm}(M)}||(x,y)-\gamma(x,y)||_{\infty},\]
where $||\cdot ||_{\infty}$ is the sup norm on $(\mathbb{R}_+\sqcup\{\infty\})^2$, where $\infty-\infty=0$. The following is the famous stability result for persistent homology of Vietoris-Rips filtrations. The statement was proven for the Gromov-Hausdorff distance, but the Gromov-Hausdorff distance is a lower bound on the Hausdorff distance when $X$ and $Y$ are subsets of the same metric space.

\begin{theorem}{\cite[Theorem 5.2]{Chazal2013}}
	\label{theorem:stability_hausdorff}
Let $X,Y\subset Z$ be finite point clouds in a metric space $Z$. Let $\mathcal{H}_n(\cat{VR}_{\bullet}(X))$ and  $\mathcal{H}_n(\cat{VR}_{\bullet}(Y))$ be the $n$-dimensional homology persistence modules of the Vietoris-Rips complex filtrations of $X$ and $Y$, respectively. Then 
	\[d_B(\text{Dgm}(\mathcal{H}_n(\cat{VR}_{\bullet}(X))),\text{Dgm}(\mathcal{H}_n(\cat{VR}_{\bullet}(Y))))\le 2d_{\text{GH}}(X,Y)\le 2d^Z_H(X,Y).\]
\end{theorem}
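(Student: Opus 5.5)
The second inequality is immediate from the definition of the Gromov--Hausdorff distance. In the infimum defining $d_{\text{GH}}(X,Y)$ we may take the common space to be $Z$ itself, with $f$ and $g$ the inclusions $X\hookrightarrow Z$ and $Y\hookrightarrow Z$; these are isometric embeddings. This choice gives $d_{\text{GH}}(X,Y)\le d^Z_H(X,Y)$. So the real content is the first inequality, which is \cite[Theorem 5.2]{Chazal2013}, and we may simply invoke it. For a self-contained argument in our situation it suffices to prove the weaker bound $d_B\le 2d^Z_H(X,Y)$ directly, and I would argue as follows.

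Fix $\varepsilon>d^Z_H(X,Y)$. Since $X$ and $Y$ are finite, we can choose maps $\phi:X\to Y$ and $\psi:Y\to X$ with $d(x,\phi(x))\le\varepsilon$ and $d(y,\psi(y))\le \varepsilon$ for all $x\in X$ and $y\in Y$.
\begin{itemize}
\item By the triangle inequality, $d(\phi(x),\phi(x'))\le d(x,x')+2\varepsilon$. Hence $\phi$ induces simplicial maps $\cat{VR}_a(X)\to\cat{VR}_{a+2\varepsilon}(Y)$, and similarly $\psi$ induces $\cat{VR}_a(Y)\to\cat{VR}_{a+2\varepsilon}(X)$. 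Both families are natural in $a$ with respect to the inclusions.
\item Next I check that $\psi\circ\phi$ is contiguous to the inclusion $\cat{VR}_a(X)\hookrightarrow\cat{VR}_{a+4\varepsilon}(X)$. For a simplex $\sigma$ of $\cat{VR}_a(X)$, we need $\sigma\cup\psi\phi(\sigma)$ to have diameter at most $a+4\varepsilon$. This holds because $d(x,\psi\phi(x'))\le d(x,x')+d(x',\psi\phi(x'))\le a+2\varepsilon$, and $d(\psi\phi(x),\psi\phi(x'))\le a+4\varepsilon$.
\item The same argument applies with the roles of $X$ and $Y$ exchanged.
\end{itemize}

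Contiguous simplicial maps induce the same map on homology. Applying $H_n(-;\cat{k})$ therefore yields an $2\varepsilon$-interleaving between $\mathcal{H}_n(\cat{VR}_\bullet(X))$ and $\mathcal{H}_n(\cat{VR}_\bullet(Y))$.

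Both modules are pointwise finite dimensional, since the point clouds are finite, and only finitely many simplices appear. By \cref{theorem:structure_theorem} they are therefore interval decomposable. The algebraic stability (isometry) theorem for pointwise finite dimensional one-parameter modules converts an $2\varepsilon$-interleaving into $d_B\le 2\varepsilon$. Letting $\varepsilon\downarrow d^Z_H(X,Y)$ gives the claim.

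The main obstacle is this last step, the isometry theorem, which is external to the paper. I would cite it rather than reprove it. The sharper Gromov--Hausdorff version needs the same argument with a correspondence of distortion $2\varepsilon$ in place of the maps $\phi$ and $\psi$; that is exactly the content of \cite{Chazal2013}.
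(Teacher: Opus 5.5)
Your proposal is correct and matches the paper's treatment: the paper simply cites \cite[Theorem 5.2]{Chazal2013} for the Gromov--Hausdorff bound, and it notes $d_{\text{GH}}(X,Y)\le d^Z_H(X,Y)$ for the same reason you give, namely that the inclusions $X,Y\hookrightarrow Z$ are admissible isometric embeddings. Your extra interleaving argument (nearest-point maps, contiguity of $\psi\circ\phi$ with the inclusion at scale $a+4\varepsilon$, then the isometry theorem) is a valid self-contained proof of the weaker bound $d_B\le 2d^Z_H(X,Y)$, and that weaker bound is the form the paper actually uses in \cref{corollary:grid_and_sample_pers_hom}.
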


\subsection{Approximating persistence diagrams in a product metric}
Let $S_N^X$, $S_M^Y$, $S_{NM}^{X\times Y}$ be as above. Let $\cat{VR}^p_{\bullet}(S_N^X\times S_M^Y)$ and $\cat{VR}^p_{\bullet}(S_{NM}^{X\times Y})$ be Vietoris-Rips filtrations of $S_N^X\times S_M^Y$ and $S_{NM}^{X\times Y}$ with respect to the $\ell^p$ metric on $X\times Y$ for $p\in [1,\infty]$, respectively. Then from \cref{theorem:stability_hausdorff,theorem:hausdorff_distance_grid_and_sample} the following is immediate.

\begin{corollary}
\label{corollary:grid_and_sample_pers_hom} We have that
\[d_B(\text{Dgm}(\mathcal{H}_n(\cat{VR}^p_{\bullet}(S_N^X\times S_M^Y))),\text{Dgm}(\mathcal{H}_n(\cat{VR}^p_{\bullet}(S_{NM}^{X\times Y}))))\to 0\text{ almost surely as }N,M\to\infty,\]
where $\mathcal{H}_n(-)$ are the $n$-dimensional persistent homology modules.
\end{corollary}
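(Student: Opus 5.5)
The plan is to combine \cref{theorem:stability_hausdorff} with \cref{theorem:hausdorff_distance_grid_and_sample}, working inside the single ambient metric space $Z=(X\times Y,d^p_{X\times Y})$.

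First, I will check that both point clouds are finite subsets of the same metric space. $S_N^X\times S_M^Y$ has at most $NM$ points and $S^{X\times Y}_{NM}$ has $NM$ points, and both Vietoris–Rips filtrations $\cat{VR}^p_\bullet$ are built from the restriction of $d^p_{X\times Y}$. Since $p\in[1,\infty]$, $d^p_{X\times Y}$ is a genuine metric, so \cref{theorem:stability_hausdorff} applies. It gives, for every $N,M$ and every $n$,
\[d_B\big(\text{Dgm}(\mathcal{H}_n(\cat{VR}^p_{\bullet}(S_N^X\times S_M^Y))),\text{Dgm}(\mathcal{H}_n(\cat{VR}^p_{\bullet}(S_{NM}^{X\times Y})))\big)\le 2\,d_H(S_N^X\times S_M^Y,S^{X\times Y}_{NM}).\]
Because these finite filtrations have pointwise finite-dimensional homology, the modules are interval decomposable by \cref{theorem:structure_theorem}, so the diagrams and $d_B$ are defined.

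Second, I invoke \cref{theorem:hausdorff_distance_grid_and_sample}, which says the right-hand side tends to $0$ almost surely as $N,M\to\infty$. The event on which the Hausdorff distance tends to $0$ has probability one. On that event the left-hand side is squeezed between $0$ and a sequence tending to $0$, which proves the claim almost surely. This holds simultaneously for all $n$, since the bound does not depend on $n$.

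The main obstacle is hypothesis bookkeeping rather than any estimate. \cref{theorem:hausdorff_distance_grid_and_sample} assumes $X$ and $Y$ are compact with full-support probability measures and is stated for finite $p\ge1$, whereas the corollary allows $p=\infty$. I would handle $p=\infty$ by noting that the proofs of \cref{prop:grid_sample,prop:iid_sample} only use that $d^p_{X\times Y}$ metrizes the product topology on the compact space $X\times Y$. This holds for the sup metric as well, since $d^\infty\le d^1$, so the grid density argument carries over verbatim. I would therefore state the corollary under the standing compactness and full-support assumptions of this subsection.
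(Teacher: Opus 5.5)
Your proposal is correct and follows the paper's argument, which is simply to combine \cref{theorem:stability_hausdorff} (giving $d_B\le 2d_H$ inside the common ambient space $(X\times Y,d^p)$) with \cref{theorem:hausdorff_distance_grid_and_sample}. Your extra bookkeeping for $p=\infty$ covers a case the paper glosses over. The cleanest way to handle it is this: $d^\infty\le d^1$ pointwise gives $d_H^{d^\infty}\le d_H^{d^1}$, so the $p=1$ case of \cref{theorem:hausdorff_distance_grid_and_sample} already suffices.
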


Let $\cat{VR}_{\bullet}(S_N^X)$ and $\cat{VR}_{\bullet}(S_M^Y)$ be the Vietoris-Rips filtrations of $S_N^X$ and $S_M^Y$ with respect to the metrics in $X$ and $Y$, respectively. These are effectively filtrations on the simplex  $\Delta^{N}$ and the simplex $\Delta^{M}$, respectively. Our K\"unneth short exact sequence for filtered CW complexes (\cref{theorem:kunneth_filtered_cw}) allows us to efficiently compute the persistent homology of the $\cpnorm$ filtration on $\Delta^N\times \Delta^M$, where this is a product of CW complexes. We denote this filtered CW complex by
\[(\cat{VR}(S_N^X)\times \cat{VR}(S_M^Y))_{\cpnorm(\bullet,\bullet)}.\] 

 However, $(\cat{VR}(S_N^X)\times \cat{VR}(S_M^Y))_{\cpnorm(\bullet,\bullet)}$  and $\cat{VR}^p_{\bullet}(S_N^X\times S_M^Y)$ are two different filtered CW complexes. Note that, as in \cref{fig:kunneth_square}, $(\cat{VR}(S_N^X)\times \cat{VR}(S_M^Y))_{\cpnorm(\bullet,\bullet)}$ will typically be a filtered cubical complex, whereas $\cat{VR}_{\bullet}^p(S_N^X\times S_M^Y)$ is always a filtered simplicial complex. Even if we triangulate $\Delta^N\times \Delta^M$ by instead taking the product in the category of simplicial complexes, it is not clear how to induce a new filtration on the triangulation that agrees with the filtration $\cat{VR}_{\bullet}^p(S_N^X\times S_M^Y)$. This was first observed in \cite{Carlsson2020} for the case of the $\ell^1$ metric on the product space. 
 
  The case $p=\infty$ is a notable exception, as the product complex $\cat{VR}_a(S_N^X)\times \cat{VR}_a (S_M^Y)$ (in the category of simplicial complexes) and the simplicial complex $\cat{VR}_{a}^{\infty}(S_N^X\times S_M^Y)$ are homotopy equivalent, for all $a\in \mathbb{R}_+$, see \cite[Proposition 10.2]{Adamaszek2017}. Therefore, as was also observed in \cite{gakhar2019} we have:
  
\begin{proposition}{\cite[Theorem 4.4]{gakhar2019}}
	\label{proposition:vr_product_sup_metric}
For $n\ge 0$, 
\[\mathcal{H}_n((\cat{VR}(S_N^X)\times \cat{VR}(S_M^Y))_{\ell^{\infty}_c(\bullet,\bullet)})=\mathcal{H}_n(\cat{VR}^{\infty}_{\bullet}(S_N^X\times S_M^Y)),\] 
where $\mathcal{H}_n(-)$ is the $n$-dimensional persistent homology.
\end{proposition}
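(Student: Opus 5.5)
The plan is to identify both sides, for each $a\in\mathbb{R}_+$, with the homology of the same space up to homotopy, naturally in $a$.

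First I would unwind the left-hand side. By \cref{theorem:kunneth_filtered_cw} with $\varphi=\ell^\infty_c$, the filtered CW complex $(\cat{VR}(S_N^X)\times \cat{VR}(S_M^Y))_{\ell^{\infty}_c(\bullet,\bullet)}$ has chain complex $K\otimes_{\ell^\infty_c}L$. Here $K$ and $L$ are the chain complexes of the two Vietoris--Rips filtrations. A product cell $\sigma\times\tau$ has filtration value $\max(f(\sigma),g(\tau))$. Hence the sublevel set at $a$ is exactly the cellular product $\cat{VR}_a(S_N^X)\times\cat{VR}_a(S_M^Y)$, and the inclusion maps for $a\le b$ are products of the inclusions. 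So the left-hand persistence module is $a\mapsto H_n(\cat{VR}_a(S_N^X)\times\cat{VR}_a(S_M^Y))$. Alternatively, \cref{lemma:sh_tensor_as_a_convolution} and \cref{corollary:kunneth_infty} identify it pointwise with $\bigoplus_{i+j=n}H_i(\cat{VR}_a(S^X_N))\otimes H_j(\cat{VR}_a(S^Y_M))$, which is the classical K\"unneth formula at each $a$.

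Next I would handle the right-hand side. Under the $\ell^\infty$ metric, a finite set of pairs has diameter at most $a$ if and only if both coordinate projections have diameter at most $a$. Thus $\cat{VR}^\infty_a(S_N^X\times S_M^Y)$ is the categorical product of the two simplicial complexes, that is, the flag complex on the product vertex set. By \cite[Proposition 10.2]{Adamaszek2017} it is homotopy equivalent to the topological product $|\cat{VR}_a(S_N^X)|\times|\cat{VR}_a(S_M^Y)|$, which is the CW product above. This gives an isomorphism $H_n(\cat{VR}_a(S_N^X)\times\cat{VR}_a(S_M^Y))\approx H_n(\cat{VR}^\infty_a(S_N^X\times S_M^Y))$ for each $a$.

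The main obstacle is naturality in $a$. Pointwise isomorphisms do not give an isomorphism of persistence modules unless they commute with the structure maps for $a\le b$. I would handle this by using the canonical map from the categorical simplicial product to the topological product, namely the map induced by the two simplicial projections, $|A\times^{\mathrm{cat}} B|\to|A|\times|B|$. This map is natural with respect to simplicial inclusions $A\subset A'$, $B\subset B'$, and it is the homotopy equivalence of \cite{Adamaszek2017}. Naturality of this map makes the squares for $a\le b$ commute on homology, so the pointwise isomorphisms assemble into an isomorphism of persistence modules, as in \cite[Theorem 4.4]{gakhar2019}.
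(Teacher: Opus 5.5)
Your proposal is correct and takes essentially the same route as the paper. The paper invokes \cite[Proposition 10.2]{Adamaszek2017} to get a homotopy equivalence between $\cat{VR}_a(S_N^X)\times\cat{VR}_a(S_M^Y)$ and $\cat{VR}^{\infty}_a(S_N^X\times S_M^Y)$ at every scale $a$, and then defers to \cite[Theorem 4.4]{gakhar2019}; your use of the projection-induced map $|A\times^{\mathrm{cat}}B|\to|A|\times|B|$, which commutes with inclusions, supplies the naturality in $a$ that the paper leaves implicit. The one claim you should justify rather than attribute is that this particular map is a homotopy equivalence. That follows from Quillen's fiber lemma applied to the face-poset map $\sigma\mapsto(\pi_1\sigma,\pi_2\sigma)$, whose fibers are full simplices on $\alpha\times\beta$, together with the homeomorphism $|\Delta(P\times Q)|\cong|\Delta P|\times|\Delta Q|$.
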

 
Furthermore, note that the one-skeletons of $(\cat{VR}(S_N^X)\times \cat{VR}(S_M^Y))_{\cpnorm(\bullet,\bullet)}$ and $\cat{VR}_{\bullet}^p(S_N^X\times S_M^Y)$ will have the same filtrations with respect to any $\ell^p$ metric. Thus we also have the following:

\begin{lemma}
\label{lemma:pers_hom_degree_0_grid_and_product}
We have \[\mathcal{H}_0((\cat{VR}(S_N^X)\times \cat{VR}(S_M^Y))_{\cpnorm(\bullet,\bullet)})=\mathcal{H}_0(\cat{VR}^p_{\bullet}(S_N^X\times S_M^Y)),\] where $\mathcal{H}_0(-)$ is the $0$-dimensional persistent homology.
\end{lemma}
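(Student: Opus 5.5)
The plan is to reduce the statement to an equality of the one-skeletons as filtered complexes. Degree-$0$ persistent homology of any filtered CW complex depends only on its filtered $1$-skeleton: for each $a$, $H_0$ of the sublevel set is the cokernel of the cellular boundary $C_1\to C_0$ of the $1$-skeleton at level $a$, and the structure maps are induced by inclusion of $1$-skeleta. So it suffices to exhibit an isomorphism of filtered $1$-skeleta, compatible with the boundary maps, between $(\cat{VR}(S_N^X)\times \cat{VR}(S_M^Y))_{\cpnorm(\bullet,\bullet)}$ and $\cat{VR}^p_{\bullet}(S_N^X\times S_M^Y)$. Equivalently, on the level of chain complexes of persistence modules from \cref{subsection:chain_complexes_of_pers_mod}, one can compare the truncations $C_1\to C_0$ and take $H_0$.

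First I compare the vertices. A $0$-cell of the product is $x_i\times y_j$. Its filtration value is $\cpnorm(0,0)=0$, since vertices enter a Vietoris--Rips filtration at $0$. This matches the vertex $(x_i,y_j)$ of $\cat{VR}^p_{\bullet}(S_N^X\times S_M^Y)$, which also appears at $0$.

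Next I compare the edges. A $1$-cell of the product is either $[x_i,x_{i'}]\times y_j$ or $x_i\times[y_j,y_{j'}]$. Their values are $\cpnorm(d_X(x_i,x_{i'}),0)=d_X(x_i,x_{i'})$ and $d_Y(y_j,y_{j'})$ respectively, and these equal the $d^p$-lengths of the corresponding Rips edges. The main obstacle is that $\cat{VR}^p$ also has diagonal edges between $(x_i,y_j)$ and $(x_{i'},y_{j'})$ with $i\ne i'$ and $j\ne j'$, which have no $1$-cell counterpart in the product. So the statement cannot be a literal equality of $1$-skeleta, and I handle this at the level of $H_0$. Such a diagonal edge has length $r=\|(d_X,d_Y)\|_p\ge \max(d_X,d_Y)$ for $p\in[1,\infty]$. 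By that time both the edge $(x_i,y_j)$--$(x_{i'},y_j)$ (length $d_X\le r$) and the edge $(x_{i'},y_j)$--$(x_{i'},y_{j'})$ (length $d_Y\le r$) are present. Hence the diagonal edge joins two vertices already in the same component at every level where it exists, and never changes $H_0$.

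It remains to assemble the isomorphism. Let $G$ be the filtered graph spanned by the non-diagonal edges; it is exactly the $1$-skeleton of the product complex. The inclusion $G_a\hookrightarrow \cat{VR}^p_a$ is the identity on vertices, so it is surjective on $\pi_0$. By the argument above it is also injective on $\pi_0$ for every $a$, and it is natural in $a$. This gives an isomorphism of persistence modules on $H_0$ with coefficients in $\cat{k}$. Finally, I note that the equality of filtration values on vertices and on axis-parallel edges depends on order-preservation and on $\cpnorm(t,0)=t$, which holds for all $p\in(0,\infty]$. The inequality $r\ge \max(d_X,d_Y)$ uses only $(d_X^p+d_Y^p)^{1/p}\ge\max(d_X,d_Y)$, which in fact holds for every $p>0$.
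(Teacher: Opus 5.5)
Your proof is correct. It uses the same basic idea as the paper, but it is more careful at the one point where care is needed.

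The paper justifies this lemma only by the preceding remark that the one-skeletons of $(\cat{VR}(S_N^X)\times \cat{VR}(S_M^Y))_{\cpnorm(\bullet,\bullet)}$ and $\cat{VR}^p_{\bullet}(S_N^X\times S_M^Y)$ ``have the same filtrations,'' and then reads off $H_0$. As you observe, that remark is not literally true. The $1$-cells of the CW product $\Delta^N\times\Delta^M$ are only the axis-parallel edges (edge $\times$ vertex and vertex $\times$ edge). The Rips complex also contains the diagonal edges between $(x_i,y_j)$ and $(x_{i'},y_{j'})$ with $i\neq i'$ and $j\neq j'$. For example, with two-point samples the CW product is a square with four edges, while the Rips complex on its four vertices has six. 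So literal equality of filtered $1$-skeleta fails for every $p$ as soon as both samples have at least two points.

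Your extra step is exactly what turns the paper's remark into a proof. A diagonal edge of length $\|(d_X,d_Y)\|_p\ge\max(d_X,d_Y)$ enters no earlier than the two-edge axis-parallel path joining its endpoints. Hence the inclusion of the axis-parallel graph into $\cat{VR}^p_a$ is a bijection on $\pi_0$ for every $a$, naturally in $a$.

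Your version also makes clear what the argument actually uses: only $\cpnorm(t,0)=t$ on the axis-parallel edges and the inequality $\|(s,t)\|_p\ge\max(s,t)$. So it holds for all $p\in(0,\infty]$, not just the range $p\in[1,\infty]$ considered in that section.
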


Therefore, even with the result in \cref{corollary:grid_and_sample_pers_hom}, there is still a gap in applying our K\"unneth Theorems for approximating persistent homology in the $\ell^p$ metrics, at least in dimensions other than $0$ or $p=\infty$.

Using the same arguments as in the proof of \cite[Theorem 4.9]{Carlsson2020} which was about the $\ell^1$ metric, we can prove the following:

\begin{proposition}
 	\label{prop:bottleneck_distance}
	Let $r_1=\text{diam}(S_N^X)$ and $r_2=\text{diam}(S_M^Y)$ be the diameters of the samples $S_N^X$ and $S_M^Y$, respectively. Then
	\[d_B(\text{Dgm}(\mathcal{H}_n(\cat{VR}^p_{\bullet}(S_N^X\times S_M^Y))),\text{Dgm}(\mathcal{H}_n(\cat{VR}(S_N^X)\times \cat{VR}(S_M^Y))_{\cpnorm(\bullet,\bullet)})))\le \min(r_1,r_2),\]
	for all $n\ge 0$.
\end{proposition}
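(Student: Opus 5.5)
The strategy is to compare the two filtered complexes through interleaving, in the style of \cite[Theorem 4.9]{Carlsson2020}. Both filtered complexes compute homology of contractible total spaces once the parameter is large. More importantly, they are related by an explicit chain-level comparison. Write $V_a\od\cat{VR}^p_a(S_N^X\times S_M^Y)$ and $W_a$ for the sublevel set at $a$ of the product cell complex $\cat{VR}(S_N^X)\times\cat{VR}(S_M^Y)$ filtered by $\cpnorm(\bullet,\bullet)$. I would construct maps of filtered spaces $V_a\to W_{a+\varepsilon}$ and $W_a\to V_{a+\varepsilon}$ with $\varepsilon=\min(r_1,r_2)$. I would then check that the composites are homotopic to the structure inclusions $V_a\hookrightarrow V_{a+2\varepsilon}$, $W_a\hookrightarrow W_{a+2\varepsilon}$. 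This gives an $\varepsilon$-interleaving of $\mathcal{H}_n(V_\bullet)$ and $\mathcal{H}_n(W_\bullet)$, and the isometry (algebraic stability) theorem then bounds the bottleneck distance by $\varepsilon$. By symmetry assume $r_2\le r_1$, so $\varepsilon=r_2$.

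The key geometric observation concerns a cell $\sigma\times\tau$ of the product complex, with $\sigma\subset S_N^X$, $\tau\subset S_M^Y$. Its filtration value is $\ell^p(\mathrm{diam}\,\sigma,\mathrm{diam}\,\tau)$. The vertex set $\sigma\times\tau$, viewed as a simplex of the grid point cloud, has $d^p$-diameter exactly $\ell^p(\mathrm{diam}\,\sigma,\mathrm{diam}\,\tau)$, since the maximum of $\|(d_X,d_Y)\|_p$ is attained at the coordinatewise maxima by monotonicity. Hence each cell $\sigma\times\tau\in W_a$ has its full vertex simplex in $V_a$, and the standard triangulation of $\sigma\times\tau$ (staircase simplices) lands in $V_a$. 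This gives a filtered map $W_a\to V_a$ with shift $0$. Conversely, take a simplex $\rho\in V_a$ with vertices $(x_i,y_i)$. Its projections $\sigma=\pi_X\rho$ and $\tau=\pi_Y\rho$ satisfy $\mathrm{diam}\,\sigma\le a$ and $\mathrm{diam}\,\tau\le\min(a,r_2)$. Therefore
\[\ell^p(\mathrm{diam}\,\sigma,\mathrm{diam}\,\tau)\le \|(a,r_2)\|_p\le a+r_2\]
for $p\ge1$, so $\rho$ maps into the cell $\sigma\times\tau\in W_{a+r_2}$. On realizations I would use the map that sends a point of $\rho$ to its image under the two coordinate projections. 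This is a simplicial-to-cellular map into $\sigma\times\tau$, compatible with faces, so it defines $V_a\to W_{a+r_2}$.

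Next I would verify the homotopies, and I expect this to be the main obstacle. The composite $W_a\to V_a\to W_{a+r_2}$ is the identity on each cell, since a triangulated cell projects back onto itself, so that case is easy. For $V_a\to W_{a+r_2}\to V_{a+r_2}$, each simplex $\rho$ goes to a chain supported on the simplex spanned by $\pi_X\rho\times\pi_Y\rho$. That simplex contains $\rho$ and has diameter at most $a+r_2$. So both the composite and the inclusion land in a single common simplex of $V_{a+r_2}$, a contractible carrier, and these carriers are compatible with faces. The acyclic carrier theorem then yields a chain homotopy, or a straight-line homotopy works on realizations. I would state this carefully, because carriers must be checked on faces to avoid any gaps.

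Finally, combining the two maps gives a $(0,r_2)$-interleaving, which is in particular an $r_2$-interleaving, of $\mathcal{H}_n(V_\bullet)$ and $\mathcal{H}_n(W_\bullet)$. These are pointwise finite dimensional and hence interval decomposable by \cref{theorem:structure_theorem}. The algebraic stability theorem then gives $d_B\le r_2=\min(r_1,r_2)$ for every $n$. The case $p=\infty$ is consistent with \cref{proposition:vr_product_sup_metric}, where the distance is $0$.
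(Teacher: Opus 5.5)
Your proof is correct and is essentially the argument the paper intends. The paper gives no separate proof and only says that the $\ell^1$ argument of \cite[Theorem 4.9]{Carlsson2020} carries over; your $(0,\min(r_1,r_2))$-interleaving is the natural $\ell^p$ version of that argument. In it, triangulated product cells sit inside $\mathbf{VR}^p_a$ at the same scale, and coordinate projections send each simplex $\rho$ of $\mathbf{VR}^p_a$ into the cell $\pi_X\rho\times\pi_Y\rho$, whose value is at most $\|(a,r_2)\|_p\le a+r_2$. The inequality $\|(a,r)\|_p\le a+r$ is the only $p$-dependent step, which is why you need $p\ge 1$, matching the paper's standing assumption $p\in[1,\infty]$.
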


At this point in time we have no other theoretical guarantees or bounds on the bottleneck distances between the resulting persistence diagrams of the filtered complexes $(\cat{VR}(S_N^X)\times \cat{VR}(S_M^Y))_{\cpnorm(\bullet,\bullet)}$ and $\cat{VR}_{\bullet}^p(S_N^X\times S_M^Y)$. We do present some experiments on synthetic examples in the following subsection in order to form some conjectures.

\subsection{Experimental results}
All experiments were conducted on Kaggle's free cloud computing platform using a CPU session with 30GB RAM. We sampled points from two unit intervals, taking their product gives us a grid sample of the square. We did the same with two circles of radius $1$ to get a grid sample of the torus, see \cref{fig:grid_samples}. The metric on the unit intervals was the standard one and the metric on the circle was the geodesic one. The point samples inherited these metrics. The resulting metric on their products were the $\ell^p$ metrics, where $p\in \{1,2,5\}$. We are not interested in the case  $p=\infty$ for this analysis due to \cref{proposition:vr_product_sup_metric}.

\begin{figure}[H]
	\includegraphics[scale=0.25]{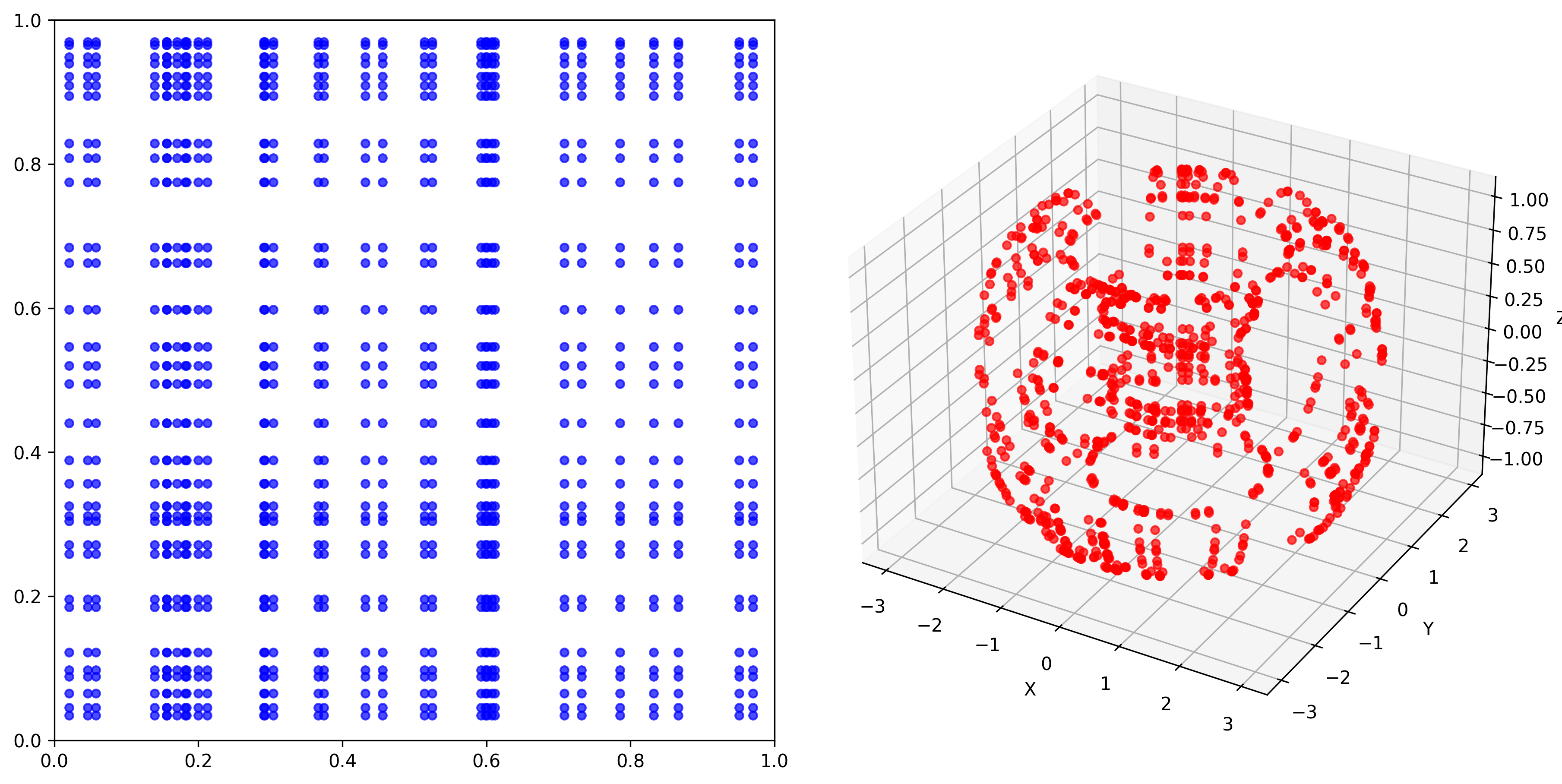}
	\caption{Grid samples of a unit square (blue) and torus (red). $32$ points were sampled from uniform measures on each unit interval and each circle.}
	\label{fig:grid_samples}
\end{figure}

We sampled a range of points from $5$ to $32$, from each unit interval and circle. The resulting product samples thus contained a range of points from $25$ to $1024$.
We computed the Vietoris-Rips persistent homology of the samples of the unit intervals and circles and combined them using \cref{theorem:kunneth_filtered_cw}. We then computed persistent homology of the resulting Vietoris-Rips filtrations of the corresponding products, with respect to the $\ell^p$ metrics. All computations were done using the Ripser algorithm \cite{Bauer2021}. We did not compute persistent homology in degree $0$ for this analysis due to \cref{lemma:pers_hom_degree_0_grid_and_product}. 

In the case of the square sample, we did not compute persistent homology in degree $2$ or higher. This is because the persistent homology of the samples in the two unit intervals in degree higher than $0$ is trivial, so the K\"unneth short exact sequence will only give terms in at most degree $1$.

In the case of the torus sample, we did not compute persistent homology in degree $3$ or higher. This is because of the computational complexity of computing persistent homology in higher degrees. Our Kaggle session in fact ran out of memory when trying to compute persistent homology for more than $1024$ points in degree $2$.

 The results for the square are in \cref{fig:square_comparison}. Across the full range of sample size in our experiments, the bottleneck distance between the two different filtrations has very small values, for persistent homology diagrams in dimension $1$. It also seems to be decreasing as sample size is increasing.

\begin{figure}[H]
	\includegraphics[scale=0.35]{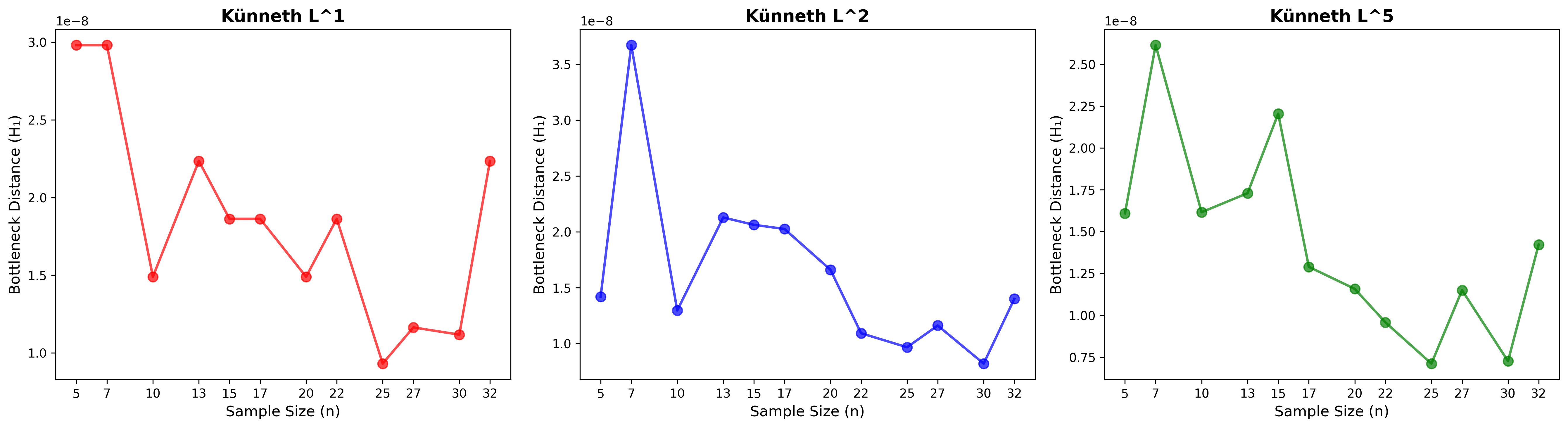}
	\caption{Points were sampled uniformly from two unit intervals. Their product is a grid sample of the unit square. Persistent homology in dimension $1$ of the grid sample was computed with respect to the $\ell^p$ metric. We also computed the persistent homology in dimension $1$ of the resulting filtered cubical complex. The bottleneck distances between the resulting persistence diagrams is shown on the $y$-axis. The $x$-axis shows the sample size from each interval ($n$). Plots for $p=1$ in red, $p=2$ in blue and $p=5$ in green are illustrated.}
	\label{fig:square_comparison}
\end{figure}

 The results for the torus are in \cref{fig:torus_comparison}. Across the full range of sample size in our experiments, the bottleneck distance between the two different filtrations has very small values, for persistent homology diagrams in degree $1$. It also seems to be decreasing as sample size is increasing. However, for persistent homology in dimension $2$, the bottleneck distance is several orders of magnitude larger. For both dimensions, the bottleneck distance goes down with sample size, for all values of $p$ tested. We thus formulate our first conjecture.
 
\noindent \textbf{Conjecture 1.} As $N,M\to \infty$,  \[d_B(\text{Dgm}(\mathcal{H}_n(\cat{VR}^p_{\bullet}(S_N^X\times S_M^Y))),\text{Dgm}(\mathcal{H}_n((\cat{VR}(S_N^X)\times \cat{VR}(S_M^Y))_{\cpnorm(\bullet,\bullet)})))\to 0,\]
for all $n\ge 0$. 

It also appears that for higher values of $p$, the bottleneck distance goes to $0$ faster as the sample size increases. This makes sense, as the parameter $p$ alternates between $1$ and $\infty$ and we know by \cref{proposition:vr_product_sup_metric} that the distance is exactly $0$ for $p=\infty$.

\noindent \textbf{Conjecture 2.} As $N,M\to \infty$, 
 \[d_B(\text{Dgm}(\mathcal{H}_n(\cat{VR}^q_{\bullet}(S_N^X\times S_M^Y))),\text{Dgm}(\mathcal{H}_n((\cat{VR}(S_N^X)\times \cat{VR}(S_M^Y))_{\ell^q_c(\bullet,\bullet)})))\to 0,\]
at a faster rate than $d_B(\text{Dgm}(\mathcal{H}_n(\cat{VR}^p_{\bullet}(S_N^X\times S_M^Y))),\text{Dgm}(\mathcal{H}_n((\cat{VR}(S_N^X)\times \cat{VR}(S_M^Y))_{\cpnorm(\bullet,\bullet)})))$ whenever $p<q$, for all $n\ge 0$. 

Finally, as noted above the bottleneck distances for persistent homology in dimension $2$ are orders of magnitude larger than in the dimension $1$ case. This appears true across all sample sizes and values of $p$. Therefore, we state our final conjecture.

\noindent \textbf{Conjecture 3.} As $N,M\to \infty$,  
\[d_B(\text{Dgm}(\mathcal{H}_n(\cat{VR}^p_{\bullet}(S_N^X\times S_M^Y))),\text{Dgm}(\mathcal{H}_n((\cat{VR}(S_N^X)\times \cat{VR}(S_M^Y))_{\cpnorm(\bullet,\bullet)})))\to 0,\]
at a faster rate than $d_B(\text{Dgm}(\mathcal{H}_m(\cat{VR}^q_{\bullet}(S_N^X\times S_M^Y))),\text{Dgm}(\mathcal{H}_m((\cat{VR}(S_N^X)\times \cat{VR}(S_M^Y))_{\ell^q_c(\bullet,\bullet)})))$ whenever $n\le m$, if the persistent homology in dimensions $n$ and $m$ is nontrivial.

\begin{figure}[H]
	\includegraphics[scale=0.35]{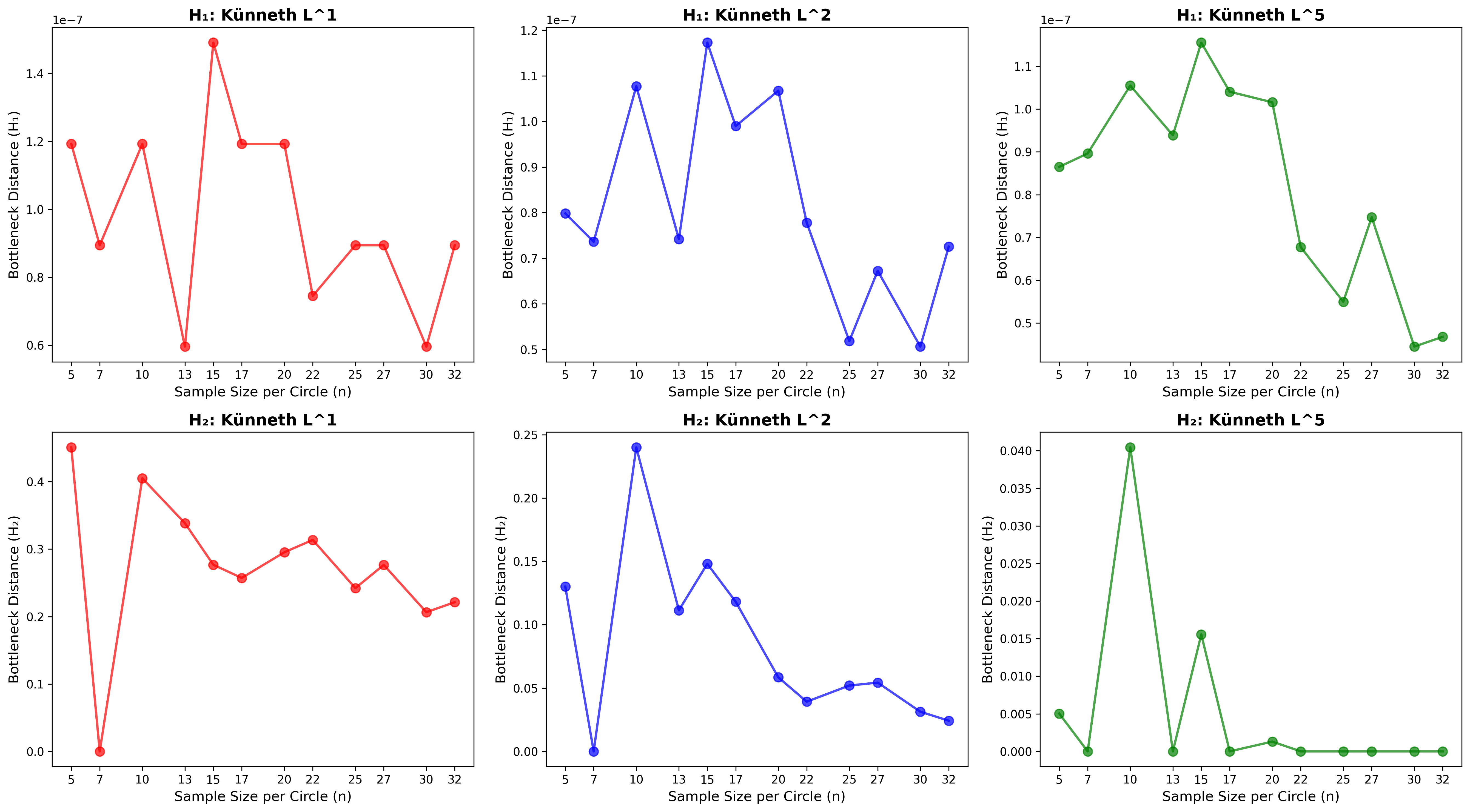}
	\caption{Points were sampled uniformly from two circles. Their product is a grid sample of the torus. Persistent homology in dimension $1$ and $2$ of the grid sample was computed with respect to the $\ell^p$ metric. We also computed the persistent homology in dimensions $1$ and $2$ of the resulting filtered cubical complex. The bottleneck distances between the resulting persistence modules is shown on the $y$-axis. The top row shows the bottleneck distances for persistence diagrams in dimension $1$. The bottom row shows the bottleneck distances for persistence diagrams in dimension $2$. The $x$-axis shows the sample size from each circle ($n$). Plots for $p=1$ in red, $p=2$ in blue and $p=5$ in green are illustrated.}
	\label{fig:torus_comparison}
\end{figure}

\section{Discussion}

Here we discuss the implications of this manuscript.

\begin{itemize}[left=0pt]
	\item We developed K\"unneth short exact sequences for persistence modules in 
    	significantly greater generality than previously existed in the literature, 
    	expanding the homological algebra of persistence modules. Our examples and 
    	theorems lay groundwork for future research. Notably, our results accelerate 
    	computations of persistent homology of product spaces, and our Universal 
    	Coefficient Theorems appear applicable to computing the Borel-Moore homology 
    	of open complexes, with potential implications for discrete stratified Morse 
    	theory.
	 
	 \item Most examples in this paper are restricted to $P = \mathbb{R}_+^n$, 
    	since the $\cpnorm$ functions are order-preserving on this domain. However, 
    	our results hold in greater generality for arbitrary posets $P$ and 
    	order-preserving maps $\varphi$. In particular, for any order-preserving 
    	$\varphi \colon \mathbb{R}^n \times \mathbb{R}^n \to \mathbb{R}^n$ or 
    	$\varphi \colon \mathbb{Z}^n \times \mathbb{Z}^n \to \mathbb{Z}^n$, which 
    	are standard in persistence theory. Furthermore, our classification of 
    	$\otimes_\varphi$-acyclic, projective, and injective persistence modules 
    	can be extended to other posets using similar arguments. We hope these results 
    	facilitate new computational approaches to topological data analysis.
	 
	\item The standard persistent homology pipeline begins with a filtered 
    	simplicial complex, most commonly a Vietoris-Rips complex built from a 
    	finite metric space. However, the persistence algorithm requires only a 
    	symmetric dissimilarity matrix, i.e. a weighted graph and does 
    	not depend on a metric space structure. Our framework applies to any 
    	product filtration $F : X \times Y \to \mathbb{R}_+$ arising from an 
    	order-preserving map $\varphi: \mathbb{R}_+^2 \to \mathbb{R}_+$ 
    	composed with two individual filtration functions, which may be chosen 
    	freely.
	
	\item We attempted to give some bounds on bottleneck distances 
	\[d_B(\text{Dgm}(\mathcal{H}_n(\cat{VR}^p_{\bullet}(S_N^X\times S_M^Y))),\text{Dgm}(\mathcal{H}_n((\cat{VR}(S_N^X)\times \cat{VR}(S_M^Y))_{\cpnorm(\bullet,\bullet)}))).\]
	The need for this arises because $\cat{VR}(S_N^X)\times \cat{VR}(S_M^Y))_{\cpnorm(\bullet,\bullet)}$ and $\cat{VR}^p_{\bullet}(S_N^X\times S_M^Y)$ are different filtered complexes. However, one could simply always use the filtered complex $\cat{VR}(S_N^X)\times \cat{VR}(S_M^Y))_{\cpnorm(\bullet,\bullet)}$ and never consider $\cat{VR}^p_{\bullet}(S_N^X\times S_M^Y)$ when analyzing $S_N^X\times S_M^Y$. This approach is more computationally efficient and is a valid construction based on homological algebra.
	
	\item We illustrated our framework extensively for $\varphi = \cpnorm$. 
   	 The most direct application is approximating persistent homology on a 
    	product space equipped with the $\ell^p$ metric, such as the torus. 
    	Strikingly, our pipeline can reduce the computation of persistent homology 
   	 of a point cloud with millions of points to the same computational 
   	 complexity as that of a point cloud with roughly one thousand points.
    	
	\item Our theory applies to products of arbitrary filtered CW complexes and is
	not restricted to Vietoris-Rips filtrations. \v{C}ech \cite{Edelsbrunner2009}, alpha
	\cite{Edelsbrunner1983}, Dowker \cite{Dowker1952}, and flood
	 \cite{graf2026floodcomplexlargescalepersistent} complexes are all compatible 
	 alternatives.
\end{itemize}

\appendix
\label{section:appendix}

\section{Morphisms of interval modules}
\label{section:appendix_A}

We restate \cref{lemma:nonzero_map_of_interval_modules} for the sake of convenience.

\begin{lemma}
    Suppose that $A$ and $B$ are intervals in $P$ such that $A\cap B$ is connected.
    Assume there is a nonzero map $f:\cat{k}[A]\to \cat{k}[B]$ of interval modules. Then (up to isomorphism) $f_a=1$ if $a\in A\cap B$ and $f_a=0$ otherwise.
\end{lemma}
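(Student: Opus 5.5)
We will prove the statement by analysing the naturality squares of $f$ one comparable pair at a time, and then spreading a single scalar across $A\cap B$ using connectedness. First, for $x\notin A\cap B$ one of $\cat{k}[A]_x$, $\cat{k}[B]_x$ is zero, so $f_x=0$ is forced. For $x\in A\cap B$, the component $f_x$ is a linear map $\cat{k}\to\cat{k}$, hence multiplication by a scalar $\lambda_x\in\cat{k}$. The whole proof therefore reduces to showing that $\lambda_x$ is independent of $x\in A\cap B$.

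Next, take $x\le y$ with both in $A\cap B$, and look at the naturality square for $x\le y$. Since $A$ and $B$ are intervals, hence convex, both structure maps $\cat{k}[A]_{x\le y}$ and $\cat{k}[B]_{x\le y}$ are identities. Commutativity then gives $\lambda_y\cdot 1=1\cdot\lambda_x$, so $\lambda_x=\lambda_y$. It is essential here that $A\cap B$ is convex, which follows from the convexity of $A$ and $B$. Without this, the structure maps for $x\le y$ might factor through points outside $A$ or $B$, but convexity rules that out.

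Then use connectedness of $A\cap B$. For any $a,b\in A\cap B$ there is a zigzag $a=p_0\le q_1\ge p_1\le\cdots\le q_n=b$ with all $p_i,q_i$ in $A\cap B$. Applying the previous step to each comparable consecutive pair gives $\lambda_a=\lambda_{q_1}=\lambda_{p_1}=\cdots=\lambda_b$. So $\lambda_x$ equals a single constant $\lambda$ on $A\cap B$. Since $f\neq 0$ and $f$ vanishes outside $A\cap B$, we have $\lambda\neq0$. Composing $f$ with the automorphism $\lambda^{-1}\cdot\mathrm{id}$ of $\cat{k}[B]$ (equivalently, rescaling the basis) then gives $f_a=1$ on $A\cap B$ and $f_a=0$ elsewhere, which is the meaning of ``up to isomorphism''.

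The main obstacle is the bookkeeping in the second step. We must make sure that the naturality square only involves points of $A\cap B$, and we should note that we do not need to check any constraint involving points outside $A\cap B$: those squares only impose that $f$ extends as a natural transformation, which is a condition on existence, not on the value of $\lambda$. That existence condition is the ``overlap nicely'' criterion treated in \cref{lemma:morphisms_between_intervals}. For the present lemma, existence of a nonzero $f$ is assumed, so only uniqueness of the constant is needed.
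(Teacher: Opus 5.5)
Your proof is correct and follows essentially the same route as the paper: you force $f$ to vanish off $A\cap B$, then use naturality squares with identity horizontal maps to propagate the value of $f$ along the zigzags supplied by connectedness of $A\cap B$. The only cosmetic difference is the order of steps: the paper normalizes $f_b=1$ at one point and propagates that value by induction on zigzag length, whereas you first show the scalar $\lambda_x$ is constant on $A\cap B$ and then rescale by $\lambda^{-1}$ at the end.
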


\begin{proof}
    Assume $f\neq 0$. Then there is a $b\in A\cap B$ such that $f_b$ is nonzero. Without loss of generality, we may assume that $f_b=1$. 
    Since $A\cap B$ is connected, for any $a\in A\cap B$ there is a sequence $a=p_0\le q_1\ge p_1\le q_2\ge\cdots \ge p_n\le q_n=b$ for some $n\in \mathbb{N}$. Suppose that there is a $c\in A\cap B$ such that $a\le c\ge b$. For the general case the argument will follow by induction. 
    Then from the commutative diagram
    \begin{figure}[H]
        \centering
        \begin{tikzcd}
            \cat{k}[A]_{a}\arrow[r,"1"]\arrow[d,"f_{a}"]& \cat{k}[A]_c\arrow[d,"f_c"]& \cat{k}[A]_b\arrow[l,"1"]\arrow[d,"f_b=1"]\\
            \cat{k}[B]_{a}\arrow[r,"1"]& \cat{k}[B]_c &\cat{k}[B]_b\arrow[l,"1"]
        \end{tikzcd}
    \end{figure}
    we conclude that $f_c=1$ and thus $f_a=1$. Thus, for any $a\in A\cap B$ we have $f_a=1$. If $a\not\in A\cap B$, then either $\cat{k}[A]_a=0$ or $\cat{k}[B]_a=0$, which implies that $f_a=0$.
\end{proof}

We restate \cref{lemma:morphisms_between_intervals} for the sake of convenience.

\begin{lemma}
Let $A,B$ be intervals of $P$ such that $A\cap B$ is connected. Then 
    \[\Hom_{\cat{k^{P}}}(\cat{k}[A],\cat{k}[B]) \approx
  \begin{cases}
    \cat{k}, &\text{if } A \text{ and } B \text{ overlap nicely}\\
    0, &\textnormal{otherwise}
  \end{cases}.\]
\end{lemma}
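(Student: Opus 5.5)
Fix a nonzero morphism $f:\cat{k}[A]\to\cat{k}[B]$. By \cref{lemma:nonzero_map_of_interval_modules}, after rescaling we may assume $f_a=1$ for $a\in A\cap B$ and $f_a=0$ otherwise. Consequently $\Hom_{\cat{k^P}}(\cat{k}[A],\cat{k}[B])$ has dimension at most one: every morphism is a scalar multiple of the candidate map $\chi$ with components $\chi_a=1$ on $A\cap B$ and $\chi_a=0$ elsewhere. (If $A\cap B=\varnothing$ the Hom space is $0$; this agrees with the statement, since nice overlap requires $A\cap B\neq\varnothing$.) The lemma therefore reduces to one claim: $\chi$ is a natural transformation if and only if $A$ and $B$ overlap nicely.

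To decide naturality, check the square of \cref{fig:limit_char_of_natural_transformations} for every pair $x\le y$ in $P$. It reads $\cat{k}[B]_{x\le y}\circ\chi_x=\chi_y\circ\cat{k}[A]_{x\le y}$. Split into cases according to whether $x$ and $y$ lie in $A$, in $B$, or in $A\cap B$.

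\emph{Case $x\notin A$.} The source $\cat{k}[A]_x=0$. The right side is either $0$ or factors through $\cat{k}[A]_x=0$, so both sides vanish.

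\emph{Case $y\notin B$.} The target $\cat{k}[B]_y=0$, so both sides vanish.

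\emph{Case $x\in A$ and $y\in B$.} Convexity controls the remaining subcases. If $x,y\in A\cap B$, then both structure maps are identities, $\chi_x=\chi_y=1$, and the square commutes. Otherwise there are two ways it can fail.
\begin{enumerate}
\item $x\in A\cap B$ and $y\in B\setminus A$. Then the left side is $1$ and the right side is $0$. This is exactly the configuration excluded by condition (1) of \cref{def:nicely_overlap}.
\item $x\in A\setminus B$ and $y\in A\cap B$. Then the left side is $0$ and the right side is $1$. This is exactly the configuration excluded by condition (2).
\end{enumerate}
The remaining possibility is $x\in A\setminus B$ and $y\in B\setminus A$. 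Here $\chi_x=\chi_y=0$, so both sides are $0$ and the square commutes trivially.

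Hence $\chi$ is natural precisely when no pair of type (1) or type (2) exists, that is, precisely when $A$ and $B$ overlap nicely. If they overlap nicely, $\Hom$ is the line spanned by $\chi$, so it is $\cat{k}$. If they do not, any nonzero $f$ would be a nonzero multiple of $\chi$ and therefore not natural, so $\Hom=0$.

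The main point of care is the case bookkeeping. Every square must be either trivially commutative or one of the two forbidden configurations, and the mixed case $x\in A\setminus B$, $y\in B\setminus A$ must be seen to be harmless rather than an obstruction. Connectedness of $A\cap B$ is used only through \cref{lemma:nonzero_map_of_interval_modules}, which rigidifies the map to the single candidate $\chi$.
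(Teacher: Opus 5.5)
Your proof is correct and follows the same route as the paper: \cref{lemma:nonzero_map_of_interval_modules} reduces every morphism to a scalar multiple of the indicator map $\chi$ on $A\cap B$, and checking the naturality squares shows that the only possible failures are exactly the two configurations excluded by \cref{def:nicely_overlap}. Your version is, if anything, tidier than the paper's. Disposing first of the squares with $x\notin A$ or $y\notin B$ makes your case analysis genuinely exhaustive, whereas the paper's enumeration skips several trivial cases and lists one case twice, and you state explicitly that $\Hom_{\cat{k^P}}(\cat{k}[A],\cat{k}[B])$ is at most one-dimensional, which the paper leaves implicit.
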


\begin{proof}
    We prove that there is a nonzero map $f:\cat{k}[A]\to \cat{k}[B]$ if and only if $A$ and $B$ overlap nicely. 

    Suppose that $f\neq 0$. Then for all $a\in A\cap B$ without loss of generality, we can assume that $1=f_a:\cat{k}[A]_a\to\cat{k}[B]_a$.
    Assume that $A$ and $B$ do not overlap nicely.
    Then there exists a $b\in B\setminus A$ such that there is an $a\in A\cap  B$ with $a\le b$, or there exists an $a\in A\setminus B$ such that there is a $b\in A\cap  B$ with $a\le b$.

    Let $b\in B\setminus A$ be such that there is an $a\in A\cap  B$ with $a\le b$. From the commutative diagram on the left below
      \begin{figure}[H]
        \centering
        \begin{tikzcd}
            \cat{k}[A]_{a}\arrow[r,"0"]\arrow[d,"f_{a}"]& \cat{k}[A]_b=0\arrow[d,"f_b=0"]\\
            \cat{k}[B]_{a}\arrow[r,"1"]& \cat{k}[B]_b
        \end{tikzcd}
        \hspace{1cm}
        \begin{tikzcd}
            \cat{k}[A]_{a}\arrow[r,"1"]\arrow[d,"f_{a}=0"]& \cat{k}[A]_b=0\arrow[d,"f_b"]\\
            \cat{k}[B]_{a}\arrow[r,"0"]& \cat{k}[B]_b
        \end{tikzcd}
    \end{figure}
    we conclude that it must be the case that $f_a=0$. But $a\in A\cap B$ which contradicts our previous observation. Let $a\in A\setminus B$ be such that there is a $b\in A\cap  B$ with $a\le b$. From the commutative diagram on the right above we conclude that it must be the case that $f_b=0$. But $b\in A\cap B$ which contradicts our previous observation. 

    Therefore if $A$ and $B$ do not overlap nicely we get a contradiction, in either case. Thus it must be the case that $A$ and $B$ overlap nicely.

    Now suppose that $A$ and $B$ overlap nicely. Define $f:\cat{k}[A\to \cat{k}[B]$ by $f_a=1$ if $a\in A\cap B$ and $f_a=0$ otherwise. We argue that $f$ is a natural transformation. Let $a\le b$ in $P$. There are several possibilities to consider.
    \begin{enumerate}[left=0pt]
        \item If $a,b\in A\cap B$, then $f_a=1,f_b=1,\cat{k}[A]_{a\le b}=1$ and $\cat{k}[B]_{a\le b}=1$ and thus $\cat{k}[B]_{a\le b}\circ f_a=f_b\circ \cat{k}[A]_{a\le b}$.
        \item If $a,b\not\in A\cup B$, then $f_a=0,f_b=0,\cat{k}[A]_{a\le b}=0$ and $\cat{k}[B]_{a\le b}=0$ and thus $\cat{k}[B]_{a\le b}\circ f_a=f_b\circ \cat{k}[A]_{a\le b}$.
        \item If $a\in A\setminus B$ and $b\in B\setminus A$, then $f_a=0,f_b=0,\cat{k}[A]_{a\le b}=0$ and $\cat{k}[B]_{a\le b}=0$ and thus $\cat{k}[B]_{a\le b}\circ f_a=f_b\circ \cat{k}[A]_{a\le b}$.
        \item If $b\in A\setminus B$ and $a\in B\setminus A$, then $f_a=0,f_b=0,\cat{k}[A]_{a\le b}=0$ and $\cat{k}[B]_{a\le b}=0$ and thus $\cat{k}[B]_{a\le b}\circ f_a=f_b\circ \cat{k}[A]_{a\le b}$.
        \item If $a\in A\setminus B$ and $b\in A\cap B$, then $a\le b$ cannot happen since $A$ and $B$ overlap nicely. So this case does not happen.
        \item If $b\in B\setminus A$ and $a\in A\cap B$, then $a\le b$ cannot happen since $A$ and $B$ overlap nicely. So this case does not happen.
        \item If $b\in A\setminus B$ and $a\in A\cap B$, then $f_a=1,f_b=0,\cat{k}[A]_{a\le b}=1$ and $\cat{k}[B]_{a\le b}=0$ and thus $\cat{k}[B]_{a\le b}\circ f_a=f_b\circ \cat{k}[A]_{a\le b}$.
        \item If $a\in A\setminus B$ and $b\in A\cap B$, then $f_a=0,f_b=1,\cat{k}[A]_{a\le b}=0$ and $\cat{k}[B]_{a\le b}=1$ and thus $\cat{k}[B]_{a\le b}\circ f_a=f_b\circ \cat{k}[A]_{a\le b}$.
    \end{enumerate}
    Thus, in all possible cases we have $\cat{k}[B]_{a\le b}\circ f_a=f_b\circ \cat{k}[A]_{a\le b}$. Therefore $f$ is a nonzero natural transformation.
\end{proof}

\section{K\"unneth theorem proofs}
\label{section:appendix_kunneth}

Here we prove the existence of a split K\"unneth short exact sequences for persistence modules. The proof consists of standard arguments involving spectral sequences of filtrations of double complexes in abelian categories. We assume the reader is familiar with spectral sequences; otherwise \cite[Chapter 10]{rotman2009introduction} is good introductory material.

In what follows, $(P,\le)$ is an arbitrary poset and $\varphi:P\times P\to P$ is an order-preserving morphism. All persistence modules are objects in $\cat{k}^P$. A positive complex of persistence modules is a chain complex of persistence modules $(K,d^K)$ such that $K_n=0$ for $n<0$ and the differentials $d^K_n:K_n\to K_{n-1}$ go down in degree and satisfy $d^K_{n-1}d^K_n=0$. A negative complex of persistence modules is a chain complex of persistence modules $(K,d^K)$ such that $K_n=0$ for $n>0$ and the differentials $d^K_{-n}:K_{-n}\to K_{-n-1}$ go down in degree and satisfy $d^K_{n-1}d^K_n=0$. Equivalently, this also defines a positive cochain complex of persistence modules by the mapping $n\mapsto -n$.

\begin{theorem}[K\"unneth short exact sequence for persistence modules, I]
Let $(K,d^K)$ and $(L,d^L)$ be two positive complexes of persistence module. Suppose that $K$ and its subcomplex of boundaries have all terms projective persistence modules. Then for all $n\ge 0$ there is a natural short exact sequence 
\[0\to \bigoplus_{i+j=n}H_i(K)\otimes_{\varphi} H_j(L)\to H_n(K\otimes_{\varphi} L)\to\bigoplus_{i+j=n-1} \cat{Tor}^{\varphi}_1(H_{i}(K),H_j(L))\to 0.\]
If in addition the complex $L$ and its subcomplex of boundaries have all terms projective, then
the short exact sequence splits, although not naturally.
\end{theorem}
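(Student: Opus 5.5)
I will follow the classical proof of the algebraic Künneth theorem (e.g. Rotman's), with $\otimes_{\varphi}$ in place of the ordinary tensor product. The only properties of $\otimes_{\varphi}$ needed are these:
\begin{enumerate}
\item it is additive and right exact in each variable (\cref{corollary:tensor_and_internal_hom_adjoints});
\item $P\otimes_{\varphi}-$ and $-\otimes_{\varphi}P$ are exact when $P$ is projective (\cref{corollary:free_modules_are_flat}, together with the remark that projectives are left and right acyclic);
\item it commutes with direct sums, being a left adjoint.
\end{enumerate}

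Write $Z_i=\ker d^K_i$ and $B_{i-1}=\im d^K_i$, so that $0\to Z_i\to K_i\to B_{i-1}\to 0$ is exact. By hypothesis $B_{i-1}$ is projective, so this sequence splits. Hence $Z_i$ is a direct summand of the projective $K_i$ and is itself projective.

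Regard $Z$ and $B'$ (with $B'_i=B_{i-1}$) as complexes with zero differential. This gives a short exact sequence of complexes $0\to Z\to K\to B'\to 0$ that is degreewise split. Applying $-\otimes_{\varphi}L$ and taking total complexes (\cref{def:monoidal_prod_of_chain_complexes}) therefore yields a short exact sequence of complexes
\[0\to Z\otimes_{\varphi}L\to K\otimes_{\varphi}L\to B'\otimes_{\varphi}L\to 0,\]
and hence a long exact homology sequence.

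Since $Z$ and $B'$ have zero differential and projective terms, exactness of $Z_i\otimes_{\varphi}-$ lets it commute with homology. This gives
\[H_n(Z\otimes_{\varphi}L)\approx\bigoplus_{i+j=n}Z_i\otimes_{\varphi}H_j(L),\]
and similarly
\[H_n(B'\otimes_{\varphi}L)\approx\bigoplus_{i+j=n-1}B_i\otimes_{\varphi}H_j(L).\]
Next I check that the connecting homomorphism is, up to sign, $\bigoplus i_i\otimes_{\varphi}\id$, where $i_i:B_i\hookrightarrow Z_i$ is the inclusion. This is the standard diagram chase: lift $b\otimes z$ to $k\otimes z$ with $dk=b$ and apply the total differential. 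I expect this to be the main obstacle. The chase is routine elementwise, but here it must be carried out with natural transformations of persistence modules and the colimit description of \cref{lemma:generalized_convolution}. I will argue it at the level of morphisms, using naturality of the identifications above, rather than elementwise.

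The long exact sequence then breaks into short exact sequences
\[0\to\operatorname{coker}(i\otimes_{\varphi}\id)_n\to H_n(K\otimes_{\varphi}L)\to\ker(i\otimes_{\varphi}\id)_{n-1}\to 0.\]
To identify the outer terms, take the projective resolution $0\to B_i\to Z_i\to H_i(K)\to 0$ of $H_i(K)$ and tensor it with $H_j(L)$. By right exactness and the definition of $\cat{Tor}^{\varphi}$ as a left derived functor, the cokernel of $i_i\otimes_{\varphi}\id$ is $H_i(K)\otimes_{\varphi}H_j(L)$ and its kernel is $\cat{Tor}^{\varphi}_1(H_i(K),H_j(L))$. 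Summing over $i+j$ gives the stated sequence. Naturality follows because every map used is natural in morphisms of pairs of complexes.

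For the splitting, assume $L$ and its boundaries are also projective, so both $K$ and $L$ split as above. Choose retractions $K_i\to Z_i$ and $L_j\to Z_j(L)$, and extend them to chain maps $K\to H(K)$ and $L\to H(L)$ into the homology complexes with zero differential. This is possible because $Z_i\to H_i(K)$ vanishes on boundaries. Tensoring these chain maps gives
\[K\otimes_{\varphi}L\to H(K)\otimes_{\varphi}H(L),\]
whose map on $H_n$ lands in $\bigoplus_{i+j=n} H_i(K)\otimes_{\varphi}H_j(L)$ and is a left inverse of the first map of the sequence. This depends on the chosen retractions, so the splitting is not natural.
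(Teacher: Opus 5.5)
Your argument is correct, but it is organized differently from the paper's.

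\textbf{The paper's route.} It uses the splitting $K_n\cong Z_n\oplus B_{n-1}$ to write $K$ itself as a direct sum of two-term complexes $S_p=(B_p\xrightarrow{i_p}Z_p)$. For each $S_p\otimes_{\varphi}L$ it runs the spectral sequence of a double complex with only two columns. Exactness of $Z_p\otimes_{\varphi}-$ and $B_p\otimes_{\varphi}-$ gives $E^1$. The differential $d^1$ is induced by $i_p\otimes_{\varphi}\mathrm{id}$, so $E^2$ consists of $H_p(K)\otimes_{\varphi}H_q(L)$ and $\mathbf{Tor}^{\varphi}_1(H_p(K),H_q(L))$. Collapse at $E^2$ gives the short exact sequence, which is then summed over $p$.

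\textbf{Your route and the trade-off.} You instead use the canonical, degreewise split sequence $0\to Z\to K\to B'\to 0$ and its long exact sequence. The paper gets the identification of the connecting map with $i\otimes_{\varphi}\mathrm{id}$ for free as $d^1$. The cost is a non-canonical decomposition of $K$, so the asserted naturality is not evident from its proof, plus spectral-sequence machinery. Your route is more elementary and makes naturality transparent.

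The step you flag as the main obstacle is routine at the level of morphisms. Take a degreewise section $s\colon B'\to K$. On each $B'_i\otimes_{\varphi}L_j$ one has $\partial_{K\otimes_{\varphi}L}(s\otimes_{\varphi}\mathrm{id})-(s\otimes_{\varphi}\mathrm{id})\partial_{B'\otimes_{\varphi}L}=(d^Ks)\otimes_{\varphi}\mathrm{id}$, because the two terms $(-1)^i s\otimes_{\varphi}d^L$ cancel. This is $i\otimes_{\varphi}\mathrm{id}$ followed by the inclusion of $Z\otimes_{\varphi}L$, and it induces the connecting morphism. Your splitting argument coincides with the paper's.

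\textbf{A caveat that applies equally to both proofs.} Your property (2), which you use only in the form ``$Z_i\otimes_{\varphi}-$ and $B_i\otimes_{\varphi}-$ are exact'', is the paper's assertion that projectives are $\otimes_{\varphi}$-acyclic. Since $(\mathbf{k}[U_a]\otimes_{\varphi}N)_q\cong\colim_{\varphi(a,d)\le q}N_d$, it holds whenever each set $\{d:\varphi(a,d)\le q\}$ is directed or empty, e.g.\ for $P=\mathbb{R}_+$ or for $\ell^p_c$ on $\mathbb{R}^n_+$. It fails for an arbitrary poset. Take $P=\{0,x,y,1\}$ with $0<x<1$, $0<y<1$ and $x,y$ incomparable, and let $\varphi(c,d)=g(d)$, where $g(y)=x$ and $g$ fixes the other three elements. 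Then for the free module $\mathbf{k}[P]=\mathbf{k}[U_0]$, the value $(\mathbf{k}[P]\otimes_{\varphi}N)_x$ is the pushout of $N_x\leftarrow N_0\to N_y$. This does not preserve the monomorphism $\mathbf{k}[\{x,y,1\}]\hookrightarrow\mathbf{k}[P]$. The paper's $E^1$ computation relies on exactly this step, so this is not a weakness of your route relative to the paper's. However, the argument in either form is only justified in the directed case.
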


\begin{proof}
For a given $n$, let $B_{n-1}=\im d^K_n$ denote the boundary and $Z_n=\ker d_n$ denote the kernel of the differential in $K$ at the $n$-th term, respectively. From the natural short exact sequence
\[0\to Z_n\to K_n\to B_{n-1}\to 0,\]
and the assumption of $B_{n-1}$ being projective we have a splitting $K_n\approx Z_n\oplus B_{n-1}$. Since $K_n$ is projective, its direct summand $Z_n$ is also projective. Define the ``elementary'' complex of persistence modules $S_p$ concentrated in degrees $p$ and $p+1$ by
\[ S_p: \dots \to 0 \to B_p \xrightarrow{i_p} Z_p \to 0 \to \dots \]
where $i_p: B_p \hookrightarrow Z_p$ is the inclusion. Note $H_p(S_p) \approx H_p(K)$ and $H_{k}(S_p) = 0$ for $k \neq p$, by construction. Since $K \approx \bigoplus_p S_p$, and both $\otimes_{\varphi}$ and $H_n$ commute with direct sums we have that
\[ H_n(K \otimes_{\varphi} L) \approx \bigoplus_p H_n(S_p \otimes_{\varphi} L). \]

For a fixed $p$, $S_p \otimes_{\varphi} L$ is a double complex with only two rows ($p$ and $p+1$). We consider the following spectral sequence filtered by rows. The $E^0$ page is 
\[E^0_{i,j}=(S_p)_i\otimes_{\varphi}L_j.\]
Taking homology with respect to $d^L$ (the horizontal differential), the $E^1$ page is
\[E^1_{p,q}=Z_p\otimes_{\varphi} H_q(L),\quad  E^1_{p+1,q}=B_p\otimes_{\varphi} H_q(L).\]
All other $E^1$ page entries are $0$. This is because $B_p$ and $Z_p$ are projective, hence left $\otimes_{\varphi}$-acyclic. Therefore $B_p\otimes_{\varphi}-$ and $Z_p\otimes_{\varphi}-$ are exact functors and thus commute with taking kernels and images and thus they commute with homology as well.

The $d^1:E^1\to E^2$ differential is induced by $i_p\otimes_{\varphi}\id$. For a fixed $q$, there is a vertical complex of persistence modules
\[0\to B_p\otimes_{\varphi}H_q(L)\xrightarrow{i_p\otimes_{\varphi}\id}Z_p\otimes_{\varphi}H_q(L)\to 0.\]
Since $0\to B_p\to Z_p\to H_p(K)\to 0$ is a projective resolution of $H_p(K)$, the homology of this vertical complex gives the $E^2$ page:
\[E^2_{p,q}\approx H_p(K)\otimes_{\varphi} H_q(L),\quad E^2_{p+1,q}=\cat{Tor}_1^{\varphi} (H_p(K),H_q(L)).\]
Because there are only two columns ($p$ and $p+1$), all higher differentials $d^r$  
($r\ge 2$) vanish. The sequence thus collapses at $E^2$ and thus we have short exact sequence of persistence modules
\[0\to H_p(K)\otimes_{\varphi} H_{n-p}(L)\to H_n(S_p\otimes_{\varphi}L)\to \cat{Tor}_1^{\varphi}(H_p(K),H_{n-p-1}(L))\to 0.\]
Direct summing over all $p$ and noting that $H_n(K\otimes_{\varphi} L)\approx \bigoplus_p H_n(S_p\otimes_{\varphi} L)$ gives the short exact sequence
\[0\to \bigoplus_{i+j=n}H_i(K)\otimes_{\varphi} H_j(L)\to H_n(K\otimes_{\varphi} L)\to\bigoplus_{i+j=n-1} \cat{Tor}^{\varphi}_1(H_{i}(K),H_j(L))\to 0.\]

Now assume that the complex $L$ and its subcomplex of boundaries also has all terms projective. Let $B_{n=1}^K=\im d^K_n, B_{n=1}^L=\im d^L_n, Z_n^K=\ker d^K_n, Z_n^L=\ker d^L_n$.
	Then we have the natural short exact split sequences
	\[0\to Z^K_n\xrightarrow{i_n^K} K_n\to B_{n-1}^K\to 0,\]
	\[0\to Z^L_n\xrightarrow{i_n^L} L_n\to B_{n-1}^L\to 0.\]
	Since the sequences are split, we have retractions $r^K_n:K_n\to Z^K_n$ and $r^L_n:L_n\to Z^L_n$ (which satisfy $r_n^K\circ i_n^K=\id_{Z_n^K}$ and $r_n^L\circ i_n^L=\id_{Z_n^L}$). Composing these retractions with the quotients $Z_n^K \to H_nK)$ and $Z_n^L\to H_n(L)$,  gives us chain maps $K\to H(K)$ and $L\to H(L)$ where $H(K)$ and $H(L)$ are the complexes of homologies with zero differentials. We therefore get a chain map 
	\[K\otimes_{\varphi} L\to H(K)\otimes_{\varphi}H(L).\] 
	
	The induced map on homology for this last chain map
is the desired splitting map since the chain complex $H(K)\otimes_{\varphi}H(L)$ equals its own
homology, the boundary maps being trivial.
	 Therefore the induced map on homology is
		\[H_n(K\otimes_{\varphi} L)\to \bigoplus_{i+j=n}H_i(K)\otimes_{\varphi}H_j(L).\] 
 		is a splitting of the short exact sequence 
		\[0\to \bigoplus_{i+j=n}H_i(K)\otimes_{\varphi} H_j(L)\to H_n(K\otimes_{\varphi} L)\to\bigoplus_{i+j=n-1} \cat{Tor}^{\varphi}_1(H_{i}(K),H_j(L))\to 0.\qedhere\]
\end{proof}

Restricting to the special case when $L$ is a single persistence module thought of as a complex concentrated in degree $0$ we have the following.

\begin{theorem}[Universal Coefficient Theorem for persistence modules, I]
Let $A$ be a persistence module. Let $(K,d)$ be a chain complex that has all terms projective and whose subcomplex of boundaries $B$ also has all terms projective. For all $n\ge 0$, there is a natural short exact sequence
\[0\to H_n(K)\otimes_{\varphi} A\to H_n(K\otimes_{\varphi} A)\to \cat{Tor}^{\varphi}_1(H_{n-1}(K),A)\to 0.\]
Furthermore the short exact sequence splits, although not naturally.
\end{theorem}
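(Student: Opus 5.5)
The plan is to deduce this from \cref{theorem:kunneth_1} by taking $L$ to be the complex with $L_0=A$ and $L_j=0$ for $j\neq 0$, all differentials zero. Then $H_0(L)=A$ and $H_j(L)=0$ for $j\neq 0$. The total complex $K\otimes_{\varphi}L$ of \cref{def:monoidal_prod_of_chain_complexes} has $(K\otimes_{\varphi}L)_n=K_n\otimes_{\varphi}A$. Its differential is $d^K_n\otimes_{\varphi}\id_A$, since the vertical part vanishes; this is exactly the complex $K\otimes_{\varphi}A$. In the sums of \cref{theorem:kunneth_1}, the only surviving terms are $i=n,j=0$ in the left sum and $i=n-1,j=0$ in the right sum. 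Hence the natural short exact sequence of \cref{theorem:kunneth_1} becomes
\[0\to H_n(K)\otimes_{\varphi} A\to H_n(K\otimes_{\varphi} A)\to \cat{Tor}^{\varphi}_1(H_{n-1}(K),A)\to 0.\]
The hypotheses on $K$ (all terms and boundaries projective) are exactly those required of $K$ there, and no hypothesis on $L$ is needed for exactness. Naturality in $K$ and $A$ is inherited.

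The splitting is the delicate step. The splitting clause of \cref{theorem:kunneth_1} requires $L$ and its boundaries to be projective, i.e.\ $A$ projective, and that is not assumed. So I would instead build the splitting using only the projectivity in $K$, as in the proof of \cref{theorem:kunneth_1}. The short exact sequence $0\to Z_n\to K_n\to B_{n-1}\to 0$ splits because $B_{n-1}$ is projective. This gives retractions $r_n:K_n\to Z_n$. Composing each $r_n$ with $Z_n\to H_n(K)$ yields maps $K_n\to H_n(K)$.

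One must check that these assemble into a chain map $K\to H(K)$, with zero differential on $H(K)$. This requires the composite $K_{n+1}\to K_n\to H_n(K)$ to vanish. Here $d_{n+1}$ lands in $B_n\subset Z_n$ and $r_n$ restricts to the identity on $Z_n$, so its image lies in $B_n$, which maps to zero in homology. Applying $-\otimes_{\varphi}A$ gives a chain map $K\otimes_{\varphi}A\to H(K)\otimes_{\varphi}A$. The target has zero differential and so equals its own homology. This produces a map $H_n(K\otimes_{\varphi}A)\to H_n(K)\otimes_{\varphi}A$.

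It remains to verify that this map composed with the inclusion $H_n(K)\otimes_{\varphi}A\to H_n(K\otimes_{\varphi}A)$ is the identity. I would trace the inclusion through the $E^2$ identification $Z_n\otimes_{\varphi}A\to H_n(K)\otimes_{\varphi}A$ from the spectral sequence argument. On $Z_n\otimes_{\varphi}A$ the retraction acts as the identity, so the composite is the identity after passing to the quotient. The splitting depends on the choice of the $r_n$, hence is not natural.
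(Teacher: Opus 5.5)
Your proposal is correct and follows essentially the same route as the paper. Exactness comes from \cref{theorem:kunneth_1} with $L=A$ concentrated in degree $0$, and the splitting comes from $K_n\approx Z_n\oplus B_{n-1}$ carried through $-\otimes_{\varphi}A$, without needing $A$ projective; the paper passes to quotients from the direct summand $Z_n\otimes_{\varphi}A\subseteq\ker(d_n\otimes_{\varphi}\id_A)$, and your map built from $r_n\otimes_{\varphi}\id_A$ is just the explicit projection onto that summand.
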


\begin{proof}
Since $K$ and its subcomplex of boundaries have all terms projective, there is a short exact sequence
\[0\to H_n(K)\otimes_{\varphi} A\to H_n(K\otimes_{\varphi} A)\to \cat{Tor}^{\varphi}_1(H_{n-1}(K),A)\to 0.\]

Recall the splitting $K_n\approx Z_n\oplus B_{n-1}$. Applying the functor $-\otimes_{\varphi}A$ we see that $Z_n\otimes_{\varphi}A$ is a direct summand of $K_n\otimes_{\varphi}A$. Furthermore, $Z_n\otimes_{\varphi}A$ is also a direct summand of 
\[\ker(d_n\otimes_{\varphi}\id_A:K_n\otimes_{\varphi}A\to K_{n-1}\otimes_{\varphi}A).\]
Taking quotients of both $Z_n\otimes_{\varphi}A$ and $\ker(d_n\otimes_{\varphi}\id_A)$ by the common image of $d_{n+1}\otimes_{\varphi}\id_A$ we see that $H_n(K)\otimes_{\varphi}A$ is a direct summand of $H_n(K\otimes_{\varphi}A)$. Therefore the K\"unneth short exact sequence for $A$ splits.
\end{proof}

\section{Metric measure spaces}
\label{section:appendix_B}

\begin{lemma}[Borel-Cantelli lemma]
\label{lemma:borel_cantelli}
Let $E_1, E_2, \dots $ be a sequence of events in some probability space. If the sum of the probabilities of the events $\{E_n\}$ is finite, i.e. $\sum_{n=1}^{\infty} P(E_n)<\infty$, then the probability that infinitely many of them occur is $0$, that is

\[P(\limsup_{n\to\infty}E_n)=P(\bigcap_{n=1}^{\infty}\bigcup_{k=n}^{\infty}E_k)=0.\]
\end{lemma}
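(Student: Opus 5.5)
The statement is the first Borel--Cantelli lemma, a classical fact of measure theory. I would prove it directly from countable subadditivity and continuity from above of the probability measure $P$; nothing from the persistence-module part of the paper is needed.

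First, set $F_n \od \bigcup_{k=n}^{\infty} E_k$. These form a decreasing sequence of events, $F_1\supseteq F_2\supseteq\cdots$, and by definition $\limsup_{n\to\infty}E_n=\bigcap_{n=1}^\infty F_n$. It is also worth noting that a sample point lies in this intersection exactly when it lies in $E_k$ for infinitely many $k$, which justifies reading the event as ``infinitely many of the $E_n$ occur.''

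Second, for each fixed $n$, monotonicity and countable subadditivity give
\[P\Big(\limsup_{m\to\infty}E_m\Big)\le P(F_n)\le \sum_{k=n}^{\infty}P(E_k).\]

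Third, since $\sum_{k=1}^\infty P(E_k)<\infty$, the tails $\sum_{k\ge n}P(E_k)$ tend to $0$ as $n\to\infty$. The left-hand side above does not depend on $n$, so letting $n\to\infty$ forces it to be $0$. Alternatively, one may apply continuity from above to the decreasing sequence $F_n$; this is legitimate because $P(F_1)\le 1<\infty$.

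There is no real obstacle here. The only point needing care is the tail argument, that is, recognizing that a convergent series has vanishing tails, together with the identification of $\limsup E_n$ with the event that infinitely many $E_n$ occur.
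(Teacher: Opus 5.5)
Your proof is correct. For each $n$ you bound $P(\limsup_{n\to\infty}E_n)\le P\bigl(\bigcup_{k\ge n}E_k\bigr)\le\sum_{k\ge n}P(E_k)$ and then let the tail of the convergent series tend to $0$, which is the standard argument. The paper gives no proof of this lemma: it quotes it as a classical fact and uses it only as a tool in the proof of \cref{prop:iid_sample}. Your argument is therefore exactly what would supply the omitted justification.
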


We restate \cref{prop:iid_sample} for the sake of convenience.

\begin{proposition}
	Let $(X,d)$ be a compact metric space and $\mu$ a Borel probability measure. Let $x_1, x_2, \dots, x_N$ be i.i.d. samples from $\mu$. Then
	\[\lim_{N\to \infty}\sup_{x\in \text{supp}(\mu)}\min_{i=1,\dots, N}d(x,x_i)=0 \text{ almost surely}.\]
\end{proposition}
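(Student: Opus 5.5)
The natural route is a covering argument combined with Borel--Cantelli (\cref{lemma:borel_cantelli}). Fix $\varepsilon>0$. Since $\text{supp}(\mu)$ is closed in the compact space $X$, it is compact. We can therefore cover it by finitely many open balls $B(z_1,\varepsilon/2),\dots,B(z_K,\varepsilon/2)$ with centres $z_k\in\text{supp}(\mu)$. Each centre lies in the support, so $q_k\od\mu(B(z_k,\varepsilon/2))>0$. Set $q\od\min_k q_k>0$.

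Next we control the probability that some ball is missed. Let $E_N$ be the event that some ball $B(z_k,\varepsilon/2)$ contains none of $x_1,\dots,x_N$. By independence and a union bound,
\[P(E_N)\le \sum_{k=1}^K (1-q_k)^N\le K(1-q)^N.\]
This bound is summable in $N$. By \cref{lemma:borel_cantelli}, almost surely only finitely many $E_N$ occur.

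Now we pass from the balls to arbitrary points. On the complement of $E_N$, every ball contains a sample point. Any $x\in\text{supp}(\mu)$ lies in some $B(z_k,\varepsilon/2)$, and that ball contains some sample $x_i$, so $d(x,x_i)<\varepsilon$ by the triangle inequality. Hence, almost surely,
\[\sup_{x\in\text{supp}(\mu)}\min_{i\le N}d(x,x_i)<\varepsilon\]
for all sufficiently large $N$. Alternatively, the quantity is nonincreasing in $N$, so it suffices to find a single $N$ at which it drops below $\varepsilon$; this also holds almost surely, since $P(E_N)\to0$ and the events $E_N$ are decreasing.

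Finally we remove the dependence on $\varepsilon$. Take $\varepsilon=1/m$ for $m\in\mathbb{N}$. The intersection of countably many almost sure events is almost sure, which gives the limit $0$ almost surely.

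The only point needing care is the positivity $\mu(B(z,r))>0$ for $z\in\text{supp}(\mu)$ and $r>0$. This is exactly the definition of the support, being the set of points all of whose open neighbourhoods have positive measure. It is also why the centres must be chosen inside the support, not merely in $X$. The rest is routine.
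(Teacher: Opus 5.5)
Your proposal is correct and follows essentially the same route as the paper: a finite cover of the compact set $\text{supp}(\mu)$ by $\varepsilon/2$-balls centred in the support, the union bound $\sum_k(1-q_k)^N$, Borel--Cantelli, the triangle inequality, and a countable intersection over $\varepsilon=1/m$. Your side remark is also valid and is not used in the paper: the quantity is nonincreasing in $N$ and the events $E_N$ are decreasing, so $P(E_N)\to 0$ alone already gives the almost sure conclusion without Borel--Cantelli.
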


\begin{proof}
	Recall that the support of $\mu$, denoted $\text{supp}(\mu)$, is defined as:
	
	\[\text{supp}(\mu)=\{x\in X\, |\, \mu(B(x,r))>0 \text{ for all }r>0\},\]
	where $B(x,r)$ denotes the open ball of radius $r$ centered at $x$. The support, $\text{supp}(\mu)$, is closed  in $X$ and since $X$ is compact, $\text{supp}(\mu)$ is also compact.  Since $\text{supp}(\mu)$ is compact, for any $\epsilon > 0$, there exists a finite set of points $\{y_1, y_2, \dots, y_k\} \subset \text{supp}(\mu)$ such that:
	
	\[\text{supp}(\mu)\subset \bigcup_{j=1}^kB\left(y_j,\dfrac{\epsilon}{2}\right).\]
	
	In particular, for each $1\le j\le k$,  we have $\mu\left(B\left(y_j, \dfrac{\epsilon}{2}\right)\right) > 0$, by definition of $\text{supp}(\mu)$. For each $1\le j\le  k$, define the event
	
	\[A_{j,N}=\left\{\text{at least one of } x_1,x_2,\dots ,x_N \text{ is in } B\left(y_j,\dfrac{\epsilon}{2}\right)\right\}.\]
	
	Since $\mu\left(B\left(y_j, \dfrac{\epsilon}{2}\right)\right) > 0$, the probability that any single sample falls in $B\left(y_j, \dfrac{\epsilon}{2}\right)$ is $p_j = \mu\left(B\left(y_j, \dfrac{\epsilon}{2}\right)\right) > 0$. The probability that none of the first $N$ samples falls in $B\left(y_j, \dfrac{\epsilon}{2}\right)$ is $P(\text{not} A_{j,N})=(1-p_j)^N$, because the samples were independent. Since $p_j > 0$, we have
	
	\[\lim_{N\to \infty}P(\text{not }A_{j,N})=\lim_{N\to \infty}(1-p_j)^N=0.\]
	
	Therefore $P(A_{j,N})\to 1$ as $N\to \infty$. Define the event
	
	\[A_N^{\epsilon}=\bigcap_{j=1}^kA_{j,N}=\left\{\text{for all }  j, \text{ at least one of } x_1,x_2,\dots, x_N\text{ is in }B\left (y_j,\dfrac{\epsilon}{2}\right)\right\}.\]
	
	Since the events $A_{j,N}$ are not independent, we cannot directly multiply probabilities. However, we still have
	
	\[P(\text{not}A_N^{\epsilon})=P\left(\bigcup_{j=1}^k\text{not }A_{j,N}\right)\le\sum_{j=1}^kP(\text{not }A_{j,N})=\sum_{j=1}^k(1-p_j)^N.\]
	
	Since each $p_j > 0$ and $k$ is finite we have
	
	\[\lim_{N\to \infty}P(\text{not} A_N^{\epsilon})\le\lim_{N\to \infty} \sum_{j=1}^k(1-p_j)^N=\sum_{j=1}^k\lim_{N\to \infty}(1-p_j)^N=0.\]
	
	Therefore $P(A_N^{\epsilon})\to 1$ as $N\to \infty$. Suppose the event $A_N^{\epsilon}$ occurs. Then for each $1\le j \le k$, there exists an $1\le i_j\le N$ such that $x_{i_j}\in B\left(y_j,\dfrac{\epsilon}{2}\right)$. For any $x\in \text{supp}(\mu)$, there exists some $j$ such that $x\in B\left(y_j,\dfrac{\epsilon}{2}\right)$ and hence $d(x,y_j)<\dfrac{\epsilon}{2}$. Since $x_{i_j}\in B\left(y_j,\dfrac{\epsilon}{2}\right)$ it follows that $d(x_{i_j},y_j)<\dfrac{\epsilon}{2}$. By the triangle inequality we have
	\[d(x,x_{i_j})\le d(x,y_j)+d(y_j,x_{i_j})<\dfrac{\epsilon}{2}+\dfrac{\epsilon}{2}=\epsilon.\]
	
	Therefore we have
	\[\min_{i=1,\dots, N}d(x,x_i)\le d(x,x_{i_j})<\epsilon.\]
	Since this holds for all $x\in \text{supp}(\mu)$ we have
	
	\[\sup_{x\in \text{supp}(\mu)}\min_{i=1,\dots, N}d(x,x_i)\le \epsilon.\]
	
	For any $\epsilon>0$ we showed above that $P(\text{not}A_N^{\epsilon})\le \sum_{j=1}^k(1-p_j)^N$. In particular, since $0<1-p_j<1$ for each $j$ we have that 
	\[\sum_{N=1}^{\infty}P (\text{not}A_N^{\epsilon})\le \sum_{N=1}^{\infty}\sum_{j=1}^k(1-p_j)^N=\sum_{j=1}^k\sum_{N=1}^{\infty}(1-p_j)^N.\]
	Note that the inner sum on the right hand side is a convergent geometric series and therefore we have
	\[\sum_{N=1}^{\infty}(1-p_j)^N=\dfrac{1-p_j}{1-(1-p_j)}=\dfrac{1-p_j}{p_j}<\infty.\]
	Since we have finitely many such series the entire sum is finite. Therefore $\sum_{N=1}^{\infty}P (\text{not}A_N^{\epsilon})\le \infty$. Hence, by the Borel-Cantelli lemma (\cref{lemma:borel_cantelli}) we have that 
	\[P(\limsup_{N\to\infty} (\text{not}A_N^{\epsilon}))=0.\]
	This is equivalent to
	\[P(\liminf_{N\to\infty} A_N^{\epsilon})=1,\]
	
	which means that with probability $1$, $A_N^{\epsilon}$ occurs for sufficiently large $N$. Note that whenever $A_N^{\epsilon}$ occurs we have that 
	\[\sup_{x\in \text{supp}(\mu)}\min_{i=1,\dots, N}d(x,x_i)\le \epsilon\].
	
	Therefore
	\[P\left(\sup_{x\in \text{supp}(\mu)}\min_{i=1,\dots, N}d(x,x_i)\le \epsilon \text{ for all sufficiently large N}\right)=1.\]
	This holds for every $\epsilon > 0$.
	
	Now, for each positive integer $m$, with probability $1$,  there exists a (random) $N_m$ such that for all $N \ge N_m$ we have
	
	\[\sup_{x\in \text{supp}(\mu)}\min_{i=1,\dots, N}d(x,x_i)\le \dfrac{1}{m}.\]
	
	Since we have countably many events (one for each $m$), and each occurs with probability $1$, their intersection also occurs with probability $1$. Therefore	
	\[\lim_{N\to \infty}\sup_{x\in \text{supp}(\mu)}\min_{i=1,\dots, N}d(x,x_i)=0 \text{ almost surely}.\qedhere\]
\end{proof}

We restate \cref{prop:grid_sample} for the sake of convenience.

\begin{proposition}
	Let $(X,d_X)$ and $(Y,d_Y)$ be compact metric spaces with Borel probability measures $\mu_X$ and $\mu_Y$, respectively, both having full support. Let $\{x_i\}_{i=1}^N$ and $\{y_i\}_{i=1}^M$ be i.i.d. samples from $\mu_X$ and $\mu_Y$, respectively. Let $d$ be the metric on $X\times Y$ defined by $d((x,y),(x',y'))=(d_X(x,x')^p+d_Y(y,y')^p)^{1/p}$ for $p\ge 1$. Then
	\[\lim_{N,M\to \infty}\sup_{(x,y)\in X\times Y}\min_{1\le i\le N,1\le j\le M}d((x,y),(x_i,y_j))=0 \text{ almost surely}.\]
\end{proposition}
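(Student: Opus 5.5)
The plan is to reduce the statement to \cref{prop:iid_sample} applied separately to each factor, and then control the product distance by the two factor distances. Since $\mu_X$ and $\mu_Y$ have full support, $\text{supp}(\mu_X)=X$ and $\text{supp}(\mu_Y)=Y$. Set
\[\delta^X_N\od\sup_{x\in X}\min_{1\le i\le N}d_X(x,x_i),\qquad \delta^Y_M\od\sup_{y\in Y}\min_{1\le j\le M}d_Y(y,y_j).\]
By \cref{prop:iid_sample}, there are events $\Omega_X$ and $\Omega_Y$ of probability one on which $\delta^X_N\to 0$ as $N\to\infty$ and $\delta^Y_M\to 0$ as $M\to\infty$. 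Their intersection still has probability one, whether or not the two samples are independent.

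The key deterministic step is this. Fix $(x,y)\in X\times Y$ and pick $i$ with $d_X(x,x_i)=\min_{i'}d_X(x,x_{i'})\le\delta^X_N$, and $j$ with $d_Y(y,y_j)\le\delta^Y_M$. The minima exist since the index sets are finite. The point $(x_i,y_j)$ lies in the grid, and the $p$-norm is monotone in each coordinate on $\mathbb{R}^2_+$. Hence
\[\min_{i',j'}d((x,y),(x_{i'},y_{j'}))\le d((x,y),(x_i,y_j))=\|(d_X(x,x_i),d_Y(y,y_j))\|_p\le\|(\delta^X_N,\delta^Y_M)\|_p.\]
Taking the supremum over $(x,y)$ gives
\[\sup_{(x,y)}\min_{i,j}d((x,y),(x_i,y_j))\le\|(\delta^X_N,\delta^Y_M)\|_p\le 2^{1/p}\max(\delta^X_N,\delta^Y_M).\]
For $p\ge1$ one could even use $\le\delta^X_N+\delta^Y_M$.

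To conclude, fix an outcome in $\Omega_X\cap\Omega_Y$ and $\epsilon>0$. Choose $N_0$ and $M_0$ such that $\delta^X_N<\epsilon/2$ for $N\ge N_0$ and $\delta^Y_M<\epsilon/2$ for $M\ge M_0$. Then the supremum above is $<\epsilon$ whenever $N\ge N_0$ and $M\ge M_0$. This is exactly the double limit $N,M\to\infty$, which therefore holds almost surely.

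I do not expect a genuine obstacle. The only points needing care are interpreting the joint limit $N,M\to\infty$ as the double limit, which the $N_0,M_0$ argument handles, and noting that the almost-sure events for the two factors intersect to a full-measure event. Measurability of the supremum is not an issue, because we only bound it on a full-measure event by quantities that \cref{prop:iid_sample} already controls.
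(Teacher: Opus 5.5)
Your proof is correct. Its deterministic core is exactly the paper's reduction: pick the nearest $x_i$ and $y_j$ separately and bound the grid covering radius by $\|(\delta^X_N,\delta^Y_M)\|_p$. Where you differ is the probabilistic conclusion. The paper invokes independence of the two samples so that $P\bigl(\delta^X_N<\epsilon/2^{1/p},\ \delta^Y_M<\epsilon/2^{1/p}\bigr)$ factors as a product tending to $1$. This only yields convergence \emph{in probability} of the bound. The paper then asserts almost sure convergence without justification, and in general convergence in probability does not imply almost sure convergence. The step can be rescued, for instance by noting that the bound is nonincreasing in both $N$ and $M$. Your argument needs no rescue: you work pathwise on $\Omega_X\cap\Omega_Y$, which gives the almost sure double limit directly. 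It also needs no independence between the $x$-sample and the $y$-sample, an assumption the statement does not make explicit. Your route is shorter and more rigorous, and the paper's detour through independence and convergence in probability buys nothing extra here.
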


\begin{proof}
	Let $(x,y)\in X\times Y$ and let $i^*=\argmin\limits_{1\le i\le N} d_X(x,x_i)$ and $j^*=\argmin\limits_{1\le j\le M} d_Y(y,y_j)$. Then
	
	\[\min_{i,j}d((x,y),(x_i,y_j))\le d((x,y),(x_{i^*},y_{j^*})).\]
	This implies that
	\[\min_{i,j}d((x,y),(x_i,y_j))\le\left(\left(\min_{1\le i\le N}d_X(x,x_i)\right)^p+\left(\min_{1\le j\le M}d_Y(y,y_j)\right)^p\right)^{\frac{1}{p}}.\]
	
	Taking the supremum of the previous inequality we have
	\[\sup_{(x,y)\in X\times Y}\min_{i,j}d((x,y),(x_i,y_j))\le\left(\left(\sup_{x\in X}\min_{1\le i\le N}d_X(x,x_i)\right)^p+\left(\sup_{y\in Y}\min_{1\le j\le M}d_Y(y,y_j)\right)^p\right)^{\frac{1}{p}}.\]
	
		By \cref{prop:iid_sample} we have that 
	\begin{itemize}
		\item $A_N=\lim\limits_{N\to \infty}\sup\limits_{x\in X}\min\limits_{1\le i\le N}d_X(x,x_i)\to 0$ almost surely.
		\item $B_M=\lim\limits_{M\to \infty}\sup\limits_{y\in Y}\min\limits_{1\le j\le M}d_Y(y,y_j)\to0$ almost surely.
	\end{itemize}
	Therefore for any $\epsilon>0$, we have that 
	\begin{itemize}
		\item $P\left(A_N<\dfrac{\epsilon}{2^{1/p}}\right)\to 1$ as $N\to \infty$. 
		\item$P\left(B_M<\dfrac{\epsilon}{2^{1/p}}\right)\to 1$ as $M\to \infty$.
	\end{itemize} 
	
	Since $\{x_i\}$ and $\{y_j\}$ are independent samples, the random variables $A_N$ and $B_M$ are also independent and therefore
	\[P\left(A_N<\dfrac{\epsilon}{2^{1/p}} \text{ and }B_M<\dfrac{\epsilon}{2^{1/p}}\right)=P\left(A_N<\dfrac{\epsilon}{2^{1/p}}\right )P\left(B_M<\dfrac{\epsilon}{2^{1/p}}\right)\to 1.\]
	When both events occur we have that 
	\[Z_{NM}=(A_N^p+B_M^p)^{1/p}<\left(\left(\dfrac{\epsilon}{2^{1/p}}\right)^p+\left(\dfrac{\epsilon}{2^{1/p}}\right)^p\right)^{1/p}=\left(\dfrac{\epsilon^p}{2}+\dfrac{\epsilon^p}{2}\right)^{1/p}=\epsilon.\]
	Therefore $P(Z_{NM}<\epsilon)\to 1$ as $N,M\to \infty$. Since this holds for any $\epsilon$ we have that $Z_{NM}\to 0$ in probability. Therefore
	
	\[\lim_{N,M\to \infty}\sup_{(x,y)\in X\times Y}\min_{1\le i\le N,1\le j\le M}d((x,y),(x_i,y_j))=0 \text{ almost surely}.\qedhere\]
\end{proof}

\printbibliography

@article{Bubenik2021,
  title = {Homological Algebra for Persistence Modules},
  volume = {21},
  ISSN = {1615-3383},
  DOI = {10.1007/s10208-020-09482-9},
  number = {5},
  journal = {Foundations of Computational Mathematics},
  publisher = {Springer Science and Business Media LLC},
  author = {Bubenik,  Peter and Milićević,  Nikola},
  year = {2021},
  month = jan,
  pages = {1233–1278}
}

@article{Bauer2021,
  title = {Ripser: efficient computation of Vietoris–Rips persistence barcodes},
  volume = {5},
  ISSN = {2367-1734},
  DOI = {10.1007/s41468-021-00071-5},
  number = {3},
  journal = {Journal of Applied and Computational Topology},
  publisher = {Springer Science and Business Media LLC},
  author = {Bauer,  Ulrich},
  year = {2021},
  month = june,
  pages = {391–423}
}

@misc{graf2026floodcomplexlargescalepersistent,
      title={The Flood Complex: Large-Scale Persistent Homology on Millions of Points}, 
      author={Florian Graf and Paolo Pellizzoni and Martin Uray and Stefan Huber and Roland Kwitt},
      year={2026},
      eprint={2509.22432},
      archivePrefix={arXiv},
      primaryClass={cs.LG},
}

@article{Dowker1952,
  title = {Homology Groups of Relations},
  volume = {56},
  ISSN = {0003-486X},
  DOI = {10.2307/1969768},
  number = {1},
  journal = {The Annals of Mathematics},
  publisher = {JSTOR},
  author = {Dowker,  C. H.},
  year = {1952},
  month = july,
  pages = {84}
}

@article{Edelsbrunner1983,
  title = {On the shape of a set of points in the plane},
  volume = {29},
  ISSN = {1557-9654},
  DOI = {10.1109/tit.1983.1056714},
  number = {4},
  journal = {IEEE Transactions on Information Theory},
  publisher = {Institute of Electrical and Electronics Engineers (IEEE)},
  author = {Edelsbrunner,  H. and Kirkpatrick,  D. and Seidel,  R.},
  year = {1983},
  month = july,
  pages = {551–559}
}

@book{Edelsbrunner2009,
  title = {Computational Topology},
  ISBN = {9781470412081},
  DOI = {10.1090/mbk/069},
  publisher = {American Mathematical Society},
  author = {Edelsbrunner,  Herbert and Harer,  John},
  year = {2009},
  month = dec 
}

@book{Burago2001,
  title = {A Course in Metric Geometry},
  ISBN = {9781470417949},
  ISSN = {1065-7339},
  DOI = {10.1090/gsm/033},
  journal = {Graduate Studies in Mathematics},
  publisher = {American Mathematical Society},
  author = {Burago,  Dmitri and Burago,  Yuri and Ivanov,  Sergei},
  year = {2001},
  month = jun 
}

@article{Knudson2022,
  title = {Discrete Stratified Morse Theory: Algorithms and A User’s Guide},
  volume = {67},
  ISSN = {1432-0444},
  DOI = {10.1007/s00454-022-00372-1},
  number = {4},
  journal = {Discrete \& Computational Geometry},
  publisher = {Springer Science and Business Media LLC},
  author = {Knudson,  Kevin and Wang,  Bei},
  year = {2022},
  month = mar,
  pages = {1023–1052}
}

@misc{knudson2026discretemorsetheoryopen,
      title={Discrete Morse theory for open complexes}, 
      author={Kevin P. Knudson and Nicholas A. Scoville},
      year={2026},
      eprint={2402.12116},
      archivePrefix={arXiv},
      primaryClass={math.AT},
}

@article{Chazal2013,
  title = {Persistence stability for geometric complexes},
  volume = {173},
  ISSN = {1572-9168},
  DOI = {10.1007/s10711-013-9937-z},
  number = {1},
  journal = {Geometriae Dedicata},
  publisher = {Springer Science and Business Media LLC},
  author = {Chazal,  Frédéric and de Silva,  Vin and Oudot,  Steve},
  year = {2013},
  month = dec,
  pages = {193–214}
}

@misc{gakhar2019,
      title={K\"unneth Formulae in Persistent Homology}, 
      author={Hitesh Gakhar and Jose A. Perea},
      year={2019},
      eprint={1910.05656},
      archivePrefix={arXiv},
      primaryClass={math.AT},
}

@inbook{Tamarkin2018,
  title = {Microlocal Condition for Non-displaceability},
  ISBN = {9783030015886},
  ISSN = {2194-1017},
  DOI = {10.1007/978-3-030-01588-6_3},
  booktitle = {Algebraic and Analytic Microlocal Analysis},
  publisher = {Springer International Publishing},
  author = {Tamarkin,  Dmitry},
  year = {2018},
  pages = {99–223}
}

@article{CohenSteiner2006,
  title = {Stability of Persistence Diagrams},
  volume = {37},
  ISSN = {1432-0444},
  DOI = {10.1007/s00454-006-1276-5},
  number = {1},
  journal = {Discrete \& Computational Geometry},
  publisher = {Springer Science and Business Media LLC},
  author = {Cohen-Steiner,  David and Edelsbrunner,  Herbert and Harer,  John},
  year = {2006},
  month = dec,
  pages = {103–120}
}

@book{Bredon1997,
  title = {Sheaf Theory},
  ISBN = {9781461206477},
  ISSN = {0072-5285},
  DOI = {10.1007/978-1-4612-0647-7},
  journal = {Graduate Texts in Mathematics},
  publisher = {Springer New York},
  author = {Bredon,  Glen E.},
  year = {1997}
}

@article{Borel1960,
  title = {Homology theory for locally compact spaces.},
  volume = {7},
  ISSN = {0026-2285},
  DOI = {10.1307/mmj/1028998385},
  number = {2},
  journal = {Michigan Mathematical Journal},
  publisher = {Michigan Mathematical Journal},
  author = {Borel,  A. and Moore,  J. C.},
  year = {1960},
  month = jan 
}

@inbook{Guillermou2014,
  title = {Microlocal Theory of Sheaves and Tamarkin’s Non Displaceability Theorem},
  ISBN = {9783319065144},
  ISSN = {1862-9113},
  DOI = {10.1007/978-3-319-06514-4_3},
  booktitle = {Homological Mirror Symmetry and Tropical Geometry},
  publisher = {Springer International Publishing},
  author = {Guillermou,  Stéphane and Schapira,  Pierre},
  year = {2014},
  pages = {43–85}
}

@article{Crawley-Boevey2015,
  title     = "Decomposition of pointwise finite-dimensional persistence
               modules",
  author    = "Crawley-Boevey, William",
  journal   = "J. Algebr. Appl.",
  publisher = "World Scientific Pub Co Pte Lt",
  volume    =  14,
  number    =  05,
  pages     = "1550066",
  month     =  jun,
  year      =  2015,
  language  = "en"
}

@article{Geist2023,
  title     = "Global dimension of real-exponent polynomial rings",
  author    = "Geist, Nathan and Miller, Ezra",
  journal   = "Algebra Number Theory",
  publisher = "Mathematical Sciences Publishers",
  volume    =  17,
  number    =  10,
  pages     = "1779--1788",
  month     =  sep,
  year      =  2023,
  language  = "en"
}

@article{Polterovich2017,
  title     = "Persistence modules with operators in Morse and floer theory",
  author    = "Polterovich, Leonid and Shelukhin, Egor and Stojisavljevi{\'c},
               Vuka{\v s}in",
  journal   = "Mosc. Math. J.",
  publisher = "National Research University, Higher School of Economics (HSE)",
  volume    =  17,
  number    =  4,
  pages     = "757--786",
  year      =  2017
}

@article{Carlsson2020,
  title     = "Persistent homology of the sum metric",
  author    = "Carlsson, Gunnar and Filippenko, Benjamin",
  journal   = "J. Pure Appl. Algebra",
  publisher = "Elsevier BV",
  volume    =  224,
  number    =  5,
  pages     = "106244",
  month     =  may,
  year      =  2020,
  copyright = "http://www.elsevier.com/open-access/userlicense/1.0/",
  language  = "en"
}

@book{curry2014sheaves,
    AUTHOR = {Curry, Justin Michael},
     TITLE = {Sheaves, cosheaves and applications},
      NOTE = {Thesis (Ph.D.)--University of Pennsylvania},
 PUBLISHER = {ProQuest LLC, Ann Arbor, MI},
      YEAR = {2014},
     PAGES = {317},
      ISBN = {978-1303-96615-6},
   MRCLASS = {Thesis},
  MRNUMBER = {3259939},
}

@article{Kashiwara2018,
  title     = "Persistent homology and microlocal sheaf theory",
  author    = "Kashiwara, Masaki and Schapira, Pierre",
  journal   = "J. Appl. Comput. Topol.",
  publisher = "Springer Science and Business Media LLC",
  volume    =  2,
  number    = "1-2",
  pages     = "83--113",
  month     =  oct,
  year      =  2018,
  language  = "en"
}

@book{Kashiwara1990,
  title = {Sheaves on Manifolds},
  ISBN = {9783662026618},
  ISSN = {0072-7830},
  DOI = {10.1007/978-3-662-02661-8},
  journal = {Grundlehren der mathematischen Wissenschaften},
  publisher = {Springer Berlin Heidelberg},
  author = {Kashiwara,  Masaki and Schapira,  Pierre},
  year = {1990}
}

@article{Miller2023,
  title     = "Stratifications of real vector spaces from constructible sheaves
               with conical microsupport",
  author    = "Miller, Ezra",
  journal   = "J. Appl. Comput. Topol.",
  publisher = "Springer Science and Business Media LLC",
  volume    =  7,
  number    =  3,
  pages     = "473--489",
  month     =  sep,
  year      =  2023,
  copyright = "https://www.springernature.com/gp/researchers/text-and-data-mining",
  language  = "en"
}

@article{Miller2025,
  title     = "Homological algebra of modules over posets",
  author    = "Miller, Ezra",
  journal   = "SIAM J. Appl. Algebr. Geom.",
  publisher = "Society for Industrial \& Applied Mathematics (SIAM)",
  volume    =  9,
  number    =  3,
  pages     = "483--524",
  month     =  sep,
  year      =  2025,
  language  = "en"
}

@book{rotman2009introduction,
  title={An introduction to homological algebra},
  author={Rotman, Joseph J and Rotman, Joseph J},
  volume={2},
  year={2009},
  publisher={Springer}
}

@article{Berkouk2021,
  title     = "Ephemeral persistence modules and distance comparison",
  author    = "Berkouk, Nicolas and Petit, Fran{\c c}ois",
  journal   = "Algebr. Geom. Topol.",
  publisher = "Mathematical Sciences Publishers",
  volume    =  21,
  number    =  1,
  pages     = "247--277",
  month     =  feb,
  year      =  2021,
  language  = "en"
}

@article{Berkouk2022,
  title     = "A derived isometry theorem for sheaves",
  author    = "Berkouk, Nicolas and Ginot, Gr{\'e}gory",
  journal   = "Adv. Math. (N. Y.)",
  publisher = "Elsevier BV",
  volume    =  394,
  number    =  108033,
  pages     = "108033",
  month     =  jan,
  year      =  2022,
  language  = "en"
}

@article{Fernandes2025,
  title     = "Computation of gamma-linear projected barcodes for
               multiparameter persistence",
  author    = "Fernandes, Alex and Oudot, Steve and Petit, Fran{\c c}ois",
  journal   = "J. Appl. Comput. Topol.",
  publisher = "Springer Science and Business Media LLC",
  volume    =  9,
  number    =  2,
  month     =  jun,
  year      =  2025,
  copyright = "https://www.springernature.com/gp/researchers/text-and-data-mining",
  language  = "en"
}

@misc{berkouk2024,
      title={Projected distances for multi-parameter persistence modules}, 
      author={Nicolas Berkouk and Francois Petit},
      year={2024},
      eprint={2206.08818},
      archivePrefix={arXiv},
      primaryClass={math.AT},
}

@article{Carlsson2009,
  title     = "The theory of multidimensional persistence",
  author    = "Carlsson, Gunnar and Zomorodian, Afra",
  journal   = "Discrete Comput. Geom.",
  publisher = "Springer Science and Business Media LLC",
  volume    =  42,
  number    =  1,
  pages     = "71--93",
  month     =  jul,
  year      =  2009,
  language  = "en"
}

@article{Lesnick2015,
  title     = "The theory of the interleaving distance on multidimensional
               persistence modules",
  author    = "Lesnick, Michael",
  journal   = "Found. Comut. Math.",
  publisher = "Springer Science and Business Media LLC",
  volume    =  15,
  number    =  3,
  pages     = "613--650",
  month     =  jun,
  year      =  2015,
  language  = "en"
}

@article{Zomorodian2005,
  title     = "Computing persistent homology",
  author    = "Zomorodian, Afra and Carlsson, Gunnar",
  journal   = "Discrete Comput. Geom.",
  publisher = "Springer Science and Business Media LLC",
  volume    =  33,
  number    =  2,
  pages     = "249--274",
  month     =  feb,
  year      =  2005,
  language  = "en"
}

@article{Bubenik2015,
  title     = "Metrics for generalized persistence modules",
  author    = "Bubenik, Peter and de Silva, Vin and Scott, Jonathan",
  journal   = "Found. Comut. Math.",
  publisher = "Springer Science and Business Media LLC",
  volume    =  15,
  number    =  6,
  pages     = "1501--1531",
  month     =  dec,
  year      =  2015,
  language  = "en"
}

@ARTICLE{Bubenik2014,
  title     = "Categorification of persistent homology",
  author    = "Bubenik, Peter and Scott, Jonathan A",
  journal   = "Discrete Comput. Geom.",
  publisher = "Springer Science and Business Media LLC",
  volume    =  51,
  number    =  3,
  pages     = "600--627",
  month     =  apr,
  year      =  2014,
  language  = "en"
}

@article{Takeuchi2021,
  title     = "The persistent homology of a sampled map: from a viewpoint of
               quiver representations",
  author    = "Takeuchi, Hiroshi",
  journal   = "J. Appl. Comput. Topol.",
  publisher = "Springer Science and Business Media LLC",
  volume    =  5,
  number    =  2,
  pages     = "179--213",
  month     =  jun,
  year      =  2021,
  copyright = "https://creativecommons.org/licenses/by/4.0",
  language  = "en"
}

@article{Escolar2016,
  title     = "Persistence modules on commutative ladders of finite type",
  author    = "Escolar, Emerson G and Hiraoka, Yasuaki",
  journal   = "Discrete Comput. Geom.",
  publisher = "Springer Science and Business Media LLC",
  volume    =  55,
  number    =  1,
  pages     = "100--157",
  month     =  jan,
  year      =  2016,
  language  = "en"
}

@article{Carlsson2010,
  title     = "Zigzag persistence",
  author    = "Carlsson, Gunnar and de Silva, Vin",
  journal   = "Found. Comut. Math.",
  publisher = "Springer Nature",
  volume    =  10,
  number    =  4,
  pages     = "367--405",
  month     =  aug,
  year      =  2010,
  language  = "en"
}

@book{Oudot2016,
  title     = "Persistence theory: From quiver representations to data analysis",
  author    = "Oudot, Steve Y",
  publisher = "American Mathematical Society",
  series    = "Mathematical Surveys and Monographs",
  month     =  sep,
  year      =  2016,
  address   = "Providence, RI"
}

@article{Harrington2019,
  title     = "Stratifying multiparameter persistent homology",
  author    = "Harrington, Heather A and Otter, Nina and Schenck, Hal and
               Tillmann, Ulrike",
  journal   = "SIAM J. Appl. Algebr. Geom.",
  publisher = "Society for Industrial \& Applied Mathematics (SIAM)",
  volume    =  3,
  number    =  3,
  pages     = "439--471",
  month     =  jan,
  year      =  2019,
  language  = "en"
}

@misc{harsu2024,
      title={Ephemeral Modules and Scott Sheaves on a Continuous Poset}, 
      author={Manu Harsu and Eero Hyry},
      year={2024},
      eprint={2411.16235},
      archivePrefix={arXiv},
      primaryClass={math.AT},
      url={https://arxiv.org/abs/2411.16235}, 
}

@misc{angel2025,
      title={2-Categorical Foundations for Multiparameter Persistence}, 
      author={Mauricio Angel},
      year={2025},
      eprint={2508.02914},
      archivePrefix={arXiv},
      primaryClass={math.AT},
}

@article{Torras-Casas2023,
  title     = "Distributing persistent homology via spectral sequences",
  author    = "Torras-Casas, {\'A}lvaro",
  journal   = "Discrete Comput. Geom.",
  publisher = "Springer Science and Business Media LLC",
  volume    =  70,
  number    =  3,
  pages     = "580--619",
  month     =  oct,
  year      =  2023,
  copyright = "https://creativecommons.org/licenses/by/4.0",
  language  = "en"
}

@misc{goldfarb2019,
      title={Singular persistent homology with geometrically parallelizable computation}, 
      author={Boris Goldfarb},
      year={2019},
      eprint={1607.01257},
      archivePrefix={arXiv},
      primaryClass={cs.CG},
}

@book{Chazal2016,
  title     = "The structure and stability of persistence modules",
  author    = "Chazal, Frederic and de Silva, Vin and Glisse, Marc and Oudot,
               Steve Y",
  publisher = "Springer International Publishing",
  series    = "SpringerBriefs in mathematics",
  edition   =  1,
  month     =  oct,
  year      =  2016,
  address   = "Cham, Switzerland",
  language  = "en"
}

@article{Botnan2024,
  title     = "On the bottleneck stability of rank decompositions of
               multi-parameter persistence modules",
  author    = "Botnan, Magnus Bakke and Oppermann, Steffen and Oudot, Steve and
               Scoccola, Luis",
  journal   = "Adv. Math. (N. Y.)",
  publisher = "Elsevier BV",
  volume    =  451,
  number    =  109780,
  pages     = "109780",
  month     =  aug,
  year      =  2024,
  copyright = "http://creativecommons.org/licenses/by/4.0/",
  language  = "en"
}

@article{Dey2024,
  title     = "Computing generalized rank invariant for 2-parameter persistence
               modules via zigzag persistence and its applications",
  author    = "Dey, Tamal K and Kim, Woojin and M{\'e}moli, Facundo",
  journal   = "Discrete Comput. Geom.",
  publisher = "Springer Science and Business Media LLC",
  volume    =  71,
  number    =  1,
  pages     = "67--94",
  month     =  jan,
  year      =  2024,
  copyright = "https://creativecommons.org/licenses/by/4.0",
  language  = "en"
}

@article{Blanchette2022,
  title     = "Homological approximations in persistence theory",
  author    = "Blanchette, Benjamin and Br{\"u}stle, Thomas and Hanson, Eric J",
  journal   = "Canad. J. Math.",
  publisher = "Canadian Mathematical Society",
  pages     = "1--38",
  month     =  dec,
  year      =  2022,
  language  = "en"
}

@article{Lupo2022,
  title     = "Persistence Steenrod modules",
  author    = "Lupo, Umberto and Medina-Mardones, Anibal M and Tauzin,
               Guillaume",
  journal   = "J. Appl. Comput. Topol.",
  publisher = "Springer Science and Business Media LLC",
  volume    =  6,
  number    =  4,
  pages     = "475--502",
  month     =  dec,
  year      =  2022,
  copyright = "https://creativecommons.org/licenses/by/4.0",
  language  = "en"
}

@misc{medinamardones2025,
      title={Persistent cohomology operations and Gromov-Hausdorff estimates}, 
      author={Anibal M. Medina-Mardones and Ling Zhou},
      year={2025},
      eprint={2503.17130},
      archivePrefix={arXiv},
      primaryClass={math.AT},
}

@misc{Contessoto2022,
  title     = "Persistent {Cup-Length}",
  author    = "Contessoto, Marco and M{\'e}moli, Facundo and Stefanou,
               Anastasios and Zhou, Ling",
  publisher = "Schloss Dagstuhl - Leibniz-Zentrum f{\"u}r Informatik",
  month     =  jun,
  year      =  2022
}

@article{Memoli2024,
  title     = "Persistent cup product structures and related invariants",
  author    = "M{\'e}moli, Facundo and Stefanou, Anastasios and Zhou, Ling",
  journal   = "J. Appl. Comput. Topol.",
  publisher = "Springer Science and Business Media LLC",
  volume    =  8,
  number    =  1,
  pages     = "93--148",
  month     =  mar,
  year      =  2024,
  copyright = "https://creativecommons.org/licenses/by/4.0",
  language  = "en"
}

@article{JMLR:v21:19-054,
  author  = {Oliver Vipond},
  title   = {Multiparameter Persistence Landscapes},
  journal = {Journal of Machine Learning Research},
  year    = {2020},
  volume  = {21},
  number  = {61},
  pages   = {1--38},
}

@article{Bubenik2015Landscapes,
author = {Bubenik, Peter},
title = {Statistical topological data analysis using persistence landscapes},
year = {2015},
issue_date = {January 2015},
publisher = {JMLR.org},
volume = {16},
number = {1},
issn = {1532-4435},
journal = {J. Mach. Learn. Res.},
month = jan,
pages = {77–102},
numpages = {26}
}

@article{Boissonnat2019,
  title     = "Computing persistent homology with various coefficient fields in
               a single pass",
  author    = "Boissonnat, Jean-Daniel and Maria, Cl{\'e}ment",
  journal   = "J. Appl. Comput. Topol.",
  publisher = "Springer Science and Business Media LLC",
  volume    =  3,
  number    = "1-2",
  pages     = "59--84",
  month     =  jun,
  year      =  2019,
  language  = "en"
}

@article{Ren2021-ox,
  title     = "Computational tools in weighted persistent homology",
  author    = "Ren, Shiquan and Wu, Chengyuan and Wu, Jie",
  journal   = "Chin. Ann. Math. Ser. B",
  publisher = "Springer Science and Business Media LLC",
  volume    =  42,
  number    =  2,
  pages     = "237--258",
  month     =  mar,
  year      =  2021,
  language  = "en"
}

@misc{romero2014,
      title={Defining and computing persistent Z-homology in the general case}, 
      author={Ana Romero and Jónathan Heras and Julio Rubio and Francis Sergeraert},
      year={2014},
      eprint={1403.7086},
      archivePrefix={arXiv},
      primaryClass={cs.CG},
}

@article{Obayashi2023,
  title     = "Field choice problem in persistent homology",
  author    = "Obayashi, Ippei and Yoshiwaki, Michio",
  journal   = "Discrete Comput. Geom.",
  publisher = "Springer Science and Business Media LLC",
  volume    =  70,
  number    =  3,
  pages     = "645--670",
  month     =  oct,
  year      =  2023,
  copyright = "https://creativecommons.org/licenses/by/4.0",
  language  = "en"
}

@article{Adams2025,
  title     = "Vietoris--rips complexes of torus grids",
  author    = "Adams, Henry and Adetowubo, Adenike Yeside and Barriga-Acosta,
               Hector and Feng, Ziqin and Sterling, John",
  journal   = "Mediterr. J. Math.",
  publisher = "Springer Science and Business Media LLC",
  volume    =  22,
  number    =  7,
  month     =  nov,
  year      =  2025,
  copyright = "https://www.springernature.com/gp/researchers/text-and-data-mining",
  language  = "en"
}

@article{Adamaszek2017,
  title = {The Vietoris–Rips complexes of a
circle},
  volume = {290},
  ISSN = {0030-8730},
  DOI = {10.2140/pjm.2017.290.1},
  number = {1},
  journal = {Pacific Journal of Mathematics},
  publisher = {Mathematical Sciences Publishers},
  author = {Adamaszek,  Michał and Adams,  Henry},
  year = {2017},
  month = july,
  pages = {1–40}
}

@article{Grothendieck1957,
  title     = "Sur quelques points d'alg{\`e}bre homologique, {I}",
  author    = "Grothendieck, Alexander",
  journal   = "Tohoku Math. J. (2)",
  publisher = "Mathematical Institute, Tohoku University",
  volume    =  9,
  number    =  2,
  pages     = "119--221",
  month     =  jan,
  year      =  1957
}

@article{Bubenik2018,
  title     = "Topological spaces of persistence modules and their properties",
  author    = "Bubenik, Peter and Vergili, Tane",
  journal   = "J. Appl. Comput. Topol.",
  publisher = "Springer Science and Business Media LLC",
  volume    =  2,
  number    = "3-4",
  pages     = "233--269",
  month     =  dec,
  year      =  2018,
  language  = "en"
}

\end{document}